\tikzset{
   main/.style={circle, minimum size = 10mm, thick, 
        draw =black!80, node distance = 10mm},
   box/.style={rectangle, draw=black!100}
}
\setlist{  
  listparindent=\parindent,
  parsep=0pt
}
\crefname{assumption}{Assumption}{Assumptions}
\titleformat*{\section}{\large\bfseries}
\titleformat*{\subsection}{\normalfont\bfseries}
 \newcommand\numberthis{\addtocounter{equation}{1}\tag{\theequation}}
\DeclareMathOperator*{\argmin}{arg\,min}
\DeclareMathOperator*{\argmax}{arg\,max}
\newcommand{\eps}{\ensuremath{\varepsilon}}
\newcommand{\s}{^*}
\renewcommand{\t}{^{\top}}
\newcommand{\inv}{^{-1}}
\newcommand{\p}{\mathbb{P}}
\renewcommand{\hat}{\widehat}
\renewcommand{\tilde}{\widetilde}
\newtheorem{theorem}{Theorem}
\newtheorem{corollary}{Corollary}
\newtheorem{lemma}{Lemma}
\theoremstyle{definition}
\newtheorem{assumption}{Assumption}
\title{A High-Dimensional Statistical Theory for Convex and Nonconvex Matrix Sensing}
\author{Joshua Agterberg\thanks{Department of Statistics, University of Illinois Urbana-Champaign} \and Ren\'e Vidal\thanks{Center for Innovation in Data Engineering and Science (IDEAS), Departments of Electrical and Systems Engineering, Radiology, Computer and Information Science, Statistics and Data Science, University of Pennsylvania}}
\date{\today}
\begin{document}

\maketitle

\begin{abstract}
The problem of matrix sensing, or trace regression, is a problem wherein one wishes to estimate a low-rank matrix from linear measurements perturbed with noise. A number of existing works have studied both convex and nonconvex approaches to this problem, establishing minimax error rates when the number of measurements is sufficiently large relative to the rank and dimension of the low-rank matrix, though a precise comparison of these procedures still remains unexplored.   
In this work we provide a high-dimensional   statistical analysis for symmetric low-rank matrix sensing observed under Gaussian measurements and noise.  
Our main result describes a novel phenomenon: in this statistical model and in an appropriate asymptotic regime, the behavior of any local minimum of the nonconvex factorized approach (with known rank)  is approximately  equivalent to that of the matrix hard-thresholding of a corresponding matrix denoising problem, and the behavior of the convex nuclear-norm regularized least squares approach is approximately equivalent to that of matrix soft-thresholding of the same matrix denoising problem.  Here ``approximately equivalent'' is understood in the sense of concentration of Lipchitz functions.  As a consequence, the nonconvex procedure uniformly dominates the convex approach in mean squared error. 
 Our arguments are based on a matrix operator generalization of the Convex Gaussian Min-Max Theorem (CGMT) together with studying the interplay between local minima of the convex and  nonconvex formulations and their ``debiased'' counterparts, and several of these results may be of independent interest. 
\end{abstract}

\tableofcontents

\section{Introduction}

In the \emph{matrix sensing} or \emph{trace regression} problem, the goal is to estimate a symmetric matrix $\bm{M}$ from $n$ observations  $(y_i,\bm{X}_i) \in \mathbb{R} \times \mathbb{R}^{d\times d}$ such that
\begin{align}
    y_i = \frac{1}{\sqrt{n}}\langle \bm{X}_i, \bm{M} \rangle + \eps_i, \numberthis \label{matsens}
\end{align}
where $\langle \cdot, \cdot \rangle$ denotes the Frobenius inner product, $\eps_i$ is observation noise, and $\bm{M}$ is assumed to be a symmetric positive semidefinite rank $r$ signal matrix of interest with $r$ typically taken to be much smaller than the dimension $d$ (the scaling by $\frac{1}{\sqrt{n}}$ is primarily for theoretical purposes). Equation \eqref{matsens}  serves as a simple-but-practical model for high-dimensional matrix regression, and this model has drawn attention from fields such as computer science, engineering, and statistics.   This problem can be viewed as a matrix generalization of the noisy sparse recovery problem, in which one instead observes linear measurements of a sparse signal vector  perturbed by random noise, with the rank of the signal matrix $\bm{M}$ playing the role of sparsity.

To address the low-rank structure of the signal matrix, drawing inspiration from the compressed sensing literature, initial work focused on a penalized least-squares approach to estimation, often referred to as the Matrix LASSO.
Explicitly,  the Matrix LASSO with tuning parameter $\lambda$, denoted $\bm{Z}^{(\lambda)}$, solves the problem
\begin{align}
    \bm{Z}^{(\lambda)} = \argmin_{\bm{Z} \succcurlyeq 0} g_{\sf cvx}^{(\lambda)}(\bm{Z}) :=  \frac{1}{2} \sum_{i=1}^{n} (y_i - \langle \bm{X}_i, \bm{Z}\rangle/\sqrt{n} )^2 + \lambda \| \bm{Z} \|_*,
\end{align}
where $\|\cdot\|_*$ denotes the nuclear norm on $d\times d$ matrices (here we take any minimizer of the objective function).  
The right-hand side is convex in the optimization variable $\bm{Z}$, hence the minimizer is computable in polynomial time. Much work has been devoted to the study of $\bm{Z}^{(\lambda)}$, including conditions on the sensing matrices $\bm{X}_i$ and the tuning parameter $\lambda$ such that approximate recovery is possible.  As a representative example, the work of \citet{candes_tight_2011} proved that for Gaussian observations $\{\bm{X}_i\}_{i=1}^{n}$, when $n \geq C_0 dr$ and $\eps_i \sim \mathcal{N}(0,\sigma^2)$, it holds that 
\begin{align}
\label{eq:MLASSO-recovery}
    \| \bm{Z}^{(\lambda)} - \bm{M}\|_F^2 \leq C \sigma^2 dr
\end{align}
with high probability as long as $\lambda$ is taken to be $C' \sigma \sqrt{d}$, where $C, C',$ and $C_0$  are some universal constants.  Moreover, \citet{candes_tight_2011} show that this procedure is minimax rate-optimal in the sense that no estimator can improve the order of the right hand side of Equation \eqref{eq:MLASSO-recovery}.

While the estimator $\bm{Z}^{(\lambda)}$ is computable in polynomial time, computing it in practice often requires running an iterative algorithm such as projected gradient descent. A major drawback to projected gradient descent and its variants is that each iteration requires taking a (full) singular value decomposition of a $d\times d$ matrix, which can be prohibitive when $d$ is large. To ameliorate the need for multiple full SVDs, many researchers have instead proposed using a pre-factorized version of the problem that operates directly on the factors of the matrix, often referred to as the  ``Burer-Monteiro'' factorization from the work of \citet{burer_nonlinear_2003}.  Explicitly, given a pre-specified rank $r$, the nonconvex factorized estimator $\bm{U}^{(\lambda)} \in \mathbb{R}^{d\times r}$ solves the problem
 \begin{align}
     \bm{U}^{(\lambda)} = \argmin_{\bm{U} \in \mathbb{R}^{d\times r}} f_{{\sf ncvx}}^{(\lambda)}(\bm{U}) := \frac{1}{4} \sum_{i=1}^{n} (y_i - \langle \bm{X}_i, \bm{UU}\t \rangle/\sqrt{n} )^2 + \frac{\lambda}{2} \| \bm{U} \|_F^2.
 \label{eq:Burer-Monteiro}
 \end{align}
  The right hand side of \eqref{eq:Burer-Monteiro} is nonconvex in the optimization variable $\bm{U}$, but iterative procedures such as gradient descent have the benefit of operating directly on the low-rank factors, thereby reducing the computational burden when $r$ is small relative to $d$.  In this paper we are interested both in $\lambda > 0$ but also in the special case where $\lambda = 0$, which is often used in practice.  

 Existing work has demonstrated that despite introducing nonconvexity, the particular form of nonconvexity is ``harmless'' in the sense that the optimization landscape is benign (in the noiseless setting) \citep{haeffele2014structured,bhojanapalli_global_2016,li_symmetry_2019,ma_optimization_2023}, or that local optima exhibit minimax-optimal statistical guarantees similar to existing results for the convex estimator $\bm{Z}^{(\lambda)}$ \citep{ge_no_2017,negahban_estimation_2011,tu_low-rank_2016,zheng_convergent_2015}. See \cref{sec:relatedwork} for more details.
 
 Despite these advancements, a more comprehensive statistical theory is still missing from the literature.  
 To draw an analogy, in the statistical literature on high-dimensional linear regression, 
 a precise asymptotic statistical theory for the LASSO estimator has emerged over a series of works \citep{bayati_lasso_2012,javanmard_confidence_2014,javanmard_debiasing_2018,bellec_debiasing_2023,celentano_lasso_2023,miolane_distribution_2021,thrampoulidis_precise_2018,thrampoulidis_regularized_2015}.  
 Therefore, partially motivated by these findings, in this paper we are interested in answering the following question:
 \begin{quote}
   \emph{What is the asymptotic behavior of $\bm{Z}^{(\lambda)}$ and $\bm{U}^{(0)}$ when $d^2 \gg n \gg d$ with $r$ held fixed?}
 \end{quote}
 It is worth noting that in our main results   we will allow $r$ to grow with $n$ so that $n \gg d r^2$.  We deliberately focus on the regime $d^2 \gg n \gg d$ since if $n \gg d^2$, then we are once again in the ``low-dimensional'' regime where least squares is consistent without any additional regularization.

The key takeaway from this work can be stated informally as follows:
\begin{quote} 
\emph{The convex estimator $\bm{Z}^{(\lambda)}$ behaves asymptotically equivalently to matrix soft thresholding, and the nonconvex estimator $\bm{U}^{(0)}$ behaves asymptotically equivalently to matrix hard thresholding, where both thresholding procedures are computed on a noisy version of $\bm{M}$. }
\end{quote}
In the subsequent sections we formalize what we mean by ``behaves asymptotically equivalently to.''  Importantly, this asymptotic equivalence also implies a form of statistical dominance: namely, the nonconvex estimator has a mean-squared error strictly smaller than the convex estimator.

\subsection{Notation}
For a veector $\bm{v}$, $\|\bm{v}\|$ denotes its Euclidean $\ell_2$ norm, and for a matrix $\bm{X}$, $\|\bm{X}\|, \|\bm{X}\|_*$ and $\|\bm{X}\|_F$ denote its spectral, nuclear, and Frobenius norm, respectively.   We write ${\sf vec}(\bm{X})$ to denote the (half) vectorization of a (symmetric) matrix.     We write $\lambda_i(\bm{X})$ to denote the $i$'th largest eigenvalue of a matrix, and we let $\lambda_{\min}(\bm{X})$ denote its smallest nonzero eigenvalue.     For a general function $f: \mathbb{R}^{d \times r} \to \mathbb{R}$ we let $\nabla f$ denote its gradient, viewed as an element of $\mathbb{R}^{d \times r}$ and $\nabla^2$ denote its Hessian, viewed as an operator on $\mathbb{R}^{d\times r} \times \mathbb{R}^{d\times r}$. For two matrices $\bm{U}$ and $\bm{U}'$ of  dimension $d \times r$, we denote $\mathcal{O}_{\bm{U},\bm{U}'}$ via
\begin{align}
    \mathcal{O}_{\bm{U},\bm{U}'} &= \argmin_{\mathcal{OO}\t = \bm{I}_r} \| \bm{U} \mathcal{O} - \bm{U}' \|_F.
\end{align}

 Throughout we let $C$ or $c$ denote a universal constant that may change from line to line.  For two sequences $a_n$ and $b_n$, we say $a_n \ll b_n$ if $a_n/b_n \to 0$ as $n \to\infty$, and we say $a_n \lesssim b_n$ if $a_n \leq C b_n$.  We also write $a_n = O(b_n)$ if for all $n$ sufficiently large $a_n \leq C b_n$.  Similarly, we write $a_n = o(b_n)$ if $a_n \ll b_n$, and we write $a_n = \tilde O(b_n)$ if there exists some constant $c > 0$ such that $a_n = O( \log^c(n) b_n )$.  

We write $\mathcal{N}(\mu,\sigma^2)$ to denote the normal distribution with mean $\mu$ and variance $\sigma^2$.  We say a matrix $\bm{X} \sim {\sf GOE}(d)$ if $\bm{X} \in \mathbb{R}^{d\times d}$ is symmetric with independent $\mathcal{N}(0,1)$ entries on the diagonal and independent $\mathcal{N}(0,1/2)$ entries on the off-diagonal. 
We let $\mathcal{X}: \mathbb{R}^{d\times d} \to \mathbb{R}^{n}$ be the operator with elements defined via $\mathcal{X}(\bm{M})_i = \langle \bm{X}_i, \bm{M} \rangle/\sqrt{n}$, and we let $\mathcal{X}\s$ denote its adjoint.  Throughout the rest of the paper, probabilities and expectations without adornment hold according to the randomness in $\{\bm{X}_i,\eps_i\}_{i=1}^{n}$.

We let $L^2(\mathbb{R}^{d \times d}; \mathbb{R}^{d\times d})$ denote the space of symmetric matrix-valued functions; for two matrix-valued functions $\bm{Q}_1(\cdot), \bm{Q}_2(\cdot): \mathbb{R}^{d\times d} \to \mathbb{R}^{d\times d}$, we define the inner product 
\begin{align}
    \langle \bm{Q}_1, \bm{Q}_2 \rangle_{L_2,F} := \int \langle \bm{Q}_1(\bm{G}), \bm{Q}_2(\bm{G}) \rangle d\mathbb{P}(\bm{G}),
\end{align}
where $\mathbb{P}(\bm{G})$ denotes the density with respect to the Gaussian Orthogonal Ensemble.  We let the caligraphic letter $\mathcal{E}$ denote events.

\subsection{Preliminaries and Technical Assumptions}
\label{sec:prelims}
Suppose we observe $\{y_i,\bm{X}_i\}_{i=1}^{n}$ of the form in \eqref{matsens}, where $(\eps_i,\bm{X}_i) \in \mathbb{R} \times \mathbb{R}^{d\times d}$ are such that 
\begin{align}
     \qquad \bm{X}_i \sim {\sf GOE}(d), \quad \text{and}\quad \eps_i \sim \mathcal{N}(0,\sigma^2). \numberthis \label{model}
\end{align}
Here we recall that $\bm{X}_i \sim {\sf GOE}(d)$  means that $\bm{X}_i$ is a $d \times d$ symmetric matrix with IID $\mathcal{N}(0,1)$ entries on the diagonal and IID $\mathcal{N}(0,1/2)$ entries on the off-diagonal, referred to as the \emph{Gaussian orthogonal ensemble}.

While these assumptions on $\bm{X}_i$ and $\eps_i$ may seem overly idealized, we may view such assumptions as making a ``tradeoff'' in model specificity and result specificity, with more precise results (typically) requiring stronger modeling assumptions.  Moreover, while we assume Gaussianity throughout this work, a series of works in high-dimensional regression  suggest that these assumptions can be generalized to other distributions on $\bm{X}_i$ (e.g., independent entries) via notions of \emph{universality} \citep{dudeja_spectral_2024,dudeja_universality_2023,han_entrywise_2024,han_universality_2023,montanari_universality_2023}. 
Indeed, the works \citet{dudeja_spectral_2024,dudeja_universality_2023} suggest that many existing results continue to hold only requiring certain asymptotic spectral properties of the observations.  
Therefore, while perhaps the statistical model studied herein may seem limited, it is likely that the insights gleaned are not.

Our results relate $\bm{U}^{(\lambda)}, \bm{Z}^{(\lambda)}$, and $\bm{U}^{(0)}$ to a certain corresponding matrix denoising problem.  Suppose one observes $\bm{M} + \sigma \bm{H}$, where  $\bm{H} \sim {{\sf GOE}}(d)$.
Define
\begin{align}
    \bm{Z}_{{\sf ST}}^{(\lambda)} := \argmin_{\bm{Z}} \frac{1}{2} \| \bm{Z} - \bm{M} - \sigma \bm{H} \|_F^2 + \lambda  \| \bm{Z} \|_*.
\end{align}
Equivalently, $\bm{Z}_{{\sf ST}}^{(\lambda)}$ is the \emph{matrix soft-thresholding} of $\bm{M} + \sigma\bm{H}$ at level $\lambda$, which can be computed as follows.  First, suppose that $\bm{M} + \sigma \bm{H}$ has SVD $\bm{\hat V} \bm{\hat \Lambda} \bm{\hat W}\t$.  Then $\bm{Z}_{{\sf ST}}^{(\lambda)} = \bm{\hat V}\big( \bm{\hat \Lambda} - \lambda \bm{I} \big)_+ \bm{\hat W}\t$, where for a diagonal matrix $\bm{D}$, $(\bm{D})_+$ sets the negative entries to zero.  In other words, $\bm{Z}_{{\sf ST}}^{(\lambda)}$ subtracts $\lambda$ from all the singular values of $\bm{M} + \sigma \bm{H}$ and sets the negative singular values to zero.  

 We also define the matrix denoising hard thresholding counterpart
\begin{align}
    \bm{Z}_{{\sf HT}} := \mathcal{P}_{{\sf rank}-r} \big( \bm{M} + \sigma \bm{H} \big),
\end{align}
where $\mathcal{P}_{{\sf rank}-r}$ simply computes the best rank $r$ approximation to $\bm{M} + \sigma \bm{H}$, which, by the Ekhart-Young Theorem, is equivalent to setting the $d-r$ smallest singular values of $\bm{M} + \sigma \bm{H}$ to zero.    We emphasize that  the quantities $\bm{Z}_{{\sf HT}}$ and $\bm{Z}_{{\sf ST}}^{(\lambda)}$ serve only as theoretical tools. Throughout this work we assume that $\bm{Z}_{{\sf HT}}$ is \emph{always} computed assuming that $r$ is known, so there is no ambiguity in its definition.  

In order to state our results we must impose some assumptions. 
Our first assumption concerns the signal strength of the matrix $\bm{M}$. 
\begin{assumption}[Signal Strength] \label{ass1}
The low-rank matrix $\bm{M}$ satisfies $\bm{M} = \sqrt{d} \bm{V\Lambda V}\t$, where $\bm{\Lambda}$ is a diagonal $r\times r$ matrix of positive eigenvalues of $\bm{M}$ and $\bm{V}$ is a $d \times r$ orthonormal matrix of corresponding eigenvectors.  
The matrix $\bm{\Lambda}$ has smallest eigenvalue $\lambda_r$ that satisfies
\begin{align}
    C_1 \sqrt{r} < \lambda_r/\sigma < C_2 \sqrt{r},
\end{align}
where $C_1$ and $C_2$ are some sufficiently large constants. Furthermore, it holds that $\frac{\|\bm{M}\|}{\lambda_r \sqrt{d}} \leq \kappa$ for some $\kappa = O(1)$.  Finally, it holds that $\sigma \geq \sigma_{\min} > c > 0$. 
\end{assumption}
If $\|\bm{M}\|$ is too small relative to $\sigma$, then the zero matrix achieves the optimal error rate, and hence \cref{ass1} ensures the problem is well-defined.   Furthermore, from classical results in random matrix theory,  when $r$ is fixed it is known that the leading eigenvectors of $\bm{M} + \sigma \bm{H}$ are asymptotically nontrivial estimators of $\bm{V}$ when $\lambda_r/\sigma > 1$ \citep{benaych-georges_eigenvalues_2011}.  Similarly, by the lower bounds of \citet{cai_rate-optimal_2018}, the condition $\lambda_r/\sigma \geq C \sqrt{r}$ is necessary for minimax consistency of the leading eigenspace of $\bm{M}$.  Therefore, our assumption  $\lambda_r/\sigma > C_1 \sqrt{r}$, is nearly optimal to ensure that the matrix denoising problem $\bm{M} + \sigma \bm{H}$ admits nontrivial estimators for its eigenspace. 
Finally, we assume that $\sigma$ is bounded below for technical convenience. In principle, our results can be modified to admit the regime where $\sigma \to 0$, but the statements of the results will change, as the probabilities may depend on $\sigma$. Similarly we assume that $\frac{\lambda_r}{\sigma } \leq C_2 \sqrt{r}$ for some large constant $C_2$, which is only a technical assumption to ensure that the signal to noise ratio (the ratio $\frac{\lambda_r}{\sigma \sqrt{r}}$) is of constant order.  This assumption is not strong, and is likely a technical artifact, as a diverging SNR only renders the problem simpler from a statistical perspective.  This regime mimics existing results for the LASSO in the sub-linear sparsity regime; see the discussion after \cref{thm:mainthm}.  

Our second assumption concerns the dimensionality of the problem and ensures we are given sufficiently many measurements.
\begin{assumption}[Sample Size and Dimension] \label{ass2}
It holds that $\gamma_n := \frac{n}{dr}$ satisfies $\gamma_n \geq C_3 r $, where $C_3$ is some sufficiently large constant (possibly depending on $C_1$,  $C_2$, and $\kappa$). 
\end{assumption}
From the minimax bounds of \citet{candes_tight_2011}, minimax consistency requires $\gamma_n \to \infty$, and hence such an assumption is only suboptimal by a factor of $r$.  However, the assumption $n \geq C d r^2$ is pervasive in the literature studying gradient descent and its variants on the factorized problem.  More specifically, such a condition is often required to ensure that the spectral initialization for gradient descent lies within the so-called \emph{basin of attraction}, and it is an open problem to determine whether there is an initialization for gradient descent that converges under the weaker condition $n \geq C d r$. Consequently, \cref{ass2} is not particularly strong.

Our third assumption concerns the choice of the regularization parameter $\lambda$ in the convex estimator $\bm{Z}^{(\lambda)}$:
intuitively, if $\lambda$ is too large, then there is too much regularization, and if $\lambda$ is too small, then there is too little.  
\begin{assumption}[Regularization Parameter] \label{ass3}
    There exist universal constants $C_4$ and $C_5$ (possibly depending on $\kappa$) such that $C_4\sqrt{d} \leq \lambda/\sigma \leq C_5 \sqrt{d}$.  
\end{assumption}
 The choice above matches the order of the regularization parameter taken in \citet{candes_tight_2011} to yield consistent estimation, and hence this choice of $\lambda$ is order-wise optimal.  Throughout we assume that $\lambda$ is fixed \emph{a priori}.

\subsection{Main Results}
We are now prepared to state our main result, \cref{thm:mainthm}, which relates the the properties of $\bm{U}^{(0)}$ and $\bm{Z}^{(\lambda)}$ to $\bm{Z}_{{\sf HT}}$ and $\bm{Z}_{{\sf ST}}^{(\lambda)}$ respectively.

\begin{theorem} \label{thm:mainthm}
Suppose that \cref{ass1,ass2,ass3} hold. 
There exists a constant $C > 0$ depending on our assumptions such that for all $n$ sufficiently large, 
for any $\eps> C\exp( - c dr )$, and for any $1-$Lipschitz function $\phi: \mathbb{R}^{d\times d} \to \mathbb{R}$,
\begin{align}
\begin{split}
    \p\bigg\{ &\bigg| \phi\bigg( \frac{1}{\sqrt{dr}}\bm{U}^{(0)} \bm{U}^{(0)\top} \bigg) - \mathbb{E}_{\bm{H}} \phi\big( \frac{1}{\sqrt{dr}}\bm{Z}_{{\sf HT}} \big) \bigg| > \eps + C \sigma \sqrt{\frac{r}{\gamma_n}}\bigg\} \\&\leq  
 O\bigg( \exp( - c d) +  \exp( - c dr) + \exp( - c n) + \frac{1}{\eps^2} \exp(-c dr \eps^4 ) + \exp( - \frac{(dr)^2}{n} \eps^4) \bigg); \end{split} \\
 \begin{split}
    \p\bigg\{ &\bigg| \phi\bigg( \frac{1}{\sqrt{dr}}\bm{Z}^{(\lambda)} \bigg) - \mathbb{E}_{\bm{H}} \phi\big( \frac{1}{\sqrt{dr}}\bm{Z}_{{\sf ST}}^{(\lambda)} \big) \bigg| > \eps + C \sigma \frac{\sqrt{r}}{\gamma_n}  \bigg\} \\
    &\leq  O\bigg( \exp( - c d) +  \exp( - c dr) + \exp( - c n) + \frac{1}{\eps^2} \exp(-c dr \eps^4 ) + \exp( - \frac{(dr)^2}{n} \eps^4) \bigg) \end{split}
\end{align}
The same result continues to hold if $\bm{Z}^{(\lambda)}$ is replaced with $\bm{U}^{(\lambda)} \bm{U}^{(\lambda)\top}$.  
\end{theorem}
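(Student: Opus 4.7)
The plan is to analyze the convex and nonconvex estimators by separate but parallel arguments, each reducing matrix sensing to its corresponding matrix denoising problem, and then to conclude by Gaussian Lipschitz concentration of $\phi$ under the ${\sf GOE}(d)$ law. Once quantitative closeness of $\bm{Z}^{(\lambda)}$ to $\bm{Z}_{{\sf ST}}^{(\lambda)}$ (respectively $\bm{U}^{(0)}\bm{U}^{(0)\top}$ to $\bm{Z}_{{\sf HT}}$) in Frobenius norm is established, the $1$-Lipschitz property of $\phi$ combined with the concentration of $\phi(\bm{Z}_{{\sf HT}})$ and $\phi(\bm{Z}_{{\sf ST}}^{(\lambda)})$ around their $\bm{H}$-expectations closes the argument via a triangle inequality.

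For the convex estimator I would employ a matrix operator generalization of the Convex Gaussian Min-Max Theorem (CGMT). Dualizing the quadratic loss via $\tfrac{1}{2}\|v\|^2 = \max_{\bm{u}}\langle\bm{u},v\rangle - \tfrac{1}{2}\|\bm{u}\|^2$ recasts the primary problem as a convex-concave saddle-point in $(\bm{Z},\bm{u})$ with a single Gaussian bilinear term $\langle\bm{u},\mathcal{X}(\bm{Z}-\bm{M})\rangle$. The matrix CGMT replaces this with $\|\bm{u}\|\langle\bm{G}_1,\bm{Z}-\bm{M}\rangle + \|\bm{Z}-\bm{M}\|_F\langle\bm{G}_2,\bm{u}\rangle$ for independent Gaussian $\bm{G}_1,\bm{G}_2$; maximizing over $\bm{u}$ and discarding subleading terms of order $\sigma\sqrt{r}/\gamma_n$ reduces the auxiliary problem to the denoising problem whose minimizer is $\bm{Z}_{{\sf ST}}^{(\lambda)}$. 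Restricted strong convexity of the nuclear-norm-penalized quadratic on the relevant low-rank/low-energy cone then converts optimal-value closeness into Frobenius-norm closeness of minimizers.

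For the nonconvex estimator CGMT is not directly applicable, so I would use the first-order optimality condition $\mathcal{X}^*\bigl(\mathcal{X}(\bm{U}^{(0)}\bm{U}^{(0)\top}) - \bm{y}\bigr)\bm{U}^{(0)} = 0$, rearranged as
\begin{align*}
\bm{U}^{(0)}\bm{U}^{(0)\top}\bm{U}^{(0)} = \mathcal{X}^*(\bm{y})\bm{U}^{(0)} - \bigl((\mathcal{X}^*\mathcal{X}-\mathcal{I})(\bm{U}^{(0)}\bm{U}^{(0)\top})\bigr)\bm{U}^{(0)}.
\end{align*}
Under \cref{ass2}, the perturbation operator $\mathcal{X}^*\mathcal{X}-\mathcal{I}$ is small on rank-$O(r)$ matrices by an RIP-type concentration estimate, which makes this an approximate eigenvector relation for the debiased matrix $\mathcal{X}^*(\bm{y})$. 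Combined with the second-order curvature condition at a local minimum, this pins $\bm{U}^{(0)}\bm{U}^{(0)\top}$ to the rank-$r$ PCA of $\mathcal{X}^*(\bm{y})$; and since $\mathcal{X}^*(\bm{y}) \approx \bm{M} + \sigma\bm{H}_{{\sf eff}}$ with $\bm{H}_{{\sf eff}} \eqd {\sf GOE}(d)$ (arising from $\mathcal{X}^*\bm{\epsilon}/\|\bm{\epsilon}\|$ conditional on $\bm{\epsilon}$, together with concentration of $\|\bm{\epsilon}\|^2/n$ around $\sigma^2$), this identifies $\bm{U}^{(0)}\bm{U}^{(0)\top}$ with $\bm{Z}_{{\sf HT}}$ up to error of order $\sigma\sqrt{r/\gamma_n}$. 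The orthogonal-alignment ambiguity of $\bm{U}^{(0)}$ via $\mathcal{O}_{\bm{U},\bm{U}'}$ does not enter since only the symmetric quantity $\bm{U}^{(0)}\bm{U}^{(0)\top}$ appears.

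The principal obstacle is upgrading the pointwise CGMT comparison into a high-probability concentration statement with tails of order $\exp(-cdr\eps^4)$; this requires uniform restricted strong convexity rather than the classical value-comparison form of CGMT, and the matrix-operator setting (with nuclear-norm geometry) makes the Gaussian comparison step nontrivial. A secondary obstacle in the nonconvex case is controlling $(\mathcal{X}^*\mathcal{X}-\mathcal{I})(\bm{U}\bm{U}^\top)$ uniformly over all local minima and ruling out spurious stationary points outside a neighborhood of the true signal; this is where the sample-size assumption $n \gtrsim dr^2$ enters crucially, via local landscape analyses of the Burer--Monteiro objective that yield strong convexity in a neighborhood of $\bm{V}\bm{\Lambda}^{1/2}$. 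The final tail factors $\exp(-cn)$, $\exp(-cdr)$, and $\exp(-(dr)^2\eps^4/n)$ then arise, respectively, from concentration of $\|\bm{\epsilon}\|^2/n$, Lipschitz concentration on $\mathbb{R}^{d\times d}$, and the restricted-isometry deviation bound for $\mathcal{X}^*\mathcal{X}-\mathcal{I}$.
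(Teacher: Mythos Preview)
Your treatment of the convex estimator—matrix CGMT to reduce to an auxiliary denoising-type objective, then restricted strong convexity to upgrade value comparison to minimizer closeness—matches the paper's route (\cref{thm:matrixcgmt} and \cref{thm:cvxasymptotics}) essentially line for line.

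For the nonconvex estimator your route diverges substantially from the paper's. The paper does \emph{not} attack $\bm{U}^{(0)}$ directly via its stationarity equation and a data-derived coupling $\bm{H}_{{\sf eff}}=\mathcal{X}^*(\bm{\epsilon})/\sigma$. Instead it introduces debiased intermediaries
\[
\bm{U}_{{\sf deb}}^{(\lambda)}=\bm{U}^{(\lambda)}\bigl(\bm{I}+\lambda(\bm{U}^{(\lambda)\top}\bm{U}^{(\lambda)})^{-1}\bigr)^{1/2},
\qquad
\bm{Z}_{{\sf deb}}^{(\lambda)}=\mathcal{P}_{{\sf rank}-r}\bigl(\bm{Z}^{(\lambda)}+\mathcal{X}^*(y-\mathcal{X}\bm{Z}^{(\lambda)})\bigr),
\]
shows $\bm{U}_{{\sf deb}}^{(\lambda)}\bm{U}_{{\sf deb}}^{(\lambda)\top}=\bm{Z}_{{\sf deb}}^{(\lambda)}$ on $\mathcal{E}_{{\sf Good}}$, and proves the chain $\phi(\bm{U}^{(0)}\bm{U}^{(0)\top})\approx\phi(\bm{Z}_{{\sf deb}}^{(\lambda)})\approx\mathbb{E}\phi(\bm{Z}_{{\sf HT}})$. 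The second approximation \emph{reuses} the convex CGMT result: on the good event the map $\bm{Z}^{(\lambda)}\mapsto\bm{Z}_{{\sf deb}}^{(\lambda)}$ is just ``add $\lambda$ to the top $r$ singular values,'' which is Lipschitz on the relevant set, so the concentration of $\phi(\bm{Z}^{(\lambda)})$ about $\mathbb{E}\phi(\bm{Z}_{{\sf ST}})$ transfers to $\phi(\bm{Z}_{{\sf deb}}^{(\lambda)})$ about $\mathbb{E}\phi(\bm{Z}_{{\sf HT}})$ with the \emph{same} $\tfrac{1}{\eps^2}\exp(-cdr\eps^4)$ and $\exp(-(dr)^2\eps^4/n)$ tails (these come from the CGMT auxiliary-objective analysis, not from RIP as you suggest). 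The first approximation is then a pure landscape step: show $\|\nabla f^{(0)}_{{\sf ncvx}}(\bm{U}_{{\sf deb}}^{(\lambda)})\|_F$ is small and invoke strong convexity of $f^{(0)}_{{\sf ncvx}}$ along the segment to $\bm{U}^{(0)}$. This is the only place $n\gg dr^2$ is essential and the only source of the $\sigma\sqrt{r/\gamma_n}$ bias.

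Your direct route is plausible and, if it works, would be more elementary—possibly even yielding a tail sharper than $\exp(-cdr\eps^4)$ via straight Lipschitz concentration of $\bm{H}_{{\sf eff}}$. But two steps in your sketch are not closed. First, the stationarity condition is a \emph{fixed point}: the perturbation $(\mathcal{X}^*\mathcal{X}-\mathcal{I})(\bm{U}^{(0)}\bm{U}^{(0)\top})$ depends on the solution itself, so ``this pins $\bm{U}^{(0)}\bm{U}^{(0)\top}$ to $\mathcal{P}_r(\mathcal{X}^*(\bm{y}))$'' still requires a landscape/contraction argument of exactly the kind the paper uses for $\bm{U}^{(0)}\approx\bm{U}_{{\sf deb}}^{(\lambda)}$. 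Second, your $\bm{H}_{{\sf eff}}$ and the RIP event you condition on are both functions of $\{\bm{X}_i\}$; the dependence is weak (cost $\exp(-cd)$) but you have not argued it away. The paper's detour through $\bm{Z}_{{\sf deb}}^{(\lambda)}$ sidesteps both issues because the ``Gaussian'' it compares to is the CGMT auxiliary $\bm{H}$, decoupled from the sensing operator by construction—at the cost of inheriting the (possibly loose) CGMT tail into the nonconvex bound.
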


We note that the scaling by $\frac{1}{\sqrt{dr}}$ is simply the scaling required in order for the error to be $\Theta(1)$. In addition, as we will demonstrate in the proof (see \cref{thm:cvxasymptotics}), the error term $C \sqrt{r}/\gamma_n $ for the convex estimator in the second line can be eliminated by replacing $\bm{Z}^{(\lambda)}_{{\sf ST}}$ with a slightly different quantity (see \cref{sec:cvxcontrol} for details).   Furthermore, the requirement that $\eps > C \exp( - c dr)$ is only technical, since if $\eps \ll \exp( -c dr)$, the probability bound becomes essentially vacuous (as it becomes arbitrarily close to one).  Finally, we note that the one of the probability bounds also requires that $n \ll (dr)^2$.  However, as throughout this paper we are primarily interested in $d^2 \gg n$, so this requirement is not stringent.

Before discussing the implications of \cref{thm:mainthm}, we first state the following corollary demonstrating its utility.  The proof can be found in \cref{sec:corproof}.
\begin{corollary} \label{cor:maincor}
Consider the asymptotic regime where $d\to\infty$ with $r$ held fixed and $n = n(d)$ satisfies $ d \ll n(d) \ll d^2$.   Let $\{\bm{M}(d)\}_{d=1}^{\infty}$ denote a sequence of rank $r$ signal matrices, and suppose the eigenvalues of $\bm{M}(d)/\sqrt{d}$ converge deterministically to some limits $\lambda_1,\dots,\lambda_r$ satisfying \cref{ass1}.  Suppose further that \cref{ass1,ass2,ass3} are satisfied for all $n$ and $d$, and suppose $\sigma$ is fixed. Let $\bm{Z}^{(\lambda)} = \bm{Z}^{(\lambda)}(d)$ denote the sequence of convex minimizers, and let $\bm{U}^{(0)} = \bm{U}^{(0)}(d)$ be defined similarly.  
Then it holds that 
\begin{align}
  \liminf_{d\to\infty} \frac{\| \bm{Z}^{(\lambda)} - \bm{M}(d) \|_F^2}{ \| \bm{U}^{(0)}\bm{U}^{(0)\top} - \bm{M}(d) \|_F^2} \geq 1, \numberthis \label{mse}
\end{align}
where the limit is in probability, the infimum is over the sequence of problem instances, and strict inequality can hold for particular sequences of matrices $\{\bm{M}(d)\}_{d=1}^{\infty}$.  
\end{corollary}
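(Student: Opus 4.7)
The plan is to invoke \cref{thm:mainthm} to reduce \eqref{mse} to a deterministic comparison between matrix soft and hard thresholding in the Gaussian matrix denoising problem, and then to show that soft thresholding carries an unavoidable shrinkage bias that is absent from hard thresholding.

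Concretely, I would apply \cref{thm:mainthm} to the $1$-Lipschitz test function $\phi(\bm{A}) := \|\bm{A} - \bm{M}/\sqrt{dr}\|_F$, once with $\bm{A} = \bm{U}^{(0)}\bm{U}^{(0)\top}/\sqrt{dr}$ and once with $\bm{A} = \bm{Z}^{(\lambda)}/\sqrt{dr}$. Setting
\[
  \alpha_d := \frac{\mathbb{E}_{\bm{H}} \|\bm{Z}_{{\sf HT}} - \bm{M}\|_F}{\sqrt{dr}}, \qquad \beta_d := \frac{\mathbb{E}_{\bm{H}} \|\bm{Z}_{{\sf ST}}^{(\lambda)} - \bm{M}\|_F}{\sqrt{dr}},
\]
and choosing $\eps_d \to 0$ slowly (e.g., $\eps_d = (dr)^{-1/8}$), the slack terms and the probability bounds in \cref{thm:mainthm} all vanish because $\gamma_n \to \infty$ and $n \ll (dr)^2$ in the regime $d \ll n(d) \ll d^2$ with $r$ fixed. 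It follows that with probability $1 - o(1)$ there is a deterministic sequence $\eta_d \to 0$ with
\[
  \frac{\|\bm{U}^{(0)}\bm{U}^{(0)\top} - \bm{M}\|_F}{\sqrt{dr}} \le \alpha_d + \eta_d, \qquad \frac{\|\bm{Z}^{(\lambda)} - \bm{M}\|_F}{\sqrt{dr}} \ge \beta_d - \eta_d.
\]

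The substantive step is the deterministic comparison $\liminf_d (\beta_d - \alpha_d) > 0$. Using the spectral decomposition $\bm{M} + \sigma\bm{H} = \sum_i \hat\mu_i \hat v_i \hat v_i^\top$ and rank-$r$ spiked GOE asymptotics under \cref{ass1}, the top $r$ eigenvalues are $\hat\mu_i = \sqrt{d}(\lambda_i + \sigma^2/\lambda_i) + o_{\p}(\sqrt{d})$ with squared overlaps $|\langle \hat v_i, v_i\rangle|^2 = 1 - \sigma^2/\lambda_i^2 + o_{\p}(1)$. \Cref{ass1,ass3} give $\hat\mu_r \asymp \sigma\sqrt{dr}$, $\lambda \asymp \sigma\sqrt{d}$, and the bulk of $\bm{M} + \sigma\bm{H}$ has spectral norm converging to $2\sigma\sqrt{d}$; taking $C_4$ large then places the top $r$ eigenvalues above $\lambda$ and the bulk below, so $\bm{Z}_{{\sf ST}}^{(\lambda)}$ shares the top-$r$ eigenvectors of $\bm{Z}_{{\sf HT}}$ but shrinks each top eigenvalue by $\lambda$. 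A direct expansion then yields
\[
  \|\bm{Z}_{{\sf ST}}^{(\lambda)} - \bm{M}\|_F^2 - \|\bm{Z}_{{\sf HT}} - \bm{M}\|_F^2 = r\lambda^2 - 4\lambda\sqrt{d}\sigma^2\sum_{i=1}^r\lambda_i^{-1} + o_{\p}(dr\sigma^2),
\]
whose leading term $r\lambda^2 \asymp dr\sigma^2$ strictly exceeds the correction once $C_1 C_4$ is taken sufficiently large, giving $\beta_d^2 - \alpha_d^2 \to c\sigma^2 > 0$. A parallel short computation gives $\alpha_d^2 \to \alpha^2 \in (0,\infty)$, so both limits are strictly positive with $\beta^2 > \alpha^2$.

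Combining the two steps yields, on the high-probability event,
\[
  \frac{\|\bm{Z}^{(\lambda)} - \bm{M}\|_F^2}{\|\bm{U}^{(0)}\bm{U}^{(0)\top} - \bm{M}\|_F^2} \;\ge\; \frac{(\beta_d - \eta_d)^2}{(\alpha_d + \eta_d)^2} \;\longrightarrow\; \frac{\beta^2}{\alpha^2} > 1,
\]
which establishes the $\liminf \ge 1$ bound of \eqref{mse}; in fact the argument yields strict inequality for every sequence $\bm{M}(d)$ satisfying the hypotheses, so the ``particular sequences'' caveat is automatic. The main obstacle I anticipate is the deterministic BBP-type computation producing the displayed expansion: although elementary, it requires careful tracking of both the shrinkage bias $r\lambda^2$ and the (smaller but non-negligible) eigenvalue inflation in $\bm{Z}_{{\sf HT}}$ to confirm that the former strictly dominates under the quantitative assumptions on $C_1$ and $C_4$.
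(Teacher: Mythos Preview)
Your approach and the paper's both proceed by reducing, via \cref{thm:mainthm}, to a comparison of the matrix-denoising risks $\mathbb{E}\|\bm{Z}_{\sf HT} - \bm{M}\|_F^2$ and $\mathbb{E}\|\bm{Z}_{\sf ST}^{(\lambda)} - \bm{M}\|_F^2$; the paper then simply invokes Lemma~4 of \citet{gavish_optimal_2014} to obtain closed-form limits for each risk and reads off the inequality, whereas you compute the BBP asymptotics by hand and expand the \emph{difference} of the two risks directly. Your route is more self-contained and, in fact, more informative: the expansion $\Delta = r\lambda^2 - c\,\lambda\sqrt{d}\,\sigma^2\sum_i\lambda_i^{-1} + o(d)$ correctly isolates the competition between the shrinkage penalty $r\lambda^2$ and the cross term arising from BBP eigenvalue inflation.

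The one place to be careful is how you close that comparison. The inequality $r\lambda^2 > c\,\lambda\sqrt{d}\,\sigma^2\sum_i\lambda_i^{-1}$ is \emph{not} automatic: mild shrinkage partially corrects the inflation of $\hat\mu_i$ above $\sqrt{d}\lambda_i$, so at low SNR soft thresholding can genuinely beat hard thresholding (this is precisely the regime the paper's simulations flag as outside the theory). You should therefore argue that the ``sufficiently large $C_1$'' clause in \cref{ass1} forces $C_1 C_4\sqrt{r}$ to exceed the relevant universal constant, rather than writing ``once $C_1 C_4$ is taken sufficiently large'' as though these were parameters at your disposal. Relatedly, the sentence ``taking $C_4$ large then places the top $r$ eigenvalues above $\lambda$'' is inverted: enlarging $C_4$ enlarges $\lambda$, making it \emph{harder} for the spikes to clear it; what actually places the spikes above $\lambda$ is $C_1$ large relative to $C_5$, while $C_4>2$ is what pushes the bulk below $\lambda$. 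Once you invoke the constants correctly, your expansion yields strict inequality for every admissible sequence, which is stronger than the corollary's ``particular sequences'' clause and is fine.
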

Note that the quantity $\| \bm{U}^{(0)} \bm{U}^{(0)\top} - \bm{M} \|_F^2$ is (a rescaled version of) the excess risk for the estimator $\bm{U}^{(0)} \bm{U}^{(0)\top}$, with a similar interpretation for $\| \bm{Z}^{(\lambda)} - \bm{M} \|_F^2$.  
Furthermore, \eqref{mse} implies that asymptotically $\|\bm{Z}^{(\lambda)} - \bm{M}\|_F \geq \| \bm{U}^{(0)} \bm{U}^{(0)\top} - \bm{M}\|_F$, with strict inequality possible. In particular, this implies that the convex estimator has a test error larger than the nonconvex estimator, and hence is inadmissible with respect to the squared loss.

 \begin{figure}[ht!]
 \centering
\includegraphics[width=0.32\textwidth,keepaspectratio]{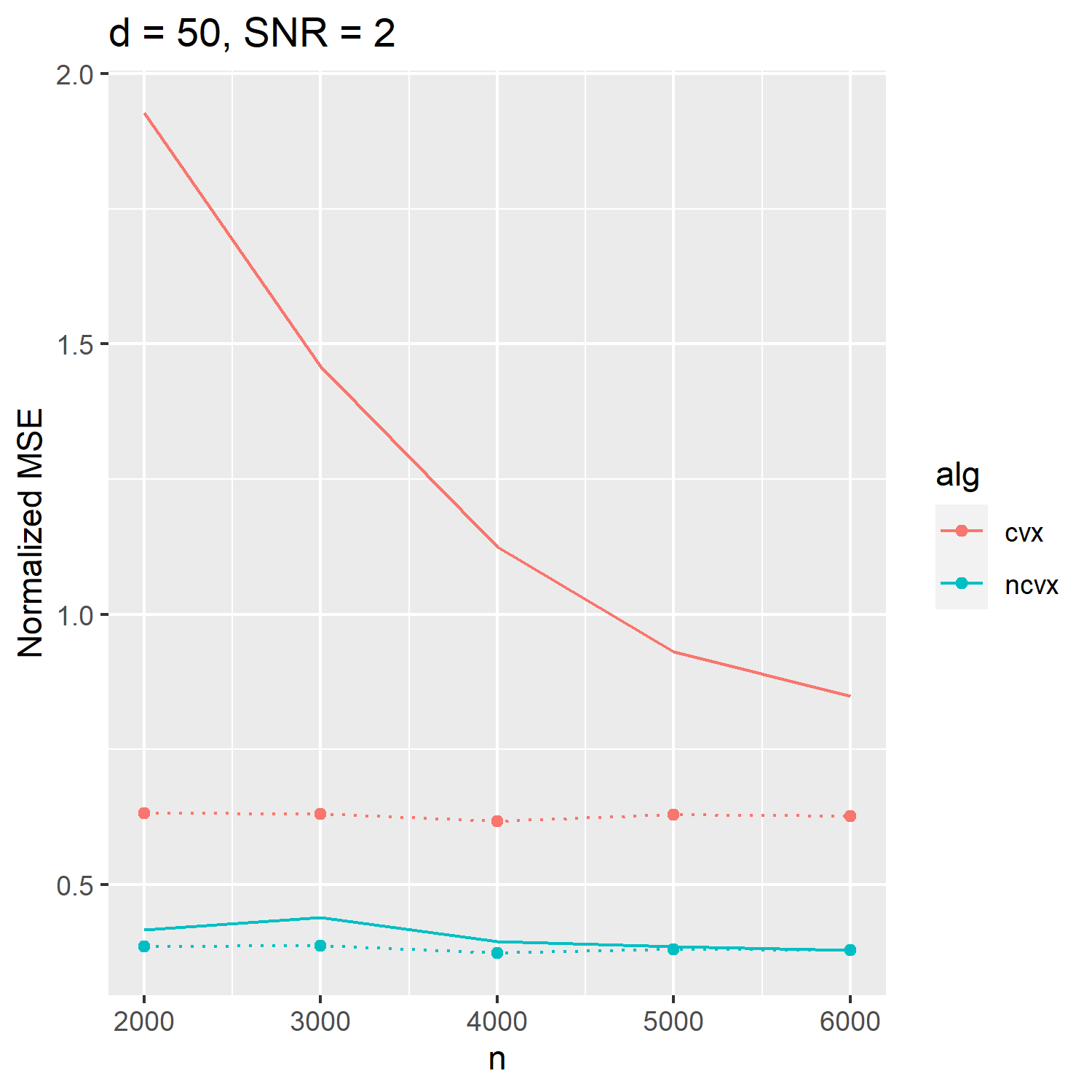}
\includegraphics[width=0.32\textwidth,keepaspectratio]{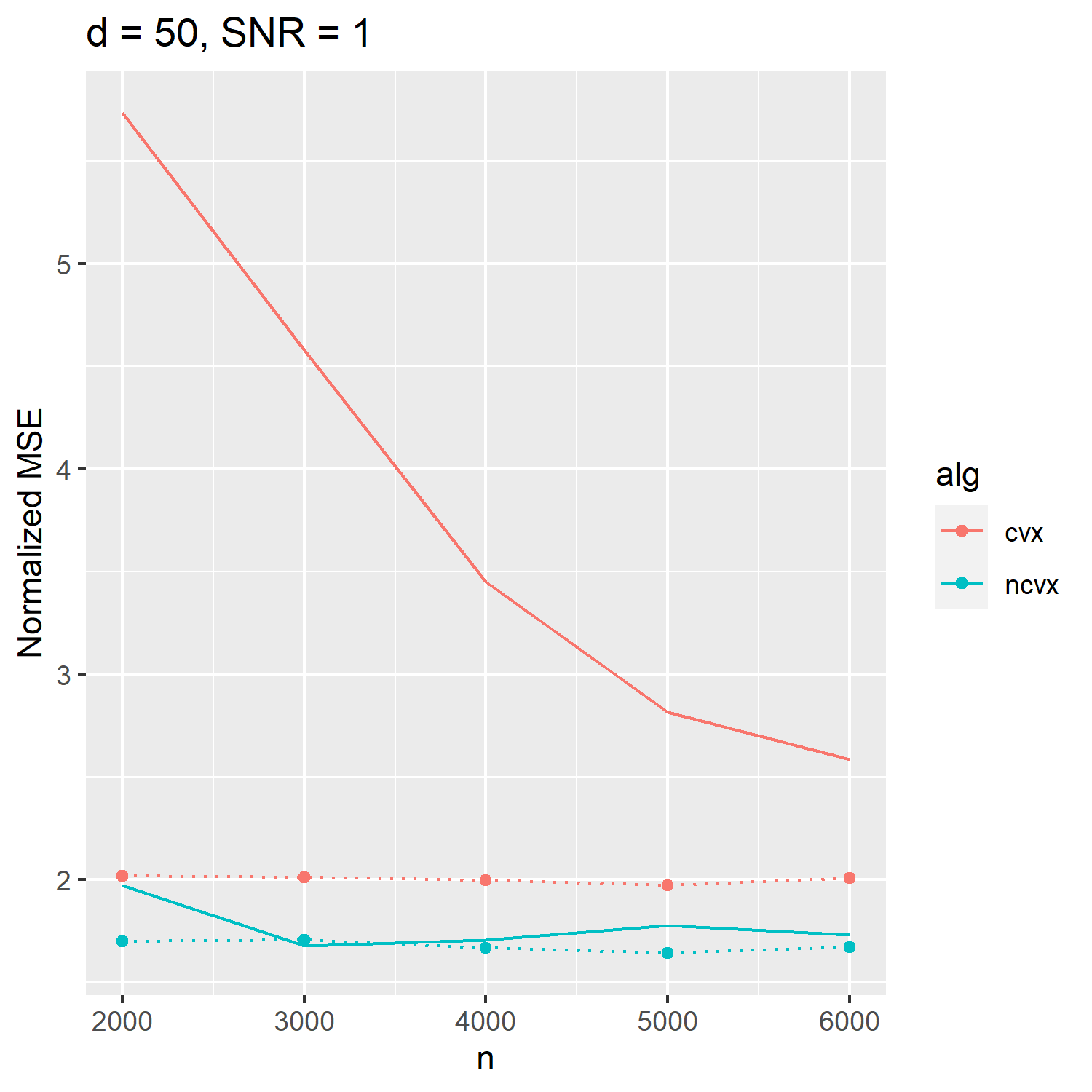}
\includegraphics[width=0.32\textwidth,keepaspectratio]{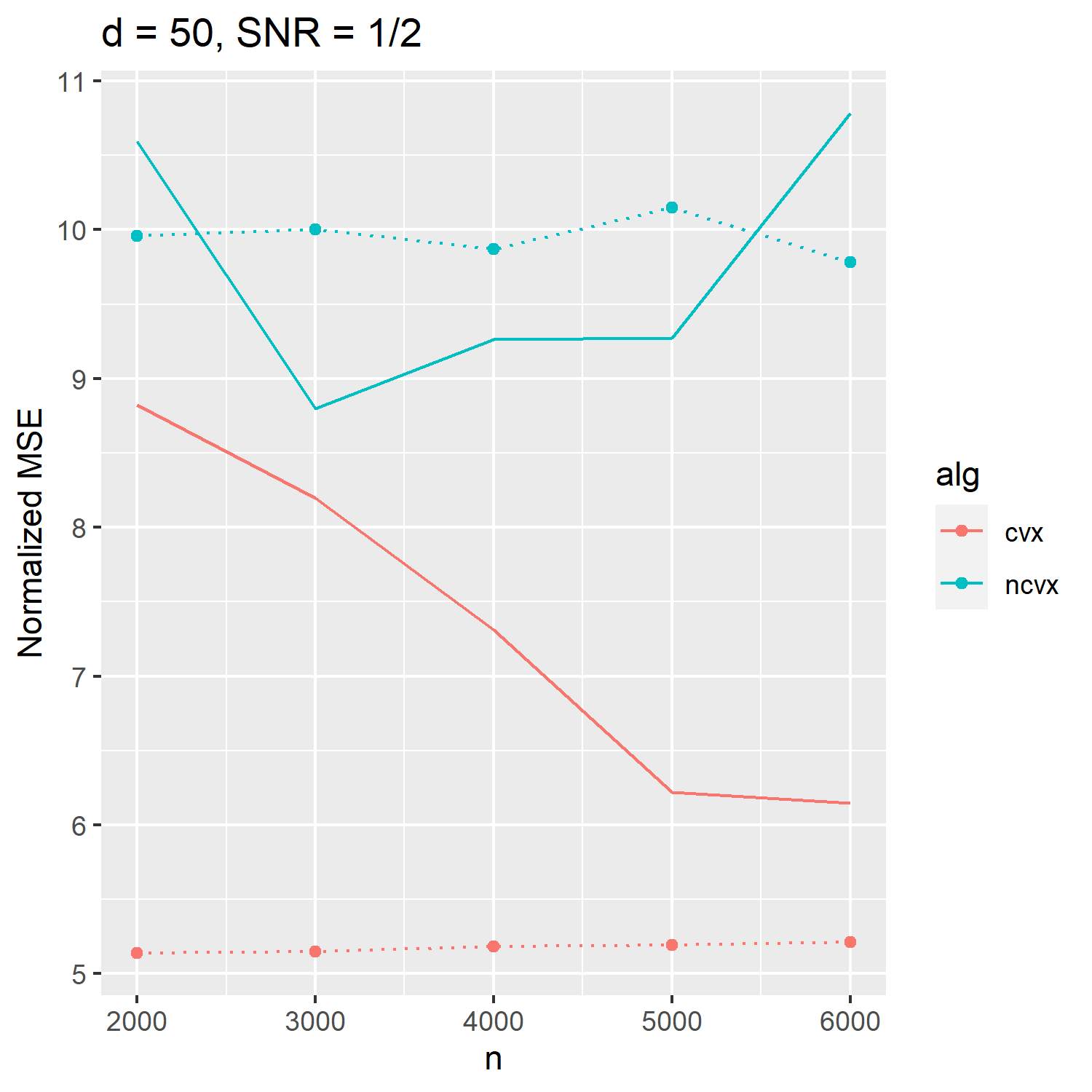}
\includegraphics[width=0.32\textwidth,keepaspectratio]{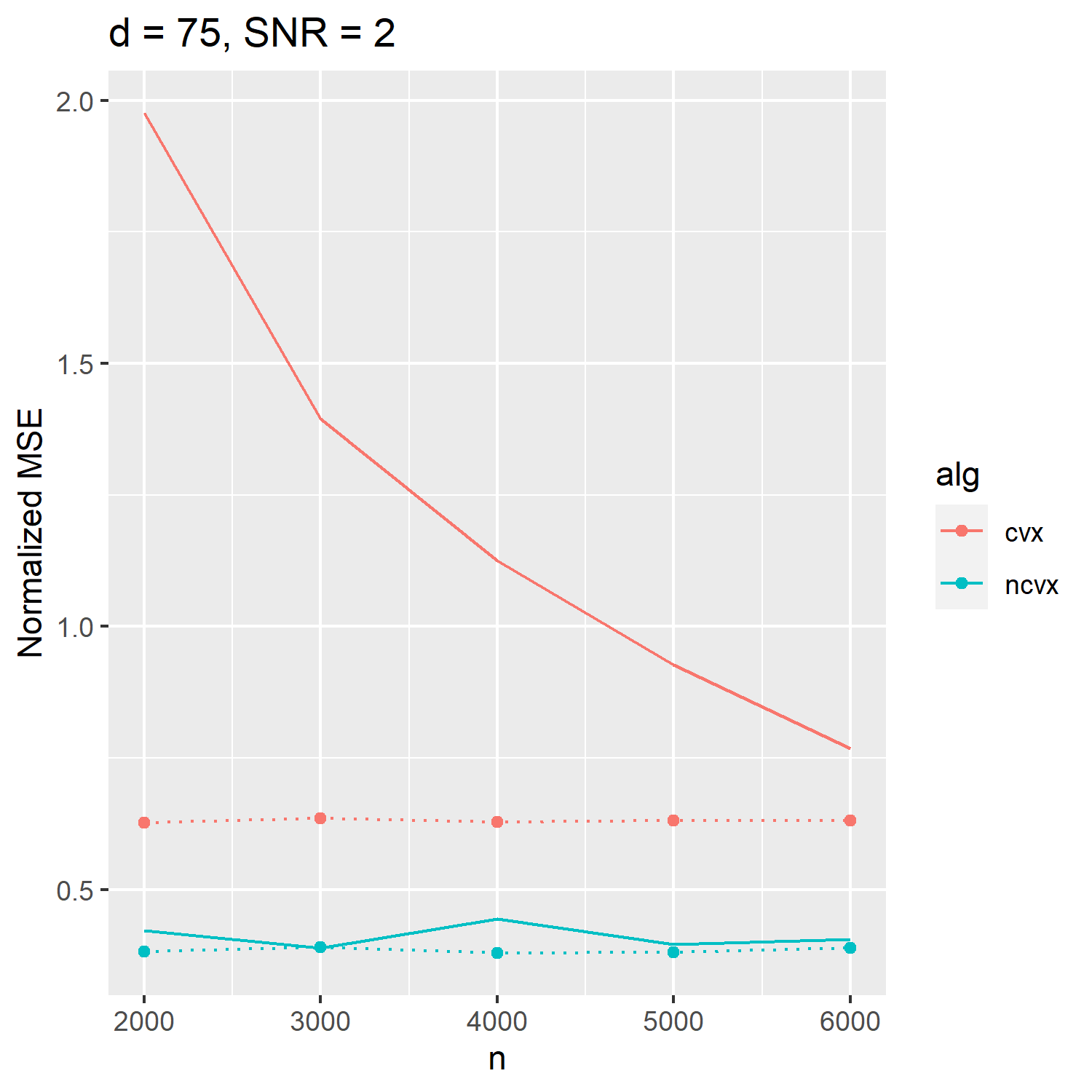}
\includegraphics[width=0.32\textwidth,keepaspectratio]{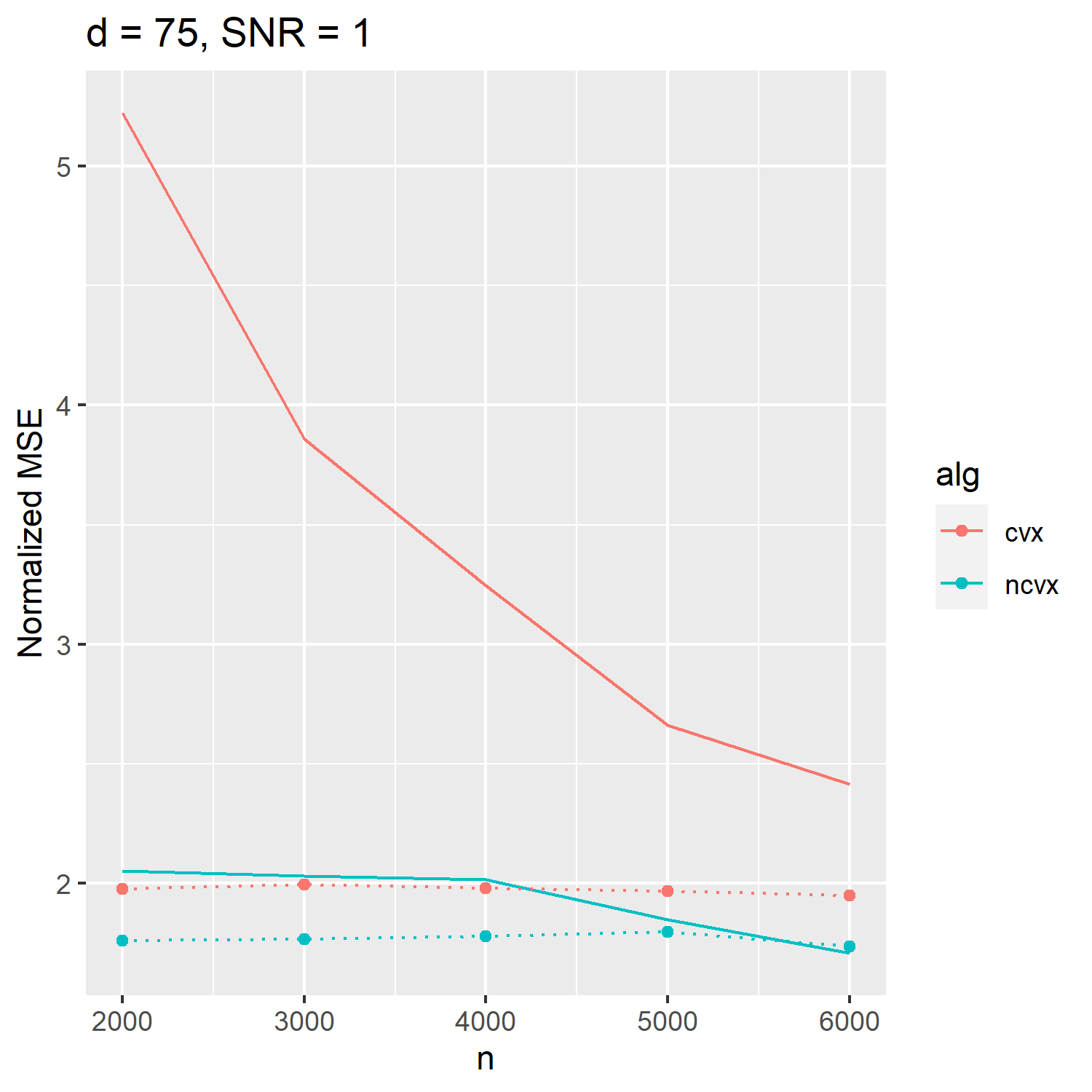}
\includegraphics[width=0.32\textwidth,keepaspectratio]{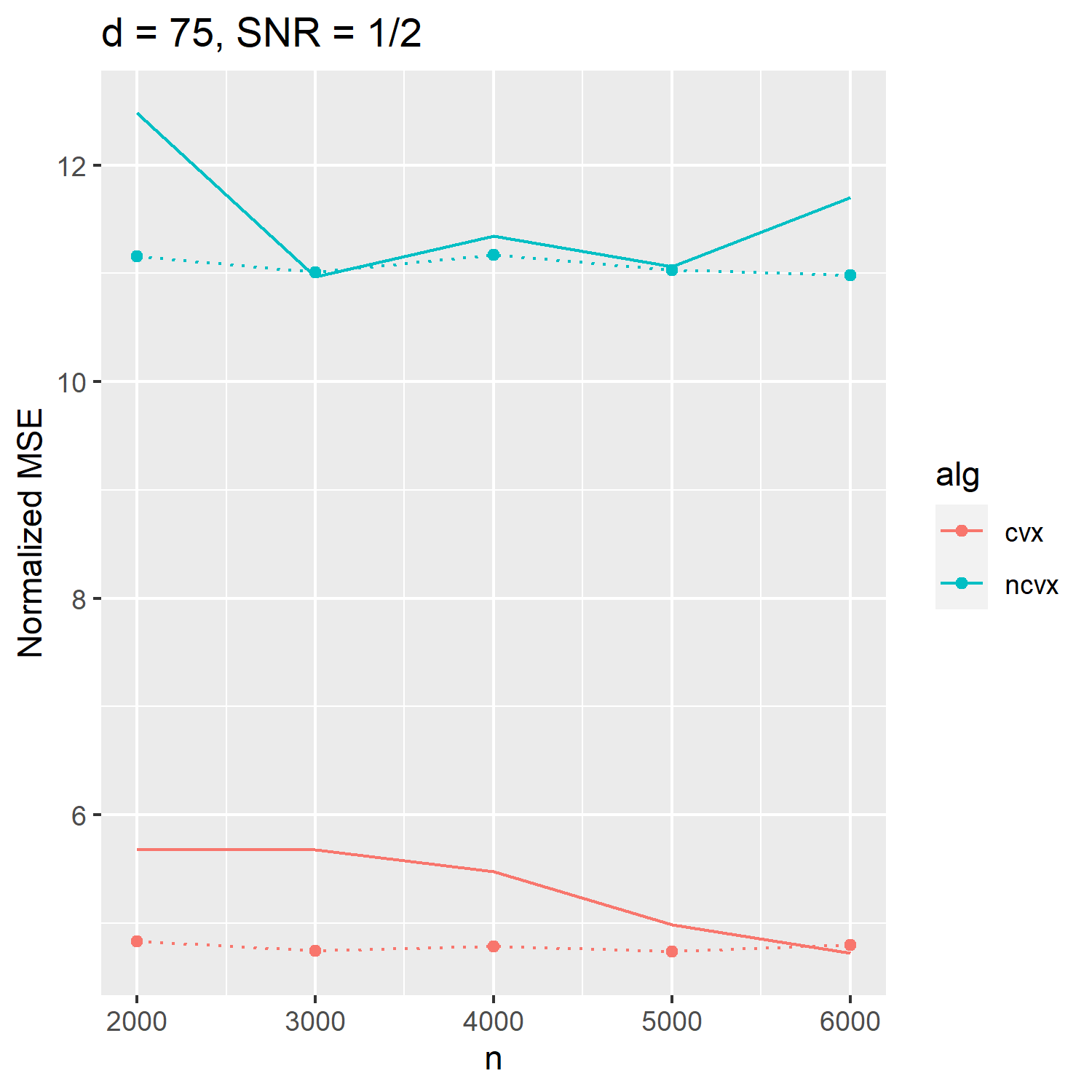}
       \caption{ For varying $n \in \{1000,2000,3000\}$, we generate $\bm{M}$ by first drawing a random matrix $\bm{U}$ of dimension $n\times r$ with entries $\mathcal{N}(0,1)$ and setting $\bm{\tilde M} := \bm{UU}\t$.  We then define $\bm{M}$ by setting the smallest nonzero singular value of $\bm{\tilde M}$ to be $\sqrt{d}$.  We control the SNR by varying $\sigma \in \{1/2,1,2\}$ corresponding to an SNR of $\{2,1,1/2\}$ respectively.  We take $\lambda = 0.5 \sigma \sqrt{d}$ in all simulations.  To estimate $\bm{U}^{(0)}$ we run gradient descent on $f^{(0)}_{{\sf ncvx}}$ starting from the ground truth with step size $2\times 10^{-4}$ for $d =50$ and $10^{-4}$ for $d = 75$ respectively, and we stop when the gradient is below $.01$.  Similarly, we estimate $\bm{Z}^{(\lambda)}$ by proximal gradient descent with step size $.01$, and we stopped when the gradient was below $.01$.   Our random seed was set to be $492025$.        The y-axis represents the error $\frac{1}{2d} \| \bm{\hat M} - \bm{M} \|_F^2$, with $\bm{\hat M}$ either equal to $\bm{Z}^{(\lambda)}$ or $\bm{U}^{(0)} \bm{U}^{(0)\top}$.  The solid line represents our empirical estimate and the points correspond to our theory, with the empirical estimate averaged over five iterations.}  \label{fig:d75snrvary}
     \end{figure}

\cref{fig:d75snrvary} displays the behavior illustrated by \cref{cor:maincor} for varying levels of the signal-to-noise ratio (left to right) and varying $d$ (top to bottom).   \cref{cor:maincor} shows that the nonconvex estimator dominates the convex estimator in least squares error, so we plot the error $\frac{1}{2d} \| \bm{\hat M} - \bm{M} \|_F^2$ under our model, with $\bm{\hat M}$ either $\bm{U}^{(0)} \bm{U}^{(0)\top}$ (blue) or $\bm{Z}^{(\lambda)}$ (red).  The dotted lines represent the theoretical prediction and the solid lines represent our empirical prediction, and the figure corroborates our theory for high SNR, the nonconvex estimator outperforms the convex estimator, and the approximation becomes more accurate as $n$ and $d$ increase.  Furthermore, when the SNR is less than one (something not covered by our theory), we find that the convex estimator outperforms the nonconvex estimator (the last panel).  This mirrors matrix denoising as discussed in \citet{gavish_optimal_2014}.   Interestingly, despite lack of theoretical backing, the error of the nonconvex estimator (the solid blue line) still seems to approximate the error of matrix hard thresholding (the dotted blue line) as in the case of high SNR. It would be interesting to derive an analogue of \cref{thm:mainthm} in such a ``weak signal'' regime.

We now highlight several features of \cref{thm:mainthm}:
\begin{itemize}
\item \textbf{Interpretation}. The main benefit of \cref{thm:mainthm} is to translate the behavior of the (complicated) estimators $\bm{U}^{(0)}$ and $\bm{Z}^{(\lambda)}$ into the behavior of the (simpler) estimators $\bm{Z}_{{\sf HT}}$ and $\bm{Z}_{{\sf ST}}^{(\lambda)}$. Specifically, \cref{thm:mainthm} shows that  Lipschitz functions of $\bm{U}^{(0)}\bm{U}^{(0)\top}$ (or $\bm{Z}^{(\lambda)}$) concentrate around expected values of Lipschitz functions of $\bm{Z}_{{\sf HT}}$ (or $\bm{Z}_{{\sf ST}}^{(\lambda)}$), 
with the error decaying roughly of order $o(1)$ when $\frac{n}{d r^2} \to \infty$. We elect to state \cref{thm:mainthm} in terms of a Lipschitz function $\phi:\mathbb{R}^{d\times d} \to \mathbb{R}$ to allow for generality.  Furthermore, convergence of Lipschitz functions is equivalent to convergence in distribution, so one (informal) interpretation of  \cref{thm:mainthm} could be that $\bm{U}^{(0)} \bm{U}^{(0)\top}$ (resp. $\bm{Z}^{(\lambda)}$) is approximately equal in distribution to $\bm{Z}_{{\sf HT}}$ (resp. $\bm{Z}_{{\sf ST}}^{(\lambda)}$). Also, throughout our analysis of $\bm{U}^{(0)}$ we assume that the rank $r$ is known.  Therefore, another (informal) interpretation of \cref{cor:maincor} is that knowledge of the rank $r$ improves estimation.  It is unclear whether the estimator $\bm{U}^{(0)}$ with $r$ over-estimated will continue to dominate $\bm{Z}^{(\lambda)}$, which is a regime of interest as $r$ is typically unknown in practice. However, such a result seems out of reach with our techniques, so we leave such analyses to future work.
  \item \textbf{High-Dimensional Asymptotics of Both Estimators}.   In the asymptotic regime $n \gg d r^2$, one can use \cref{thm:mainthm} to derive the leading-order constant for the asymptotic test error as in \cref{cor:maincor}, or it can be used for statistical inference. For example, suppose one is interested in providing confidence intervals for entries of $\bm{M}$.  Our results imply that the nonconvex estimator satisfies
  \begin{align*}
      \bigg(\bm{U}^{(0)} \bm{U}^{(0)\top}\bigg)_{ij} \approx \mathcal{P}_r\bigg( \bm{M} + \sigma \bm{H} \bigg)_{ij},
  \end{align*}
  where the ``$\approx$'' holds in the sense of distribution.  One can therefore derive asymptotically valid confidence intervals using techniques for matrix denoising as in \citet{chen_spectral_2021}.  Furthermore, our analysis shows that the convex estimator requires debiasing for statistical inference, whereas the nonconvex estimator is \emph{already approximately unbiased!}

\item  \textbf{Optimality of Assumptions}.  
When $d^2 \gg n \gg d$ and $r \asymp 1$, by taking $\eps \asymp d^{-1/4} \log(n)$, \cref{thm:mainthm} shows concentration at the rate $\tilde O( d^{-1/4}  + \sqrt{\frac{d}{n}})$ for the nonconvex estimator and concentration at the rate $O(d^{-1/4} + \frac{d}{n})$ for the convex estimator.  The quantity $d^{-1/4}$ probably has an incorrect fractional power on $d$ (it should be the parametric rate $d^{-1/2}$), although this is a known deficiency of CGMT-style analyses, which often result in a suboptimal dependence on dimension when deriving concentration bounds \citep{chandrasekher_sharp_2023,celentano_lasso_2023,miolane_distribution_2021}. The additional error terms depending on the ratio $\frac{d}{n}$ are likely necessary without significantly modifying the results, since consistency is only guaranteed in this regime when $n \gg d$. It would be of interest to improve the dependence on these rates. 

 Our result mimics asymptotics for sparse regression with $s$ nonzero coordinates and $p$ dimensions in the regime $\frac{s \log(p/s)}{n} \to 0$ \citep{javanmard_debiasing_2018,javanmard_hypothesis_2014,bellec_debiasing_2023}. 
 While there already exists a rich literature on sparse high-dimensional linear regression, similar analyses for matrix sensing are still in their infancy.   To the best of our knowledge, \cref{thm:mainthm} is the first result of its kind for both the convex and nonconvex estimators in this high-dimensional setup.

The choice of the order of the tuning parameter $\lambda$ is order-wise optimal (though implicitly requires knowledge of the noise level).  In practice one may wish to further optimize over $\lambda$, but our results may not cover this approach, as such an  analysis would require a \emph{uniform} control with respect to $\lambda$, and our results only apply for a (fixed) choice of $\lambda$.  It is of interest to determine whether there are settings for which optimizing over $\lambda$ can yield better error rates, but we leave such analyses to future work. 
\end{itemize}

\subsection{Proof Strategy and Novelty} \label{sec:proofstrategy}
We now highlight the key steps and novelty of our proof. 
Roughly speaking, given a Lipschitz function $\phi$, the major intuition behind our proof can be intuitively understood via the following ``approximate equivalences:''
\begin{align}
\phi\big(\bm{Z}^{(\lambda)}\big) &\approx \phi\big(\bm{U}^{(\lambda)} \bm{U}^{(\lambda)\top}\big) \approx  \phi\big(\bm{Z}_{{\sf ST}}^{(\lambda)}\big); \\
    \phi\big(\bm{U}^{(0)} \bm{U}^{(0)\top} \big)&\approx \phi\big(\bm{U}^{(\lambda)}_{{\sf deb}}\bm{U}^{(\lambda)\top}_{{\sf deb}} \big)\approx \phi\big(\bm{Z}^{(\lambda)}_{{\sf deb}}\big) \approx \phi\big(\bm{Z}_{{\sf HT}}\big),
    \end{align}
    where $\bm{U}^{(\lambda)}_{{\sf deb}}$ and $\bm{Z}_{{\sf deb}}^{(\lambda)}$ are ``debiased'' estimators  defined via 
 \begin{align}
     \bm{Z}_{{\sf deb}}^{(\lambda)} &:= \mathcal{P}_{{\mathrm{rank}-r}} \bigg( \bm{Z}^{(\lambda)} + \frac{1}{n} \sum_{i=1}^{n} \bigg( y_i - \langle \bm{X}_i, \bm{Z}^{(\lambda)} \rangle \bigg) \bm{X}_i \bigg); \\
     \bm{U}_{{\sf deb}}^{(\lambda)} &:= \bm{U}^{(\lambda)} \bigg( \bm{I} + \lambda \big( \bm{U}^{(\lambda)\top} \bm{U}^{(\lambda)} \big)\inv \bigg)^{1/2}.
 \end{align}
      We will elaborate on each of these approximate equivalences, as well as further discussion of $\bm{U}_{{\sf deb}}^{(\lambda)}$ and $\bm{Z}_{{\sf deb}}^{(\lambda)}$ in the subsequent paragraphs.   For convenience at  we have also included a diagram of the main steps of the proof in \cref{fig1}. %

\begin{figure}[ht] 
    \centering
{\footnotesize 
\begin{tikzpicture}
   \node[main] (t1) {\shortstack{$\bm{U}^{(\lambda)}\bm{U}^{(\lambda)\top}$ \\ \ \ $= \bm{Z}^{(\lambda)}$\\ (\cref{thm:relatecvxncvx})}}; 
   \node[main] (t2) [above right= 1.5cm of t1] {\shortstack{$\bm{U}_{{\sf deb}}^{(\lambda)} \bm{U}_{{\sf deb}}^{(\lambda)\top}$  \\ \ \ $= \bm{Z}^{(\lambda)}_{{\sf deb}}$ \\ (\cref{thm:udebzdeb})}}; 
   \node[main] (t3) [below right= 1.5cm of t1] {\shortstack{$\phi\big( \bm{Z}^{(\lambda)} \big)$ \\ $\approx \mathbb{E} \phi\big( \bm{Z}^{(\lambda)}_{{\sf ST}}\big)$\\ (\cref{thm:cvxasymptotics})\\(\cref{lem:ZSTtau})}}; 
   
\node[main] (t4) [right = 2cm of t1]{\shortstack{$\phi\big(\bm{Z}^{(\lambda)}_{{\sf deb}}\big)$ \\ $ \approx \phi\big(\bm{Z}_{{\sf HT}}\big)$\\ (\cref{thm:zdeb})\\(\cref{lem:ZHTtau})}}; 
\node[main] (t5) [right  = .5cm of t4]{\shortstack{ $\bm{U}^{(0)}\bm{U}^{(0)\top}$ \\ $\approx \bm{U}_{{\sf deb}}^{(\lambda)}\bm{U}_{{\sf deb}}^{(\lambda)\top}$\\  (\cref{thm:debunreg})}}; 
\node[main] (t6) [right = .5 cm of t5]{\shortstack{$\phi(\bm{U}^{(0)}\bm{U}^{(0)\top}\big)$\\ $\approx \mathbb{E} \phi\big( \bm{Z}_{{\sf HT}} \big)$\\(\cref{thm:mainthm})}};
   \node[main] (t7)[left = .5 cm of t3]{\shortstack{Matrix\\ CGMT\\ (\cref{thm:matrixcgmt})}};

\draw [->] (t1) to (t2);
\draw [->] (t1) to (t3);
\draw [->] (t5) to (t6);
\draw [->] (t7) to (t3);
\draw [->] (t2) to (t4);
\draw [->] (t3) to (t4);
\draw [->] (t4) to (t5);

\end{tikzpicture} 
}
\caption{Diagram of Proof Dependencies}\label{fig1}
\end{figure}
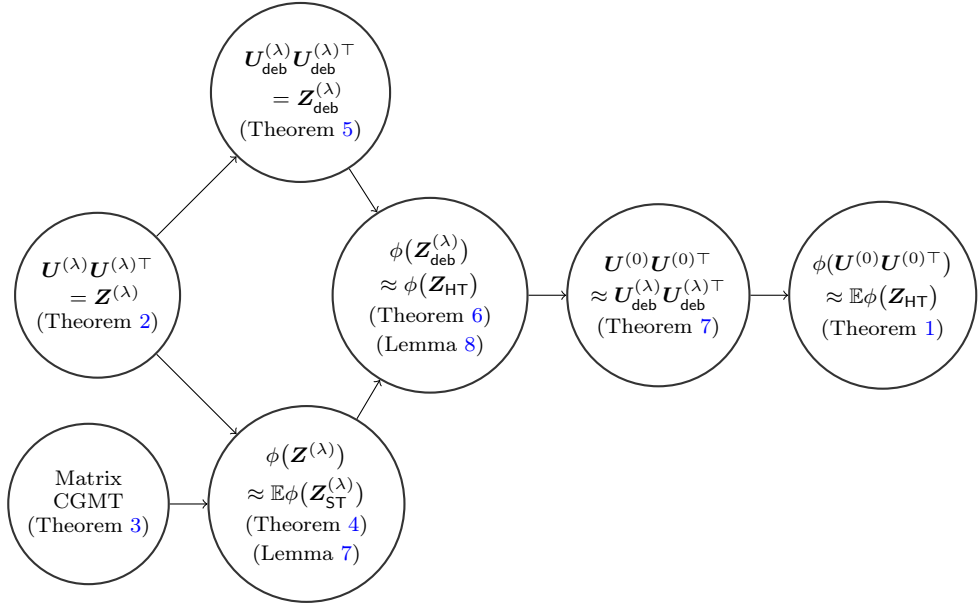

\paragraph{\cref{thm:relatecvxncvx}: Equivalence of Convex and (Regularized) Nonconvex Estimators, $\bm{Z}^{(\lambda)} = \bm{U}^{(\lambda)} \bm{U}^{(\lambda)\top}$, via Study of Local Minima.}  
The first equivalence we establish concerns the minima of $f^{(\lambda)}_{{\sf ncvx}}$ and $g^{(\lambda)}_{{\sf cvx}}$ with the same regularization parameter.  Indeed; recall from the variational form of the nuclear norm that for all positive semidefinite matrices $\bm{Z} \in \mathbb{R}^{d\times d}$ of rank at most $r$ it holds that 
\begin{align}
   \|\bm{Z} \|_* =  \min_{\bm{U} \in \mathbb{R}^{d \times r}: \bm{UU}\t = \bm{Z}} \| \bm{U} \|_F^2.
\end{align}
Consequently, since $f^{(\lambda)}_{{\sf ncvx}}$ includes a penalty term of $\|\bm{U}\|_F^2$ and $g^{(\lambda)}_{{\sf cvx}}$ includes a penalty term of $\|\bm{Z}\|_*$ it is natural that perhaps the local minima of these objectives ought to ``track'' each other.   
Since $\bm{U}^{(\lambda)}$ is computed with a \emph{pre-specified} rank $r$, to ensure that local minima of $f^{(\lambda)}_{{\sf ncvx}}$ and $g^{(\lambda)}_{{\sf cvx}}$ correspond, one needs to guarantee that the minimizer of $g^{(\lambda)}_{{\sf cvx}}$ is rank at most $r$. 

It turns out that there are simple conditions on the noise $\{\eps_i\}_{i=1}^{n}$ and the operator $\mathcal{X}$ such that the minimizer of $g_{{\sf cvx}}^{(\lambda)}(\bm{Z})$ is rank at most $r$. First, a simple condition is that the noise $\{\eps_i\}_{i=1}^{n}$ is sufficiently well-behaved, which can be handled directly via techniques from random matrix theory.  In addition, the other condition is that $\mathcal{X}\s\mathcal{X}$ behaves approximately as the identity operator on rank at most $2r$ matrices, which is known as the \emph{restricted isometry property} in the literature.  In particular, one formulation of the restricted isometry property is that $\mathcal{X}$ satisfies 
\begin{align}
    (1 - \delta_{2r}) \|\bm{A} \|_F \leq \| \mathcal{X}\s\mathcal{X}(\bm{A}) \| \leq (1 + \delta_{2r} ) \|\bm{A} \|_F,
\end{align}
for all matrices $\bm{A}$ of rank at most $2r$, where $\|\cdot\|$ is the operator norm. Here, $\delta_{2r}\geq 0$ is the \emph{restricted isometry constant} (roughly speaking, a quantitative measurement of the condition number of $\mathcal{X}\s\mathcal{X}$ on rank $2r$ matrices).  We encapsulate these ``good'' properties in the ``good'' event $\mathcal{E}_{{\sf Good}}$ defined formally in \eqref{egood}.

In order to formally prove \cref{thm:relatecvxncvx}, we first establish that first-order critical points  of $f^{(\lambda)}_{{\sf ncvx}}$ and $g^{(\lambda)}_{{\sf cvx}}$ are in correspondence provided they lie within radius $R = O(\sigma \sqrt{dr})$ of $\bm{M}$, which allows us to eliminate potentially bad critical points. Next, we show that any local minimum of $f^{(\lambda)}_{{\sf ncvx}}$  satisfying the second-order necessary conditions for a local minimum must lie within such a region of $\bm{M}$ on the event $\mathcal{E}_{{\sf Good}}$, which in turn yields the equivalence of the minima.   As a byproduct of this proof, on this same event, $\bm{Z}^{(\lambda)}$ is rank at most $r$ and is within radius $R$ of $\bm{M}$, which turns out to be a useful fact for the subsequent analysis.

Our arguments here bear some similarity to a number of similar lines of work in matrix completion  establishing connections between convex and nonconvex procedures \citep{yang_optimal_2023,chen_bridging_2021,chen_convex_2021,wang_robust_2025}, most notably \citet{chen_noisy_2020}.  However, unlike \citet{chen_noisy_2020}, we prove the equivalence by directly studying the second-order necessary conditions for a local minimum, whereas \citet{chen_noisy_2020} provide an inductive argument by considering the iterates of gradient descent initialized at the (unknown) ground truth $\bm{M}$.  By starting directly from the second-order necessary conditions we bypass the inductive arguments, and our results are therefore algorithm-agnostic. 

\paragraph{\cref{thm:cvxasymptotics}: Concentration of $\phi\big(\bm{Z}^{(\lambda)}\big)$ about $\mathbb{E}\phi\big(\bm{Z}_{{\sf ST}}^{(\lambda)}\big)$ via Matrix CGMT.} 
  Having shown the (exact) equivalence of convex and (regularized) nonconvex estimators, our next major technical argument is to demonstrate the approximate equivalence of minima of $g^{(\lambda)}_{{\sf cvx}}$ with $\bm{Z}_{{\sf ST}}^{(\lambda)}$, the soft-thresholding estimator.  This result is significantly different in spirit from the previous argument concerning local minima, as the form of approximate equivalence herein is taken to mean that Lipschitz functions of the former concentrate about expected values of Lipschitz functions of the latter  as in \cref{thm:mainthm}.

In \cref{thm:cvxasymptotics} we demonstrate that $\bm{Z}^{(\lambda)}$ approximates another soft-thresholding estimator which we then show approximates $\bm{Z}^{(\lambda)}_{{\sf ST}}$ asymptotically via \cref{lem:ZSTtau}.
  It is worth emphasizing that \cref{thm:cvxasymptotics} is able to yield precise asymptotics in the regime $\frac{d r^2}{n}\asymp C$ as long as \cref{ass1,ass2,ass3} hold.    To prove this approximate equivalence, we leverage Gaussian comparison inequalities via a novel generalization of the Convex Gaussian Min-Max Theorem of \citet{thrampoulidis_gaussian_2015} tailored to matrix ensembles. We dub this result, which may be of independent interest (see \cref{thm:matrixcgmt}), the ``Matrix CGMT.''

  Informally, the (vector) CGMT allows one to translate probabilistic statements about the optimal value of one min-max problem over Gaussian processes into the optimal value of another, often much simpler, min-max problem with different Gaussian processes.  
 This technique has been used to great success in a number of works on  high-dimensional regression, though we leave a more in-depth literature review to \cref{sec:relatedwork}.  Our work is most related to the analysis in \citet{celentano_lasso_2023}. 
  
  Many analyses based on the CGMT follow a similar recipe:
  \begin{enumerate}
      \item First, show that the empirical minimization  problem (e.g. $\min g^{(\lambda)}_{{\sf cvx}}$) can be written as an appropriate convex-concave min-max problem, and  apply the CGMT to translate probabilistic statements about this min-max problem into the ``simpler'' min-max problem (which we refer to as the ``CGMT objective'').
      \item Next, study the behavior of the CGMT objective about its optimal value to derive probabilistic statements about minima of the empirical minimization problem.
  \end{enumerate}    
  Step one above is a straightforward generalization of the arguments of \citet{celentano_lasso_2023} to the matrix setting via the Matrix CGMT.  After carrying out this argument, we arrive at the ``MCGMT objective'' $L_{\lambda}^{(n)}(\bm{Z})$ defined in \eqref{MCGMT_objective}.

The second step (studying the probabilistic behavior of the CGMT objective) is more complicated.    Unlike \citet{celentano_lasso_2023}, the function $L_{\lambda}^{(n)}(\cdot)$ is not  necessarily strongly convex on $d \times d$ matrices in a neighborhood of $\bm{Z}_{{\sf ST}}^{(\lambda)}$, and analyzing its probabilistic behavior becomes more difficult due to this lack of strong convexity.  However, by our previous analysis, we are able to restrict $L_{\lambda}^{(n)}$ to rank at most $r$ matrices within a certain radius of $\bm{M}$, and one of our key insights is to demonstrate that this restriction results in a form of strong convexity.  This restricted strong convexity mimics similar types of restricted strong convexity in other low-rank matrix optimization (e.g.,  \citet{chi_nonconvex_2019}). However, to the best of our knowledge, we are the first to combine these arguments with the CGMT, and our analysis requires a number of additional technical considerations beyond those required for the LASSO in \citet{celentano_lasso_2023}.

\paragraph{\cref{thm:udebzdeb,thm:zdeb}: Concentration of $\phi(\bm{U}^{(\lambda)}_{{\sf deb}} \bm{U}^{(\lambda)\top}_{{\sf deb}})$ and $\phi(\bm{Z}^{(\lambda)}_{{\sf deb}})$ about $\mathbb{E} \phi(\bm{Z}_{{\sf ST}})$.}  Recall that our analysis uses two intermediate quantities $\bm{U}^{(\lambda)}_{{\sf deb}}$ and $\bm{Z}_{{\sf deb}}^{(\lambda)}$    
    that can be computed from $\bm{U}^{(\lambda)}$ and $\bm{Z}^{(\lambda)}$ respectively.
While we view these two quantities as theoretical tools, in principle $\bm{Z}_{{\sf deb}}^{(\lambda)}$ and $\bm{U}_{{\sf deb}}^{(\lambda)}$ could be used for statistical inference as they are computable from data.   We show in \cref{thm:udebzdeb} that $\bm{Z}_{{\sf deb}}^{(\lambda)} = \bm{U}^{(\lambda)}_{{\sf deb}}\bm{U}^{(\lambda)\top}_{{\sf deb}}$ with high probability. 

To understand the reasoning behind introducing these estimators, consider the matrix denoising problem momentarily. In this case, the quantity $\bm{Z}_{{\sf ST}}^{(\lambda)}$ has its leading $r$ singular values shrunken by $\lambda$, which leads to a bias.  In order to debias this estimator, one would ideally re-impute all of the singular values of $\bm{Z}_{{\sf ST}}^{(\lambda)}$.  However, it turns out that only imputing the leading few singular values is sufficient, as keeping the remaining singular values to zero serves to act as additional denoising.  Therefore, the ``debiasing'' operation one arrives at is simply to add back $\lambda$ to the leading $r$ singular values, which results in $\bm{Z}_{{\sf HT}}$. 

Observe that operation $\bm{U}^{(\lambda)}\bm{U}^{(\lambda)\top} \mapsto \bm{U}_{{\sf deb}}^{(\lambda)}\bm{U}_{{\sf deb}}^{(\lambda)\top}$ can be equivalently be expressed as adding $\lambda$ to the leading $r$ singular values of $\bm{Z}^{(\lambda)}$. Consequently, both the operations $\bm{U}^{(\lambda)} \bm{U}^{(\lambda)\top} \mapsto \bm{U}^{(\lambda)}_{{\sf deb}} \bm{U}^{(\lambda)\top}_{{\sf deb}}$ and $\bm{Z}_{{\sf ST}}^{(\lambda)} \mapsto \bm{Z}_{{\sf HT}}$ can be understood as adding $\lambda$ to the nonzero singular values.  Since  we have already shown that $\phi\big(\bm{Z}^{(\lambda)}\big)$ concentrates about $\mathbb{E}\phi\big(\bm{Z}^{(\lambda)}_{{\sf ST}}\big)$, this discussion shows that it is intuitive to argue that $\phi\big(\bm{Z}_{{\sf deb}}^{(\lambda)}\big)$ concentrates about $\mathbb{E}\phi\big( \bm{Z}_{{\sf HT}}\big)$.   This result is accomplished in \cref{thm:zdeb}, which shows that $\phi(\bm{Z}_{{\sf deb}}^{(\lambda)})$ concentrates around around another hard thresholding estimator, which we then show approximates $\bm{Z}_{{\sf HT}}$ asymptotically via \cref{lem:ZHTtau}.  The proof of \cref{thm:zdeb} follows by applying \cref{thm:cvxasymptotics}, where we use the fact that the debiasing operator (i.e., adding $\lambda$ to the nonzero singular values) is a Lipschitz function on an appropriate domain.  

\paragraph{\cref{thm:debunreg}: Concentration of $\bm{U}_{{\sf deb}}^{(\lambda)}$ about $\bm{U}^{(0)}$.}    
 Our final  major technical contribution is to demonstrate that  $\bm{U}_{{\sf deb}}^{(\lambda)}$ and $\bm{U}^{(0)}$ are close in Frobenius norm (up to some nonidentifiable orthogonal transformation).  Informally, our main idea is to show that $\bm{U}_{{\sf deb}}^{(\lambda)}$ is an approximate local minimum for $f^{(0)}_{{\sf ncvx}}$, hence ``closeness'' follows via a Taylor expansion argument.  However, as we demonstrate in \cref{thm:debunreg}, of all of the steps of our proofs, this step is the only step that essentially requires that $n \gg  d r^2$. In order to formally prove this result, we first show that $\nabla f^{(0)}_{{\sf ncvx}}\big( \bm{U}_{{\sf deb}}^{(\lambda)}\big)$, the gradient of the unregularized loss function $f^{(0)}_{{\sf ncvx}}$ evaluated at $\bm{U}_{{\sf deb}}^{(\lambda)}$, 
 is close to zero with high probability.  In addition, since $\bm{U}^{(0)}$ is a local minimum of $f^{(0)}_{{\sf ncvx}}$ we show that on the event $\mathcal{E}_{{\sf Good}}$ the second derivative of $f^{(0)}_{{\sf ncvx}}$ is sufficiently well-behaved in a neighborhood of $\bm{U}^{(0)}$, and $\bm{U}^{(\lambda)}_{{\sf deb}}$ falls in this neighborhood on this event.

\subsection{Related Work} \label{sec:relatedwork}
In this section we review related work.  First, we consider low-rank matrix optimization both in the noiseless and noisy settings.  Next our main technical tools are closely related to similar tools used in the literature on precise asymptotics for high-dimensional linear regression, and we review these related works as well.  \\
\\ \ \noindent
\textbf{Low-Rank Matrix Optimization}. A number of works have studied factorized nonconvex approaches to low-rank matrix estimation, with several works focusing on the optimization landscape and geometry \citep{ge_no_2017,bhojanapalli_global_2016}.  
There has also been a few works considering the geometry with $r$ overparameterized \citep{zhang_sharp_2021,ma_geometric_2023,ma_optimization_2023}.   Subsequent research has also been devoted to studying the iterates of gradient descent and its variants, including settings where the parameter $r$ is overspecified and the initialization is random 
\citep{stoger_small_2021,soltanolkotabi_implicit_2025,ma_geometric_2023} or  the matrix $\bm{M}$ is ill-conditioned \citep{tong_low-rank_2021,zhang_fast_2023,xu_power_2023}. For a more complete list of references, see the survey \citet{chi_nonconvex_2019}. 

For matrix sensing the primary focus of this line of work has been the noiseless setting, and one theme that has emerged is that  when the rank is correctly specified, all local minima are in fact global minima, and all other critical points are ``strict saddles,'' meaning there is a search direction along which the Hessian is negative. In particular, in the noiseless setting, all local minima can be shown to yield the ground truth signal matrix.  Therefore, 
our results can be viewed as an extension from the noiseless setting to the noisy setting: in the noisy setting all local minima behave asymptotically equivalently to the truncated SVD of the corresponding matrix denoising problem, which matches the ground truth in the noiseless case.    A major novelty of our work is in demonstrating that this form of ``asymptotic equivalence'' holds in a precise distributional sense (in the sense of Lipschitz functions).
 
Our work is also closely related to several works on low-rank matrix estimation under various statistical models.  For example, \citet{chen_convex_2021} study noisy blind deconvolution, \citet{candes_phase_2015,chen_gradient_2019,cai_optimal_2016} study phase retrieval, \citet{chen_bridging_2021} study robust PCA, \citet{chen_noisy_2020} study noisy matrix completion, and \citet{ma_implicit_2020,charisopoulos_low-rank_2021} consider several different low-rank problems.  From a technical point of view, our analysis differs significantly from these works, who rely on a ``leave-one-out'' argument to study the iterates of nonconvex procedures. Typically the focus of these works has been to study the statistical error \emph{rates}, whereas our focus is on the precise asymptotic statistical behavior.  A work markedly similar to ours is  \citet{chen_inference_2019}, in which the authors consider developing confidence regions for the convex estimator in matrix completion via a debiasing procedure.  Besides the fact that they consider matrix completion, whereas we consider matrix sensing, unlike their work our results apply to both convex and nonconvex estimators directly, whereas the results of  \citet{chen_inference_2019} only apply to the convex estimator, though they also consider an analogue of $\bm{U}^{(\lambda)}$.  The focus of our work is on explicitly studying the statistical behavior of $\bm{U}^{(0)}$ and $\bm{Z}^{(\lambda)}$, whereas the focus of their work is on statistical inference.

Perhaps the closest related works to our setting are the works \citet{chandrasekher_alternating_2024,chandrasekher_sharp_2023} who derive precise predictions for iterative procedures in high-dimensional regimes similar to the one we consider herein. In \citet{chandrasekher_alternating_2024} the authors study alternating minimization for rank one matrix sensing, though the statistical model is significantly different from the one we consider herein.  In addition,  \citet{chandrasekher_sharp_2023} uses the CGMT to derive iterate-by-iterate predictions for nonconvex procedures for variants of high-dimensional regression, provided that each iteration minimizes some convex problem.  
However, both \citet{chandrasekher_alternating_2024} and \citet{chandrasekher_sharp_2023} rely heavily on the assumption of \emph{sample-splitting}, meaning that a fresh set of data is used at each iteration of their algorithm.  In contrast, our work has no such assumption, as our results apply directly to local minima regardless of  algorithm.  
\\ \ \\
\noindent
\textbf{Precise Asymptotics for High-Dimensional Statistical Models}.  Besides the work \citet{chandrasekher_sharp_2023}, the CGMT has been used in a number of works studying the asymptotics of estimators of various procedures in high dimensions. 
For example, \citet{thrampoulidis_regularized_2015,thrampoulidis_precise_2018} studied  regularized linear regression and M-estimation respectively, \citet{javanmard_precise_2020,javanmard_precise_2022} considered adversarial training, \citet{liang_precise_2022,donhauser_fast_2022,wang_tight_2022,stojanovic_tight_2024,han_distribution_2023} considered minimum norm interpolators for regression and classification, \citet{montanari_generalization_2025} considered margin-based classfication, and \citet{taheri_sharp_2020,taheri_fundamental_2021,han_noisy_2023,loureiro_learning_2021,salehi_impact_2019} considered other variants of linear models.  Our model is significantly different from these works, as our model is based on matrix-valued observations, necessitating a generalization of the CGMT used in these prior works to matrix ensembles (\cref{thm:matrixcgmt}).

As mentioned previously, our analyses are partially motivated by those in \citet{celentano_lasso_2023}, who provide precise statistical guarantees for the LASSO and its debiased counterpart under general Gaussian measurements in the \emph{inconsistency regime} (i.e., the asymptotic regime where minimax error does not vanish asymptotically).  While some of our techniques are similar, our focus is significantly different, as we primarily focus on a regime in which minimax error \emph{does} vanish asymptotically, though we only require this assumption in the final step of our proof.  
However, we also study the minimizer of the nonconvex objective function $f_{{\sf ncvx}}^{(\lambda)}$, to which there is no such immediate analogue in sparse linear regression. In essence, this difference is due to the fact that singular values are \emph{ordered} automatically by magnitude, whereas linear regression coefficients do not have any such ordering \emph{a priori}.

Finally, the debiased estimators we consider in this work are motivated by \citet{chen_inference_2019}, though similar forms of $\bm{Z}_{{\sf deb}}^{(\lambda)}$ have appeared elsewhere \citep{xia_confidence_2019,cai_geometric_2016}.  The estimator $\bm{Z}_{{\sf deb}}^{(\lambda)}$ without the rank $r$ projection step is also similar to the debiased LASSO first introduced in \citep{zhang_confidence_2014} and studied further in the works of \citet{javanmard_confidence_2014,javanmard_debiasing_2018,miolane_distribution_2021,celentano_lasso_2023}, among others.

\subsection{Discussion} \label{sec:discussion}
In this paper we have established the precise asymptotic behavior of both the convex and nonconvex formulations to the matrix sensing problem.  We have seen that even by picking the parameter $\lambda$ to be order-wise optimal  \emph{a priori}, the nonconvex estimator performs better in terms of its generalization ability.  The heart of this result relies on an intimate connection between the nonconvex minima and the debiased convex minima, which we show to be asymptotically ``equivalent'' in the regime $\gamma_n/r \to \infty$. 

While we believe our analysis to shed new light on the matrix sensing problem, a number of open problems remain, which we detail below. 
\begin{itemize}
    \item \textbf{Asymmetric matrices:} In our analysis we have assumed throughout that the underlying matrices (both the observations and the matrix to be estimated) are symmetric.  This assumption greatly simplifies our analyses, but in practice matrices need not be symmetric.   The paper of \citet{ma_beyond_2021} showed that in the noiseless setting gradient descent on asymmetric matrices without additional regularization converges to a local minimum, but extending this result to the noisy setting would require significant additional considerations for both the left and right factors.
\item \textbf{Choice of $r$:} We assumed throughout that $r$ was known \emph{a priori}.  Such an assumption is unrealistic in practice, as $r$ is often overspecified. There has been some work studying such a regime \citep{stoger_small_2021,soltanolkotabi_implicit_2025,ma_geometric_2023}, but these works do not obtain any statistical characterizations similar to the one found herein.  
\item \textbf{The inconsistency regime:}  Our work assumed that $\gamma_n/r \to \infty$ (or at least $\gamma_n \geq C_3 r$), which is essentially the minimal assumption in order to have consistency up to the factor of $r$.  Extension of our results to the inconsistency regime will likely require new arguments, as our form of ``approximate equivalence'' of $\bm{U}_{{\sf deb}}^{(\lambda)}$ and $\bm{U}^{(0)}$ relied heavily on this regime.  
\item \textbf{Heavy-tailed noise:}  Our analysis requires that $\bm{\eps}$ is a Gaussian random vector, though the extension to subgaussian noise is likely minor.  However, extending our results to encompass the heavy-tailed setting may require analyzing a new estimator that incorporates a robust loss function.  
\item \textbf{Weak signals:} In \cref{ass1} we require that $\lambda_r/\sigma \geq C_1 \sqrt{r}$, which implies that $\bm{M}$ has enough signal.  In our numerical experiments  we have seen that the nonconvex estimator no longer dominates the convex estimator in weak signal regimes, a regime not covered by our theory.  It would be of interest to pin down all regimes where the nonconvex estimator dominates the convex estimator (or vice versa).  
\item \textbf{Matrix sensing as a testbed for neural networks:}  Finally, one of our major results shows that the nonconvex formulation with correctly specified rank dominates the nonconvex formulation.  Insofar as matrix sensing can be viewed as a toy model for two-layer neural networks, it may be interesting to study how to extend our results to deep neural networks with many hidden layers.      
\end{itemize}

\subsection{Paper Organization}
The rest of this paper is organized as follows.  First, in \cref{sec:equivalence} we demonstrate the equivalence of the convex and regularized nonconvex estimators, and in \cref{sec:cvxcontrol} we study the precise high-dimensional behavior of the convex estimator. \cref{sec:debequivalence} is dedicated to studying the debiased estimators, and  \cref{sec:debnearequivalence} is dedicated to the study of the concentration of the unregularized estimator about the debiased estimator.  We finish with a proof of \cref{thm:mainthm} in \cref{sec:mainthmproof}.  More detailed proofs are deferred to the appendices.

\section{Equivalence of Convex and Regularized Nonconvex Estimators} \label{sec:equivalence}
In this section we demonstrate the equivalence of the convex and regularized nonconvex estimators; i.e, \cref{thm:relatecvxncvx} in \cref{fig1}.  The result relies on the existence of a high-probability event $\mathcal{E}_{{\sf Good}}$ such that $\bm{U}^{(\lambda)} \bm{U}^{(\lambda)\top} = \bm{Z}^{(\lambda)}$.  To wit, 
define the ``good'' event $\mathcal{E}_{{\sf Good}}$ via
\begin{subequations} 
\label{egood}
\begin{align}
    \mathcal{E}_{{\sf Good}} &:= \bigg\{ \bigg\| \frac{1}{n} \sum_i \eps_i \bm{X}_i \bigg\| \leq 8\sigma \sqrt{\frac{d}{n}}  \bigg\}  \label{concentration} \\
    &\bigcap \bigg\{ (1 - \delta_{2r}) \|\bm{A}\|_F^2 \leq  \|  \mathcal{X}( \bm{A}) \|^2 \leq (1 + \delta_{2r}) \|\bm{A} \|_F^2 \text{ for all matrices $\bm{A}$ of rank at most $2r$}   \bigg\}  \label{rip} \\
    &\bigcap \bigg\{ \bigg\| \frac{1}{n} \sum_{i} \eps_i \langle \bm{X}_i , \bm{A} \rangle \bigg\| \leq C \sigma \sqrt{\frac{dr}{n}}\| \bm{A} \|_F  \text{ for all matrices $\bm{A}$ of rank at most $2r$} \bigg\}. \label{rip2}
 \end{align}
 \end{subequations}
 Here $\delta_{2r} > 0$ is a fixed constant.  Intuitively, the first event \eqref{concentration} ensures that the random matrix $\frac{1}{n} \sum_{i} \eps_i \bm{X}_i$ (which behaves as a random linear combination of ${\sf GOE}(d)$ random matrices) does not deviate too far in spectral norm, which guarantees that the noise is sufficiently well-behaved.  The next event \eqref{rip} is the well-known \emph{restricted isometry property} for the operator $\mathcal{X}$.  Many analyses for matrix sensing rely only on this assumption or something similar \citep{zhang_sharp_2021,zheng_convergent_2015,zhang_fast_2023,ge_no_2017,ma_beyond_2021,soltanolkotabi_implicit_2025,stoger_small_2021}. The final event, \eqref{rip2}, argues that $\bm{X}_i$ concentrates at a rate that holds \emph{uniformly} over all matrices $\bm{A}$ of rank at most $2r$.  In this sense, this event blends both probabilistic concentration and something resembling the restricted isometry property (as it must hold uniformly over matrices of rank at most $2r$).

The following lemma shows that this event holds with high probability for an appropriate choice of $\delta_{2r}$.  

\begin{lemma} \label{lem:Egood}
The event $\mathcal{E}_{{\sf Good}}$ holds with probability at least $1 - O(\exp(- c d))$ whenever $n \geq C dr /\delta_{2r}^2$. In particular, $\mathcal{E}_{{\sf Good}}$ holds under \cref{ass2} (i.e., we may take $\delta_{2r} = \frac{c}{\sqrt{r}}$).
\end{lemma}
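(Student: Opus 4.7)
The plan is to bound the failure probability of each of the three sub-events in \eqref{egood} separately and combine them with a union bound. A useful preliminary observation is that event \eqref{rip2} is a deterministic consequence of event \eqref{concentration} via a short low-rank trace inequality, so only two genuine probabilistic arguments are needed.

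For \eqref{concentration}, the main trick is to condition on $\bm\eps$: because any linear combination of independent $\mathrm{GOE}(d)$ matrices is itself a scaled $\mathrm{GOE}(d)$ matrix, we have the conditional distributional identity
\[
\frac{1}{n}\sum_{i=1}^n \eps_i \bm{X}_i \;\eqd\; \frac{\|\bm\eps\|}{n}\,\bm{W}, \qquad \bm{W}\sim\mathrm{GOE}(d).
\]
Standard Gaussian concentration for the $1$-Lipschitz spectral norm functional yields $\|\bm{W}\|\leq 3\sqrt{d}$ outside a set of probability $\exp(-cd)$, while a $\chi^2_n$ tail bound applied to $\|\bm\eps\|^2$ gives $\|\bm\eps\|\leq 2\sigma\sqrt{n}$ outside a set of probability $\exp(-cn)$. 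Combining these two bounds, together with the fact that $n \geq d$ under \cref{ass2}, establishes \eqref{concentration}.

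For \eqref{rip}, I would invoke the classical proof of the restricted isometry property for Gaussian matrix sensing via an $\eps$-net argument. For each fixed unit-Frobenius-norm symmetric matrix $\bm{A}$, the entries of $\mathcal{X}(\bm{A})$ are i.i.d.\ $\mathcal{N}(0,\|\bm{A}\|_F^2/n)$ (a short variance computation using the ${\sf GOE}$ normalization), so $\bigl|\|\mathcal{X}(\bm{A})\|^2 - \|\bm A\|_F^2\bigr|\leq \delta_{2r}\|\bm A\|_F^2/2$ fails with probability at most $\exp(-c n\delta_{2r}^2)$ by a standard $\chi^2_n$ deviation inequality. A $(\delta_{2r}/8)$-net of the unit-Frobenius-norm rank-$2r$ matrices has cardinality at most $\exp(C dr)$ (via parametrization by two $d\times 2r$ semi-orthonormal factors and a $2r\times 2r$ singular-value block). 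A union bound followed by the usual approximation argument then gives \eqref{rip} on an event of probability at least $1-\exp(-c(n\delta_{2r}^2 - dr)) \geq 1-\exp(-cd)$ as soon as $n \geq Cdr/\delta_{2r}^2$, matching the hypothesis.

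For \eqref{rip2}, write $\bm{M} := \frac{1}{n}\sum_i \eps_i \bm{X}_i$ and use
\[
\frac{1}{n}\sum_{i=1}^n \eps_i \langle \bm{X}_i,\bm{A}\rangle \;=\; \langle \bm{M}, \bm{A}\rangle,\qquad |\langle \bm{M},\bm{A}\rangle| \leq \|\bm{M}\|\,\|\bm{A}\|_* \leq \sqrt{2r}\,\|\bm{M}\|\,\|\bm{A}\|_F,
\]
valid for any $\bm{A}$ of rank at most $2r$; on the event \eqref{concentration} this yields \eqref{rip2} with an absolute constant. The final claim reduces to arithmetic: under \cref{ass2}, $n/(dr)\geq C_3 r$, so $\delta_{2r}=c/\sqrt{r}$ satisfies $n\geq Cdr/\delta_{2r}^2$. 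I do not anticipate any serious obstacle; the only delicate point is booking the constants in the net argument for \eqref{rip} so that the final failure probability is at most $\exp(-cd)$ rather than just $\exp(-cn\delta_{2r}^2)$, but this is automatic given $n\delta_{2r}^2\gtrsim dr\geq d$.
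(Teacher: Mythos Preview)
Your proposal is correct, and for events \eqref{concentration} and \eqref{rip} it proceeds essentially as the paper does (the paper cites Bandeira's sharp bound for \eqref{concentration} and \citet{candes_tight_2011} for \eqref{rip}, but these are the same underlying arguments you sketch).

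For event \eqref{rip2}, however, you take a genuinely simpler route than the paper. The paper treats \eqref{rip2} from scratch via a separate $\eps$-net argument over rank-$2r$ matrices: for fixed $\bm{A}$ it writes $\frac{1}{n}\sum_i \eps_i\langle\bm{X}_i,\bm{A}\rangle \eqd \frac{\sigma\|\bm{A}\|_F}{n}\langle\bm{z}_1,\bm{z}_2\rangle$, applies Gaussian tail bounds, then unions over a net of cardinality $(9/\eps)^{3dr}$ and closes with the standard approximation step. Your observation that
\[
\Bigl|\tfrac{1}{n}\sum_i \eps_i\langle\bm{X}_i,\bm{A}\rangle\Bigr|
=\bigl|\langle \tfrac{1}{n}\sum_i\eps_i\bm{X}_i,\,\bm{A}\rangle\bigr|
\le \bigl\|\tfrac{1}{n}\sum_i\eps_i\bm{X}_i\bigr\|\cdot\|\bm{A}\|_*
\le \sqrt{2r}\,\bigl\|\tfrac{1}{n}\sum_i\eps_i\bm{X}_i\bigr\|\cdot\|\bm{A}\|_F
\]
makes \eqref{rip2} an immediate deterministic consequence of \eqref{concentration}, with no second net argument and no separate probability to track. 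This is cleaner and yields the same constant up to an $O(1)$ factor; the paper's approach does not buy any additional generality here, so your shortcut is a genuine improvement.
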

\begin{proof}
    See \cref{sec:proofofegood}.
\end{proof}

Next,  the following theorem demonstrates that on the event $\mathcal{E}_{{\sf Good}}$, local minima of $f^{(\lambda)}_{{\sf ncvx}}$ correspond to local (in fact, global) minima of $g^{(\lambda)}_{{\sf cvx}}$.   
\begin{theorem}[Relating Convex and Nonconvex Regularized Estimators] \label{thm:relatecvxncvx}
Let $\bm{Z}^{(\lambda)}$ denote (any) global minimum of $g^{(\lambda)}_{{\sf cvx}}$, and let $\bm{U}^{(\lambda)}$ denote any critical point of $f^{(\lambda)}_{{\sf ncvx}}$ satisfying the first and second-order necessary conditions for a local minimum.  Then under \cref{ass2,ass3}, 
on the event $\mathcal{E}_{{\sf Good}}$ defined in \eqref{egood}, 
\begin{align}
    \bm{U}^{(\lambda)} \bm{U}^{(\lambda)\top} &= \bm{Z}^{(\lambda)}.
\end{align}
\end{theorem}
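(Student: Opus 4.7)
The first move is to reduce the equivalence to verifying the KKT conditions for the convex problem. Since $\|\bm{Z}\|_* = \tr(\bm{Z})$ on the PSD cone, the convex program is equivalent to $\min_{\bm{Z} \succeq 0} \tfrac{1}{2}\|\bm{y} - \mathcal{X}(\bm{Z})\|^2 + \lambda \tr(\bm{Z})$, whose KKT conditions read: (i) $\bm{Z} \succeq 0$; (ii) $\bm{\Lambda} := \lambda \bm{I} - \mathcal{X}^*(\bm{y} - \mathcal{X}(\bm{Z})) \succeq 0$; and (iii) $\bm{\Lambda} \bm{Z} = 0$. Taking $\bm{Z} = \bm{U}^{(\lambda)}\bm{U}^{(\lambda)\top}$, condition (i) is immediate, and the first-order condition $\nabla f^{(\lambda)}_{\sf ncvx}(\bm{U}^{(\lambda)}) = \bm{\Lambda}\bm{U}^{(\lambda)} = 0$ yields (iii) upon right-multiplication by $\bm{U}^{(\lambda)\top}$. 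Because the convex problem is convex, verifying (ii) certifies $\bm{U}^{(\lambda)}\bm{U}^{(\lambda)\top}$ as a global minimizer of $g^{(\lambda)}_{\sf cvx}$, forcing $\bm{Z}^{(\lambda)} = \bm{U}^{(\lambda)}\bm{U}^{(\lambda)\top}$.

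All the work is therefore in (ii), and this is where the second-order necessary condition (SOSC) and the event $\mathcal{E}_{\sf Good}$ enter. Write $\bm{A} := \mathcal{X}^*(\bm{y} - \mathcal{X}(\bm{Z}))$. Since $\bm{A}$ is symmetric and the FONC gives $\bm{A}\bm{U}^{(\lambda)} = \lambda\bm{U}^{(\lambda)}$, the subspace $\mathrm{range}(\bm{U}^{(\lambda)})$ is $\bm{A}$-invariant, so $\bm{A}$ is block-diagonal with respect to $\mathrm{range}(\bm{U}^{(\lambda)}) \oplus \mathrm{range}(\bm{U}^{(\lambda)})^\perp$ and acts as $\lambda\bm{I}$ on the first block. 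Hence (ii) reduces to showing $\bm{P}^\perp \bm{A}\bm{P}^\perp \preceq \lambda\bm{I}$, where $\bm{P}^\perp$ denotes the projector onto $\mathrm{range}(\bm{U}^{(\lambda)})^\perp$. My plan is a two-stage argument: first extract proximity of $\bm{U}^{(\lambda)}\bm{U}^{(\lambda)\top}$ to $\bm{M}$ from the SOSC, and then use this proximity to bound $\bm{P}^\perp \bm{A}\bm{P}^\perp$. For the proximity stage I would plug the deviation direction $\bm{\Delta} = \bm{U}^{(\lambda)} - \bm{U}_M\bm{O}^*$ into the SOSC $\nabla^2 f^{(\lambda)}_{\sf ncvx}(\bm{U}^{(\lambda)})[\bm{\Delta},\bm{\Delta}] \geq 0$, where $\bm{U}_M = d^{1/4}\bm{V}\bm{\Lambda}^{1/2}$ is a square-root factor of $\bm{M}$ and $\bm{O}^*$ the optimal aligning rotation. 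Using \eqref{rip} to replace $\|\mathcal{X}(\bm{U}^{(\lambda)}\bm{\Delta}^\top + \bm{\Delta}\bm{U}^{(\lambda)\top})\|^2$ by its Frobenius norm and absorbing noise cross terms via \eqref{concentration} and \eqref{rip2}, combined with \cref{ass1,ass3}, this yields both Frobenius proximity $\|\bm{U}^{(\lambda)}\bm{U}^{(\lambda)\top} - \bm{M}\|_F = O(\sigma\sqrt{dr})$ and a sharper Davis--Kahan-type subspace alignment $\|\sin\Theta(\bm{U}^{(\lambda)},\bm{V})\|_{op} = O(1/\sqrt{r})$.

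For the control stage I split $\bm{A} = \mathcal{X}^*\mathcal{X}(\bm{M} - \bm{Z}) + \mathcal{X}^*\bm{\eps}$. The noise term has operator norm at most $8\sigma\sqrt{d}$ by \eqref{concentration}. For the deterministic term, a standard RIP consequence gives $\|\mathcal{X}^*\mathcal{X}(\bm{M}-\bm{Z}) - (\bm{M}-\bm{Z})\|_{op} \lesssim \delta_{2r}\|\bm{M}-\bm{Z}\|_F = O(\sigma\sqrt{d})$, using $\delta_{2r} = O(1/\sqrt{r})$ from \cref{lem:Egood} together with the Frobenius proximity just obtained. Finally, since $\bm{Z}$ vanishes on $\mathrm{range}(\bm{U}^{(\lambda)})^\perp$, we have $\bm{P}^\perp(\bm{M}-\bm{Z})\bm{P}^\perp = \bm{P}^\perp\bm{M}\bm{P}^\perp$, whose operator norm is at most $\sqrt{d}\,\|\bm{\Lambda}\|_{op}\,\|\sin\Theta(\bm{U}^{(\lambda)},\bm{V})\|_{op}^2 = O(\sigma\sqrt{d})$ via the subspace alignment and the bound $\|\bm{\Lambda}\|_{op} \leq \kappa C_2\sigma\sqrt{r}$ from \cref{ass1}. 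Summing the three contributions yields $\|\bm{P}^\perp \bm{A}\bm{P}^\perp\|_{op} = O(\sigma\sqrt{d}) \leq \lambda$ once $C_4$ in \cref{ass3} is chosen sufficiently large, completing (ii).

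The main obstacle is the proximity stage: converting the mere nonnegativity of the Hessian (SOSC) into quantitative Frobenius and subspace-distance bounds on $\bm{U}^{(\lambda)}\bm{U}^{(\lambda)\top} - \bm{M}$. This parallels the noiseless ``no spurious local minima'' analyses of Ge--Jin--Zheng and Bhojanapalli--Neyshabur--Srebro, but here one must carefully track the noise term $\mathcal{X}^*\bm{\eps}$ and, crucially, obtain \emph{subspace} proximity rather than only Frobenius proximity to secure $\|\bm{P}^\perp\bm{M}\bm{P}^\perp\|_{op} = O(\sigma\sqrt{d})$ in place of the weaker $O(\sigma\sqrt{dr})$ that Frobenius proximity alone would give. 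This refinement likely requires a second pass that exploits the block-diagonal structure induced by the FONC together with the eigengap condition $\lambda_r/\sigma \geq C_1\sqrt{r}$ from \cref{ass1}.
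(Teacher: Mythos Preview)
Your high-level structure---reduce to KKT, extract Frobenius proximity from the SOSC, then verify dual feasibility---matches the paper's two-lemma decomposition exactly. The divergence is in how you carry out the dual-feasibility step, and there your route is harder than necessary and leaves a real gap.

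You decompose $\bm{P}^\perp\bm{A}\bm{P}^\perp$ directly and are left with $\|\bm{P}^\perp\bm{M}\bm{P}^\perp\|_{op}$, which forces you to seek a subspace bound $\|\sin\Theta(\bm{U}^{(\lambda)},\bm{V})\|_{op}=O(1/\sqrt{r})$. You correctly flag this as the obstacle, but your proposed ``second pass'' is not substantiated: Davis--Kahan from Frobenius proximity gives only $\|\sin\Theta\|_{op}\le\|\bm{Z}-\bm{M}\|_F/\lambda_r(\bm{M})=O(1)$, and upgrading to $O(1/\sqrt{r})$ would require an \emph{operator-norm} bound $\|\bm{Z}-\bm{M}\|_{op}=O(\sigma\sqrt{d})$, which the SOSC argument does not readily deliver. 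So as written, the control stage does not close.

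The paper sidesteps this entirely with a Weyl trick. Rearranging the FONC as
\[
\bm{M} + \underbrace{(\mathcal{I}-\mathcal{X}^*\mathcal{X})(\bm{Z}-\bm{M}) + \bm{N}_\eps}_{=:E}
= \bm{V}_U(\Sigma+\lambda\bm{I})\bm{V}_U^\top - \bm{W}_U,
\]
one applies Weyl to the \emph{left} side: $\sigma_{r+1}(\text{LHS})\le \sigma_{r+1}(\bm{M})+\|E\|=\|E\|$, and $\|E\|\le \delta_{2r}\|\bm{Z}-\bm{M}\|_F + \|\bm{N}_\eps\|<\lambda$ using only the Frobenius proximity you already have. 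Since the right side is block-diagonal with top block $\Sigma+\lambda\bm{I}\succeq\lambda\bm{I}$, having the $(r{+}1)$st singular value below $\lambda$ forces all singular values of $\bm{W}_U$ below $\lambda$, i.e.\ $\|\bm{W}_U\|<\lambda$. No subspace alignment is needed---the low rank of $\bm{M}$ does the work via $\sigma_{r+1}(\bm{M})=0$ rather than via $\bm{P}^\perp\bm{M}\bm{P}^\perp$. Replace your third term analysis with this Weyl argument and the proof closes using only the Frobenius bound from the proximity stage.
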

This result is entirely deterministic when $\mathcal{E}_{{\sf Good}}$ holds. 

\subsection{Proof of \cref{thm:relatecvxncvx}}
In order to prove \cref{thm:relatecvxncvx} we first prove that the conclusion holds under a sufficient condition via the following lemma.
\begin{lemma}[Relating the Convex and Nonconvex Regularized Estimators Under a Technical Condition] \label{lem:relatecvxncvx}
    Let $\bm{Z}^{(\lambda)}$ denote the global minimum of $g^{(\lambda)}_{{\sf cvx}}$, and let $\bm{U}^{(\lambda)}$ denote a stationary point of $f^{(\lambda)}_{{\sf ncvx}}$.  Then as long as $\lambda \geq C_4 \sigma \sqrt{d}$, and $ \| \bm{U}^{(\lambda)}\bm{U}^{(\lambda)\top} - \bm{M} \|_F < \lambda/(2\delta_{2r})$,     on the event $\mathcal{E}_{{\sf Good}}$ it holds that
    \begin{align}
        \bm{U}^{(\lambda)} \bm{ U}^{(\lambda)\top} &= \bm{Z}^{(\lambda)}.
    \end{align}
\end{lemma}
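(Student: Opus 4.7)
The plan is to verify that $\bm{Z} := \bm{U}^{(\lambda)}\bm{U}^{(\lambda)\top}$ satisfies the KKT conditions of the PSD-constrained trace-penalized convex program; by convexity this immediately implies $\bm{Z} = \bm{Z}^{(\lambda)}$. Writing $\bm{G} := \mathcal{X}^{*}(\bm{y} - \mathcal{X}(\bm{Z}))$, these conditions reduce to complementary slackness $\bm{G}\bm{Z} = \lambda\bm{Z}$ together with dual feasibility $\bm{G} \preceq \lambda\bm{I}$. First I would observe that the nonconvex stationarity $\nabla f_{\sf ncvx}^{(\lambda)}(\bm{U}^{(\lambda)}) = 0$ is precisely the eigen-relation $\bm{G}\bm{U}^{(\lambda)} = \lambda\bm{U}^{(\lambda)}$: right-multiplying by $\bm{U}^{(\lambda)\top}$ yields complementary slackness for free, and combining with symmetry of $\bm{G}$ gives the block decomposition $\bm{G} = \lambda\bm{P} + \bm{P}^{\perp}\bm{G}\bm{P}^{\perp}$ where $\bm{P}$ is the orthogonal projector onto $\text{range}(\bm{U}^{(\lambda)})$. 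Hence dual feasibility reduces to showing that the top eigenvalue of $\bm{P}^{\perp}\bm{G}\bm{P}^{\perp}$ is at most $\lambda$.

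Next I would expand $\bm{G}$ using $\bm{y} = \mathcal{X}(\bm{M}) + \bm{\eps}$ and $\mathcal{X}^{*}\mathcal{X}(\bm{\Delta}) = \bm{\Delta} + (\mathcal{X}^{*}\mathcal{X} - \mathcal{I})(\bm{\Delta})$, where $\bm{\Delta} := \bm{Z} - \bm{M}$ has rank at most $2r$. Since $\bm{P}^{\perp}\bm{Z}\bm{P}^{\perp} = 0$, one obtains
\[
\bm{P}^{\perp}\bm{G}\bm{P}^{\perp} = \bm{P}^{\perp}\bm{M}\bm{P}^{\perp} - \bm{P}^{\perp}(\mathcal{X}^{*}\mathcal{X} - \mathcal{I})(\bm{\Delta})\bm{P}^{\perp} + \bm{P}^{\perp}\mathcal{X}^{*}(\bm{\eps})\bm{P}^{\perp}.
\]
A standard polarization of the rank-$2r$ RIP event \eqref{rip} bounds the middle term in spectral norm by a constant multiple of $\delta_{2r}\|\bm{\Delta}\|_F$, which is strictly less than $\lambda/2$ by the closeness hypothesis. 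The noise event \eqref{concentration} gives $\|\mathcal{X}^{*}(\bm{\eps})\| \lesssim \sigma\sqrt{d}$, which is bounded by, say, $\lambda/8$ once $C_4$ is taken sufficiently large.

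The main obstacle is controlling the PSD term $\bm{P}^{\perp}\bm{M}\bm{P}^{\perp}$, which measures the component of the signal orthogonal to $\text{range}(\bm{U}^{(\lambda)})$. The plan is to exploit the stationarity equation a second time: rewriting it as
\[
\bm{M}\bm{U}^{(\lambda)} = \bm{Z}\bm{U}^{(\lambda)} + \lambda\bm{U}^{(\lambda)} + \bigl[(\mathcal{X}^{*}\mathcal{X} - \mathcal{I})(\bm{\Delta}) - \mathcal{X}^{*}(\bm{\eps})\bigr]\bm{U}^{(\lambda)},
\]
and left-multiplying by $\bm{P}^{\perp}$ annihilates the first two right-hand-side terms, so $\|\bm{P}^{\perp}\bm{M}\bm{U}^{(\lambda)}\|$ is controlled purely by the (small) RIP and noise errors acting on $\bm{U}^{(\lambda)}$. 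Combined with the signal-strength lower bound $\lambda_r(\bm{M}) \gtrsim \sigma\sqrt{dr}$ from \cref{ass1}, inverting through the right singular structure of $\bm{U}^{(\lambda)}$ via a $\sin\Theta$-type argument (with an initial Davis--Kahan bootstrap that guarantees $\bm{V}_{\bm{M}}^{\top}\bm{V}$ is well-conditioned, where $\bm{V}$ are the left singular vectors of $\bm{U}^{(\lambda)}$) yields $\|\bm{P}^{\perp}\bm{V}_{\bm{M}}\| \lesssim C_4/(C_1\sqrt{r})$, from which $\|\bm{P}^{\perp}\bm{M}\bm{P}^{\perp}\| \leq \|\bm{P}^{\perp}\bm{V}_{\bm{M}}\|^{2}\|\bm{M}\| \leq \lambda/4$ provided $C_1$ is taken sufficiently large compared to $C_4$, $\kappa$, and $C_2$. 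Summing the three pieces then establishes $\bm{P}^{\perp}\bm{G}\bm{P}^{\perp} \preceq \lambda\bm{P}^{\perp}$, completing the KKT verification. The delicate part is the constant bookkeeping in this last step, which implicitly requires the signal to be strong enough (via $C_1$) that $\text{range}(\bm{U}^{(\lambda)})$ aligns with $\text{range}(\bm{M})$ at leading order.
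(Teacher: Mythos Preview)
Your KKT framework and the block decomposition $\bm{G}=\lambda\bm{P}+\bm{P}^{\perp}\bm{G}\bm{P}^{\perp}$ are exactly how the paper sets things up, and the RIP and noise pieces are handled identically. The divergence is in how the ``signal leak'' term $\bm{P}^{\perp}\bm{M}\bm{P}^{\perp}$ is treated.

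You try to bound $\|\bm{P}^{\perp}\bm{M}\bm{P}^{\perp}\|$ directly, via a $\sin\Theta$/Davis--Kahan argument that forces $\mathrm{range}(\bm{U}^{(\lambda)})$ to align with $\mathrm{range}(\bm{M})$. This works, but it imports the signal-strength lower bound $\lambda_r(\bm{M})\gtrsim\sigma\sqrt{dr}$ from \cref{ass1}, which is \emph{not} among the lemma's hypotheses. As stated, the lemma only assumes $\lambda\ge C_4\sigma\sqrt{d}$, the closeness condition, and $\mathcal{E}_{\sf Good}$; your proof therefore establishes a strictly weaker statement (and needs the constant chain $C_1\gg C_4,\kappa,C_2$ you flag at the end).

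The paper sidesteps this entirely. Instead of projecting and splitting, it rearranges the stationarity identity into
\[
\bm{M}+\bigl(\mathcal{I}-\mathcal{X}^{*}\mathcal{X}\bigr)(\bm{\Delta})+\bm{N}_{\eps}
=\bm{V}_{U}(\bm{\Sigma}+\lambda\bm{I})\bm{V}_{U}^{\top}-\bm{W}_{U},
\]
and applies Weyl's inequality to the \emph{whole} left-hand side. Because $\bm{M}$ has rank exactly $r$, one has $\sigma_i(\bm{M})=0$ for $i>r$, so $\sigma_i(\text{LHS})\le\|(\mathcal{I}-\mathcal{X}^{*}\mathcal{X})(\bm{\Delta})\|+\|\bm{N}_{\eps}\|<\lambda$ for all $i>r$. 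On the right-hand side, the orthogonality $\bm{V}_{U}^{\top}\bm{W}_{U}=0$ means the $i>r$ singular values are precisely those of $\bm{W}_{U}$, giving $\|\bm{W}_{U}\|<\lambda$ in one stroke. No subspace alignment, no signal-strength assumption, no bootstrap---the low rank of $\bm{M}$ does all the work through Weyl. Your approach is salvageable in the paper's broader setting (where \cref{ass1} is ambient), but the paper's route is both shorter and proves the lemma exactly as stated.
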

\begin{proof}[Proof of \cref{lem:relatecvxncvx}]
    First, observe that any  optimizer $\bm{Z}^{(\lambda)}$ of $g^{(\lambda)}_{{\sf cvx}}$ must satisfy the first-order necessary condition
\begin{align}
  \frac{1}{\sqrt{n}} \sum_{i=1}^{n} \bigg( \langle \bm{X}_i, \bm{Z}^{(\lambda)} - \bm{M} \rangle /\sqrt{n} - \eps_i \bigg) \bm{X}_i = -\lambda \bm{VV}\t + \bm{W},
\end{align}
where $\bm{Z}^{(\lambda)} = \bm{V} \Sigma \bm{V}\t$, and $\bm{W}$ lies in the tangent space of $\bm{Z}^{(\lambda)}$ and satisfies $\|\bm{W} \| \leq \lambda $.  In addition, from the first-order necessary condition for a stationary point, $\bm{U}^{(\lambda)}$ satisfies
\begin{align}
   \frac{1}{\sqrt{n}} \sum_{i=1}^{n} \bigg( \langle \bm{X}_i, \bm{U}^{(\lambda)}\bm{U}^{(\lambda)\top} - \bm{M} \rangle/\sqrt{n} - \eps_i \bigg) \bm{X}_i \bm{U}^{(\lambda)} + \lambda \bm{U}^{(\lambda)} &= 0. \numberthis \label{uncvx}
\end{align}
Denote
\begin{align}
    \bm{N}_{\eps} := \frac{1}{\sqrt{n}}\sum_i \eps_i \bm{X}_i.
\end{align}
Equation \cref{uncvx} then implies 
\begin{align}
\mathcal{X}\s \mathcal{X}( \bm{U}^{(\lambda)} \bm{U}^{(\lambda)\top} - \bm{M} ) - \bm{N}_{\eps} &= 
    -\lambda \bm{V}_{\bm{U}} \bm{V}_{\bm{U}}\t + \bm{W}_{U}, \numberthis \label{cnvx}
\end{align}
where $\bm{V}_{{\bm{U}}}$ is the orthonormal matrix associated to the column space of $\bm{U}^{{(\lambda)}}$, and $\bm{W}_U$ is orthogonal to $\bm{V}_{{\bm{U}}}$.  Therefore, it suffices to show that $\|\bm{W}_U \| < \lambda$.  
We have that 
\begin{align}
\mathcal{X}\s \mathcal{X} \big( \bm{U}^{(\lambda)} \bm{U}^{(\lambda)\top} - \bm{M} \big) - \bm{N}_{\eps} &= 
 -\bm{M} + \bm{U}^{(\lambda)}\bm{U}^{(\lambda)\top} + \bigg( \mathcal{X}\s \mathcal{X} - \mathcal{I} \bigg) \big( \bm{U}^{(\lambda)}\bm{U}^{(\lambda)\top} - \bm{M}  \big) - \bm{N}_{\eps}.
\end{align}
Plugging this identity into \eqref{cnvx} and rearranging yields
\begin{align}
    \bm{M} + \bigg( \mathcal{I} - \mathcal{X}\s \mathcal{X}  \bigg) \big( \bm{U}^{(\lambda)}\bm{U}^{(\lambda)\top} - \bm{M}  \big) + \bm{N}_{\eps} &=  \bm{U}^{(\lambda)}\bm{U}^{(\lambda)\top} + \lambda \bm{V}_{U} \bm{V}_U\t - \bm{W}_U \\
    &= \bm{V}_{U} \bigg( \Sigma + \lambda \bm{I} \bigg) \bm{V}_U - \bm{W}_U.  
\end{align}
Since $\bm{W}_U$ is orthogonal to $\bm{V}_{\bm{U}}$, the eigenvalues of $\bm{W}_U$ are equal to the bottom $d - r$ eigenvalues of $\bm{V}_U(\Sigma + \lambda \bm{I}) \bm{V}_U\t$. 
Therefore, by Weyl's inequality, for $r < i \leq d$,
\begin{align}
    \sigma_i \bigg( \bm{V}_{U} \bigg( \Sigma + \lambda \bm{I} \bigg) \bm{V}_U\t - \bm{W}_U \bigg) &\leq \sigma_i(\bm{M}) + \bigg\| \bigg( \mathcal{I} - \mathcal{X}\s \mathcal{X}  \bigg) \big( \bm{U}^{(\lambda)}\bm{U}^{(\lambda)\top} - \bm{M}  \big) + \bm{N}_{\eps} \bigg\| \\
    &\leq \bigg\|  \bigg( \mathcal{I} - \mathcal{X}\s \mathcal{X}  \bigg) \big( \bm{U}^{(\lambda)}\bm{U}^{(\lambda)\top} - \bm{M}  \big) \bigg\| + \bigg\| \bm{N}_{\eps} \bigg\| \\
    &\leq  \delta_{2r} \| \bm{U}^{(\lambda)}\bm{U}^{(\lambda)\top} - \bm{M} \|_F + \| \bm{N}_{\eps} \| \\
    &< \frac{\lambda}{2} + \frac{\lambda}{2} \\
    &< \lambda,
\end{align}
since by \cref{ass3} $\lambda$ is taken to be  larger than $C_4 \sigma \sqrt{d}$, since then on $\mathcal{E}_{{\sf Good}}$ it holds that $\|\bm{N}_{\eps}\| \leq C\sigma \sqrt{d} < \frac{C_4 \sigma \sqrt{d}}{2}$, and  $ \| \bm{U}^{(\lambda)}\bm{U}^{(\lambda)\top} - \bm{M} \|_F < \lambda/(2\delta_{2r})$ by  assumption.  Thus, we have shown that $\| \bm{W}_U\| < \lambda$, which completes the proof.
\end{proof}

Next, note that \cref{lem:relatecvxncvx} implies \cref{thm:relatecvxncvx} if we can show that $ \| \bm{U}^{(\lambda)}\bm{U}^{(\lambda)\top} - \bm{M} \|_F < \lambda/(2\delta_{2r})$ for a local minimum $\bm{U}^{(\lambda)}$.  This result is accomplished in the following lemma.   
\begin{lemma} \label{lem:uncvxgood}
Let $\bm{U}^{(\lambda)}$ be a minimizer of $f^{(\lambda)}_{{\sf ncvx}}$ satisfying the second-order necessary condition.  Then on the event $\mathcal{E}_{{\sf Good}}$ it holds that
\begin{align}
    \frac{1}{\sqrt{dr}}\| \bm{U}^{(\lambda)} \bm{U}^{(\lambda)\top} - \bm{M} \|_F &\leq C \sigma. 
\end{align}
In particular, under \cref{ass2} and \cref{ass3} the conditions of \cref{lem:relatecvxncvx} hold.
\end{lemma}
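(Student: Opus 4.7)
The plan is to exploit the first- and second-order necessary conditions satisfied by $\bm{U}^{(\lambda)}$ together with the restricted isometry property \eqref{rip} and the noise bound \eqref{concentration} from $\mathcal{E}_{{\sf Good}}$. Write $\bm{U}_* := d^{1/4}\bm{V}\bm{\Lambda}^{1/2}$ so that $\bm{U}_*\bm{U}_*^\top = \bm{M}$, let $\bm{R} := \mathcal{O}_{\bm{U}^{(\lambda)},\bm{U}_*}$ denote the optimal orthogonal alignment from the Notation paragraph, and set $\bm{\Delta} := \bm{U}^{(\lambda)} - \bm{U}_*\bm{R}$ together with $\bm{E} := \bm{U}^{(\lambda)}\bm{U}^{(\lambda)\top} - \bm{M}$. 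A direct expansion gives the key algebraic identity
\[
\bm{E} \;=\; \bm{U}^{(\lambda)}\bm{\Delta}^\top + \bm{\Delta}\bm{U}^{(\lambda)\top} - \bm{\Delta}\bm{\Delta}^\top,
\]
so both $\bm{E}$ and $\bm{\Delta}\bm{\Delta}^\top$ have rank at most $2r$; this matrix $\bm{\Delta}$ will serve as the test direction in the Hessian inequality.

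Writing $\bm{N}_{\eps} := \mathcal{X}\s\bm{\eps}$ (satisfying $\|\bm{N}_{\eps}\| \leq 8\sigma\sqrt{d}$ on $\mathcal{E}_{{\sf Good}}$), a direct differentiation gives $\nabla f^{(\lambda)}_{{\sf ncvx}}(\bm{U}) = [\mathcal{X}\s\mathcal{X}(\bm{U}\bm{U}^\top - \bm{M}) - \bm{N}_{\eps} + \lambda\bm{I}]\bm{U}$ and
\[
\nabla^2 f^{(\lambda)}_{{\sf ncvx}}(\bm{U})[\bm{H},\bm{H}] \;=\; \tfrac{1}{2}\|\mathcal{X}(\bm{U}\bm{H}^\top + \bm{H}\bm{U}^\top)\|^2 + \langle \mathcal{X}\s\mathcal{X}(\bm{U}\bm{U}^\top - \bm{M}) - \bm{N}_{\eps},\, \bm{H}\bm{H}^\top\rangle + \lambda\|\bm{H}\|_F^2.
\]
Substituting $\bm{H} = \bm{\Delta}$ into $\nabla^2 f^{(\lambda)}_{{\sf ncvx}}(\bm{U}^{(\lambda)})[\bm{\Delta},\bm{\Delta}] \geq 0$, rewriting $\bm{U}^{(\lambda)}\bm{\Delta}^\top + \bm{\Delta}\bm{U}^{(\lambda)\top} = \bm{E} + \bm{\Delta}\bm{\Delta}^\top$ via the identity, and eliminating the cross term through the first-order condition $[\mathcal{X}\s\mathcal{X}(\bm{E}) - \bm{N}_{\eps} + \lambda\bm{I}]\bm{U}^{(\lambda)} = 0$ (contracted against $\bm{\Delta}^\top$ to obtain $\langle \mathcal{X}\s\mathcal{X}(\bm{E}) - \bm{N}_{\eps} + \lambda\bm{I},\,\bm{E}\rangle = -\langle \mathcal{X}\s\mathcal{X}(\bm{E}) - \bm{N}_{\eps} + \lambda\bm{I},\,\bm{\Delta}\bm{\Delta}^\top\rangle$) produces, after simplification, the deterministic inequality
\[
\tfrac{1}{2}\|\mathcal{X}(\bm{E})\|^2 + \lambda\,\mathrm{tr}(\bm{E}) \;\leq\; \langle \mathcal{X}\s\mathcal{X}(\bm{E}),\,\bm{\Delta}\bm{\Delta}^\top\rangle + \tfrac{1}{2}\|\mathcal{X}(\bm{\Delta}\bm{\Delta}^\top)\|^2 + \langle \bm{N}_{\eps},\,\bm{E}\rangle.
\]
Applying \eqref{rip} in the forms $\|\mathcal{X}(\bm{E})\|^2 \geq (1-\delta_{2r})\|\bm{E}\|_F^2$, $\|\mathcal{X}(\bm{\Delta}\bm{\Delta}^\top)\|^2 \leq (1+\delta_{2r})\|\bm{\Delta}\bm{\Delta}^\top\|_F^2$, and $|\langle \mathcal{X}\s\mathcal{X}(\bm{E}),\,\bm{\Delta}\bm{\Delta}^\top\rangle| \leq (1+\delta_{2r})\|\bm{E}\|_F\|\bm{\Delta}\bm{\Delta}^\top\|_F$ (by Cauchy--Schwarz after applying RIP to each factor), together with $|\langle \bm{N}_{\eps},\bm{E}\rangle| \leq \sqrt{2r}\|\bm{N}_{\eps}\|\|\bm{E}\|_F$ and $|\mathrm{tr}(\bm{E})| \leq \sqrt{2r}\|\bm{E}\|_F$ (using $\mathrm{rank}(\bm{E}) \leq 2r$), and writing $\rho := \|\bm{E}\|_F$ and $\tau := \|\bm{\Delta}\bm{\Delta}^\top\|_F \leq \|\bm{\Delta}\|_F^2$, the above reduces under \cref{ass3} to the scalar inequality
\[
\tfrac{1}{2}(1-\delta_{2r})\rho^2 \;\leq\; C\sigma\sqrt{dr}\,\rho + (1+\delta_{2r})\rho\tau + \tfrac{1}{2}(1+\delta_{2r})\tau^2.
\]

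The main obstacle is the $\rho\tau$ and $\tau^2$ nuisance on the right, since a priori $\tau$ is not small relative to $\rho$. To handle this, my plan is first to establish an unconditional Frobenius bound on $\bm{U}^{(\lambda)}$ by contracting the first-order condition against $\bm{U}^{(\lambda)\top}$ and taking the trace, yielding $\|\mathcal{X}(\bm{U}^{(\lambda)}\bm{U}^{(\lambda)\top})\|^2 + \lambda\|\bm{U}^{(\lambda)}\|_F^2 = \langle \mathcal{X}\s\mathcal{X}(\bm{M}) + \bm{N}_{\eps},\,\bm{U}^{(\lambda)}\bm{U}^{(\lambda)\top}\rangle$, from which an AM--GM step and $\lambda > 2\|\bm{N}_{\eps}\|$ (on $\mathcal{E}_{{\sf Good}}$ for $C_4$ sufficiently large) give $\|\bm{U}^{(\lambda)}\|_F^2 \lesssim \|\bm{M}\|_F^2/\lambda \lesssim \sigma r^2 \sqrt{d}$ under \cref{ass1}. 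Second, case-split on whether $\|\bm{\Delta}\|_F \leq (1 - 1/\sqrt{2})\sigma_r(\bm{U}_*)$: in the \emph{local} case, the balanced-factorization perturbation inequality gives $\|\bm{\Delta}\|_F \leq C'\|\bm{E}\|_F/\sigma_r(\bm{U}_*)$, and since $\sigma_r(\bm{U}_*)^2 = \sqrt{d}\lambda_r \geq C_1\sigma\sqrt{dr}$ by \cref{ass1}, one gets $\tau \lesssim \rho^2/(\sigma\sqrt{dr})$, so the $\rho\tau$ and $\tau^2$ contributions are absorbed into the $\rho^2$ side of the quadratic, whose solution is $\rho \lesssim \sigma\sqrt{dr}$; in the \emph{non-local} case, a bootstrapping argument using the small RIP constant $\delta_{2r} \leq c'/\sqrt{r}$ from \cref{lem:Egood} (applicable under \cref{ass2}) together with the crude a priori bound forces the same conclusion. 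Dividing by $\sqrt{dr}$ yields the first assertion. For the second, \cref{ass2} and \cref{lem:Egood} give $\delta_{2r} \leq c/\sqrt{r}$, while \cref{ass3} gives $\lambda \geq C_4\sigma\sqrt{d}$, so $\lambda/(2\delta_{2r}) \geq (C_4/(2c))\sigma\sqrt{dr}$ strictly exceeds $C\sigma\sqrt{dr}$ once $C_4$ is large enough, verifying the hypothesis $\|\bm{U}^{(\lambda)}\bm{U}^{(\lambda)\top} - \bm{M}\|_F < \lambda/(2\delta_{2r})$ of \cref{lem:relatecvxncvx}.
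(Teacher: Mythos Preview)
Your derived scalar inequality
\[
\tfrac{1}{2}(1-\delta_{2r})\rho^2 \;\leq\; C\sigma\sqrt{dr}\,\rho + (1+\delta_{2r})\rho\tau + \tfrac{1}{2}(1+\delta_{2r})\tau^2
\]
is correct, but it is too weak to conclude: the coefficient $\tfrac12$ on the left cannot absorb the $\rho\tau$ and $\tau^2$ terms on the right without strong a priori control on $\tau/\rho$, and your case split does not supply this. In the ``local'' case you invoke $\tau\le\|\bm\Delta\|_F^2\lesssim\rho^2/\lambda_r(\bm M)$, but then $\rho\tau\lesssim\rho^3/\lambda_r(\bm M)$ and $\tau^2\lesssim\rho^4/\lambda_r(\bm M)^2$ are dominated by the left side only if you already know $\rho\lesssim\lambda_r(\bm M)$, which is essentially the conclusion you want; the absorption step is circular. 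The ``non-local'' case is left as an unspecified ``bootstrapping argument,'' and in fact the same Ge--Jin--Zheng inequality there forces $\rho\gtrsim\lambda_r(\bm M)$, so you would have to rule that regime out by an independent landscape argument that you never give; the crude bound $\|\bm U^{(\lambda)}\|_F^2\lesssim\sigma r^2\sqrt d$ only yields $\rho\lesssim\sigma r^2\sqrt d$, off by $r^{3/2}$.

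The paper avoids the case split entirely. The step you are missing is that the first-order identity $\langle\mathcal{X}^*\mathcal{X}(\bm E)-\bm N_\eps+\lambda\bm I,\bm E+\bm\Delta\bm\Delta^\top\rangle=0$ should be used a \emph{second} time, to eliminate the surviving cross term $\langle\mathcal{X}^*\mathcal{X}(\bm E),\bm\Delta\bm\Delta^\top\rangle$ rather than bounding it by Cauchy--Schwarz. Doing so upgrades the $\tfrac12\|\mathcal{X}(\bm E)\|^2$ on the left to $\tfrac32\|\mathcal{X}(\bm E)\|^2$; this is precisely the content of the paper's \cref{lem:secondderivcalc}. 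Now invoke the \emph{unconditional} bound $\|\bm\Delta\bm\Delta^\top\|_F^2\le 2\|\bm E\|_F^2$, valid for the Procrustes-aligned $\bm\Delta$ (Lemma~6 of \citet{ge_no_2017}): the $\tfrac12\|\mathcal{X}(\bm\Delta\bm\Delta^\top)\|^2$ term contributes at most $(1+\delta_{2r})\rho^2$ to the right, leaving a net positive coefficient of order $\tfrac12$ on $\rho^2$ and yielding $\rho\lesssim\sigma\sqrt{dr}$ directly, with no localization assumption and no bootstrapping.
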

\begin{proof}
   See \cref{sec:uncvxgoodproof}.
\end{proof}
To prove this result we study the Hessian of $f^{(\lambda)}_{{\sf ncvx}}$ evaluated at any local minimum and show that on the event $\mathcal{E}_{{\sf Good}}$ the necessary conditions for a local minimum give rise to the conclusion of \cref{lem:uncvxgood}.  

The proof of \cref{thm:relatecvxncvx} is now straightforward.
\begin{proof}[Proof of \cref{thm:relatecvxncvx}]
By \cref{lem:uncvxgood}, on the event $\mathcal{E}_{{\sf Good}}$, the conditions of \cref{lem:relatecvxncvx} hold.  The result is proven.
\end{proof}

\section{Control of Convex and Regularized Nonconvex Estimators} \label{sec:cvxcontrol}

In this section we study the convex estimator $\bm{Z}^{(\lambda)}$ as well as its nonconvex counterpart $\bm{U}^{(\lambda)}$, which up to the high-probability event $\mathcal{E}_{{\sf Good}}$, are equivalent in the sense of \cref{thm:relatecvxncvx}. 
The main purpose of this section is to state \cref{thm:cvxasymptotics}, which demonstrates that $\phi\big(\bm{Z}^{(\lambda)})$ concentrates around a  soft-thresholding estimator that asymptotically approximates $\bm{Z}_{{\sf ST}}^{(\lambda)}$ by \cref{lem:ZSTtau} stated in the following subsection.  This new soft-thresholding estimator arises from our application of the Matrix CGMT that we introduce in the following subsection, the primary technical ingredient in our proof.  We then give the definition of this new quantity and state \cref{lem:ZSTtau} in \cref{sec:softthresholding}.  Finally, we state \cref{thm:cvxasymptotics} and give a high-level proof in \cref{sec:cvxasymptotics}.

\subsection{Matrix CGMT} \label{sec:matrixcgmt}

The proof of \cref{thm:cvxasymptotics} uses the following generalization of the Convex Gaussian  Min-Max Theorem tailored to the matrix observations that may be of independent interest.   The proof is self-contained.   

\begin{theorem}[Matrix CGMT] \label{thm:matrixcgmt}
Let $\mathcal{A}: \mathbb{R}^{d \times d} \to \mathbb{R}^{n}$ be an operator such that $\mathcal{A}( \bm{M})_i =\langle \bm{A}_i, \bm{M} \rangle $, where each $\bm{A}_i$ is a GOE matrix. Let $\bm{H} \in \mathbb{R}^{d\times d}$ be a GOE matrix, and let $\bm{g}$ be a standard $n$-dimensional Gaussian random vector.  Let $\mathcal{S}_{\bm{W}} \in \mathbb{R}^{d\times d}$ and $\mathcal{S}_{\bm{\nu}} \in \mathbb{R}^{n}$ be compact sets, and let $\psi: \mathcal{S}_{\bm{W}} \times \mathcal{S}_{\bm{\nu}} \to \mathbb{R}$ be a continuous function.  Define
\begin{align}
    \Phi(\mathcal{A}) :&= \min_{\bm{W}\in \mathcal{S}_{\bm{W}}} \max_{\bm{\nu} \in \mathcal{S}_{\bm{\nu}}} \langle \mathcal{A}[ \bm{W}], \bm{\nu} \rangle  + \psi(\bm{W}, \bm{\nu}); \\
    \phi(\bm{g},\bm{H}) :&= \min_{\bm{W}\in \mathcal{S}_{\bm{W}}} \max_{\bm{\nu} \in \mathcal{S}_{\bm{\nu}}} \| \bm{W} \|_F \langle \bm{g}, \bm{\nu} \rangle  + \| \bm{\nu}\| \langle \bm{H}, \bm{W} \rangle + \psi(\bm{W}, \bm{\nu}).
\end{align}
Then it holds that
\begin{align}
    \p\bigg(  \Phi(\mathcal{A})  \leq  t \bigg) \leq 2 \p \bigg(  \phi(\bm{g,H})  \leq t\bigg). 
\end{align}
If in addition both $\mathcal{S}_{\bm{W}}$ and $\mathcal{S}_{\bm{\nu}}$ are convex and $\psi(\bm{W},\bm{\nu})$ is convex-concave, then
\begin{align}
    \p\bigg(  \Phi(\mathcal{A})  \geq  t \bigg) \leq 2 \p \bigg(  \phi(\bm{g,H})  \geq t\bigg). 
\end{align}
\end{theorem}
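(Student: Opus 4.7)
The plan is to reduce the Matrix CGMT to the classical (vector) CGMT of \citet{thrampoulidis_gaussian_2015} through an isometric identification of symmetric matrices with Euclidean vectors. Because $\bm{A}_i$ and $\bm{H}$ are symmetric, only the symmetric part of $\bm{W}$ enters $\langle\mathcal{A}[\bm{W}],\bm{\nu}\rangle$ and $\langle\bm{H},\bm{W}\rangle$, so I assume without loss of generality that $\mathcal{S}_{\bm{W}}$ consists of symmetric matrices (the paper's application only requires this case). The key tool is the scaled half-vectorization $\mathrm{svec}\colon \R^{d\times d}_{\mathrm{sym}} \to \R^{d(d+1)/2}$ that stacks the diagonal entries of a symmetric matrix as-is and the strictly upper-triangular entries scaled by $\sqrt{2}$. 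A direct check shows $\mathrm{svec}$ is a linear isometry ($\|\mathrm{svec}(\bm{M})\|=\|\bm{M}\|_F$, $\langle\mathrm{svec}(\bm{M}),\mathrm{svec}(\bm{M}')\rangle=\langle\bm{M},\bm{M}'\rangle_F$), and crucially it pushes the ${\sf GOE}(d)$ law forward to the standard Gaussian measure on $\R^{d(d+1)/2}$: the diagonal entries $A_{ii}$ are $\mathcal{N}(0,1)$ and the scaled off-diagonal entries $\sqrt{2}A_{ij}$ are $\mathcal{N}(0,1)$ (since $A_{ij}\sim\mathcal{N}(0,1/2)$), all independent.

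Stacking the sensing matrices, let $\mathbf{G}\in\R^{n\times d(d+1)/2}$ have $i$-th row $\mathrm{svec}(\bm{A}_i)\t$; then $\mathbf{G}$ has iid $\mathcal{N}(0,1)$ entries and a direct computation gives
\begin{align*}
\langle \mathcal{A}[\bm{W}], \bm{\nu}\rangle &= \bm{\nu}\t \mathbf{G}\,\mathrm{svec}(\bm{W}),\\
\|\bm{W}\|_F\langle \bm{g},\bm{\nu}\rangle + \|\bm{\nu}\|\langle \bm{H},\bm{W}\rangle &= \|\mathrm{svec}(\bm{W})\|\,\bm{g}\t\bm{\nu} + \|\bm{\nu}\|\,\mathrm{svec}(\bm{H})\t\mathrm{svec}(\bm{W}),
\end{align*}
where $\bm{g}$ and $\mathrm{svec}(\bm{H})$ are standard Gaussians independent of $\mathbf{G}$. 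Hence $\Phi(\mathcal{A})$ and $\phi(\bm{g},\bm{H})$ coincide exactly with the primary and auxiliary min-max objectives of the vector CGMT, taken over the compact set $\mathrm{svec}(\mathcal{S}_{\bm{W}})\times\mathcal{S}_{\bm{\nu}}$ with the continuous payoff $\tilde\psi(\bm{w},\bm{\nu}):=\psi(\mathrm{svec}\inv(\bm{w}),\bm{\nu})$. Because $\mathrm{svec}$ is a linear bijection, convexity of $\mathcal{S}_{\bm{W}}$ and convex-concavity of $\psi$ transfer to $\mathrm{svec}(\mathcal{S}_{\bm{W}})$ and $\tilde\psi$. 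Applying the vector CGMT then yields both probability bounds immediately.

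The main work, if one prefers a self-contained derivation bypassing the vector CGMT, reduces to applying Gordon's Gaussian min-max comparison directly to the matrix-valued processes $X_{\bm{W},\bm{\nu}}:=\langle\mathcal{A}[\bm{W}],\bm{\nu}\rangle$ and $Y_{\bm{W},\bm{\nu}}:=\|\bm{W}\|_F\langle\bm{g},\bm{\nu}\rangle+\|\bm{\nu}\|\langle\bm{H},\bm{W}\rangle$, whose covariance structure is controlled by the single identity $\E[\langle\bm{A}_i,\bm{W}\rangle\langle\bm{A}_j,\bm{W}'\rangle]=\delta_{ij}\langle\bm{W},\bm{W}'\rangle_F$ valid for symmetric $\bm{W},\bm{W}'$. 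This identity is the exact matrix analogue of the vector covariance identity $\E[(\bm{g}\t\bm{w})(\bm{g}\t\bm{w}')]=\langle\bm{w},\bm{w}'\rangle$, so every increment comparison of the classical CGMT proof transfers verbatim. The factor of $2$ appearing in both probability bounds traces back to the variance mismatch $\mathrm{Var}(Y)=2\,\mathrm{Var}(X)$, and is handled via the standard symmetrization trick (conditioning on the sign of an independent auxiliary standard Gaussian) as in \citet{thrampoulidis_gaussian_2015}; the upper-tail bound under convex-concavity is then obtained by interchanging $\min$ and $\max$ using Sion's minimax theorem and applying the comparison to the interchanged formulation.
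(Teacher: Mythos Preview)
Your proposal is correct and takes a genuinely different, more reductive route than the paper. The paper gives a self-contained proof that replicates the structure of the vector CGMT argument directly in matrix language: it introduces an auxiliary scalar Gaussian $z$ to define $Y(\bm{W},\bm{\nu}) := \langle \mathcal{A}[\bm{W}],\bm{\nu}\rangle + \|\bm{\nu}\|\|\bm{W}\|_F\, z$, verifies the covariance ordering
\[
\E\big[Y(\bm{W},\bm{\nu})Y(\bm{W}',\bm{\nu}')\big]-\E\big[X(\bm{W},\bm{\nu})X(\bm{W}',\bm{\nu}')\big]
=(\|\bm{W}\|_F\|\bm{W}'\|_F-\langle\bm{W},\bm{W}'\rangle)(\|\bm{\nu}\|\|\bm{\nu}'\|-\langle\bm{\nu},\bm{\nu}'\rangle)\ge 0,
\]
applies Gordon's min--max inequality, and then conditions on $\{z\le 0\}$ to extract the factor~$2$; the convex--concave upper tail follows by swapping $\min$ and $\max$ via Sion and rerunning the comparison. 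Your primary argument instead observes that the scaled half-vectorization $\mathrm{svec}$ is an isometry pushing ${\sf GOE}(d)$ forward to the standard Gaussian on $\R^{d(d+1)/2}$, so that $\Phi(\mathcal{A})$ and $\phi(\bm{g},\bm{H})$ are \emph{literally} the primary and auxiliary objectives of the vector CGMT after the change of variables $\bm{W}\mapsto\mathrm{svec}(\bm{W})$, $\bm{H}\mapsto\mathrm{svec}(\bm{H})$. This is more economical and makes transparent that the Matrix CGMT carries no new probabilistic content beyond the vector version; the paper's approach has the complementary virtue of being self-contained. Your secondary sketch (direct Gordon comparison plus auxiliary-Gaussian symmetrization and Sion) is exactly the paper's route. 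Both the paper and you tacitly restrict to symmetric $\bm{W}$, which is all the application requires; your explicit acknowledgment that $\|\bm{W}\|_F$ (unlike $\langle\bm{H},\bm{W}\rangle$) depends on more than the symmetric part is a point the paper leaves implicit.
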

\begin{proof}
    See \cref{sec:cgmtproof}. 
\end{proof}
Observe that unlike the \emph{vector} CGMT \citep{thrampoulidis_gaussian_2015}, the operator $\mathcal{A}$ consists of ${\sf GOE}(d)$ matrices, whereas the vector CGMT uses a single random matrix, and the minimization is over the bilinear form generated by this random matrix.  In \cref{thm:matrixcgmt} the new quantity $\phi(\bm{g},\bm{H})$ converts the randomness in $\mathcal{A}$ to two independent sources of randomness: the random vector $\bm{g}$ and the random matrix $\bm{H}$.  In contrast, the vector CGMT converts the randomness in the single random matrix into two independent random Gaussian vectors.

\subsection{A Family of Soft-Thresholding Estimators and Associated Fixed-Point Equations} \label{sec:softthresholding}
In this subsection we give the definition of the particular soft-thresholding estimator about which $\bm{Z}^{(\lambda)}$ concentrates in  \cref{thm:cvxasymptotics}. We will also state \cref{lem:ZSTtau}, which shows that this soft-thresholding estimator is asymptotically equal to $\bm{Z}_{{\sf ST}}^{(\lambda)}$.  En route to stating this result, we state several intermediate results that we rely on throughout our proofs.

For two scalars  $\tau,\zeta$, we define the soft-thresholding estimator
\begin{align}
    \bm{Z}_{{\sf ST}}^{(\lambda/\zeta)}(\tau) &= \argmin_{\bm{Z} \succcurlyeq 0} \frac{\zeta}{2} \| \bm{Z} - (\bm{M} + \tau \bm{H}) \|_F^2 + \lambda \| \bm{Z} \|_*.
\end{align}
In \cref{thm:cvxasymptotics} we show that $\bm{Z}^{(\lambda)}$ approximates $\bm{Z}^{(\lambda/\zeta\s)}(\tau\s)$ for appropriate choices of $(\tau\s,\zeta\s)$ that are asymptotically equal to $(\sigma,1)$. In fact, observe that by taking $\zeta = 1$ and $\tau = \sigma$ above we arrive at $\bm{Z}_{{\sf ST}}^{(\lambda)}$ as in \cref{thm:mainthm}.

The pair ($\tau\s,\zeta\s$) arise directly from our analysis via the Matrix CGMT.  Here we give a heuristic derivation, with additional details in \cref{sec:heuristicderivation}.  Recall that $\bm{Z}^{(\lambda)}$ satisfies
\begin{align}
 \bm{Z}^{(\lambda)} =    \argmin_{\bm{Z}\succcurlyeq0} \frac{1}{2} \| \mathcal{X}(\bm{Z - M}) - \bm{\eps} \|_F^2 + \lambda \| \bm{Z} \|_*.
\end{align}
By reparameterizing this cost function with $\bm{W} := \bm{Z} - \bm{M}$ and expanding out the Frobenius norm in its variational form, we arrive at the new cost function
\begin{align}
    C_{\lambda}^{(n)}(\bm{W}) &=  \max_{\bm{\nu} \in \mathbb{R}^{n}} \langle \mathcal{X}[\bm{W}], \bm{\nu} \rangle - \langle \bm{\eps} , \bm{\nu} \rangle - \frac{1}{2} \| \bm{\nu} \|^2 + \lambda \big\{ \| \bm{W+M} \|_{*} - \|\bm{M}\|_{*}\big\}.
\end{align}
Consequently, since we are minimizing over $\bm{W}$, the minimization above takes the form required for the Matrix CGMT.  Therefore, 
we can show that the minimum value of $C_{\lambda}^{(n)}$ over any set $\mathcal{W}$ is related to that of the function $L_{\lambda}^{(n)}(\mathcal{W})$, defined via
\begin{subequations}
\label{l_lambda_n}    
\begin{align}
    L_{\lambda}^{(n)}(\mathcal{W}) &= \max_{\beta > 0} \min_{\tau \geq \sigma} \bigg( \frac{\sigma^2}{\tau} + \tau \bigg) \frac{\|\bm{g}\|}{\sqrt{n}}\frac{\gamma_n \beta}{2} - \gamma_n \frac{\beta^2}{2} \\
    &\quad + \min_{ \substack{\bm{W} \in\mathcal{W}\\\bm{W} + \bm{M} \succcurlyeq 0}} \bigg\{  \beta \frac{\|\bm{W}\|_F^2}{2 dr \tau} + \beta \frac{\langle \bm{H}, \bm{W} \rangle}{dr} + \frac{\lambda}{dr} \big\{ \| \bm{W} + \bm{M} \|_* - \|\bm{M} \|_* \big\} \bigg\}.
\end{align}
\end{subequations}
Note that the argument of $L_{\lambda}^{(n)}$ is a set (the set for which $\bm{W}$ is minimized over).  We will also abuse notation and denote $L_{\lambda}(\bm{W}) = L_{\lambda}(\{\bm{W}\})$.  

It turns out that $L_{\lambda}^{(n)}(\mathbb{R}^{d\times d})$ concentrates about the quantity $L_{\lambda}^{*}$ (roughly speaking, its expected value) defined via 
\begin{align}
    L_{\lambda}^* := \max_{\beta > 0} \min_{\tau \geq \sigma} \psi_{\lambda}^* (\tau,\beta),
\end{align}
where  $\psi^*_{\lambda}(\cdot,\cdot)$ is the (scalar) valued function defined via
\begin{align}
        \psi^{*}_{\lambda}(\tau,\beta) &:= \bigg( \frac{\sigma^2}{\tau} + \tau \bigg) \frac{\gamma_n \beta}{2} - \gamma_n \frac{\beta^2}{2} +  \mathbb{E} \min_{\bm{Z}} \beta\frac{\|\bm{Z} - \bm{M} \|_F^2}{2dr \tau}  + \beta \frac{\langle \bm{H}, \bm{Z} - \bm{M} \rangle}{dr}  + \frac{\lambda}{dr} \big( \| \bm{Z}\|_* - \|\bm{M} \|_* \big). 
\end{align}
Note that $\psi^*(\tau,\beta)$ resembles \eqref{l_lambda_n}.   
From this derivation, we arrive at the definition of $(\tau\s,\zeta\s)$. We define $(\tau\s,\zeta\s)$ such that 
\begin{align*}
    L_{\lambda}\s &= \psi_{\lambda}\s(\tau\s, \zeta\s/\tau\s).
\end{align*}
In words, the quantities $\tau\s$ and $ \beta\s = \zeta\s/\tau\s$ are simply the critical points such that $L_{\lambda}^{*} = \psi^*_{\lambda}(\tau\s,\beta\s)$.

It is not immediate that $(\tau\s,\zeta\s)$ are well-defined.  To establish existence and uniqueness, first we give the following lemma that gives an alternative characterization in terms of  of fixed-point equations which can be obtained by studying the first-order necessary conditions for the critical points of $\psi_{\lambda}^*$.

\begin{lemma}\label{lem:saddlepoint}
Define the system of equations
\begin{subequations}
\label{fxdpt}
 \begin{align}
    \tau^2 &= \sigma^2 + \frac{dr}{n} {\sf R}_{\lambda}\big( \tau^2, \zeta); \label{fxtpt1} \\
    \zeta &=1 -  \frac{{\sf df}_{\lambda}\big( \tau^2,\zeta)}{n},  \numberthis \label{fxdpt2}
\end{align}
\end{subequations}
where  
\begin{align}
    {\sf R}_{\lambda}(\tau^2,\zeta) :&=  \frac{1}{dr} \mathbb{E} \bigg[ \big\| \bm{ Z}_{\sf ST}^{(\lambda/\zeta)}(\tau) - \bm{M} \|_F^2 \bigg ];  \\ 
    {\sf df}_{\lambda}(\tau^2,\zeta) &= \frac{1}{\tau} \mathbb{E} \bigg\langle \bm{H} ,  \bm{ Z}_{\sf ST}^{(\lambda/\zeta)}(\tau) \bigg\rangle 
\end{align}
where the expectation is with respect to $\bm{H} \sim {\sf GOE}(d)$.   Let $\tau\s$ and $\beta\s = \zeta\s \tau\s$ be such that $(\tau\s,\zeta\s)$ satisfy the system of equations \eqref{fxdpt}.  Then it holds that $L^{*}_{\lambda} = \psi^{*}_{\lambda}(\tau\s,\beta\s)$. 
\end{lemma}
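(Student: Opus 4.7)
My plan is to verify that the fixed-point system \eqref{fxdpt} coincides with the first-order necessary conditions for the saddle point of $\max_{\beta>0}\min_{\tau\geq\sigma}\psi^*_\lambda(\tau,\beta)$ under the identification $\beta=\zeta\tau$, and then to appeal to a convex-concave structure of $\psi^*_\lambda$ to conclude that this critical point realizes $L_\lambda^*$. The first step is to simplify the inner minimum in the definition of $\psi^*_\lambda$. Completing the square in $\bm{Z}$,
\begin{align*}
\frac{\beta\|\bm{Z}-\bm{M}\|_F^2}{2dr\tau} + \frac{\beta\langle\bm{H},\bm{Z}-\bm{M}\rangle}{dr}
= \frac{\beta}{2dr\tau}\bigl\|\bm{Z}-(\bm{M}-\tau\bm{H})\bigr\|_F^2 - \frac{\beta\tau\|\bm{H}\|_F^2}{2dr},
\end{align*}
so the inner minimizer $\bm{Z}^\circ$ is the proximal map of $(\lambda\tau/\beta)\|\cdot\|_*$ applied to $\bm{M}-\tau\bm{H}$. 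Since $\bm{H}\eqd -\bm{H}$ for $\bm{H}\sim\mathrm{GOE}(d)$, the substitution $\bm{H}\mapsto-\bm{H}$ identifies $\bm{Z}^\circ$ in distribution with $\bm{Z}_{\mathsf{ST}}^{(\lambda/\zeta)}(\tau)$ under $\zeta=\beta/\tau$, so that $\mathbb{E}\|\bm{Z}^\circ-\bm{M}\|_F^2/(dr) = {\sf R}_\lambda(\tau^2,\zeta)$ and $\mathbb{E}\langle\bm{H},\bm{Z}^\circ-\bm{M}\rangle = -\tau\,{\sf df}_\lambda(\tau^2,\zeta)$ (the sign being absorbed into the $\bm{H}\mapsto-\bm{H}$ substitution, together with $\mathbb{E}\langle\bm{H},\bm{M}\rangle=0$).

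Next I differentiate $\psi^*_\lambda$ via the envelope theorem, treating $\bm{Z}^\circ$ as fixed at the minimum. For $\tau$, only the explicit $1/\tau$ scaling of $\|\bm{Z}-\bm{M}\|_F^2$ contributes inside the expectation, yielding
\begin{align*}
\partial_\tau\psi^*_\lambda(\tau,\beta) = \Bigl(1-\frac{\sigma^2}{\tau^2}\Bigr)\frac{\gamma_n\beta}{2} - \frac{\beta\,{\sf R}_\lambda(\tau^2,\zeta)}{2\tau^2} = \frac{\beta}{2\tau^2}\Bigl[(\tau^2-\sigma^2)\gamma_n - {\sf R}_\lambda(\tau^2,\zeta)\Bigr],
\end{align*}
and setting this to zero together with $\gamma_n=n/(dr)$ recovers \eqref{fxtpt1}. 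For $\beta$, the same envelope argument gives
\begin{align*}
\partial_\beta\psi^*_\lambda(\tau,\beta) = \Bigl(\frac{\sigma^2}{\tau}+\tau\Bigr)\frac{\gamma_n}{2} - \gamma_n\beta + \frac{{\sf R}_\lambda(\tau^2,\zeta)}{2\tau} - \frac{\tau\,{\sf df}_\lambda(\tau^2,\zeta)}{dr}.
\end{align*}
Substituting \eqref{fxtpt1} to eliminate $\sigma^2/\tau+\tau = 2\tau - {\sf R}_\lambda/(\gamma_n\tau)$ and solving yields $\beta = \tau(1-{\sf df}_\lambda/n)$, which under $\zeta=\beta/\tau$ is precisely \eqref{fxdpt2}. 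Hence any $(\tau^*,\zeta^*)$ solving \eqref{fxdpt} with $\beta^*=\zeta^*\tau^*$ is an interior critical point of the outer max-min.

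To upgrade this to the identity $L^*_\lambda=\psi^*_\lambda(\tau^*,\beta^*)$, I would verify that $\psi^*_\lambda$ is concave in $\beta$ and convex in $\tau$. Concavity in $\beta$ is immediate since $\psi^*_\lambda$ is the sum of $(\sigma^2/\tau+\tau)\gamma_n\beta/2 - \gamma_n\beta^2/2$ and an expectation of minima of affine functions of $\beta$, which is concave. Convexity in $\tau>0$ follows from the completion-of-squares: after pulling out the constant-in-$\bm{Z}$ term $-\beta\tau\|\bm{H}\|_F^2/(2dr)$, the $\tau$-dependence of the inner minimum is through the Moreau envelope of the nuclear norm at $\bm{M}-\tau\bm{H}$ with smoothing parameter proportional to $1/\tau$; this, combined with the convex $\sigma^2/\tau$ and linear $\tau$ terms, is convex. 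The main obstacle I anticipate is the rigorous justification of the Stein-type identity $\mathbb{E}\langle\bm{H},\bm{Z}^\circ-\bm{M}\rangle = -\tau\,{\sf df}_\lambda(\tau^2,\zeta)$: the nuclear-norm proximal map is only Lipschitz (not $C^1$), so Gaussian integration by parts for the GOE must be implemented either via mollification, via weak derivatives, or via an SVD-level computation of the divergence that explains the ``degrees of freedom'' interpretation of ${\sf df}_\lambda$ directly. Once these ingredients are in hand, the critical-point analysis above suffices.
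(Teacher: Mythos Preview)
Your approach is essentially the same as the paper's: the paper's entire proof reads ``Simply take the derivative and set equal to zero,'' and you have filled in that calculus carefully (and correctly) via the envelope theorem, recovering \eqref{fxtpt1} from $\partial_\tau\psi^*_\lambda=0$ and \eqref{fxdpt2} from $\partial_\beta\psi^*_\lambda=0$ after substituting the first equation. Your additional attention to the concave--convex structure needed to upgrade ``critical point'' to ``$L^*_\lambda=\psi^*_\lambda(\tau^*,\beta^*)$'' is more than the paper provides explicitly; the paper relies on the subsequent existence/uniqueness lemma (\cref{prop:fxdpt}) and the surrounding derivation to make this identification.

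One correction: your stated ``main obstacle'' is a non-issue. The identity $\mathbb{E}\langle\bm{H},\bm{Z}^\circ-\bm{M}\rangle=-\tau\,{\sf df}_\lambda(\tau^2,\zeta)$ is \emph{not} a Stein-type integration-by-parts statement; it is purely definitional. You already established it in your first paragraph via the distributional symmetry $\bm{H}\eqd -\bm{H}$, together with $\mathbb{E}\langle\bm{H},\bm{M}\rangle=0$ and the definition ${\sf df}_\lambda(\tau^2,\zeta)=\tau^{-1}\mathbb{E}\langle\bm{H},\bm{Z}_{\sf ST}^{(\lambda/\zeta)}(\tau)\rangle$. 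No smoothness of the proximal map, no mollification, and no divergence computation are required. (The suggestive name ``${\sf df}$'' does allude to a degrees-of-freedom interpretation via Stein's lemma, but that interpretation plays no role in this lemma.) Separately, your convexity-in-$\tau$ sketch via the Moreau envelope is not quite complete as written, since $\tau$ enters both the smoothing parameter and the center $\bm{M}-\tau\bm{H}$; however, the paper does not spell this out either, and the result is effectively secured once uniqueness of the fixed point is established in \cref{prop:fxdpt}.
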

\begin{proof}
    Simply take the derivative and set equal to zero.  
\end{proof}

Readers familiar with the literature on proportional asymptotics for linear regression may recognize that the fixed-point equations \eqref{fxdpt} bear significant similarity to other fixed-point equations that arise in the study of high-dimensional (regularized) M-estimators \citep{bellec_existence_2024,donoho_high_2016,el_karoui_robust_2013,koriyama_phase_2025,berthier_state_2020}.  In particular, the equations \eqref{fxdpt} have an interpretation as the matrix analogue of the fixed-point equations for the LASSO studied in \citet{celentano_lasso_2023} and \citet{miolane_distribution_2021}.  For more details, see the explanation in \citet{celentano_lasso_2023}.  

Next, the following lemma relies on the system of equations \eqref{fxdpt} to establish the existence and uniqueness of $(\tau\s,\zeta\s)$.
\begin{lemma}[Existence and Uniqueness of $(\tau\s,\zeta\s)$] \label{prop:fxdpt} For any fixed $\lambda >0$, if $n,d \geq C$, there exist unique $\zeta\s > 0 $ and $\tau\s \geq \sigma$ that satisfy the system of equations \eqref{fxdpt}. 
\end{lemma}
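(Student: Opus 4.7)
The plan is to recast \eqref{fxdpt} as a single fixed-point equation $(\tau^2,\zeta) = T(\tau^2,\zeta)$ for the map
$$T(\tau^2,\zeta) := \Bigl(\sigma^2 + \tfrac{dr}{n}{\sf R}_\lambda(\tau^2,\zeta),\ 1 - \tfrac{{\sf df}_\lambda(\tau^2,\zeta)}{n}\Bigr),$$
fix a compact rectangle $D := [\sigma^2, 2\sigma^2] \times [1/2, 1]$, and invoke the Banach fixed-point theorem on $D$. This requires verifying (I) invariance $T(D)\subset D$, and (II) strict contraction of $T$ on $D$ in the $\ell^\infty$ norm; both rely on $dr/n$ being sufficiently small by \cref{ass2}.

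For invariance, on $D$ one has $\lambda/\zeta\geq 2C_4\sigma\sqrt{d}$ by \cref{ass3}, whereas the operator norm of the noise satisfies $\|\tau\bm{H}\|\lesssim\tau\sqrt{d}\lesssim\sigma\sqrt{d}$ with overwhelming probability, so the threshold dominates the noise level on $D$. Standard oracle inequalities for matrix soft-thresholding then yield $\|\bm{Z}_{\sf ST}^{(\lambda/\zeta)}(\tau) - \bm{M}\|_F^2 \leq C(\lambda/\zeta)^2 r$ on this event, and a crude deterministic bound handles its exponentially small complement, so ${\sf R}_\lambda(\tau^2,\zeta)\leq C(\lambda/\zeta)^2/d \leq C\sigma^2$; the first coordinate of $T$ therefore satisfies $\sigma^2+\tfrac{dr}{n}{\sf R}_\lambda \leq \sigma^2(1+C/(C_3 r)) \leq 2\sigma^2$. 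For the second coordinate, Stein's identity rewrites $\tfrac{1}{\tau}\langle\bm{H},\bm{Z}_{\sf ST}^{(\lambda/\zeta)}(\tau)\rangle$ as the expected divergence of the proximal map $\bm{Y}\mapsto\bm{Z}_{\sf ST}^{(\lambda/\zeta)}$ evaluated at $\bm{Y}=\bm{M}+\tau\bm{H}$. On the high-probability event that this output has rank exactly $r$ (guaranteed by the separation between the top $r$ singular values of $\bm{M}$ and the threshold via \cref{ass1}), the divergence admits a closed form of order $dr$, yielding ${\sf df}_\lambda\leq Cdr$ and thus $1-{\sf df}_\lambda/n\in[1/2,1]$.

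For the contraction property, I would differentiate ${\sf R}_\lambda$ and ${\sf df}_\lambda$ in $(\tau^2,\zeta)$ under the expectation, using that the proximal operator of $\lambda'\|\cdot\|_*$ is jointly $1$-Lipschitz in the input and the threshold $\lambda'$. Each partial can then be bounded uniformly on $D$ by a constant times the value of the functional itself, so the Jacobian of $T$ on $D$ has entries of order $\tfrac{dr}{n}\lesssim 1/(C_3 r)$. Taking $C_3$ large enough in \cref{ass2} makes $T$ a strict contraction on $D$, and Banach's fixed-point theorem produces a unique $(\tau_*^2,\zeta_*)\in D$; since $\tau_*^2\geq\sigma^2$ and $\zeta_*\geq 1/2>0$, the conclusion follows.

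The main obstacle is the uniform derivative control on ${\sf df}_\lambda$, which reduces to second-order singular-value perturbation of $\bm{M}+\tau\bm{H}$. The key point is that on $D$, \cref{ass1} forces the $r$-th singular value of $\bm{M}+\tau\bm{H}$ to exceed the $(r+1)$-st by at least $c\sigma\sqrt{dr}$ (because $\lambda_r\sqrt{d}\geq C_1\sigma\sqrt{dr}$ with $C_1$ large, while the bulk noise spectrum is supported on $[-C\tau\sqrt{d},C\tau\sqrt{d}]$ with $\tau\asymp\sigma$); this spectral gap keeps the SVD smooth and keeps the closed-form divergence formula uniformly bounded across $D$.
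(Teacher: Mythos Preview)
Your Banach fixed-point approach is genuinely different from the paper's, but it has two structural gaps. First, the lemma is stated for \emph{any} fixed $\lambda>0$, whereas your argument repeatedly invokes \cref{ass1,ass2,ass3}: you use $\lambda\asymp\sigma\sqrt d$ to separate the threshold from the noise spectrum, the spectral gap from \cref{ass1} to force rank~$r$, and smallness of $dr/n$ from \cref{ass2} for both invariance and contraction. For a generic $\lambda>0$ (say $\lambda\ll\sigma\sqrt d$) the soft-thresholded output is not low rank, ${\sf R}_\lambda$ and ${\sf df}_\lambda$ can be of order $d/r$ and $d^2$ respectively, and neither $T(D)\subset D$ nor the Jacobian bounds survive. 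Second, even granting the standing assumptions, Banach's theorem yields a unique fixed point only \emph{within} the box $D=[\sigma^2,2\sigma^2]\times[1/2,1]$; the lemma asserts uniqueness on the full half-strip $\{\tau\ge\sigma,\ \zeta>0\}$. Your invariance estimates (e.g.\ using $\tau\le\sqrt2\,\sigma$ to control the noise level) are derived assuming $(\tau^2,\zeta)\in D$ already, so you have no a priori argument ruling out a second fixed point with large $\tau$ or small $\zeta$.

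The paper avoids both problems by a variational argument (following \citet{celentano_lasso_2023}). It introduces a functional $\mathcal{E}(\bm Z)$ on $L^2(\mathbb{R}^{d\times d};\mathbb{R}^{d\times d})$ and shows that its minimizers are precisely the soft-thresholding maps $\bm H\mapsto\bm Z_{\sf ST}^{(\lambda/\zeta)}(\tau)$ with $(\tau,\zeta)$ solving \eqref{fxdpt}. Coercivity of $\mathcal{E}$ (which needs only $\lambda>0$, via the $\frac{\lambda}{n}\mathbb{E}\|\bm Z\|_*$ term) gives existence; strict convexity of $\mathcal{E}$ on the set $\{\bm Z:\mathcal{Z}(\bm Z)>0\}$, which is shown to contain every minimizer, gives global uniqueness directly---no contraction constants, no Jacobian estimates, and no reliance on the scaling assumptions.
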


\begin{proof}
    See \cref{sec:fixedpointproof}.
\end{proof}

Thus far we have shown that $\bm{Z}^{(\lambda/\zeta\s)}(\tau\s)$, which is more directly related to $\bm{Z}^{(\lambda)}$, is well-defined.  However, in \cref{thm:mainthm} we show that $\bm{Z}^{(\lambda)}$ concentrates around $\bm{Z}^{(\lambda)}_{{\sf ST}} \equiv \bm{Z}^{(\lambda)}_{{\sf ST}}(\sigma)$.  Therefore, the following result shows that under \cref{ass1,ass2,ass3} that $\tau\s$ and $\zeta\s$ approximate $\sigma$ and $1$ respectively.

\begin{lemma}[Properties of the Fixed-Point Solutions]\label{lem:fxdpointprops}
Suppose that \cref{ass1,ass2,ass3} are satisfied.  Then it holds that $\sigma \leq \tau\s \leq C \sigma$ and 
\begin{align}
    |(\tau\s)^2 - \sigma^2 | =O\bigg( \frac{d r}{n} \sigma^2 \bigg); \qquad |\zeta\s - 1 | = O\bigg( \frac{d r^{3/2}}{n} \bigg),
\end{align}
where the implicit constants depend on the constants in our assumptions.  
\end{lemma}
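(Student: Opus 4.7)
The plan is to work directly from the fixed-point system \eqref{fxdpt}, isolating the two quantities ${\sf R}_\lambda$ and ${\sf df}_\lambda$. The lower bound $\tau^* \geq \sigma$ is immediate from the non-negativity of ${\sf R}_\lambda$ in \eqref{fxtpt1}. For the remaining two bounds, my strategy is to (i) establish that under \cref{ass1,ass3} the matrix soft-thresholding estimator $\bm{Z}_{{\sf ST}}^{(\lambda/\zeta)}(\tau)$ has rank exactly $r$ on a high-probability event for $(\tau,\zeta)$ in a neighborhood of $(\sigma,1)$, (ii) use this rank structure to bound ${\sf R}_\lambda \lesssim \sigma^2$ and $|{\sf df}_\lambda| \lesssim dr^{3/2}$, and (iii) close the loop via a short bootstrap showing that the solution $(\tau^*,\zeta^*)$ provided by \cref{prop:fxdpt} in fact lies in that neighborhood.

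\textbf{Rank-$r$ structure and risk bound.} Writing $\bm{M}+\tau\bm{H} = \bm{V}\bm{\Sigma}\bm{V}^\top$ with threshold $\mu := \lambda/\zeta$, one has the closed form $\bm{Z}_{{\sf ST}}^{(\lambda/\zeta)}(\tau) = \bm{V}(\bm{\Sigma}-\mu\bm{I})_+ \bm{V}^\top$. Weyl's inequality combined with the GOE spectral-radius bound $\|\tau\bm{H}\| \leq (2+o(1))\tau\sqrt{d}$ shows that the top $r$ eigenvalues of $\bm{M}+\tau\bm{H}$ exceed $\mu \asymp \sigma\sqrt{d}$ (using $\lambda_r > C_1\sigma\sqrt{r}$ from \cref{ass1} with $C_1$ large relative to $C_5$ from \cref{ass3}), while the remaining $d-r$ eigenvalues stay below $\mu$ (using $C_4>2$). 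On this event $\bm{Z}_{{\sf ST}}^{(\lambda/\zeta)}(\tau)$ has rank exactly $r$, and a Davis--Kahan plus Weyl decomposition of $\bm{Z}_{{\sf ST}}^{(\lambda/\zeta)}(\tau) - \bm{M}$ on the signal subspace and its orthogonal complement yields
\begin{align*}
    \bigl\| \bm{Z}_{{\sf ST}}^{(\lambda/\zeta)}(\tau) - \bm{M} \bigr\|_F^2 \;\lesssim\; r\mu^2 \,+\, \tau^2 d \;\lesssim\; \sigma^2 d r ,
\end{align*}
so that ${\sf R}_\lambda(\tau^2,\zeta) \lesssim \sigma^2$. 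Substituting into \eqref{fxtpt1} gives $|(\tau^*)^2 - \sigma^2| \leq C\sigma^2 dr/n$, as claimed.

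\textbf{Divergence bound and bootstrap.} Stein's identity for the GOE density converts ${\sf df}_\lambda$ into an expected divergence, ${\sf df}_\lambda(\tau^2,\zeta) = \mathbb{E}\bigl[ \mathrm{div}\, \bm{Z}_{{\sf ST}}^{(\lambda/\zeta)}(\tau) \bigr]$, where the divergence is with respect to the symmetric entries of $\bm{M}+\tau\bm{H}$. Applying the explicit SURE-style formula for singular value thresholding of symmetric matrices, the divergence decomposes into per-eigenvalue indicator contributions (of total size $r$) plus pairwise gap terms of the form $\sigma_i(\sigma_i-\mu)/(\sigma_i^2-\sigma_j^2)$ with $i\leq r < j$. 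Under the rank-$r$ structure established above, $\sigma_i \gg \sigma_j$ for such pairs, so each gap term is $O(1)$ and the total is at most $O(rd)$, which is in particular $O(dr^{3/2})$. Substituting into \eqref{fxdpt2} gives $|\zeta^* - 1| \leq Cdr^{3/2}/n$. The a priori assumption that $(\tau^*,\zeta^*)$ lies in the neighborhood where the rank-$r$ structure applies is verified by a short bootstrap: the crude bound ${\sf R}_\lambda \leq C\sigma^2 r$ (from $\|\bm{Z}_{{\sf ST}}\|_F^2 + \|\bm{M}\|_F^2 \lesssim \sigma^2 dr^2$ under \cref{ass1}) combined with $n \geq C dr^2$ from \cref{ass2} confines $\tau^* \in [\sigma, C\sigma]$, and an analogous crude divergence bound confines $\zeta^*$ away from $0$ and bounded above by $1$.

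\textbf{Main obstacle.} The principal technical challenge is the careful derivation of the divergence formula for the PSD soft-thresholding map under the GOE normalization (off-diagonal variance $1/2$), and sharp control of the pairwise eigenvalue-gap sums appearing there; the measure-zero event of repeated eigenvalues is handled by standard mollification of the spectral functional. A secondary subtlety is that the rank-$r$ certificate requires $\mu$ to lie strictly between the top-$r$ and bottom-$(d-r)$ eigenvalues of $\bm{M}+\tau\bm{H}$, which requires the constants $C_1,C_4$ in \cref{ass1,ass3} to be chosen large enough relative to $C_2,C_5$, and this constraint propagates through the constants in the final estimate.
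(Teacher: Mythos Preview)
Your bound on $\tau^*$ follows the paper's basic strategy (rank-$r$ structure implies ${\sf R}_\lambda = O(\sigma^2)$, substitute into \eqref{fxtpt1}), but your bootstrap has a circularity: the crude bound $\|\bm{Z}_{{\sf ST}}\|_F^2 \lesssim \sigma^2 dr^2$ already presupposes rank at most $r$, which in turn requires $\tau^* \lesssim \sigma$. Without the rank bound one only has $\|\bm{Z}_{{\sf ST}}\|_F^2 \lesssim (\tau^*)^2 d^2$, and plugging this into the fixed-point equation gives $(\tau^*)^2(1 - O(d^2/n)) \lesssim \sigma^2$, which is useless under \cref{ass2} since $d^2/n$ need not be small. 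The paper handles this by first establishing $0 < \zeta^* \leq 1$ directly from the variational characterization in \cref{prop:fxdpt} (so the threshold satisfies $\lambda/\zeta^* \geq \lambda$), then deriving a risk bound of the form $\mathbb{E}\|\bm{Z}_{{\sf ST}}^{(\lambda/\zeta^*)}(\tau^*) - \bm{M}\|_F^2 \leq A + B(\tau^*)^2$ with $A = O(r\|\bm{M}\|^2)$ and $B = O(dr)$; substituting into \eqref{fxtpt1} and rearranging $(1 - B/n)(\tau^*)^2 \leq \sigma^2 + A/n$ closes the loop since $B/n = O(dr/n) \ll 1$.

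For $\zeta^*$ you take a genuinely different route. The paper never invokes Stein's identity: it simply bounds $|\langle \bm{H}, \bm{Z}^*\rangle| \leq \|\bm{\hat U}^\top \bm{H}\bm{\hat U}\|_F \|\bm{\hat\Lambda}\|_F \leq \mathrm{rank}(\bm{Z}^*) \cdot \|\bm{H}\| \cdot \|\bm{M}+\tau^*\bm{H}\|$ via Cauchy--Schwarz on the eigendecomposition, yielding $\mathbb{E}\langle\bm{H},\bm{Z}^*\rangle/(n\tau^*) \lesssim \frac{dr}{n}(1 + \lambda_r/\sigma) \lesssim dr^{3/2}/n$ using $\lambda_r/\sigma \leq C_2\sqrt r$ from \cref{ass1}. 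Your Stein-plus-SURE approach is correct in principle and would in fact deliver the sharper estimate $|\zeta^*-1| = O(dr/n)$, since on the rank-$r$ event the divergence of PSD soft thresholding totals $r + O(r^2) + O(r(d-r)) = O(rd)$; the extra $\sqrt r$ in the lemma's statement is an artifact of the paper's cruder inequality. The trade-off is that your route requires deriving the divergence formula for spectral soft thresholding under the GOE half-vectorization (diagonal variance $1$, off-diagonal $1/2$), which you correctly flag as the main technical burden; the paper's four-line Cauchy--Schwarz argument sidesteps that entirely.
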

\begin{proof}
    See \cref{sec:fixedpointprops}.
\end{proof}

 In  \cref{sec:fixedpointprops} we also establish several key properties of the soft-thresholding estimator $\bm{Z}_{{\sf ST}}^{(\lambda/\zeta\s)}(\tau\s)$ that we use throughout our proofs.  Finally, the following lemma establishes that the replacement of $\bm{Z}_{{\sf ST}}^{(\lambda/\zeta\s)}(\tau\s)$ with $\bm{Z}_{{\sf ST}}^{(\lambda)}$ is asymptotically negligible.
 
\begin{lemma}\label{lem:ZSTtau} Suppose that \cref{ass1,ass2,ass3} are satisfied. Then for any $1$-Lipschitz function $\phi: \mathbb{R}^{d\times d} \to \mathbb{R}$ it holds that
    \begin{align}
         \bigg| \mathbb{E} \phi \big( \bm{Z}_{{\sf ST}}^{(\lambda/\zeta\s)}(\tau\s)/\sqrt{dr} \big) - \mathbb{E} \phi \big( \bm{Z}_{{\sf ST}}^{(\lambda)}/\sqrt{dr} \big) \bigg| = O\bigg( \sigma \frac{d r^{3/2}}{n}\bigg).
    \end{align}
\end{lemma}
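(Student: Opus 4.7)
Since $\phi$ is $1$-Lipschitz, it suffices to bound $\tfrac{1}{\sqrt{dr}}\,\E \|\bm{Z}_{{\sf ST}}^{(\lambda/\zeta\s)}(\tau\s) - \bm{Z}_{{\sf ST}}^{(\lambda)}\|_F$. Writing $h_\mu(\bm{N}):=\argmin_{\bm{Z}\succcurlyeq 0}\tfrac12\|\bm{Z}-\bm{N}\|_F^2+\mu\|\bm{Z}\|_*$ for the PSD soft-thresholding operator at level $\mu$, we have $\bm{Z}_{{\sf ST}}^{(\lambda/\zeta\s)}(\tau\s)=h_{\lambda/\zeta\s}(\bm{M}+\tau\s\bm{H})$ and $\bm{Z}_{{\sf ST}}^{(\lambda)}=h_\lambda(\bm{M}+\sigma\bm{H})$. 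I would insert the intermediate $\bm{Z}_b:=h_{\lambda/\zeta\s}(\bm{M}+\sigma\bm{H})$ and use the triangle inequality, so that the comparison splits into a pure threshold change ($\bm{Z}_b$ vs.\ $\bm{Z}_c$, same input, different thresholds) and a pure noise-level change ($\bm{Z}_a$ vs.\ $\bm{Z}_b$, same threshold, different noise). The whole analysis is carried out on a good event $\mathcal{E}$ on which (i) $\|\bm{H}\|\le 2\sqrt{d}$, and (ii) for every $t\in[\sigma,\tau\s]$ exactly the top $r$ eigenvalues of $\bm{M}+t\bm{H}$ exceed $\max(\lambda,\lambda/\zeta\s)$ while the bottom $d-r$ lie below $\min(\lambda,\lambda/\zeta\s)$. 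Using $\tau\s\asymp\sigma$ from \cref{lem:fxdpointprops}, Weyl's inequality, the signal strength from \cref{ass1} (top eigenvalues of $\bm{M}+t\bm{H}$ are $\asymp\sigma\sqrt{dr}$), and \cref{ass3} ($\lambda\asymp\sigma\sqrt{d}$ with $C_4$ large enough), $\mathcal{E}$ has probability at least $1-\exp(-cd)$; on $\mathcal{E}^c$ a crude polynomial-in-$d$ deterministic bound combined with the exponential tail is negligible relative to the target rate.

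\textbf{Threshold change.} On $\mathcal{E}$, both $\bm{Z}_b$ and $\bm{Z}_c$ share the same active eigenbasis $\bm{V}_r$ (the top $r$ eigenvectors of $\bm{M}+\sigma\bm{H}$) and vanish on its orthogonal complement, so
\[
\bm{Z}_b - \bm{Z}_c = (\lambda-\lambda/\zeta\s)\,\bm{V}_r\bm{V}_r^\top, \qquad \|\bm{Z}_b-\bm{Z}_c\|_F=\sqrt{r}\,|\lambda-\lambda/\zeta\s|.
\]
By \cref{lem:fxdpointprops}, $|\zeta\s-1|=O(dr^{3/2}/n)$, and by \cref{ass3}, $\lambda\le C\sigma\sqrt{d}$; hence $|\lambda-\lambda/\zeta\s|\lesssim\sigma\sqrt{d}\cdot dr^{3/2}/n$ and $\|\bm{Z}_b-\bm{Z}_c\|_F\lesssim\sigma d^{3/2}r^2/n$. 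Dividing by $\sqrt{dr}$ produces exactly the target rate $O(\sigma dr^{3/2}/n)$; this is the dominant contribution.

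\textbf{Noise change and main obstacle.} Setting $\mu:=\lambda/\zeta\s$ and using the fundamental theorem of calculus for the (a.e.\ defined) Fr\'echet derivative of the $1$-Lipschitz map $h_\mu$,
\[
\bm{Z}_a - \bm{Z}_b \;=\; \int_\sigma^{\tau\s} Dh_\mu(\bm{M}+t\bm{H})[\bm{H}]\,dt.
\]
The naive $1$-Lipschitz bound $\|Dh_\mu[\bm{H}]\|_F\le\|\bm{H}\|_F\lesssim d$ produces a rate $\sigma d^{3/2}\sqrt{r}/n$ after dividing by $\sqrt{dr}$, which exceeds the target whenever $d\gg r^2$; obtaining a sharper effective Lipschitz constant is the main obstacle. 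By the Daleckii--Krein spectral calculus, if $\bm{M}+t\bm{H}=\bm{V}(t)\bm{\Lambda}(t)\bm{V}(t)^\top$ then
\[
Dh_\mu(\bm{M}+t\bm{H})[\bm{H}] \;=\; \bm{V}(t)\bigl(\bm{A}(t)\odot \bm{V}(t)^\top\bm{H}\bm{V}(t)\bigr)\bm{V}(t)^\top,
\]
where the divided-difference matrix $\bm{A}(t)$ satisfies $|A_{ij}(t)|\le 1$ and, crucially, $A_{ij}(t)=0$ whenever both $\lambda_i(t),\lambda_j(t)<\mu$. On $\mathcal{E}$ this forces $A_{ij}(t)=0$ for all $i,j>r$, so writing $\bm{V}_r(t)$ for the top-$r$ eigenvectors,
\[
\|Dh_\mu(\bm{M}+t\bm{H})[\bm{H}]\|_F^2 \;\le\; 2\|\bm{V}_r(t)^\top\bm{H}\|_F^2 \;=\; 2\,\text{tr}\bigl(\bm{V}_r(t)^\top\bm{H}^2\bm{V}_r(t)\bigr) \;\le\; 2r\|\bm{H}\|^2 \;\lesssim\; rd.
\]
Combining with $|\tau\s-\sigma|=|(\tau\s)^2-\sigma^2|/(\tau\s+\sigma)\lesssim\sigma dr/n$ from \cref{lem:fxdpointprops} gives $\|\bm{Z}_a-\bm{Z}_b\|_F\lesssim\sigma d^{3/2}r^{3/2}/n$, or $\sigma dr/n$ after dividing by $\sqrt{dr}$, which is dominated by the threshold contribution. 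Summing the two terms yields the claimed $O(\sigma dr^{3/2}/n)$.
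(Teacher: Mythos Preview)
Your proof is correct. The overall architecture matches the paper's: split via an intermediate into a \emph{threshold change} (same input, different thresholds) and a \emph{noise-level change} (same threshold, different inputs), restrict to a high-probability good event on which all soft-thresholded matrices are exactly rank $r$, and control the complement by a crude moment bound times an exponential tail. The threshold-change step is handled identically in both arguments and produces the dominant $O(\sigma\,dr^{3/2}/n)$ term.

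Where you genuinely diverge is the noise-level step. The paper observes that on the good event the PSD soft-threshold at level $\lambda$ coincides with the rank-$r$ truncation of the shifted matrix $(\bm{M}-\lambda\bm{I})+t\bm{H}$, and then compares the two rank-$r$ truncations directly: it expands $\bm{U}_\sigma\bm{\Lambda}_\sigma\bm{U}_\sigma^\top-\bm{U}_\tau\bm{\Lambda}_\tau\bm{U}_\tau^\top$ into three pieces and bounds each with Weyl and Davis--Kahan, obtaining $\|\cdot\|_F\lesssim|\tau^*-\sigma|\sqrt{dr}$. You instead invoke the Daleckii--Krein formula for the Fr\'echet derivative of the spectral function $x\mapsto(x-\mu)_+$, note that the divided-difference matrix vanishes on the $(d-r)\times(d-r)$ lower block, and deduce the \emph{effective} Lipschitz bound $\|Dh_\mu[\bm{H}]\|_F\lesssim\sqrt{r}\,\|\bm{H}\|$ before integrating over $t$. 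Both routes yield the same $O(\sigma\,dr/n)$ contribution. Your argument is a bit more systematic---it explains transparently why the naive Lipschitz constant $\|\bm{H}\|_F$ can be replaced by $\sqrt{r}\,\|\bm{H}\|$, and would generalize immediately to other spectral shrinkage functions---while the paper's argument is slightly more elementary, relying only on Davis--Kahan and explicit algebra rather than operator-function calculus. A minor cosmetic difference: you place the intermediate at $(\lambda/\zeta^*,\sigma)$ whereas the paper uses $(\lambda,\tau^*)$; this is inconsequential.
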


\begin{proof}
    See \cref{sec:ZHTtau}.  
\end{proof}

\subsection{High-Dimensional Properties of Convex and Regularized Nonconvex Estimators} \label{sec:cvxasymptotics}
We are now prepared to establish the high-dimensional properties of the convex estimator and its nonconvex counterpart for the choice $\tau\s$ and $\zeta\s$ from the previous section.  The full proof of this result is in \cref{sec:step1}, though we give a high-level sketch of the proof momentarily.  
\begin{theorem} 
 \label{thm:cvxasymptotics}
Suppose that \cref{ass1,ass2,ass3} are satisfied. Then for any 1-Lipschitz function $\phi: \mathbb{R}^{d \times d} \to \mathbb{R}$ and for any $\eps$ satisfying $\eps > C \exp( - c d r)$, it holds that
\begin{align}
   \p\bigg\{ &\bigg| \phi\bigg( \frac{1}{\sqrt{dr}} \bm{U}^{(\lambda) } \bm{U^}{(\lambda)\top} \bigg) - \mathbb{E}_{\bm{H}} \phi\bigg( \frac{1}{\sqrt{dr}} \bm{Z}^{(\lambda/\zeta\s)}_{{\sf ST}}(\tau\s) \bigg) \bigg| > \eps \bigg\}\\
   &\leq  O\bigg( \exp( - c d) +  \exp( - c dr) + \exp( - c n) + \frac{1}{\eps^2} \exp(-c dr \eps^4 ) - \exp( - \frac{(dr)^2}{n} \eps^4) \bigg),
\end{align}
where the constants depend on problem parameters.  The same result holds if $\bm{U}^{(\lambda)} \bm{U}^{(\lambda)\top}$ is replaced with $\bm{Z}^{(\lambda)}$.
\end{theorem}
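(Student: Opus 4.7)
The plan is to establish the result first for the convex minimizer $\bm{Z}^{(\lambda)}$, which suffices because \cref{thm:relatecvxncvx} gives $\bm{U}^{(\lambda)}\bm{U}^{(\lambda)\top}=\bm{Z}^{(\lambda)}$ on $\mathcal{E}_{{\sf Good}}$, an event of probability at least $1-O(\exp(-cd))$ by \cref{lem:Egood}. Moreover, \cref{lem:uncvxgood} lets me restrict attention to PSD matrices $\bm{Z}$ with $\mathrm{rank}(\bm{Z})\le r$ and $\|\bm{Z}-\bm{M}\|_F\le C\sigma\sqrt{dr}$, reducing the feasible set to a compact low-rank neighborhood. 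Reparameterizing $\bm{W}=\bm{Z}-\bm{M}$ and writing the squared residual via its Fenchel dual representation yields
\[
    C_\lambda^{(n)}(\bm{W}) = \max_{\bm{\nu}}\, \langle \mathcal{X}[\bm{W}], \bm{\nu}\rangle - \langle \bm{\eps}, \bm{\nu}\rangle - \tfrac12\|\bm{\nu}\|^2 + \lambda\{\|\bm{W}+\bm{M}\|_* - \|\bm{M}\|_*\},
\]
whose minimizer over the restricted set is $\bm{Z}^{(\lambda)}-\bm{M}$. Since the Matrix CGMT requires compact $\mathcal{S}_{\bm{\nu}}$, I first truncate the dual to a ball of radius $\lesssim \sigma\sqrt{n}$, justified by the stationarity identity $\bm{\nu}\propto \mathcal{X}[\bm{W}]-\bm{\eps}$, which is controlled on $\mathcal{E}_{{\sf Good}}$.

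Next I invoke \cref{thm:matrixcgmt} with $\psi(\bm{W},\bm{\nu})=-\langle\bm{\eps},\bm{\nu}\rangle-\tfrac12\|\bm{\nu}\|^2+\lambda\{\|\bm{W}+\bm{M}\|_*-\|\bm{M}\|_*\}$; this $\psi$ is convex-concave on compact feasible sets, so both tails of the Matrix CGMT are available. The CGMT replaces the primal inner product $\langle\mathcal{X}[\bm{W}],\bm{\nu}\rangle$ by $\|\bm{W}\|_F\langle\bm{g},\bm{\nu}\rangle+\|\bm{\nu}\|\langle\bm{H},\bm{W}\rangle$; after explicitly maximizing over $\bm{\nu}$ (a scalar quadratic in $\|\bm{\nu}\|$) and parameterizing the maximizing norm through the auxiliary scalars $\tau$ and $\beta$, together with the Gaussian-width concentration $\|\bm{g}\|/\sqrt{n}\to 1$, the CGMT objective reduces to $L_\lambda^{(n)}(\mathcal{W})$ in \eqref{l_lambda_n}. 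By \cref{lem:saddlepoint,prop:fxdpt}, the unconstrained saddle value is $L_\lambda^*=\psi_\lambda^*(\tau^*,\zeta^*/\tau^*)$, attained in $\bm{W}$ at $\bm{W}^* := \bm{Z}_{{\sf ST}}^{(\lambda/\zeta^*)}(\tau^*) - \bm{M}$. For the threshold $\eta=\eps$, let $\mathcal{S}_\eta$ denote the subset of the restricted feasible set on which $|\phi((\bm{W}+\bm{M})/\sqrt{dr}) - \mathbb{E}_{\bm{H}}\phi(\bm{Z}_{{\sf ST}}^{(\lambda/\zeta^*)}(\tau^*)/\sqrt{dr})| > \eta$, and let $\mathcal{S}_\eta^c$ be its complement. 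The plan is to show (i) $\min L_\lambda^{(n)}(\mathcal{S}_\eta^c)\le L_\lambda^*+\eta^2/8$ and (ii) $\min L_\lambda^{(n)}(\mathcal{S}_\eta)\ge L_\lambda^*+\eta^2/4$ with high probability; the two directions of the Matrix CGMT then force $\bm{Z}^{(\lambda)}-\bm{M}\in \mathcal{S}_\eta^c$.

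Step (i) is Gaussian-Lipschitz concentration of $L_\lambda^{(n)}$ as a function of $(\bm{g},\bm{H})$, applied at the feasible point $\bm{W}^*$, combined with the rates $\|\bm{g}\|/\sqrt{n}-1=O(n^{-1/2})$ and $\langle \bm{H},\bm{W}^*\rangle/(dr) = O((dr)^{-1/2})$; these furnish the $\exp(-cn)$ and $\exp(-cdr\eps^4)$ tails. The main obstacle is step (ii), which requires \emph{restricted strong convexity} of $L_\lambda^{(n)}$ around $\bm{W}^*$: globally $L_\lambda^{(n)}$ is merely convex, but on the rank-$\le 2r$ cone intersected with the $C\sigma\sqrt{dr}$-ball the quadratic term $\beta\|\bm{W}\|_F^2/(2 dr\tau)$, augmented by the restricted isometry consequences of $\mathcal{E}_{{\sf Good}}$, yields a quadratic lower bound of the form $L_\lambda^{(n)}(\bm{W})-L_\lambda^*\gtrsim \|\bm{W}-\bm{W}^*\|_F^2/(dr)$; verifying this requires a careful subdifferential calculation for the nuclear norm localized at $\bm{W}^*$ (the low-rank Mirsky/Wedin-type linearization) and an $\eps$-net argument over low-rank balls to control $\langle\bm{H},\bm{W}\rangle$ uniformly, which in turn accounts for the $\exp(-(dr)^2\eps^4/n)$ term. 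Since $\phi$ is $1$-Lipschitz, $\bm{W}\in\mathcal{S}_\eta$ implies $\|\bm{W}-\bm{W}^*\|_F\ge \sqrt{dr}\,\eta$, and the quadratic growth delivers the required $\eta^2/4$ gap once $\eta\gg \exp(-cdr)$. Combining (i)–(ii) through the two directions of the Matrix CGMT, absorbing the failure probabilities of $\mathcal{E}_{{\sf Good}}$ and of the dual truncation, yields the stated bound for $\bm{Z}^{(\lambda)}$; the conclusion transfers to $\bm{U}^{(\lambda)}\bm{U}^{(\lambda)\top}$ because the two agree on $\mathcal{E}_{{\sf Good}}$.
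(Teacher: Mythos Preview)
Your high-level architecture matches the paper's: reparameterize to $\bm{W}=\bm{Z}-\bm{M}$, dualize the quadratic, apply the Matrix CGMT to reduce to $L_\lambda^{(n)}$, and then argue that on the ``bad'' set the auxiliary objective must exceed $L_\lambda^*$ by a quadratic gap. The reduction to $\bm{Z}^{(\lambda)}$ via \cref{thm:relatecvxncvx} and the restriction to rank at most $r$ and $\|\bm{W}\|_F\le C\sigma\sqrt{dr}$ via \cref{lem:uncvxgood} are exactly right.

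The gap is in your step (ii). First, a clear mis-step: you invoke ``the restricted isometry consequences of $\mathcal{E}_{{\sf Good}}$'' to control the CGMT objective, but $\mathcal{E}_{{\sf Good}}$ is an event in the primal randomness $(\mathcal{X},\bm{\eps})$, whereas $L_\lambda^{(n)}$ depends only on the independent auxiliary variables $(\bm{g},\bm{H})$; the RIP of $\mathcal{X}$ has no bearing on $L_\lambda^{(n)}$. Second, and more substantively, your plan to obtain quadratic growth directly in $\bm{W}$ by ``the quadratic term $\beta\|\bm{W}\|_F^2/(2dr\tau)$ plus a subdifferential/Mirsky--Wedin linearization of the nuclear norm'' does not go through. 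After the inner min over $\tau$ and outer max over $\beta$, the objective is no longer a quadratic in $\bm{W}$ with a clean curvature constant, and the nuclear norm term is only convex; the paper explicitly notes that $L_\lambda^{(n)}$ is \emph{not} strongly convex near $\bm{\hat W}$. What makes step (ii) work is a device you do not use: the paper passes to the factored variable $\bm{U}$ via $h(\bm{U})=L_\lambda^{(n)}(\bm{UU}^\top-\bm{M})$, so that the nuclear norm becomes $\|\bm{U}\|_F^2$ and the whole function becomes smooth in $\bm{U}$. The Hessian of $h$ is computed explicitly (\cref{lem:hessiancalculation}) and lower bounded on the event $\mathcal{E}_h=\{\,|\langle\bm{H},\bm{Q}\rangle|\lesssim\sqrt{dr}\|\bm{Q}\|_F\text{ for all rank-$\le 2r$ }\bm{Q}\,\}\cap\{\|\bm{g}\|/\sqrt{n}\in[1/2,2]\}\cap\{\|\bm{H}\|\le 3\sqrt{d}\}$ (\cref{lem:hstronglyconvex}), yielding genuine strong convexity of $h$ in the region $\|\bm{U}-\bm{U}^*\mathcal{O}\|\le c\kappa^{-1}\|\bm{U}^*\|$. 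The quadratic gap then follows from the elementary growth lemma \cref{lem:miolaneb1analogue}, and translating back to $\bm{W}$ uses $\|\bm{UU}^\top-\bm{\hat U}\bm{\hat U}^\top\|_F\lesssim\|\bm{U}^*\|\cdot\|\bm{U}-\bm{\hat U}\mathcal{O}\|_F$. Your $\eps$-net idea for $\langle\bm{H},\bm{W}\rangle$ over low-rank matrices is precisely the content of $\mathcal{E}_h$, so that part is on track; what is missing is that the curvature has to be extracted in the $\bm{U}$-coordinates, not in $\bm{W}$.
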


We remark that \cref{thm:cvxasymptotics} does not contain an additional error term depending on $\gamma_n$.  In essence, this is due to the fact that we replace $\bm{Z}_{{\sf ST}}^{(\lambda/\zeta\s)}(\tau\s)$ with $\bm{Z}_{{\sf ST}}^{(\lambda)} = \bm{Z}_{{\sf ST}}^{(\lambda)}(\sigma)$ in the statement of \cref{thm:mainthm}.   As shown in \cref{lem:fxdpointprops}, it holds that $\tau\s = \sigma + O(\gamma_n\inv)$ when $\gamma_n$ is large. Therefore, the additional error term  in \cref{thm:mainthm} arises  due to this replacement.  In the asymptotic regime $\frac{n}{d r^2} \asymp 1$, this replacement is non-negligible, but \cref{thm:cvxasymptotics} ameliorates this.  

\cref{thm:cvxasymptotics} is comparable to the results on the LASSO developed in \citet{miolane_distribution_2021,celentano_lasso_2023,javanmard_debiasing_2018}, among others.  The statement is most directly related to the main result of \citet{celentano_lasso_2023}, who establish an equivalence between the LASSO estimator for sparse linear regression with the corresponding soft-thresholding of a vector denoising problem.  The primary difference between our result and theirs comes from the fact that we are considering matrices as opposed to vectors, whence the function $L_{\lambda}^{(n)}$ is no longer strongly convex. One of our major contributions and key technical ingredients is to show that $L_{\lambda}^{(n)}$ maintains strong convexity when restricted to rank at most $r$ matrices.

\begin{proof}[Sketch of the Proof of \cref{thm:cvxasymptotics}]
    The high-level strategy will be to argue that any minimizer of $C_{\lambda}^{(n)}(\bm{W})$ over the shifted positive semidefinite cone $\{\bm{W}: \bm{W} + \bm{M} \succcurlyeq 0 \}$ must be within $\eps \sqrt{dr}$ of $\bm{ Z}_{{\sf ST}}^{(\lambda/\zeta\s)}(\tau\s)$ with high probability.  
Let $\bm{\hat W} := \bm{ Z}_{{\sf ST}}^{(\lambda/\zeta\s)}(\tau\s)- \bm{M}$, and let $\phi$ be a fixed 1-Lipschitz function. Consider the set $\mathcal{D}$ defined via
\begin{align}
    \mathcal{D} := \bigg\{ \bm{W} :\bigg| \phi\bigg( \frac{\bm{W} + \bm{M}}{\sqrt{dr}}\bigg) - \mathbb{E} \phi\bigg( \frac{\bm{\hat W} + \bm{M}}{\sqrt{dr}} \bigg) \bigg|\leq \frac{\eps}{2} \bigg\} \cap \bigg\{ \bm{W} + \bm{M} \succcurlyeq 0 \bigg\},
\end{align}
and let $\mathcal{D}_{\eps}$ denote the $\eps$-enlargement of the set $\mathcal{D}$; i.e. $\mathcal{D}_{\eps} := \{ \bm{V} = \bm{W} + \bm{W}': \| \bm{W}' \|_F \leq \eps, \bm{W} \in \mathcal{D}\}$.  Since by the proof of \cref{thm:relatecvxncvx} $\bm{Z}^{(\lambda)}$ is rank at most $r$ and within radius $R = C \sigma \sqrt{dr}$ of $\bm{M}$ with high probability, we will show that any rank at most $r$ minimum of $C^{(n)}_{\lambda}$ on $\mathcal{D}_{\eps/2}^c$ (outside of the $\eps/2$ ball) but within radius $R$ of $\bm{M}$ must be suboptimal with high probability, which will imply that the minimizer must lie within $\mathcal{D}_{\eps/2}$ with high probability.

Carrying out this argument, in our full proof of \cref{thm:cvxasymptotics} in \cref{sec:step1}, we show via the Matrix CGMT that it holds that
\begin{align}
    \p\bigg\{ \argmin ~ &C^{(n)}_{\lambda}(\bm{W}) \notin \mathcal{D}_{\eps/2} \bigg\} \\
    &\leq 2 \p\bigg\{ \bm{\hat W} \notin \mathcal{D} \bigg\}+ 2 \p\bigg\{ L_{\lambda}^{(n)}( \mathbb{R}^{d\times d}) \geq L_{\lambda}^* + C \eps^2 \bigg\} \\
    &+ 2 \p\bigg\{ L_{\lambda}^{(n)}\bigg( \mathbb{B}_{\eps/2}^c(\bm{\hat W}/\sqrt{dr}) \cap \{\mathrm{rank}(\bm{W + M}) \leq r \} \cap \{\| \bm{W}\|_F \leq R\} \bigg)  \leq L_{\lambda}^* + C \eps^2 \bigg\},
\end{align}
where $L_{\lambda}^*$ is the quantity defined in the previous subsection. Therefore, the proof boils down to analyzing the three terms on the right hand side above. It turns out that the first two terms are straightforward to bound: the first term is exponentially small as $\bm{Z}^{(\lambda/\zeta\s)}_{{\sf ST}}(\tau\s)$ is a proximal operator and hence Lipschitz.  The second term is also well-behaved as $L_{\lambda}^{(n)}$ is expected to concentrate about $L_{\lambda}^*$  at its global minimum.

The final remaining quantity is significantly more difficult to analyze.  The term inside the probability can be understood as the minimum of value of $L_{\lambda}^{(n)}$ over the set of matrices $\bm{W}$ such that:
\begin{enumerate}
    \item  They are at least $\eps/2$ distance away from $\bm{\hat W}$;
    \item $\bm{W} + \bm{M}$ is rank at most $r$;
    \item $\bm{W}$ has norm at most $R$.
\end{enumerate}
In order to analyze the function $L_{\lambda}^{(n)}$ over matrices such that $\mathrm{rank}(\bm{W} + \bm{M}) \leq r $, we define the auxiliary function
\begin{align}
    h(\bm{U}) &= \frac{\gamma_n}{2} \bigg( \sqrt{\frac{\|\bm{UU}\t - \bm{M}\|_F^2}{n} + \sigma^2} \frac{\|\bm{g}\|}{\sqrt{n}} - \frac{\langle \bm{H}, \bm{UU}\t - \bm{M}\rangle}{n} \bigg)_+^2 + \frac{\lambda}{dr} \big\{ \| \bm{U}\|_F^2 - \|\bm{M}\|_* \big\}.
\end{align}
It is straightforward to see that if $\bm{W} = \bm{UU}\t - \bm{M}$, then $h(\bm{U}) = L_{\lambda}^{(n)}(\bm{W})$.  Therefore, analyzing $L_{\lambda}^{(n)}$ over rank at most $r$ matrices is equivalent to analyzing the function $h(\bm{U})$ over $\mathbb{R}^{d \times r}$.   Intuitively, since  $L_{\lambda}^{(n)}(\mathbb{R}^{d \times d})$ is close to $L_{\lambda}\s$, the optimization geometry of $L_{\lambda}^{(n)}$ should be similar to that of matrix soft-thresholding.  Therefore, since $h(\bm{U})$ is the ``nonconvex version'' of $L_{\lambda}^{(n)}$, the geometry of $h(\bm{U})$ should behave similarly to that of nonconvex matrix factorization.  

Carrying out this intuition, in \cref{lem:hstronglyconvex} we show that the function $h(\bm{U})$ is strongly convex on an appropriate high-probability event $\mathcal{E}_h$ within a region $\mathcal{R} \subset \mathbb{R}^{d \times r}$ defined via
\begin{align}
    \mathcal{R} := \bigg\{ \bm{U}: \| \bm{U} - \bm{U}\s \mathcal{O}_{\bm{U}\s,\bm{U}} \| \leq \frac{c}{\kappa} \| \bm{U}\s \| \bigg\},
\end{align}
where $c$ is some universal constant, $\bm{U}\s$ is such that $\bm{M}  = \bm{U}\s \bm{U}^{*\top}$, and $\mathcal{O}_{\bm{U}\s,\bm{U}} $ is the Frobenius-optimal orthogonal matrix aligning $\bm{U}$ and $\bm{U}\s$.  This argument is significantly different from previous analyses for the LASSO, in which the analogue of $L_{\lambda}^{(n)}$ is already strongly convex in a region near its optimal value \citep{miolane_distribution_2021}.  In contrast, in our setting the function $L_{\lambda}^{(n)}$ need not be strongly convex near $\bm{\hat W}$, but $h(\bm{U})$ can be shown to be (near $\bm{Z}_{{\sf ST}}^{(\lambda/\zeta\s)}(\tau\s)$).  This discrepancy is due to the fact that $\langle \bm{H},\bm{W} \rangle$ does not concentrate at rate $O(\sqrt{dr} \|\bm{W}\|_F)$ uniformly over all matrices $\bm{W}$ (which is what is required on the event $\mathcal{E}_h$ for strong convexity), but $\langle \bm{H}, \bm{UU}\t - \bm{M} \rangle$ does concentrate at this rate uniformly over all matrices $\bm{U}$.

We then demonstrate that when $\|\bm{UU}\t - \bm{M} \|_F \leq R$, then this automatically implies that $\bm{U} \in \mathcal{R}$. Consequently, by strong convexity, we show that there is a high-probability event $\mathcal{E}_{{\sf ST}}$ (depending only on $\bm{Z}^{(\lambda/\zeta\s)}(\tau\s)$) such that if
\begin{align}
    \frac{\|\bm{UU}\t - \bm{Z}^{(\lambda/\zeta\s)}(\tau\s)\|_F}{\sqrt{dr}} > \frac{\eps}{2},
\end{align}
then $\bm{U}$ must satisfy 
\begin{align}
    h(\bm{U}) > \min_{\bm{U}: \|\bm{U}- \bm{U}\s \mathcal{O}_{\bm{U}\s,\bm{U}} \|_F \leq \frac{R}{8 \|\bm{U}\s\|}} h(\bm{U}) + C\eps^2,
\end{align}
where $C$ is some universal constant.   The above display turns out to be equivalent to the expression
\begin{align}
    L_{\lambda}^{(n)}\bigg( \mathbb{B}_{\eps/2}^c(\bm{\hat W}/\sqrt{dr}) \cap \mathrm{rank}(\bm{W} + \bm{M}) \leq r \cap \{ \|\bm{W}\|_F \leq R \} \bigg) > L_{\lambda}^* + C \eps^2,
\end{align}
which occurs on the event $\mathcal{E}_h \cap \mathcal{E}_{{\sf ST}}$.  Therefore, the proof is completed by tabulating the probabilities of these events.  
\end{proof}

\section{Study of Debiased Estimators} \label{sec:debequivalence}
In this section we study the debiased estimators $\bm{Z}_{{\sf deb}}^{(\lambda)}$ and $\bm{U}_{{\sf deb}}^{(\lambda)}$ defined via  
\begin{align}
    \bm{Z}^{(\lambda)}_{{\sf deb}} &:= \mathcal{P}_{{\sf rank}-r} \bigg( \bm{Z}^{(\lambda)} + \frac{1}{n} \sum_{i=1}^{n} \bigg( y_i - \langle \bm{X}_i, \bm{Z}^{(\lambda)} \bigg) \bm{X}_i \bigg); \\
    \bm{U}^{(\lambda)}_{{\sf deb}} &:= \bm{U}^{(\lambda)} \bigg( \bm{I} + \lambda \big( \bm{U}^{(\lambda)\top} \bm{U}^{(\lambda)} \big)\inv \bigg)^{1/2}.
\end{align}
In \cref{sec:equivdebiased} we prove that both definitions above are equivalent on the event $\mathcal{E}_{{\sf Good}}$, and then in \cref{sec:debcontrol} we study their high-dimensional properties.  

\subsection{Equivalence of Debiased Estimators}
\label{sec:equivdebiased}
Similar to the case of $\bm{U}^{(\lambda)}$ and $\bm{Z}^{(\lambda)}$, on the event $\mathcal{E}_{{\sf Good}}$, both $\bm{U}_{{\sf deb}}^{(\lambda)}$ and $\bm{Z}^{(\lambda)}_{{\sf deb}}$ are equivalent, which we demonstrate in the following theorem.   
\begin{theorem} \label{thm:udebzdeb}    On the event $\mathcal{E}_{{\sf Good}}$, under \cref{ass2,ass3} it holds that 
   \begin{align}
         \bm{Z}_{{\sf deb}}^{(\lambda)} &= \bm{V}_{U} \big( \bm{ \Sigma}+ \lambda \bm{I} \big) \bm{V}_U\t, \label{eq:zdebidentity1}
     \end{align}
     where $\bm{V}_U \bm{\Sigma}^{1/2} \bm{W}_U\t$ is the singular value decomposition of $\bm{U}^{(\lambda)}$.   Furthermore, on this same event it holds that 
 \begin{align}
     \bm{U}_{{\sf deb}}^{(\lambda)} \bm{U}_{{\sf deb}}^{(\lambda)\top} &= \bm{Z}_{{\sf deb}}^{(\lambda)}. \label{eq:zdebidentity2}
     \end{align}
\end{theorem}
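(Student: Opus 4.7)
The plan is to reduce both identities to algebraic consequences of the first-order optimality condition for $\bm{Z}^{(\lambda)}$ combined with the equivalence $\bm{Z}^{(\lambda)} = \bm{U}^{(\lambda)} \bm{U}^{(\lambda)\top}$ guaranteed by \cref{thm:relatecvxncvx} on $\mathcal{E}_{{\sf Good}}$. Reproducing the KKT computation from the proof of that theorem, any minimizer $\bm{Z}^{(\lambda)}$ satisfies
\[
\mathcal{X}^{*}\bigl(\bm{y} - \mathcal{X}(\bm{Z}^{(\lambda)})\bigr) \;=\; \lambda\,\bm{V}_U \bm{V}_U^{\top} \;-\; \bm{W},
\]
where $\bm{V}_U$ are the eigenvectors of $\bm{Z}^{(\lambda)}$ (equivalently, the left singular vectors of $\bm{U}^{(\lambda)}$), $\bm{W}$ is symmetric and supported on $\mathrm{range}(\bm{V}_U)^{\perp}$, and $\|\bm{W}\| \leq \lambda$. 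Writing $\bm{Z}^{(\lambda)} = \bm{V}_U \bm{\Sigma} \bm{V}_U^{\top}$ and adding it to both sides gives
\[
\bm{Z}^{(\lambda)} + \mathcal{X}^{*}\bigl(\bm{y} - \mathcal{X}(\bm{Z}^{(\lambda)})\bigr) \;=\; \bm{V}_U\bigl(\bm{\Sigma} + \lambda \bm{I}\bigr)\bm{V}_U^{\top} \;-\; \bm{W},
\]
which is already orthogonally block-diagonalized with respect to the splitting $\mathbb{R}^{d} = \mathrm{range}(\bm{V}_U) \oplus \mathrm{range}(\bm{V}_U)^{\perp}$.

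To establish \eqref{eq:zdebidentity1}, I would argue that $\mathcal{P}_{{\sf rank}-r}$ picks out the signal block. The eigenvalues in the $\bm{V}_U$ subspace are $\sigma_1 + \lambda, \dots, \sigma_r + \lambda$, whereas those in $\mathrm{range}(\bm{V}_U)^{\perp}$ are eigenvalues of $-\bm{W}$ and hence bounded by $\lambda$ in magnitude. Strict spectral separation therefore reduces to $\sigma_r > 0$, that is, $\bm{U}^{(\lambda)}$ has full column rank $r$. On $\mathcal{E}_{{\sf Good}}$, \cref{lem:uncvxgood} gives $\|\bm{Z}^{(\lambda)} - \bm{M}\|_F \leq C\sigma\sqrt{dr}$, and Weyl's inequality yields
\[
\sigma_r(\bm{Z}^{(\lambda)}) \;\geq\; \lambda_r\sqrt{d} - C\sigma\sqrt{dr},
\]
which is strictly positive under \cref{ass1} once the constant $C_1$ is taken large enough. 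Consequently $\mathcal{P}_{{\sf rank}-r}$ selects exactly the first block and \eqref{eq:zdebidentity1} follows.

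For \eqref{eq:zdebidentity2}, I would diagonalize $\bm{U}_{{\sf deb}}^{(\lambda)}$ directly via the SVD $\bm{U}^{(\lambda)} = \bm{V}_U \bm{\Sigma}^{1/2} \bm{W}_U^{\top}$. Since $\bm{W}_U$ is an $r \times r$ orthogonal matrix and $\bm{\Sigma}$ is invertible (by the same rank argument above), $\bm{U}^{(\lambda)\top}\bm{U}^{(\lambda)} = \bm{W}_U \bm{\Sigma} \bm{W}_U^{\top}$ and hence
\[
\bigl(\bm{I} + \lambda(\bm{U}^{(\lambda)\top}\bm{U}^{(\lambda)})^{-1}\bigr)^{1/2} \;=\; \bm{W}_U \bigl(\bm{I} + \lambda \bm{\Sigma}^{-1}\bigr)^{1/2} \bm{W}_U^{\top}.
\]
Multiplying on the left by $\bm{U}^{(\lambda)}$ and using the diagonal identity $\bm{\Sigma}^{1/2}(\bm{I} + \lambda \bm{\Sigma}^{-1})^{1/2} = (\bm{\Sigma} + \lambda \bm{I})^{1/2}$ yields $\bm{U}_{{\sf deb}}^{(\lambda)} = \bm{V}_U(\bm{\Sigma} + \lambda \bm{I})^{1/2}\bm{W}_U^{\top}$, from which $\bm{U}_{{\sf deb}}^{(\lambda)}\bm{U}_{{\sf deb}}^{(\lambda)\top} = \bm{V}_U(\bm{\Sigma} + \lambda \bm{I})\bm{V}_U^{\top}$ matches the right-hand side of \eqref{eq:zdebidentity1}.

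The only nontrivial step is the spectral separation argument underlying \eqref{eq:zdebidentity1}: one must ensure that the signal eigenvalues $\sigma_i + \lambda$ survive the inclusion of the residual $-\bm{W}$ on the complementary subspace (whose entries can be as large as $\lambda$), and this is what forces the use of the signal-strength calibration in \cref{ass1} through \cref{lem:uncvxgood}. Everything else is purely algebraic once the optimality condition and the identification $\bm{Z}^{(\lambda)} = \bm{U}^{(\lambda)}\bm{U}^{(\lambda)\top}$ are in hand.
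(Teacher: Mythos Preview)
Your proposal is correct and follows essentially the same route as the paper: both extract the KKT identity from the proof of \cref{thm:relatecvxncvx} to write the debiased matrix as $\bm{V}_U(\bm{\Sigma}+\lambda\bm{I})\bm{V}_U^\top - \bm{W}$ with $\bm{W}$ supported on $\mathrm{range}(\bm{V}_U)^\perp$, then argue spectral separation for \eqref{eq:zdebidentity1} and do the same SVD algebra for \eqref{eq:zdebidentity2}. The only minor difference is that the paper leans on the \emph{strict} bound $\|\bm{W}\|<\lambda$ already established inside \cref{lem:relatecvxncvx} (so separation for part one is automatic even without $\sigma_r>0$), whereas you derive $\sigma_r>0$ via \cref{lem:uncvxgood} and \cref{ass1}; your argument is still needed to make $(\bm{U}^{(\lambda)\top}\bm{U}^{(\lambda)})^{-1}$ well-defined in part two, a point the paper leaves implicit.
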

Similar to \cref{thm:relatecvxncvx}, this result is deterministic on the event $\mathcal{E}_{{\sf Good}}$.  
\begin{proof}
    Observe that from \cref{thm:relatecvxncvx}, on the event $\mathcal{E}_{{\sf Good}}$ it holds that $\bm{Z}^{(\lambda)} = \bm{U}^{(\lambda)} \bm{U}^{(\lambda)\top}$ under \cref{ass2,ass3}. Furthermore, from the proof of \cref{thm:relatecvxncvx}  we have that
    \begin{align}
        \mathcal{X}\s \mathcal{X} \big( \bm{U}^{(\lambda)} \bm{U}^{(\lambda)\top} - \bm{M} \big) - \bm{N}_{\eps} = - \lambda \bm{V}_U \bm{V}_U\t + \bm{W},
    \end{align}
    where $\bm{W}$ is orthogonal to $\bm{V}_U$ and has spectral norm bounded by $\lambda$.  Therefore, we have that
\begin{align}
    \bm{Z}^{(\lambda)} + \frac{1}{n} \sum_{i=1}^{n} \bigg( y_i - \langle \bm{X}_i , \bm{Z}^{(\lambda)} \rangle \bigg) \bm{X}_i &= \bm{U}^{(\lambda)} \bm{U}^{(\lambda)\top} + \mathcal{X}\s \mathcal{X} \big( \bm{M} - \bm{U}^{(\lambda)} \bm{U}^{(\lambda)\top} \big) + \bm{N}_{\eps} \\
    &= \bm{U}^{(\lambda)} \bm{U}^{(\lambda)\top} + \lambda \bm{V}_U \bm{V}_U\t - \bm{W} \\
    &= \bm{V}_U \big( \bm{\Sigma} + \lambda \bm{I}  \big) \bm{V}_U\t - \bm{W},
\end{align}
where the final equality is due to the fact that $\bm{U}^{(\lambda)}$ has singular vectors $\bm{V}_U$.  After taking the rank $r$ projection and recognizing that the singular values of the matrix $\bm{V}_U \big( \bm{\Sigma} + \lambda \bm{I} \big) \bm{V}_U\t$ are strictly larger than $\lambda$, we obtain that
\begin{align}
    \bm{Z}^{(\lambda)}_{{\sf deb}} &= \bm{V}_U \big( \bm{\Sigma} + \lambda \bm{I} \big) \bm{V}_U\t,
\end{align}
which proves \eqref{eq:zdebidentity1}.
Furthermore, if $\bm{U}^{(\lambda)} = \bm{V}_U \bm{\Sigma}^{1/2} \bm{W}_U\t$, then
    \begin{align}
        \bm{U}^{(\lambda)}_{{\sf deb}} \bm{U}^{(\lambda)\top}_{{\sf deb}} &= \bm{U}^{(\lambda)} \bigg( \bm{I} + \lambda \big( \bm{U}^{(\lambda)\top} \bm{U}^{(\lambda)} \big)\inv \bigg)  \bm{U}^{(\lambda)\top} \\
        &= \bm{V}_U \bm{\Sigma}^{1/2} \bm{W}_U\t \bigg(   \bm{I} + \lambda \bm{W}_U \bm{\Sigma}^{-1} \bm{W}_U\t \bigg) \bm{W}_U \bm{\Sigma}^{1/2} \bm{V}_U\t \\
        &= \bm{V}_U \big( \bm{\Sigma} + \lambda \bm{I}\big) \bm{V}_U\t,
     \end{align}
     which proves \eqref{eq:zdebidentity2}.
\end{proof}

\subsection{Concentration of Debiased Estimators about Hard-Thresholding Estimator} \label{sec:debcontrol}
Next, as in the case of the previous section, we will show that both $\bm{U}^{(\lambda)}_{{\sf deb}}$ and $\bm{Z}^{(\lambda)}_{{\sf deb}}$ concentrate around a particular hard-thresholding estimator (that may not necessarily be equal to $\bm{Z}_{{\sf HT}}$ if $\gamma_n$ remains bounded).  Define 
\begin{align}
    \bm{Z}^{(\lambda)}_{{\sf HT}} := \mathcal{P}_{{\sf rank}-r} \bigg( \bm{M} + \tau\s \bm{H} \bigg),
\end{align}
where $\tau\s$ denotes the parameter from the fixed-point equations in \eqref{fxdpt}.  Here we note that $\bm{Z}_{{\sf HT}}^{(\lambda)}$ only depends on $\lambda$ implicitly through $\tau\s$.  
The following result shows that $\bm{Z}_{{\sf deb}}^{(\lambda)}$ closely approximates $\bm{Z}_{{\sf HT}}^{(\lambda)}$.

\begin{theorem}[High-Dimensional Asymptotics for $\bm{Z}^{(\lambda)}_{{\sf HT}}$]\label{thm:zdeb} Suppose that \cref{ass1,ass2,ass3} are satisfied.
Let $\bm{Z}^{(\lambda)}_{{\sf deb}}$ denote the debiased convex estimator.  Let $\phi$ be any $1$-Lipschitz function on $\mathbb{R}^{d \times d}$.  Then
\begin{align}
    \p\bigg\{ \bigg|   &\phi\bigg( \frac{1}{\sqrt{dr}}\bm{ Z}^{(\lambda)}_{{\sf deb}} \bigg) - \mathbb{E}_{\bm{H}} \phi\bigg( \frac{1}{\sqrt{dr}}\bm{Z}^{(\lambda)}_{\sf HT} \bigg) \bigg| > \eps \bigg\} \\
    &\leq  O\bigg( \exp( - c d) +  \exp( - c dr) + \exp( - c n) + \frac{1}{\eps^2} \exp(-c dr \eps^4 ) - \exp( - \frac{(dr)^2}{n} \eps^4) \bigg).
\end{align}
\end{theorem}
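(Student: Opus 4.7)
The plan is to apply \cref{thm:cvxasymptotics} to a composition $\phi \circ \tilde D_\lambda$, where $\tilde D_\lambda$ is a Lipschitz extension of the ``debiasing'' map that adds $\lambda$ to each nonzero eigenvalue of a PSD rank-at-most-$r$ input, and then to identify the resulting limit with $\mathbb E_{\bm H}\phi(\bm Z^{(\lambda)}_{\sf HT}/\sqrt{dr})$ up to a negligible discrepancy. On $\mathcal E_{\sf Good}$, \cref{thm:relatecvxncvx,thm:udebzdeb} identify $\bm Z^{(\lambda)}_{\sf deb}$ with $D_\lambda(\bm Z^{(\lambda)})$, where $D_\lambda(\bm A) := \bm A + \lambda\,\mathcal P_{{\sf col}(\bm A)}$. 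Likewise, when the leading $r$ eigenvalues of $\bm M + \tau^* \bm H$ exceed $\lambda/\zeta^*$---which holds with probability $1-O(\exp(-cd))$ under \cref{ass1,ass3} via classical control of $\|\bm H\|$---the same map sends $\bm Z_{\sf ST}^{(\lambda/\zeta^*)}(\tau^*)$ to a rank-$r$ PSD matrix whose top-$r$ eigenvectors agree with those of $\bm Z^{(\lambda)}_{\sf HT}$ and whose nonzero eigenvalues are $\sigma_i(\bm M + \tau^* \bm H) + \lambda(1 - 1/\zeta^*)$.

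The core technical step is to extend $D_\lambda$ to a globally defined $\tilde D_\lambda : \mathbb R^{d\times d} \to \mathbb R^{d\times d}$ whose Frobenius-norm Lipschitz constant is $L = O(1)$ under our assumptions. On the ``nice'' set where the $r$-th and $(r+1)$-th eigenvalues of the input are separated by $\Omega(\sigma\sqrt{dr})$, Davis--Kahan yields local Lipschitz constant $1 + O(\lambda/(\sigma\sqrt{dr})) = 1 + O(1/\sqrt r)$ for $D_\lambda$ (using $\lambda \asymp \sigma\sqrt d$ from \cref{ass3}); a global extension is then supplied by Kirszbraun's theorem (or by smoothly truncating the projection away from the nice set). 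Using \cref{lem:uncvxgood} together with \cref{thm:relatecvxncvx} (which gives $\bm Z^{(\lambda)} = \bm U^{(\lambda)}\bm U^{(\lambda)\top}$ of rank at most $r$ and within Frobenius radius $C\sigma\sqrt{dr}$ of $\bm M$ on $\mathcal E_{\sf Good}$), Weyl's inequality plus \cref{ass1} (with $C_1$ sufficiently large) forces the $r$-th eigenvalue of $\bm Z^{(\lambda)}$ to be $\Omega(\sqrt d \lambda_r) = \Omega(\sigma\sqrt{dr})$; the same holds for $\bm Z_{\sf ST}^{(\lambda/\zeta^*)}(\tau^*)$ by standard random-matrix control of $\|\bm H\|$. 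Hence $\tilde D_\lambda$ and $D_\lambda$ agree on both matrices on these events.

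Applying \cref{thm:cvxasymptotics} to the $1$-Lipschitz function $\bm B \mapsto L^{-1}\phi(\tilde D_\lambda(\sqrt{dr}\,\bm B)/\sqrt{dr})$ yields, with exactly the exponential tail claimed,
\begin{align*}
    \bigg|\phi\bigl(\tilde D_\lambda(\bm Z^{(\lambda)})/\sqrt{dr}\bigr) - \mathbb E_{\bm H}\,\phi\bigl(\tilde D_\lambda(\bm Z_{\sf ST}^{(\lambda/\zeta^*)}(\tau^*))/\sqrt{dr}\bigr)\bigg| \leq L\eps.
\end{align*}
Replacing $\tilde D_\lambda$ by $D_\lambda$ on the good events (costing only the exponentially small probabilities already enumerated) converts the left-hand side to $|\phi(\bm Z^{(\lambda)}_{\sf deb}/\sqrt{dr}) - \mathbb E_{\bm H}\phi(D_\lambda(\bm Z_{\sf ST}^{(\lambda/\zeta^*)}(\tau^*))/\sqrt{dr})|$. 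The remaining discrepancy between $\mathbb E_{\bm H}\phi(D_\lambda(\bm Z_{\sf ST}^{(\lambda/\zeta^*)}(\tau^*))/\sqrt{dr})$ and $\mathbb E_{\bm H}\phi(\bm Z^{(\lambda)}_{\sf HT}/\sqrt{dr})$ is controlled by $1$-Lipschitzness of $\phi$ together with the bound $\|D_\lambda(\bm Z_{\sf ST}^{(\lambda/\zeta^*)}(\tau^*)) - \bm Z^{(\lambda)}_{\sf HT}\|_F = |\lambda - \lambda/\zeta^*|\sqrt r$; combining \cref{ass3} with \cref{lem:fxdpointprops} gives a normalized error of $O(\sigma\sqrt r / \gamma_n)$, which under \cref{ass2} is absorbed into $\eps$ by a harmless constant inflation.

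The main obstacle will be the Lipschitz-extension step: the top-$r$ eigenprojector $\mathcal P_{\sf col}$ is globally discontinuous, and its local Lipschitz constant scales with the inverse eigenvalue gap, so the whole argument hinges on the quantitative signal-strength lower bound $\lambda_r/\sigma \geq C_1\sqrt r$ in \cref{ass1}, which delivers the $\Omega(\sigma\sqrt{dr})$ gap at both $\bm Z^{(\lambda)}$ and $\bm Z_{\sf ST}^{(\lambda/\zeta^*)}(\tau^*)$. Once the extension is in hand, the remaining ingredients---confining to the nice set, invoking the convex concentration of \cref{thm:cvxasymptotics}, and matching the limiting denoising object---all follow directly from results already established in the paper.
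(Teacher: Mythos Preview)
Your proposal is correct and follows essentially the same route as the paper: identify $\bm{Z}^{(\lambda)}_{\sf deb}$ with $D_\lambda(\bm{Z}^{(\lambda)})$ on $\mathcal{E}_{\sf Good}$ via \cref{thm:udebzdeb}, show the ``add $\lambda$ to the top $r$ eigenvalues'' map is Lipschitz on rank-$r$ PSD matrices with a controlled eigengap (Davis--Kahan), extend globally by Kirszbraun, and apply \cref{thm:cvxasymptotics} to the composition. The paper does exactly this with the rescaled map $\eta(\bm A)=\bm A+(\lambda/\sqrt{dr})\,\mathcal P_{{\sf col}(\bm A)}$ on the set $\mathcal S=\{\lambda_{r+1}(\bm A)=0,\ \lambda_r(\bm A)\ge \lambda/\sqrt d\}$, obtaining Lipschitz constant $\le 3$ by a direct two-case argument; your formulation is equivalent.

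One small caveat: your last sentence claims the residual $|\lambda-\lambda/\zeta^*|\sqrt r/\sqrt{dr}=O(\sigma\sqrt r/\gamma_n)$ is ``absorbed into $\eps$'' under \cref{ass2}. That assumption only gives $\gamma_n\ge C_3 r$, so the residual is merely $O(\sigma/\sqrt r)$, which need not be $\le\eps$. The paper's written proof does not handle this $\zeta^*\neq 1$ discrepancy either (its final display appears to conflate $\bm Z^{(\lambda/\zeta^*)}_{\sf ST}(\tau^*)$ with $\bm Z^{(\lambda)}_{\sf HT}$), so this is a shared loose end; in the overall argument the residual is harmless because it is carried into the $C\sigma\sqrt{r/\gamma_n}$ slack of \cref{thm:mainthm}.
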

\begin{proof}
We will argue that $\phi\big( \frac{1}{\sqrt{dr}} \bm{Z}^{(\lambda)}_{{\sf deb}} \big)$ is a Lipschitz function of $\bm{Z}^{(\lambda)}$ on the event $\mathcal{E}_{{\sf Good}}$. Indeed, on this event, by the proof of \cref{thm:relatecvxncvx}, it holds that $\|\bm{Z}^{(\lambda)} - \bm{M} \|_F\leq C \sigma \sqrt{dr}$. From the condition $\lambda_{\min}(\bm{M}) \geq C_0 \sigma \sqrt{dr}$,  Weyl's inequality implies that
\begin{align}
\lambda_r(\bm{Z}^{(\lambda)}) \geq \lambda_r(\bm{M}) - C \sigma \sqrt{dr} \geq (C_0 - C) \sigma \sqrt{dr} \geq \lambda \sqrt{r},
\end{align}
provided that $C_0$ is sufficiently large.  From \cref{thm:udebzdeb}, on the event $\mathcal{E}_{{\sf Good}}$ it holds that $\bm{Z}^{(\lambda)}_{{\sf deb}} = \bm{V}_U \big( \Sigma + \lambda \bm{I} \big) \bm{V}_U\t$, where $\bm{V}_U$ are the leading $r$ eigenvectors of $\bm{Z}^{(\lambda)}$.  Let $\eta$ denote the function which takes a matrix and adds $\lambda/\sqrt{dr}$ to the leading $r$ eigenvalues of a matrix.  Then since $\bm{Z}^{(\lambda)} = \bm{V}_U \bm{\Sigma} \bm{V}_U\t$,
\begin{align}
    \eta( \bm{Z}^{(\lambda)}/\sqrt{dr}) = \frac{1}{\sqrt{dr}}\bm{V}_U \big(  \bm{\Sigma} + \lambda \bm{I} \big) \bm{V}_U\t = \bm{Z}^{(\lambda)}_{{\sf deb}}/\sqrt{dr},
\end{align}
so that $\eta(\bm{Z}^{(\lambda)}/\sqrt{dr}) = \bm{Z}_{{\sf deb}}^{(\lambda)}/\sqrt{dr}$.  We claim that $\eta $ is Lipschitz on the set
\begin{align}
  \mathcal{S} :=  \{\bm{A} \in\mathbb{R}^{d\times d} : \lambda_{r+1}(\bm{A}) =0; \lambda_{r}(\bm{A}) \geq \frac{\lambda}{\sqrt{d}}\},
\end{align}
which contains $\bm{Z}^{(\lambda)}/\sqrt{dr}$ on the event $\mathcal{E}_{{\sf Good}}$.  
To see that $\eta$ is Lipschitz, suppose that $\bm{A}$ and $\bm{B}$ are two matrices in $\mathcal{S}$, and let them have eigendecompositions $\bm{U}_A \bm{\Sigma}_A \bm{U}_A\t$ and $\bm{U}_B \bm{\Sigma}_B \bm{U}_B\t$ respectively.  First, if $\|\bm{A} - \bm{B} \|_F < \lambda/\sqrt{d}$, then
\begin{align}
    \| \eta (\bm{A} ) - \eta (\bm{B}) \|_F &= \| \bm{U}_A (\bm{\Sigma} + \frac{\lambda}{\sqrt{dr}}\bm{I}) \bm{U}_A\t - \bm{U}_B ( \bm{\Sigma}_B + \frac{\lambda}{\sqrt{dr}} \bm{I}) \bm{U}_B\t \|_F \\
    &\leq \| \bm{A} - \bm{B} \|_F +\frac{\lambda}{\sqrt{dr}} \| \bm{U}_A \bm{U}_A\t - \bm{U}_B \bm{U}_B\t \|_F \\
    &\leq \| \bm{A} - \bm{B}\|_F +\frac{\lambda}{\sqrt{dr}}\frac{\|\bm{A} - \bm{B}\|_F}{\frac{\lambda}{\sqrt{d}}} \\
    &\leq 2 \| \bm{A} - \bm{B}\|_F,
\end{align}
where we have used the Davis-Kahan Theorem.  On the other hand, if $\|\bm{A} - \bm{B}\|_F \geq \lambda/\sqrt{d}$, then
\begin{align}
     \| \eta (\bm{A} ) - \eta (\bm{B}) \|_F &\leq \| \bm{A} - \bm{B}\|_F + \frac{\lambda}{\sqrt{dr}} \| \bm{U}_A \bm{U}_A\t - \bm{U}_B \bm{U}_B\t \|_F \\
     &\leq \| \bm{A} - \bm{B}\|_F + \frac{1}{\sqrt{r}} \| \bm{A} - \bm{B}\|_F \| \bm{U}_A \bm{U}_A\t - \bm{U}_B \bm{U}_B\t \|_F \\
     &\leq 3 \| \bm{A} - \bm{B} \|_F,
\end{align}
since the Frobenius norm of the difference of projection matrices is always at most $\sqrt{2r}$.  Consequently, $\eta$ is at most a $3$-Lipschitz function on this set.  By Kirszbaum's theorem, there exists a Lipschitz extension $\tilde \eta : \mathbb{R}^{d\times d} \to \mathbb{R}^{d\times d}$ such that $\tilde \eta $ has the same Lipschitz constant of $\eta $ and $\tilde \eta |_{\mathcal{S}} = \eta$.  In particular, on the event $\mathcal{E}_{{\sf Good}}$, $\eta $ and $\tilde \eta$ coincide.  Let $\phi$ be any Lipschitz function, and let $\tilde \phi = \phi \circ \tilde \eta$, which is also Lipschitz.  By \cref{thm:cvxasymptotics},
\begin{align}
    \p\bigg\{  \phi \bigg( &\frac{1}{\sqrt{dr}} \bm{Z}^{(\lambda)}_{{\sf deb}} \bigg) - \mathbb{E} \phi \bigg( \frac{1}{\sqrt{dr}} \bm{Z}^{(\lambda/\zeta\s)}_{{\sf ST}}(\tau\s) \bigg) > \eps \bigg\} \\
    &\leq  \p\bigg\{  \phi \bigg( \frac{1}{\sqrt{dr}} \bm{Z}^{(\lambda)}_{{\sf deb}} \bigg) - \mathbb{E} \phi \bigg( \frac{1}{\sqrt{dr}} \bm{Z}^{(\lambda/\zeta\s)}_{{\sf ST}}(\tau\s) \bigg) > \eps \bigg\} \cap \mathcal{E}_{{\sf Good}} + \p \mathcal{E}_{{\sf Good}}^c \\
    &\leq \p\bigg\{ \tilde \phi \bigg( \frac{1}{\sqrt{dr}} \bm{Z}^{(\lambda)} \bigg) - \mathbb{E} \tilde \phi \bigg( \frac{1}{\sqrt{dr}} \bm{Z}^{(\lambda/\zeta\s)}_{{\sf ST}}(\tau\s) \bigg) > \eps \bigg\} + \p \mathcal{E}_{{\sf Good}}^c \\
    &\leq   O\bigg( \exp( - c d) +  \exp( - c dr) + \exp( - c n) + \frac{1}{\eps^2} \exp(-c dr \eps^4 ) - \exp( - \frac{(dr)^2}{n} \eps^4) \bigg),
\end{align}
which completes the proof.
\end{proof}

As in the previous steps observe that the rate of concentration does not have an additional error term depending on $\gamma_n$, which is due to the fact that $\bm{Z}_{{\sf HT}}^{(\lambda)} \neq \bm{Z}_{{\sf HT}}$ when $\gamma_n$ is bounded away from infinity, and, moreover, this replacement is non-negligible in such a regime.  However, the following lemma shows that $\bm{Z}_{{\sf HT}}^{(\lambda)}$ approximates $\bm{Z}_{{\sf HT}}$ when $\gamma_n \to \infty$, the latter of which does not depend on $\lambda$.
\begin{lemma}\label{lem:ZHTtau} Suppose that \cref{ass1,ass2,ass3} are satisfied. Then for any $1$-Lipschitz function $\phi: \mathbb{R}^{d\times d} \to \mathbb{R}$ it holds that
    \begin{align}
         \bigg| \mathbb{E} \phi \big( \bm{Z}_{{\sf HT}}^{(\lambda)}/\sqrt{dr} \big) - \mathbb{E} \phi \big( \bm{Z}_{{\sf HT}}^{}/\sqrt{dr} \big) \bigg| = O\bigg( \sigma\frac{dr}{n} \bigg).
    \end{align}
\end{lemma}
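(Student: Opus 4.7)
The plan is to couple the two estimators by computing them from the same draw of $\bm{H}$, so that by the $1$-Lipschitz property of $\phi$ the claim reduces to showing
\begin{align*}
\mathbb{E}\bigl\|\bm{Z}_{{\sf HT}}^{(\lambda)} - \bm{Z}_{{\sf HT}}\bigr\|_F \big/\sqrt{dr} = O\bigl(\sigma\, dr/n\bigr).
\end{align*}
Both matrices are the best rank-$r$ approximation of $\bm{M} + t\bm{H}$ for $t \in \{\tau\s,\sigma\}$, and \cref{lem:fxdpointprops} yields $|\tau\s - \sigma| = |(\tau\s)^2 - \sigma^2|/(\tau\s+\sigma) = O(\sigma\, dr/n)$, so the task reduces to establishing a local Lipschitz bound on $t \mapsto \mathcal{P}_{{\sf rank}-r}(\bm{M}+t\bm{H})$ with the right constant.

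I would work on the high-probability event $\mathcal{E}_0 = \{\|\bm{H}\| \leq 3\sqrt{d}\}$, which fails with probability $e^{-cd}$. On $\mathcal{E}_0$, \cref{ass1} gives $\lambda_r(\bm{M}) \geq C_1 \sigma\sqrt{dr}$ and \cref{lem:fxdpointprops} gives $\tau\s \leq C\sigma$, so Weyl's inequality produces a uniform spectral gap $g \geq c\sigma\sqrt{dr}$ between the top-$r$ and bottom-$(d-r)$ eigenvalues (by magnitude) of $\bm{M} + t\bm{H}$ for all $t \in [\sigma,\tau\s]$, provided $C_1$ is taken sufficiently large (which \cref{ass1} permits). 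Letting $\bm{P}_t$ denote the projection onto the top-$r$ eigenspace of $\bm{M} + t\bm{H}$, decompose
\begin{align*}
\bm{Z}_{{\sf HT}}^{(\lambda)} - \bm{Z}_{{\sf HT}} &= (\tau\s - \sigma)\,\bm{P}_{\tau\s} \bm{H} \bm{P}_{\tau\s} \\
&\quad + (\bm{P}_{\tau\s} - \bm{P}_\sigma)(\bm{M} + \sigma\bm{H})\bm{P}_{\tau\s} + \bm{P}_\sigma (\bm{M} + \sigma\bm{H})(\bm{P}_{\tau\s} - \bm{P}_\sigma).
\end{align*}
The first term has Frobenius norm at most $|\tau\s - \sigma|\sqrt{r}\|\bm{H}\| \leq C|\tau\s - \sigma|\sqrt{dr}$ on $\mathcal{E}_0$. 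For the other two, the $\sin\Theta$ form of Davis--Kahan bounds $\|\bm{P}_{\tau\s} - \bm{P}_\sigma\|_F \leq C|\tau\s - \sigma|\|\bm{H}\|\sqrt{r}/g \leq C|\tau\s-\sigma|/\sigma$, and multiplying by $\|\bm{M}+\sigma\bm{H}\| \leq C\sigma\sqrt{dr}$ again produces $O(|\tau\s - \sigma|\sqrt{dr})$. Combining gives $\|\bm{Z}_{{\sf HT}}^{(\lambda)} - \bm{Z}_{{\sf HT}}\|_F = O(\sigma (dr)^{3/2}/n)$ on $\mathcal{E}_0$, which after the $\sqrt{dr}$ normalization matches the target.

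On $\mathcal{E}_0^c$ I would use the crude bound $\|\bm{Z}_{{\sf HT}}^{(\lambda)}\|_F + \|\bm{Z}_{{\sf HT}}\|_F \leq C\sqrt{r}\bigl(\|\bm{M}\| + (\tau\s+\sigma)\|\bm{H}\|\bigr)$; combined with the subgaussian tails of $\|\bm{H}\|$, the expected contribution from $\mathcal{E}_0^c$ is exponentially small in $d$ and dominated by $\sigma\, dr/n$. The main obstacle is ensuring a uniform spectral gap of order $\sigma\sqrt{dr}$ for $t \in [\sigma,\tau\s]$ simultaneously with $\|\bm{H}\| \lesssim \sqrt{d}$; this requires a sufficiently large constant $C_1$ in \cref{ass1} to absorb the universal constant from the operator-norm bound on $\bm{H}$, after which Davis--Kahan supplies the remaining estimates.
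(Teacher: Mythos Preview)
Your proposal is correct and follows essentially the same approach as the paper: both work on the high-probability event $\{\|\bm{H}\|\le C\sqrt{d}\}$, use Weyl's inequality to secure a spectral gap of order $\sigma\sqrt{dr}$, apply Davis--Kahan to control the eigenspace perturbation, combine with $|\tau\s-\sigma|=O(\sigma\,dr/n)$ from \cref{lem:fxdpointprops}, and dispose of the complementary event via the subgaussian tails of $\|\bm{H}\|$. The only cosmetic difference is that the paper decomposes $\bm{Z}_{{\sf HT}}^{(\lambda)}-\bm{Z}_{{\sf HT}}$ through the eigendecompositions $\bm{U}_\sigma\bm{\Lambda}_\sigma\bm{U}_\sigma^\top$, $\bm{U}_\tau\bm{\Lambda}_\tau\bm{U}_\tau^\top$ and the optimal alignment $\mathcal{O}_{\bm{U}_\sigma,\bm{U}_\tau}$, whereas you use the cleaner projection identity $\bm{P}_t(\bm{M}+t\bm{H})\bm{P}_t$; both yield the same $O(|\tau\s-\sigma|\sqrt{dr})$ bound on the good event.
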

\begin{proof}
    See \cref{sec:ZHTtau}.  
\end{proof}

\section{Concentration of Unregularized Estimator about Debiased Estimator} \label{sec:debnearequivalence}
Our final argument is based on demonstrating $\bm{U}_{{\sf deb}}^{(\lambda)}$ approximates $\bm{U}^{(0)}$.  The following result establishes the near-equivalence of $\bm{U}_{{\sf deb}}^{(\lambda)}$ and $\bm{U}^{(0)}$.  We prove this result in the following subsection.
\begin{theorem}[Relating the Debiased and Unregularized Nonconvex Estimators] \label{thm:debunreg} Suppose that \cref{ass1,ass2,ass3} are satisfied.  Then the debiased estimator $\bm{U}^{(\lambda)}_{{\sf deb}}$ satisfies the bound
    \begin{align}
      \frac{1}{\sqrt{dr}}   \|\bm{U}^{(\lambda)}_{{\sf deb}} \bm{U}^{(\lambda)\top}_{{\sf deb}} - \bm{U}^{(0)} \bm{U}^{(0)\top}\|_F = O\bigg( \sigma r\sqrt{\frac{d}{n}} \bigg)
    \end{align}
    with probability at least $1 - O\bigg( \exp( - c dr) + \exp(- c n) + \exp( - c d) \bigg)$.  
\end{theorem}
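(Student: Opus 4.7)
The plan is to show that $\bm{U}_{{\sf deb}}^{(\lambda)}$ is an approximate critical point of the unregularized loss $f^{(0)}_{{\sf ncvx}}$, and then to invoke a local restricted-strong-convexity / Polyak--{\L}ojasiewicz (PL) inequality for $f^{(0)}_{{\sf ncvx}}$ around its minimum $\bm{U}^{(0)}$ to translate gradient smallness into smallness of $\|\bm{U}_{{\sf deb}}^{(\lambda)} \bm{U}_{{\sf deb}}^{(\lambda)\top} - \bm{U}^{(0)} \bm{U}^{(0)\top}\|_F$. Throughout I work on the event $\mathcal{E}_{{\sf Good}}$ of \cref{lem:Egood}, which already carries the claimed probability. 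On this event the identities $\bm{Z}^{(\lambda)} = \bm{U}^{(\lambda)} \bm{U}^{(\lambda)\top}$ (\cref{thm:relatecvxncvx}) and $\bm{U}_{{\sf deb}}^{(\lambda)} \bm{U}_{{\sf deb}}^{(\lambda)\top} = \bm{U}^{(\lambda)} \bm{U}^{(\lambda)\top} + \lambda \bm{V}_U \bm{V}_U^{\top}$ (from the proof of \cref{thm:udebzdeb}) hold, where $\bm{V}_U \in \mathbb{R}^{d \times r}$ collects the left singular vectors of $\bm{U}^{(\lambda)}$, and \cref{lem:uncvxgood} (re-run with $\lambda = 0$ for the unregularized factor) yields $\|\bm{U}^{(\lambda)}\bm{U}^{(\lambda)\top} - \bm{M}\|_F \vee \|\bm{U}^{(0)}\bm{U}^{(0)\top} - \bm{M}\|_F \lesssim \sigma\sqrt{dr}$.

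The first main step is an exact computation. Substituting $\bm{U}_{{\sf deb}}^{(\lambda)} = \bm{U}^{(\lambda)}\bigl(\bm{I} + \lambda (\bm{U}^{(\lambda)\top}\bm{U}^{(\lambda)})\inv\bigr)^{1/2}$ and the product identity above into the gradient formula $\nabla f^{(0)}_{{\sf ncvx}}(\bm{U}) = \mathcal{X}\s\mathcal{X}(\bm{U}\bm{U}^{\top} - \bm{M})\bm{U} - \bm{N}_\eps \bm{U}$, and then eliminating the data-dependent terms using the first-order condition \eqref{uncvx} for $\bm{U}^{(\lambda)}$ together with $\bm{V}_U \bm{V}_U^{\top} \bm{U}_{{\sf deb}}^{(\lambda)} = \bm{U}_{{\sf deb}}^{(\lambda)}$, one arrives at the clean identity
\begin{align*}
\nabla f^{(0)}_{{\sf ncvx}}\bigl(\bm{U}_{{\sf deb}}^{(\lambda)}\bigr) = \lambda\,\bigl[\mathcal{X}\s\mathcal{X}\bigl(\bm{V}_U \bm{V}_U^{\top}\bigr) - \bm{V}_U \bm{V}_U^{\top}\bigr]\,\bm{U}_{{\sf deb}}^{(\lambda)}.
\end{align*}
Since $\bm{V}_U\bm{V}_U^{\top}$ is rank-$r$ with Frobenius norm $\sqrt{r}$, duality against rank-$r$ test matrices of the form $\bm{B}\bm{V}_U^{\top}$ together with the RIP bound \eqref{rip} on $\mathcal{E}_{{\sf Good}}$ (with $\delta_{2r} \lesssim \sqrt{dr/n}$ under \cref{ass2}) gives $\|[\mathcal{X}\s\mathcal{X}(\bm{V}_U\bm{V}_U^{\top}) - \bm{V}_U \bm{V}_U^{\top}]\,\bm{V}_U\|_F \lesssim \delta_{2r}\sqrt{r}$, which after accounting for the right factor of $\bm{U}_{{\sf deb}}^{(\lambda)}$ yields the gradient bound $\|\nabla f^{(0)}_{{\sf ncvx}}(\bm{U}_{{\sf deb}}^{(\lambda)})\|_F \lesssim \lambda\,\delta_{2r}\,r\,\|\bm{U}_{{\sf deb}}^{(\lambda)}\|_{\mathrm{op}}$.

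The second main step is a local PL inequality for $f^{(0)}_{{\sf ncvx}}$ at $\bm{U}^{(0)}$. Under the RIP inherited from $\mathcal{E}_{{\sf Good}}$, a standard Burer--Monteiro Hessian analysis (as in \citet{tu_low-rank_2016,ge_no_2017,chi_nonconvex_2019}, adapted to the noisy setting) shows that for every $\bm{V}$ with $\|\bm{V}\bm{V}^{\top} - \bm{U}^{(0)}\bm{U}^{(0)\top}\|_F \leq c\,\sigma_r(\bm{M})$,
\begin{align*}
\|\bm{V}\bm{V}^{\top} - \bm{U}^{(0)}\bm{U}^{(0)\top}\|_F^2 \,\lesssim\, \frac{1}{\sigma_r(\bm{M})}\,\|\nabla f^{(0)}_{{\sf ncvx}}(\bm{V})\|_F^2.
\end{align*}
The strong-convexity constant $\sigma_r(\bm{M}) \asymp \sigma\sqrt{dr}$ comes from the Procrustes lower bound $\|\bm{\Delta}\bm{U}^{(0)\top} + \bm{U}^{(0)}\bm{\Delta}^{\top}\|_F^2 \gtrsim \sigma_r(\bm{M})\|\bm{\Delta}\|_F^2$ and survives the RIP and noise perturbations because $\|\bm{N}_\eps\|_{\mathrm{op}} \lesssim \sigma\sqrt{d} \ll \sigma_r(\bm{M})$ under \cref{ass1}. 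Containment of $\bm{U}_{{\sf deb}}^{(\lambda)}$ in this basin follows from the triangle inequality and the preliminary bound: $\|\bm{U}_{{\sf deb}}^{(\lambda)}\bm{U}_{{\sf deb}}^{(\lambda)\top} - \bm{U}^{(0)}\bm{U}^{(0)\top}\|_F \lesssim \sigma\sqrt{dr} \lesssim \sigma_r(\bm{M})$ provided the constant $C_1$ in \cref{ass1} is taken large enough. Combining the two steps and substituting $\lambda \asymp \sigma\sqrt{d}$, $\delta_{2r} \asymp \sqrt{dr/n}$, $\|\bm{U}_{{\sf deb}}^{(\lambda)}\|_{\mathrm{op}}^2 \asymp \sigma\sqrt{dr}$ and $\sigma_r(\bm{M}) \asymp \sigma\sqrt{dr}$ gives $\|\bm{U}_{{\sf deb}}^{(\lambda)}\bm{U}_{{\sf deb}}^{(\lambda)\top} - \bm{U}^{(0)}\bm{U}^{(0)\top}\|_F \lesssim \sigma\,dr^{3/2}/\sqrt{n}$, which after rescaling by $1/\sqrt{dr}$ is exactly the advertised rate $\sigma r\sqrt{d/n}$. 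The probability bound is obtained by a union bound over $\mathcal{E}_{{\sf Good}}^c$ and the analogous good-event used in proving \cref{lem:uncvxgood} with $\lambda = 0$.

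The main obstacle is the second step: establishing a sharp PL inequality with explicit constant $\sigma_r(\bm{M})\inv$ on a neighborhood of $\bm{U}^{(0)}$ large enough to contain $\bm{U}_{{\sf deb}}^{(\lambda)}$ (up to the rotation gauge $\mathcal{O}_{\bm{U}_{{\sf deb}}^{(\lambda)},\bm{U}^{(0)}}$), despite the fact that the Hessian of $f^{(0)}_{{\sf ncvx}}$ is automatically degenerate along the orbit of the rotation group. This is also the unique place where \cref{ass2} is needed in its strong form $n \gtrsim dr^2$: any weakening would force $\delta_{2r}$ to be of order one, which would simultaneously weaken the RIP-based Hessian lower bound here and inflate the gradient bound of the first step, so that the final rate would no longer be $o(1)$ after rescaling.
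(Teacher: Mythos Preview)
Your strategy is essentially the paper's: first derive the exact identity $\nabla f^{(0)}_{{\sf ncvx}}(\bm{U}_{{\sf deb}}^{(\lambda)}) = \lambda\bigl[(\mathcal{X}\s\mathcal{X}-\mathcal{I})(\bm{V}_U\bm{V}_U\t)\bigr]\bm{U}_{{\sf deb}}^{(\lambda)}$ and bound it, then invoke local restricted strong convexity of $f^{(0)}_{{\sf ncvx}}$ near $\bm{U}^{(0)}$ to conclude. The differences are in the execution of both halves. For the gradient bound, you appeal to the RIP inequality in $\mathcal{E}_{{\sf Good}}$ with the sharp constant $\delta_{2r}\asymp\sqrt{dr/n}$; note however that the paper's $\mathcal{E}_{{\sf Good}}$ is only stated with $\delta_{2r}=c/\sqrt{r}$, which is too coarse for your purposes, so you either need to re-invoke the sharper RIP (same probability) or, as the paper does, prove a dedicated operator-norm bound $\sup_{\|\bm{V}\|\le 1}\|(\mathcal{X}\s\mathcal{X}-\mathcal{I})(\bm{VV}\t)\|\lesssim r\sqrt{d/n}$ via an $\eps$-net (its \cref{lem:netargument}). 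Also, your stated exponent should be $\sqrt r$, not $r$: the duality step gives $\delta_{2r}\sqrt r\,\|\bm{U}_{{\sf deb}}^{(\lambda)}\|$, which after substituting $\delta_{2r}\asymp\sqrt{dr/n}$ yields the correct $\lambda r\sqrt{d/n}\,\|\bm{U}_{{\sf deb}}^{(\lambda)}\|$.

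For the second step, the paper does not invoke a PL inequality off the shelf; it instead proves directional Hessian bounds $\frac{\lambda_{\min}(\bm{M})}{20}\|\bm\Delta\|_F^2 \le \nabla^2 f^{(0)}_{{\sf ncvx}}(t\bm{U}^{(0)}+(1-t)\bm{U}_{{\sf deb}}^{(\lambda)})[\bm\Delta,\bm\Delta] \le 20\lambda_{\max}(\bm{M})\|\bm\Delta\|_F^2$ along the line segment (\cref{lem:convexity}, proven via \cref{lem:secondderivcloseness,lem:convexitydeterministic} which compare to the clean matrix-factorization Hessian and use geodesic convexity of the ball around $\bm{U}\s$ to contain the whole segment), then runs a one-step contractivity argument to obtain $\|\bm{U}_{{\sf deb}}^{(\lambda)}-\bm{U}^{(0)}\|_F\lesssim \lambda_{\max}(\bm{M})^{-1}\|\nabla f^{(0)}_{{\sf ncvx}}(\bm{U}_{{\sf deb}}^{(\lambda)})\|_F$. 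Your PL formulation is equivalent in spirit, but the references you cite (\citet{tu_low-rank_2016,ge_no_2017,chi_nonconvex_2019}) treat the noiseless problem; the adaptation to the noisy case---showing that the extra $\langle\bm{N}_\eps,\bm{VV}\t\rangle$ and $\langle\mathcal{X}\s\mathcal{X}(\bm{UU}\t-\bm{M}),\bm{VV}\t\rangle$ terms are dominated by $\lambda_{\min}(\bm{M})\|\bm{V}\|_F^2$ under \cref{ass1}---is precisely the content the paper supplies in \cref{lem:convexity}. So your sketch is correct, but you should flag that this ``standard'' step is where the actual work lies and where the noise and the signal-strength assumption interact.
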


This step is the only step essentially requiring that $\gamma_n/r \to \infty$, since observe that the rate of approximation is of order $(\gamma_n/r)^{-1/2}$ from \cref{thm:debunreg}.  In essence, this rate is due to the 
the rate of concentration of $\mathcal{X}\s \mathcal{X} - \mathcal{I}$ on rank $r$ matrices.  Therefore, in order to replace $\bm{U}_{{\sf deb}}^{(\lambda)} \bm{U}_{{\sf deb}}^{(\lambda)\top}$ with  $\bm{U}^{(0)}\bm{U}^{(0)\top}$, we incur an error of order $O((\gamma_n/r)^{-1/2})$, which is partially where this extra term in \cref{thm:mainthm} arises.

\subsection{Proof of \cref{thm:debunreg}}
In order to prove \cref{thm:debunreg} we will require a number of intermediate results.  The first result bounds the norms of the gradients of $f^{(0)}_{{\sf ncvx}}$ when evaluated at $\bm{U}_{{\sf deb}}^{(\lambda)}$. 
\begin{lemma}\label{lem:concentrationbound1} Under \cref{ass1,ass2,ass3}, with probability at least $1 - O\bigg(  \exp( -c  d r) -  \exp( - c n ) \bigg)$, it holds that
    \begin{align}
        \| \nabla f^{(0)}_{{\sf ncvx}} \big( \bm{U}^{(\lambda)}_{{\sf deb}} \big) \|_F \lesssim \frac{\lambda r \sqrt{d}}{\sqrt{n}} \| \bm{U}^{(\lambda)}_{{\sf deb}} \|_F.
    \end{align}
\end{lemma}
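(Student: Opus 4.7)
}
The plan is to combine an algebraic simplification of $\nabla f^{(0)}_{{\sf ncvx}}(\bm{U}^{(\lambda)}_{{\sf deb}})$ coming from the first-order conditions for $\bm{U}^{(\lambda)}$ with a uniform concentration estimate for a certain cubic functional on the Stiefel manifold of $d\times r$ orthonormal matrices.

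\emph{Step 1 (gradient identity).} A direct computation gives $\nabla f^{(0)}_{{\sf ncvx}}(\bm{U})=\mathcal{X}^{*}\mathcal{X}(\bm{U}\bm{U}^{\top}-\bm{M})\bm{U}-\bm{N}_{\eps}\bm{U}$. In the proof of \cref{thm:relatecvxncvx} it is shown that on $\mathcal{E}_{{\sf Good}}$ the first-order condition at $\bm{U}^{(\lambda)}$ implies $\mathcal{X}^{*}\mathcal{X}(\bm{Z}^{(\lambda)}-\bm{M})-\bm{N}_{\eps}=-\lambda\bm{V}_{U}\bm{V}_{U}^{\top}+\bm{W}_{U}$ with $\bm{W}_{U}\bm{V}_{U}=0$, where $\bm{V}_{U}$ is an orthonormal basis for $\mathrm{col}(\bm{U}^{(\lambda)})$. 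Evaluating the gradient at $\bm{U}^{(\lambda)}_{{\sf deb}}$, using \cref{thm:udebzdeb} to write $\bm{U}^{(\lambda)}_{{\sf deb}}\bm{U}^{(\lambda)\top}_{{\sf deb}}=\bm{Z}^{(\lambda)}+\lambda\bm{V}_{U}\bm{V}_{U}^{\top}$, and observing that $\mathrm{col}(\bm{U}^{(\lambda)}_{{\sf deb}})=\mathrm{col}(\bm{V}_{U})$ (so $\bm{V}_{U}\bm{V}_{U}^{\top}\bm{U}^{(\lambda)}_{{\sf deb}}=\bm{U}^{(\lambda)}_{{\sf deb}}$ and $\bm{W}_{U}\bm{U}^{(\lambda)}_{{\sf deb}}=0$), the contributions of $\bm{W}_{U}$ and of $-\lambda\bm{V}_{U}\bm{V}_{U}^{\top}$ cancel cleanly against $\lambda\,\mathcal{I}(\bm{V}_{U}\bm{V}_{U}^{\top})\bm{U}^{(\lambda)}_{{\sf deb}}$, leaving the identity
\begin{align*}
\nabla f^{(0)}_{{\sf ncvx}}(\bm{U}^{(\lambda)}_{{\sf deb}})=\lambda\,(\mathcal{X}^{*}\mathcal{X}-\mathcal{I})(\bm{V}_{U}\bm{V}_{U}^{\top})\,\bm{U}^{(\lambda)}_{{\sf deb}}.
\end{align*}

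\emph{Step 2 (factorization).} Writing $\bm{U}^{(\lambda)}_{{\sf deb}}=\bm{V}_{U}\bm{P}$ for some $r\times r$ matrix $\bm{P}$ with $\|\bm{P}\|_{\mathrm{op}}\le\|\bm{P}\|_{F}=\|\bm{U}^{(\lambda)}_{{\sf deb}}\|_{F}$, the Frobenius norm splits as
\begin{align*}
\|\nabla f^{(0)}_{{\sf ncvx}}(\bm{U}^{(\lambda)}_{{\sf deb}})\|_{F}\;\le\;\lambda\;\|(\mathcal{X}^{*}\mathcal{X}-\mathcal{I})(\bm{V}_{U}\bm{V}_{U}^{\top})\bm{V}_{U}\|_{F}\;\|\bm{U}^{(\lambda)}_{{\sf deb}}\|_{F}.
\end{align*}
Since $\lambda = O(\sigma\sqrt{d})$ by \cref{ass3}, it suffices to show that with failure probability at most $O(\exp(-cdr)+\exp(-cn))$,
\begin{align*}
\sup_{\bm{V}\in\mathbb{R}^{d\times r}:\,\bm{V}^{\top}\bm{V}=\bm{I}_{r}}\;\|(\mathcal{X}^{*}\mathcal{X}-\mathcal{I})(\bm{V}\bm{V}^{\top})\bm{V}\|_{F}\;\lesssim\; r\sqrt{d/n}.
\end{align*}

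\emph{Step 3 (uniform concentration).} For fixed orthonormal $\bm{V}$, the random matrix $\bm{Y}(\bm{V}):=(\mathcal{X}^{*}\mathcal{X}-\mathcal{I})(\bm{V}\bm{V}^{\top})\bm{V}$ is centered via the GOE identity $\mathbb{E}[\langle\bm{A},\bm{X}\rangle\bm{X}]=\bm{A}$ for symmetric $\bm{A}$. A second-moment computation entry-by-entry shows $\mathbb{E}\|\bm{Y}(\bm{V})\|_{F}^{2}\lesssim r^{2}d/n$, and because $\|\bm{Y}(\bm{V})\|_{F}^{2}$ is a quadratic form in the Gaussian entries of $(\bm{X}_{1},\ldots,\bm{X}_{n})$, Hanson--Wright concentration yields $\|\bm{Y}(\bm{V})\|_{F}\lesssim r\sqrt{d/n}$ with failure probability at most $\exp(-cdr)$ (choosing the deviation at the same scale as the mean). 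I then discretize the Stiefel manifold via a $\delta$-net in Frobenius norm of cardinality at most $(C/\delta)^{dr}$. On the RIP event in $\mathcal{E}_{{\sf Good}}$, the cubic map $\bm{V}\mapsto\bm{Y}(\bm{V})$ is Lipschitz in $\bm{V}$ with a constant that is polynomially bounded in the problem parameters; taking $\delta$ polynomially small and applying a union bound promotes the pointwise estimate to the uniform one on an event of probability at least $1-O(\exp(-cdr))$. Combining with Step~2 and tabulating the probabilities of $\mathcal{E}_{{\sf Good}}$ and the concentration event gives the claim.

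\emph{Main obstacle.} The decisive step is the uniform bound in Step 3. A black-box application of the restricted isometry property from \cref{lem:Egood} only yields $\|\bm{Y}(\bm{V})\|_{F}\lesssim\delta_{2r}\sqrt{r}$, which under $\delta_{2r}=c/\sqrt{r}$ is merely $O(1)$ and fails to capture the gain available when $n$ strictly exceeds $dr^{2}$. The sharp rate $r\sqrt{d/n}$ must therefore be extracted from the specific cubic structure of $\bm{Y}(\bm{V})$ via the pointwise Hanson--Wright bound, and care is needed in the Stiefel covering argument so that the log-covering number $\asymp dr$ does not degrade the exponent in the union bound. This is also the place where the sample-size requirement $n\gg dr^{2}$ from \cref{ass2} becomes essential.
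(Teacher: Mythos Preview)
Your Steps 1 and 2 are correct and match the paper exactly: the paper derives the same identity $\nabla f^{(0)}_{{\sf ncvx}}(\bm{U}^{(\lambda)}_{{\sf deb}})=\lambda\,(\mathcal{X}^{*}\mathcal{X}-\mathcal{I})(\bm{V}_{U}\bm{V}_{U}^{\top})\,\bm{U}^{(\lambda)}_{{\sf deb}}$ and then reduces to a uniform bound on $(\mathcal{X}^{*}\mathcal{X}-\mathcal{I})(\bm{V}\bm{V}^{\top})$ over orthonormal $\bm{V}$. The only cosmetic difference is that the paper bounds the \emph{operator} norm $\|(\mathcal{X}^{*}\mathcal{X}-\mathcal{I})(\bm{V}\bm{V}^{\top})\|$ (its \cref{lem:netargument}) and pairs it with $\|\bm{U}^{(\lambda)}_{{\sf deb}}\|_F$, whereas you bound the Frobenius norm of $(\mathcal{X}^{*}\mathcal{X}-\mathcal{I})(\bm{V}\bm{V}^{\top})\bm{V}$ and pair it with $\|\bm{P}\|_{\mathrm{op}}$; both routes lead to the same $r\sqrt{d/n}$ target.

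There is, however, a concrete gap in Step 3. You claim Hanson--Wright applies to $\|\bm{Y}(\bm{V})\|_F^2$ because it ``is a quadratic form in the Gaussian entries of $(\bm{X}_1,\ldots,\bm{X}_n)$.'' That is not correct: each entry of $\bm{Y}(\bm{V})=\tfrac{1}{n}\sum_i c_i\bm{X}_i\bm{V}-\bm{V}$ with $c_i=\mathrm{Tr}(\bm{V}^\top\bm{X}_i\bm{V})$ is itself degree two in the Gaussians, so $\|\bm{Y}(\bm{V})\|_F^2$ is a degree-\emph{four} polynomial, and a direct Hanson--Wright bound is unavailable. The paper handles this by a decoupling/conditioning argument: using rotational invariance it splits $(\mathcal{X}^{*}\mathcal{X}-\mathcal{I})(\bm{V}\bm{V}^{\top})$ into a piece $T_1$ living on $(\bm{I}-\bm{V}\bm{V}^\top)$ and a piece $T_2$ on $\bm{V}\bm{V}^\top$; for $T_1$ the factors $c_i$ and $(\bm{I}-\bm{V}\bm{V}^\top)\bm{X}_i\bm{V}$ are \emph{independent}, so conditioning on the $c_i$'s reduces the problem to concentration of a Gaussian random matrix, while $T_2$ is broken into three scalar sums ($J_1,J_2,J_3$) handled by $\chi^2$ tails, a Hanson--Wright variant for bilinear forms, and a conditional Hoeffding bound. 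This decoupling is precisely what extracts the sharp $r\sqrt{d/n}$ rate rather than the coarser $\delta_{2r}\sqrt{r}$ RIP bound you correctly flagged as insufficient.

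Your net argument is also too loose as stated. ``Lipschitz with polynomially bounded constant on the RIP event'' is not obviously true for $\bm{V}\mapsto\bm{Y}(\bm{V})$, since RIP controls $\langle(\mathcal{X}^{*}\mathcal{X}-\mathcal{I})(A),B\rangle$ only for low-rank $A,B$, not the full operator norm of $(\mathcal{X}^{*}\mathcal{X}-\mathcal{I})(A)$. The paper instead uses linearity of $\mathcal{X}^{*}\mathcal{X}-\mathcal{I}$ to run the standard self-bounding trick $M\le M\varepsilon+\text{(net bound)}$ directly, choosing the pointwise deviation parameters $t,s\asymp\sqrt{dr}$ so that the union bound over a net of cardinality $\exp(Cdr)$ is absorbed. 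If you patch Step 3 with the conditioning decomposition and this net device, your proof becomes essentially the paper's.
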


\begin{proof}
    See \cref{sec:concentrationbound1proof}.
\end{proof}

The next result characterizes the landscape of $f^{(0)}_{{\sf ncvx}}$ about the quantity $\bm{U}^{(0)}$. Without loss of generality we may take $ \mathcal{O}_{\bm{U}^{(0)},\bm{U}_{{\sf deb}}^{(\lambda)}} = \bm{I}$, since if $\bm{U}^{(0)}$ is a local minimum of $f^{(0)}_{{\sf ncvx}}$ then so is $\bm{U}^{(0)} \mathcal{O}$ for any orthogonal matrix $\mathcal{O}$.
\begin{lemma}\label{lem:convexity} Suppose without loss of generality that 
$ \mathcal{O}_{\bm{U}^{(0)},\bm{U}_{{\sf deb}}^{(\lambda)}} = \bm{I}$. Under \cref{ass1,ass2,ass3}, with probability at least $1 - O\big( \exp( - c dr ) + \exp( - c d) \big)$ it holds that
\begin{align}
    20 \lambda_{\max}(\bm{M}) \| \bm{U}^{(0)} - \bm{U}^{(\lambda)}_{{\sf deb}} \|_F^2  &\geq \nabla^2 f^{(0)}_{{\sf ncvx}}\big( t \bm{U}^{(0)} + (1 - t) \bm{U}^{(\lambda)}_{{\sf deb}} \big)[\bm{U}^{(0)} - \bm{U}_{{\sf deb}}^{(\lambda)} ,\bm{U}^{(0)} - \bm{U}_{{\sf deb}}^{(\lambda)}] \\
    &\geq \frac{\lambda_{\min}(\bm{M})}{20} \| \bm{U}^{(0)} - \bm{U}^{(\lambda)}_{{\sf deb}} \|_F^2.
\end{align}
Furthermore, it also holds that
\begin{align}
    \max\{ \| \bm{U}^{(0)}\|^2, \| \bm{U}_{{\sf deb}}^{(\lambda)} \|^2 \} \leq 8 \| \bm{M} \|
\end{align}
\end{lemma}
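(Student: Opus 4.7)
The plan is to expand the Hessian of $f^{(0)}_{{\sf ncvx}}$ in closed form, split it into a factored quadratic term plus a ``residual'' inner product, and control each on the event $\mathcal{E}_{{\sf Good}}$ using the restricted isometry property \eqref{rip}, the noise bound \eqref{rip2}, and the signal-strength condition in \cref{ass1}. The alignment hypothesis $\mathcal{O}_{\bm{U}^{(0)},\bm{U}^{(\lambda)}_{{\sf deb}}} = \bm{I}$ will enter only algebraically, via the Procrustes characterization. I would first dispose of the spectral bounds: applying \cref{lem:uncvxgood} both with regularization $0$ and with $\lambda$, on $\mathcal{E}_{{\sf Good}}$ we get $\|\bm{U}^{(0)}\bm{U}^{(0)\top}-\bm{M}\|_F \lesssim \sigma\sqrt{dr}$ and $\|\bm{U}^{(\lambda)}\bm{U}^{(\lambda)\top}-\bm{M}\|_F \lesssim \sigma\sqrt{dr}$; since $\bm{U}^{(\lambda)}_{{\sf deb}}\bm{U}^{(\lambda)\top}_{{\sf deb}} = \bm{U}^{(\lambda)}\bm{U}^{(\lambda)\top} + \lambda\bm{V}_U\bm{V}_U^\top$ by \cref{thm:udebzdeb} and $\lambda \lesssim \sigma\sqrt{d}$ by \cref{ass3}, we also get $\|\bm{U}^{(\lambda)}_{{\sf deb}}\bm{U}^{(\lambda)\top}_{{\sf deb}} - \bm{M}\|_F \lesssim \sigma\sqrt{dr}$. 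Under \cref{ass1} the right-hand side is $\leq \|\bm{M}\|/C_1$, so $\|\bm{U}\bm{U}^\top\| \leq 8\|\bm{M}\|$ for $\bm{U} \in \{\bm{U}^{(0)}, \bm{U}^{(\lambda)}_{{\sf deb}}\}$ by the triangle inequality.

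A direct computation gives
\begin{align*}
\nabla^2 f^{(0)}_{{\sf ncvx}}(\bm{U})[\bm{V},\bm{V}] = \tfrac{1}{2}\|\mathcal{X}(\bm{U}\bm{V}^\top + \bm{V}\bm{U}^\top)\|^2 + \langle \mathcal{X}\s\mathcal{X}(\bm{U}\bm{U}^\top - \bm{M}) - \bm{N}_\eps, \bm{V}\bm{V}^\top\rangle.
\end{align*}
With $\bm{U} = t\bm{U}^{(0)}+(1-t)\bm{U}^{(\lambda)}_{{\sf deb}}$ and $\bm{V} = \bm{U}^{(0)}-\bm{U}^{(\lambda)}_{{\sf deb}}$, each has rank at most $2r$, so the restricted isometry property on rank at most $4r$ matrices (available under \cref{ass2} with $\delta \asymp r^{-1/2}$) applies to both the factored quadratic and the inner-product residual. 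For the upper bound I would combine $\|\bm{U}\bm{V}^\top + \bm{V}\bm{U}^\top\|_F^2 \leq 4\|\bm{U}\|^2\|\bm{V}\|_F^2$ with the spectral bound above to obtain a leading $16(1+\delta)\lambda_{\max}(\bm{M})\|\bm{V}\|_F^2$. The residual inner product is bounded by Cauchy--Schwarz using $\|\bm{U}\bm{U}^\top-\bm{M}\|_F \lesssim \sigma\sqrt{dr}$ on the entire segment (which follows by expanding $\bm{U}\bm{U}^\top = t^2\bm{U}^{(0)}\bm{U}^{(0)\top} + (1-t)^2\bm{U}^{(\lambda)}_{{\sf deb}}\bm{U}^{(\lambda)\top}_{{\sf deb}} + t(1-t)(\bm{U}^{(0)}\bm{U}^{(\lambda)\top}_{{\sf deb}}+\bm{U}^{(\lambda)}_{{\sf deb}}\bm{U}^{(0)\top})$) together with \eqref{rip2}, contributing an extra $\lesssim (C/C_1)\lambda_{\min}(\bm{M})\|\bm{V}\|_F^2$ which for $C_1$ large is absorbed into the leading term, yielding the $20\lambda_{\max}(\bm{M})\|\bm{V}\|_F^2$ upper bound.

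For the lower bound I would exploit the algebraic identity
\begin{align*}
\|\bm{U}\bm{V}^\top + \bm{V}\bm{U}^\top\|_F^2 = 2\mathrm{tr}\!\big((\bm{V}^\top\bm{U})^2\big) + 2\mathrm{tr}\!\big((\bm{U}^\top\bm{U})(\bm{V}^\top\bm{V})\big).
\end{align*}
The alignment $\mathcal{O}_{\bm{U}^{(0)},\bm{U}^{(\lambda)}_{{\sf deb}}} = \bm{I}$ is equivalent (by Procrustes) to $\bm{U}^{(0)\top}\bm{U}^{(\lambda)}_{{\sf deb}}$ being symmetric PSD, and a direct expansion of $\bm{V}^\top\bm{U}$ then shows it is \emph{symmetric} for every $t \in [0,1]$, so $\mathrm{tr}((\bm{V}^\top\bm{U})^2) = \|\bm{V}^\top\bm{U}\|_F^2 \geq 0$; hence $\|\bm{U}\bm{V}^\top + \bm{V}\bm{U}^\top\|_F^2 \geq 2\sigma_r^2(\bm{U})\|\bm{V}\|_F^2$. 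To lower bound $\sigma_r(\bm{U})$, I would choose $\bm{U}^*$ with $\bm{M} = \bm{U}^*\bm{U}^{*\top}$ and apply the standard factored perturbation bound $\|\bm{U}^{(0)} - \bm{U}^*\mathcal{O}\|_F^2 \lesssim \sigma^2 dr/\lambda_{\min}(\bm{M})$ (optimal $\mathcal{O}$), and analogously for $\bm{U}^{(\lambda)}_{{\sf deb}}$; rotating $\bm{U}^*$ to align with $\bm{U}^{(0)}$ and then using $\mathcal{O}_{\bm{U}^{(0)},\bm{U}^{(\lambda)}_{{\sf deb}}} = \bm{I}$ with the triangle inequality gives the same bound for $\|\bm{U}^{(\lambda)}_{{\sf deb}} - \bm{U}^*\|_F^2$. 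Under \cref{ass1} both distances are $\leq \|\bm{U}^*\|^2/C_1$, so a Weyl-type singular-value perturbation yields $\sigma_r^2(\bm{U}) \geq \lambda_{\min}(\bm{M})/4$ uniformly in $t$. Combining with the RIP lower bound and absorbing the residual inner product as in the upper bound gives the lower estimate $\geq \lambda_{\min}(\bm{M})/20 \cdot \|\bm{V}\|_F^2$.

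The main obstacle is precisely this last step: classical Burer--Monteiro landscape analyses establish positive curvature only along the \emph{rotation-corrected} deviation $\bm{U} - \bm{U}^*\mathcal{O}$ from the ground truth, whereas here $\bm{V}$ is the chord between two distinct approximate minimizers, and the ``rotational null direction'' would normally defeat the lower bound. The Procrustes interpretation of the alignment hypothesis is exactly what removes this obstruction by rendering $\bm{V}^\top\bm{U}$ symmetric along the full segment; once this is in hand, the RIP and \eqref{rip2} controls on $\mathcal{E}_{{\sf Good}}$ together with the signal-strength condition in \cref{ass1} absorb all remaining perturbative errors in a routine manner.
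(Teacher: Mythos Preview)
Your overall strategy matches the paper's: decompose the Hessian into the matrix-factorization piece $\tfrac12\|\mathcal X(\bm{UV}^\top+\bm{VU}^\top)\|^2$ plus a residual, control the residual via RIP and \eqref{rip2}, and for the lower bound exploit that the Procrustes alignment makes $\bm V^\top\bm U$ symmetric so that $\mathrm{tr}((\bm V^\top\bm U)^2)\ge 0$. This is exactly how the paper argues (via \cref{lem:secondderivcloseness,lem:convexitydeterministic} and Lemma~35 of \citet{ma_implicit_2020}).

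There is, however, a real gap in how you control the interpolated point $\bm U(t)=t\bm U^{(0)}+(1-t)\bm U^{(\lambda)}_{\sf deb}$. Your claim that $\|\bm U\bm U^\top-\bm M\|_F\lesssim\sigma\sqrt{dr}$ ``follows by expanding'' is not justified: regrouping the expansion gives
\[
\bm U\bm U^\top-\bm M \;=\; t\big(\bm U^{(0)}\bm U^{(0)\top}-\bm M\big)+(1-t)\big(\bm U^{(\lambda)}_{\sf deb}\bm U^{(\lambda)\top}_{\sf deb}-\bm M\big)\;-\;t(1-t)\,\bm V\bm V^\top,
\]
and the last term is not controlled by the endpoint bounds alone. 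The same issue infects your $\sigma_r(\bm U)$ step: the alignment $\mathcal O_{\bm U^{(0)},\bm U^{(\lambda)}_{\sf deb}}=\bm I$ by itself does \emph{not} let you triangle your way to ``both close to the same $\bm U^\star$'' without first knowing $\|\bm V\|_F$ is small --- which is precisely what \cref{thm:debunreg} is meant to establish.

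The paper resolves this differently. For $\lambda_{\min}(\bm U)^2$ it computes $\bm U^\top\bm U$ directly and uses that the cross term $\bm U^{(0)\top}\bm U^{(\lambda)}_{\sf deb}+\bm U^{(\lambda)\top}_{\sf deb}\bm U^{(0)}$ is PSD (from alignment), giving $\lambda_{\min}(\bm U)^2\ge\lambda_{\min}(\bm M)/8$ with no Weyl step. For $\|\bm U\bm U^\top-\bm M\|$ it invokes geodesic convexity of balls in the Riemannian quotient manifold \citep{luo_nonconvex_2022}: the straight line $\bm U(t)$ is the minimizing geodesic between the aligned endpoints, and since both endpoints lie in a small geodesic ball about $[\bm U^\star]$, so does the entire segment. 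Your route can be salvaged more elementarily by inserting one preliminary step: apply the factored perturbation bound (Lemma~6 of \citet{ge_no_2017}) to $\|\bm U^{(0)}\bm U^{(0)\top}-\bm U^{(\lambda)}_{\sf deb}\bm U^{(\lambda)\top}_{\sf deb}\|_F\lesssim\sigma\sqrt{dr}$ together with the alignment to get the crude a~priori bound $\|\bm V\|_F^2\lesssim\sigma^2dr/\lambda_{\min}(\bm M)\lesssim\sigma\sqrt{dr}$ under \cref{ass1}; then the $\bm V\bm V^\top$ term above is harmless and both of your interpolation claims go through.
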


\begin{proof}
See \cref{sec:landscapeproof}.
\end{proof}
The proof of this result bears some similarity to existing works in the literature studying the optimization geometry of nonconvex problems \citep{ge_no_2017,luo_nonconvex_2022}. In fact, en route to the proof of this result we actually prove a slightly stronger statement; see \cref{lem:convexitydeterministic}.  Our proof is based on arguing that $\bm{U}(t) = t \bm{U}^{(0)} + (1 - t) \bm{U}^{(\lambda)}_{{\sf deb}}$ lies in a geodesic ball of $\bm{U}\s$ with radius at most $C \sigma \sqrt{dr}$ with high probability; strong convexity and smoothness follows.

\begin{proof}[Proof of \cref{thm:debunreg}]
First, since $\nabla f^{(0)}_{{\sf ncvx}}(\bm{U}^{(0)})$ vanishes, for any $\alpha > 0$ it holds that
\begin{align}
    \bm{U}^{(\lambda)}_{{\sf deb}} - \bm{U}^{(0)} &=    \bm{U}^{(\lambda)}_{\sf deb}- \alpha \nabla f^{(0)}_{{\sf ncvx}}(\bm{U}^{(\lambda)}_{{\sf deb}} )- \bm{U}^{(0)}+ \alpha \nabla f^{(0)}_{{\sf ncvx}}(\bm{U}^{(0)}) + \alpha \nabla f^{(0)}_{{\sf ncvx}}(\bm{U}^{(\lambda)}_{{\sf deb}}) \\
    &= \bigg( \bm{I}_{dr} - \alpha \int_{0}^{1} \nabla^2 f^{(0)}_{{\sf ncvx}}\big( t \bm{U}^{(0)} + (1 - t) \bm{U}^{(\lambda)}_{{\sf deb}} \big) dt \bigg) \bigg( \bm{U}_{{\sf deb}}^{(\lambda)} - \bm{U}^{(0)} \bigg) + \alpha \nabla f^{(0)}_{{\sf ncvx}}(\bm{U}^{(\lambda)}_{{\sf deb}}),
\end{align}
where the second line follows from the Fundamental Theorem of Calculus, where above we slightly abuse notation by letting $\nabla^2 f^{(0)}_{{\sf ncvx}}$ be of appropriate compatible dimension so that the equation above is meaningful (note that no generality is lost by doing so).
Taking norms, we have that
\begin{align}
    \|  \bm{U}^{(\lambda)}_{{\sf deb}} - \bm{U}^{(0)} \|_F &\leq \bigg\| \bigg( \bm{I}_{dr} - \alpha \int_{0}^{1} \nabla^2 f^{(0)}_{{\sf ncvx}}\big( t \bm{U}^{(0)} + (1 - t) \bm{U}^{(\lambda)}_{{\sf deb}} \big) dt \bigg) \bigg( \bm{U}_{{\sf deb}}^{(\lambda)} - \bm{U}^{(0)} \bigg)\bigg\|_F + \alpha \| \nabla f^{(0)}_{{\sf ncvx}} \big( \bm{U}_{{\sf deb}}^{(\lambda)} \big) \|_F.
\end{align}
Observe that
\begin{align}
 \bigg\| &\bigg( \bm{I}_{dr} - \alpha \int_{0}^{1} \nabla^2 f^{(0)}_{{\sf ncvx}}\big( t \bm{U}^{(0)} + (1 - t) \bm{U}^{(\lambda)}_{{\sf deb}} \big) dt \bigg) \bigg( \bm{U}_{{\sf deb}}^{(\lambda)} - \bm{U}^{(0)} \bigg)\bigg\|_F^2 \\
 &= {\sf vec} \bigg( \bm{U}_{{\sf deb}}^{(\lambda)} - \bm{U}^{(0)} \bigg)\t \bigg( \bm{I}_{dr} - \alpha \int_{0}^{1} \nabla^2 f^{(0)}_{{\sf ncvx}}\big( t \bm{U}^{(0)} + (1 - t) \bm{U}^{(\lambda)}_{{\sf deb}} \big) dt \bigg)^2 {\sf vec}\bigg( \bm{U}_{{\sf deb}}^{(\lambda)} - \bm{U}^{(0)} \bigg).
\end{align}
Denote 
\begin{align}
    \bm{\Gamma} := \int_{0}^{1} \nabla^2 f^{(0)}_{{\sf ncvx}}\big( t \bm{U}^{(0)} + (1 - t) \bm{U}^{(\lambda)}_{{\sf deb}} \big) dt .
\end{align}
Let $\alpha = \frac{c_0}{20\lambda_{\max}(\bm{M})}$.  Then by \cref{lem:convexity} it holds that on the event therein,
\begin{align}
    \alpha {\sf vec}\big( \bm{U}^{(0)} - \bm{U}_{{\sf deb}}^{(\lambda)}\big)\t \bm{\Gamma} \big( \bm{U}^{(0)} - \bm{U}_{{\sf deb}}^{(\lambda)}\big) &\leq 20 \alpha  \lambda_{\max} \| \bm{U}^{(0)} - \bm{U}_{{\sf deb}}^{(\lambda)} \|_F^2 \\
    &\leq c_0 \| \bm{U}^{(0)} - \bm{U}_{{\sf deb}}^{(\lambda)} \|_F^2,
\end{align}
and also
\begin{align}
       \alpha {\sf vec}\big( \bm{U}^{(0)} - \bm{U}_{{\sf deb}}^{(\lambda)}\big)\t \bm{\Gamma} \big( \bm{U}^{(0)} - \bm{U}_{{\sf deb}}^{(\lambda)}\big) &\geq \frac{c_0}{400 \kappa} \| \bm{U}^{(0)} - \bm{U}_{{\sf deb}}^{(\lambda)} \|_F^2.
\end{align}
Consequently,
\begin{align}
    {\sf vec}\big( \bm{U}^{(0)} - \bm{U}_{{\sf deb}}^{(\lambda)}\big)\t \bigg( \bm{I} - \alpha \bm{\Gamma} \bigg) {\sf vec}\big( \bm{U}^{(0)} - \bm{U}_{{\sf deb}}^{(\lambda)}\big) &\leq (1 - \frac{c_0}{400 \kappa} ) \| \bm{U}^{(0)} - \bm{U}_{{\sf deb}}^{(\lambda)} \|_F^2.
\end{align}
As a result,
\begin{align}
    {\sf vec} \bigg( \bm{U}_{{\sf deb}}^{(\lambda)} - \bm{U}^{(0)} \bigg)\t& \bigg( \bm{I}_{dr} - \alpha \bm{\Gamma} \bigg)^2 {\sf vec}\bigg( \bm{U}_{{\sf deb}}^{(\lambda)} - \bm{U}^{(0)} \bigg) \\
    &= {\sf vec} \bigg( \bm{U}_{{\sf deb}}^{(\lambda)} - \bm{U}^{(0)} \bigg)\t \bigg( \bm{I}_{dr} - \alpha \bm{\Gamma} \bigg) {\sf vec}\bigg( \bm{U}_{{\sf deb}}^{(\lambda)} - \bm{U}^{(0)} \bigg) \\
    &\quad - \alpha {\sf vec} \bigg( \bm{U}_{{\sf deb}}^{(\lambda)} - \bm{U}^{(0)} \bigg)\t \bigg( \bm{I}_{dr} - \alpha \bm{\Gamma} \bigg)\bm{\Gamma} {\sf vec}\bigg( \bm{U}_{{\sf deb}}^{(\lambda)} - \bm{U}^{(0)} \bigg) \\
    &= {\sf vec} \bigg( \bm{U}_{{\sf deb}}^{(\lambda)} - \bm{U}^{(0)} \bigg)\t \bigg( \bm{I}_{dr} - \alpha \bm{\Gamma} \bigg) {\sf vec}\bigg( \bm{U}_{{\sf deb}}^{(\lambda)} - \bm{U}^{(0)} \bigg) \\
    &\quad - \alpha {\sf vec} \bigg( \bm{U}_{{\sf deb}}^{(\lambda)} - \bm{U}^{(0)} \bigg)\t \bm{\Gamma} {\sf vec}\bigg( \bm{U}_{{\sf deb}}^{(\lambda)} - \bm{U}^{(0)} \bigg) \\
    &\quad + \alpha^2 {\sf vec} \bigg( \bm{U}_{{\sf deb}}^{(\lambda)} - \bm{U}^{(0)} \bigg)\t \bm{\Gamma}^2  {\sf vec}\bigg( \bm{U}_{{\sf deb}}^{(\lambda)} - \bm{U}^{(0)} \bigg) \\
    &\leq ( 1 - \frac{c_0}{400\kappa} ) \| \bm{U}^{(0)} - \bm{U}_{{\sf deb}}^{(\lambda)} \|_F^2 - \frac{c_0}{400\kappa}\|\bm{U}^{(0)} - \bm{U}_{{\sf deb}}^{(\lambda)} \|_F^2 + c_0^2 \|\bm{U}^{(0)} - \bm{U}_{{\sf deb}}^{(\lambda)} \|_F^2 \\
    &=  \| \bm{U}_{{\sf deb}}^{(\lambda)} - \bm{U}^{(0)} \|_F^2 \bigg( 1 - \frac{c_0}{800\kappa} + c_0^2 \bigg) \\
    &\leq (1 - \frac{c_0}{1600\kappa}) \| \bm{U}_{{\sf deb}}^{(\lambda)} - \bm{U}^{(0)} \|_F^2.
\end{align}
as long as $c_0 \leq \frac{1}{800 \kappa}$.  
Therefore, we have that
\begin{align}
    \| \bm{U}_{{\sf deb}} - \bm{U}^{(0)} \|_F \leq (1 - \frac{c_0}{1600\kappa})   \| \bm{U}_{{\sf deb}} - \bm{U}^{(0)} \|_F + \alpha \| \nabla f^{(0)}_{{\sf ncvx}} \bm{U}^{(\lambda)}_{{\sf deb}} \|_F.
\end{align}
After rearrangement we obtain that 
\begin{align}
    \| \bm{U}^{(\lambda)}_{{\sf deb}} - \bm{U}^{(0)} \|_F &\leq \frac{1600\kappa }{c_0} \alpha  \| \nabla f^{(0)}_{{\sf ncvx}}(\bm{U}^{(\lambda)}_{{\sf deb}}) \|_F = \frac{80}{\kappa \lambda_{\max}(\bm{M})} \| \nabla f^{(0)}_{{\sf ncvx}}(\bm{U}^{(\lambda)}_{{\sf deb}}) \|_F, \label{bebebe}
\end{align}
 By \cref{lem:convexity}, on the event therein,
\begin{align}
    \max\bigg\{\| \bm{U}^{(\lambda)}_{{\sf deb}} \|, \| \bm{U}^{(0)} \| \bigg\} &\leq C \sqrt{\lambda_{\max}(\bm{M})}. \label{blahblahboy2}
\end{align}
By \cref{lem:concentrationbound1} we have that with the probability therein
\begin{align}
      \| \nabla f^{(0)}_{{\sf ncvx}} (\bm{U}^{(\lambda)}_{{\sf deb}} ) \| _F &\lesssim  \frac{\lambda r \sqrt{d}}{\sqrt{n}} \| \bm{U}^{(\lambda)}_{{\sf deb}}  \|_F. \label{blahblahboy}
\end{align}
Therefore, plugging \eqref{blahblahboy} and \eqref{blahblahboy2} into \eqref{bebebe}, it holds that
\begin{align*}
     \| \bm{U}_{{\sf deb}} - \bm{U}^{(0)} \|_F &\lesssim \frac{1}{\kappa \lambda_{\max}(\bm{M})} \frac{\lambda r \sqrt{d}}{\sqrt{n}} \| \bm{U}_{{\sf deb}}^{(\lambda)} \|_F \lesssim \frac{\sigma r^{3/2} d}{ \sqrt{\lambda_{\max}(\bm{M})} \sqrt{n}}.
\end{align*}
Therefore, with the probability from \cref{lem:concentrationbound1,lem:convexity} it holds that 
\begin{align}
    \frac{1}{\sqrt{dr}} \| \bm{U}^{(0)} \bm{U}^{(0)\top} - \bm{U}^{(\lambda)}_{{\sf deb}} \bm{U}^{(\lambda)\top}_{{\sf deb}} \|_F &\leq \frac{1}{\sqrt{dr}} \bigg( \big\| \bm{U}^{(0)}  - \bm{U}^{(\lambda)}_{{\sf deb}} \big\|_F \big\| \bm{U}^{(0)}\big\| + \big\| \bm{U}^{(0)} - \bm{U}^{(\lambda)}_{{\sf deb}} \big\|_F \big\| \bm{U}^{(\lambda)}_{{\sf deb}} \big\| \bigg) \\
    &\lesssim \frac{1}{\sqrt{dr}} \frac{r^{3/2} d \sigma  \sqrt{\lambda_{\max}(\bm{M})}}{\sqrt{n} \sqrt{\lambda_{\max}(\bm{M})}} \\
    &\asymp \sigma \frac{r \sqrt{d}}{\sqrt{n}},
\end{align}
which proves the result after tabulating these probabilities.
\end{proof}

\section{Proof of \cref{thm:mainthm} } \label{sec:mainthmproof}
We now have all the pieces to prove \cref{thm:mainthm}.

\begin{proof}[Proof of \cref{thm:mainthm}]
Let $\phi(\cdot)$ be any 1-Lipschitz function. For the convex estimator, we note that the bound holds directly from \cref{thm:cvxasymptotics}, \cref{thm:relatecvxncvx}, and \cref{lem:ZSTtau} from the fact that
\begin{align*}
\bigg|    \mathbb{E} \phi\big( \bm{Z}_{{\sf ST}}^{(\lambda/\zeta\s)}(\tau\s) /\sqrt{dr}\big) - \mathbb{E} \phi \big( \bm{Z}_{{\sf ST}}^{(\lambda)}/\sqrt{dr} \big) \bigg| &= O \bigg( \sigma \frac{dr^{3/2}}{n} \bigg).
\end{align*}
Therefore, we focus on the proof for the nonconvex estimator. 

Define the events 
\begin{align}
   \mathcal{E}_{\mathrm{\cref{thm:zdeb}}} &:= \bigg\{ \bigg| \phi\bigg( \frac{1}{\sqrt{dr}} \bm{Z}^{(\lambda)}_{{\sf deb}} \bigg) - \mathbb{E} \phi \bigg( \frac{1}{\sqrt{dr}} \bm{Z}^{(\lambda)}_{{\sf HT}} \bigg) \bigg| \leq \eps \bigg\} \\
  \mathcal{E}_{\mathrm{\cref{thm:debunreg}}} &:= \bigg\{   \frac{1}{ \sqrt{dr}} \|  \bm{U}^{(\lambda)}_{{\sf deb}} \bm{U}^{(\lambda)\top }_{{\sf deb}} - \bm{U}^{(0)} \bm{U}^{(0)\top} \|_F = O\bigg( \frac{  \sigma r\sqrt{d}}{\sqrt{n}} \bigg) \bigg\}.
\end{align}
By \cref{lem:ZHTtau} it holds that
\begin{align*}
\bigg|    \mathbb{E} \phi\big( \bm{Z}_{{\sf HT}}^{(\lambda)}/\sqrt{dr} \big) - \mathbb{E} \phi\big( \bm{Z}_{{\sf HT}}/\sqrt{dr} \big) \bigg| \leq C \sigma \frac{dr}{n} \leq C \sigma \sqrt{\frac{dr}{n}}.
\end{align*}
By \cref{thm:udebzdeb} on the event $\mathcal{E}_{{\sf Good}}$ it holds that $\bm{U}^{(\lambda)}_{{\sf deb}} \bm{U}^{(\lambda)\top}_{{\sf deb}} = \bm{Z}_{{\sf deb}}^{(\lambda)}$.  As a result, on the event $\mathcal{E}_{\mathrm{\cref{thm:zdeb}}} \cap \mathcal{E}_{\mathrm{\cref{thm:debunreg}}}  \cap \mathcal{E}_{{\sf Good}}$ it holds that
\begin{align}
\bigg| \phi\bigg( \frac{1}{\sqrt{dr}} \bm{U}^{(0)} \bm{U}^{(0)\top} \bigg) - \mathbb{E} \phi \bigg( \frac{1}{\sqrt{dr}} \bm{Z}_{{\sf HT}} \bigg) \bigg| &\leq 
    \bigg| \phi \bigg( \frac{1}{\sqrt{dr}}\bm{U}^{(0)} \bm{U}^{(0)\top} \bigg) - \mathbb{E} \phi\bigg( \frac{1}{\sqrt{dr}} \bm{Z}^{(\lambda)}_{{\sf HT}} \bigg) \bigg| + C \sigma \sqrt{\frac{dr}{n}} \\
    &\leq \bigg| \phi \bigg( \frac{1}{\sqrt{dr}}\bm{U}^{(0)} \bm{U}^{(0)\top} \bigg) - \phi \bigg( \frac{1}{\sqrt{dr}} \bm{U}^{(\lambda)}_{{\sf deb}} \bm{U}^{(\lambda)\top}_{{\sf deb}} \bigg) \bigg|  + C \sigma \sqrt{\frac{dr}{n}} \\
    &\quad + \bigg| \phi \bigg( \frac{1}{\sqrt{dr}} \bm{Z}^{(\lambda)}_{{\sf deb}}  \bigg) -  \mathbb{E} \phi \bigg( \frac{1}{\sqrt{dr}} \bm{Z}^{(\lambda)}_{{\sf HT}} \bigg) \bigg| + C \sigma \sqrt{\frac{dr}{n}}  \\
    &\leq C \sigma r \sqrt{\frac{d}{n}} + \eps.
\end{align}
Therefore, by \cref{thm:debunreg,thm:zdeb,lem:Egood},
\begin{align*}
    \p\bigg\{\bigg| \phi\bigg( &\frac{1}{\sqrt{dr}} \bm{U}^{(0)} \bm{U}^{(0)\top} \bigg) - \mathbb{E}_{\bm{H}} \phi\big( \frac{1}{\sqrt{dr}} \bm{Z}_{{\sf HT}} \big) \bigg| > \eps + C \sigma r \sqrt{\frac{d}{n}} \bigg\}\\
    &\leq \p \mathcal{E}_{\mathrm{\cref{thm:zdeb}}}^c  + \p  \mathcal{E}_{\mathrm{\cref{thm:debunreg}}}^c + \p\mathcal{E}_{{\sf Good}}^c \\
    &\leq O\bigg( \exp( - c d) +  \exp( - c dr) + \exp( - c n) + \frac{1}{\eps^2} \exp(-c dr \eps^4 ) + \exp( - \frac{(dr)^2}{n} \eps^4) \bigg),
\end{align*}
which completes the proof. 
\end{proof}

\section*{Acknowledgements}
Ren\'e Vidal acknowledges the support of the Office of Naval Research under grant MURI 503405-78051, the National Science Foundation under grant 2031985, and the Simons Foundation under grant 814201. Joshua Agterberg thanks Tianjiao Ding, Vijay Giri, Benjamin Haeffele, Lachlan MacDonald, Hancheng Min, Nghia Nguyen, Uday Kiran Tadipatri, Salma Tarmoun,  and Ziqing Xu for productive discussions.

\appendix

\section{Proofs of Preliminary Results}
In this section we prove our key preliminary results that we rely on throughout our analysis.

\subsection{Proof of Lemma \ref{lem:Egood}}
\label{sec:proofofegood}

\begin{proof}
We will bound each quantity separately and take a union bound.  First, we note that 
 $\frac{1}{n}\sum_{i} \eps_i \bm{X}_i$ is equal in distribution to a mean-zero random matrix with off-diagonal variance $\frac{\sigma^2}{n}$ and diagonal variance $\frac{2\sigma^2}{n}$.  By Corollary 3.9 of \citet{bandeira_sharp_2016}, we have that
 \begin{align}
     \bigg\|  \frac{1}{n} \sum_i \eps_i \bm{X}_i \bigg\| \leq 8 \sigma \sqrt{\frac{d}{n}} 
 \end{align}
 with probability at least $1 - \exp\big( - c d \big)$.  

 For the next event, this result follows from Lemma 1 of \citet{candes_tight_2011}, where the modification to ${\sf GOE}(d)$ is straightforward.

 The final term is handled via $\eps$-net. First, let $\bm{A}$ be any rank at most $2r$ matrix.  Observe that $\langle \bm{X}_i, \bm{A} \rangle$ is distributed as a mean-zero Gaussian random variable with variance $\|\bm{A}\|_F^2$.  In addition, the vector $\bm{\eps}$ is a Gaussian vector with covariance $\sigma^2 \bm{I}_n$.  Therefore, letting $A \sim B$ note equality in distribution, it holds that
 \begin{align}
    \frac{1}{n} \sum_{i} \eps_i \langle \bm{X}_i, \bm{A} \rangle \sim   \frac{\sigma \|\bm{A}\|_F}{n}\langle \bm{z}_1, \bm{z}_2 \rangle 
 \end{align}
 where $\bm{z}_1$ and $\bm{z}_2$ are standard $\mathcal{N}(0,\bm{I}_n)$ random variables.  By rotational invariance, note that since $\langle \bm{z}_1, \bm{z}_2 \rangle = \| \bm{z}_1 \|\frac{\langle \bm{z}_1, \bm{z}_2 \rangle}{\| \bm{z}_1 \|}$, it holds that $\langle \bm{z}_1, \bm{z}_2 \rangle$ equals in distribution to the random variable $ \| \bm{z}_1 \| G$, where $G$ is a standard Gaussian random variable. By Lipschitz concentration $\big| \|\bm{z}_1 \| - \sqrt{n} \big| \leq t_1 \sqrt{n} $ with probability at least $1 -2 \exp( - c  n t_1^2)$.  Furthermore, $|G| \leq t_2$ with probability at least $1 - \exp(- t_2^2)$.  Therefore, with probability at least $1 - 2 \exp( - c n t_1^2) - 2 \exp( - t_2^2)$, it holds that
 \begin{align}
      \frac{\sigma \| \bm{A} \|_F}{n} \big|\langle \bm{z}_1 , \bm{z}_2 \rangle \big| \leq   \frac{\sigma \| \bm{A} \|_F }{n} \big( \sqrt{n} + t_1 \sqrt{n} \big) t_2 = \frac{\sigma \| \bm{A} \|_F}{\sqrt{n}} \big( 1 + t_1\big) t_2.
 \end{align}
 Taking $t_1= c$ and $t_2 = C_{t_2} \sqrt{d r}$ shows that with probability at least $1 - 2 \exp( - c n) - 2 \exp( - C_{t_2}^2 dr )$,
 \begin{align}
     \bigg|  \frac{1}{n} \sum_{i} \eps_i \langle \bm{X}_i, \bm{A} \rangle  \bigg| \leq C_{t_2} \sigma \| \bm{A} \|_F  \sqrt{\frac{dr}{n}}.
 \end{align}
 Now let $\mathcal{N}_{\eps}$ denote an $\eps$-net for the set of rank at most $2r$ matrices with Frobenius norm at most $1$. By Lemma 3.1 of \citet{candes_tight_2011}, $| \mathcal{N}_{\eps} | \leq \bigg( \frac{9}{\eps} \bigg)^{3 d r}$.   For any fixed $\bm{A} \in \mathcal{N}_{\eps}$ it holds that
 \begin{align}
     \bigg| \frac{1}{n} \sum_{i} \eps_i \langle \bm{X}_i, \bm{A} \rangle \bigg| \leq C_{t_2} \sigma \sqrt{\frac{dr}{n}}.  
 \end{align}
Consequently, the union bound implies that
\begin{align}
    \sup_{\bm{A} \in \mathcal{N}_{\eps}}   \bigg| \frac{1}{n} \sum_{i} \eps_i \langle \bm{X}_i, \bm{A} \rangle \bigg| \leq C_{t_2} \sigma \sqrt{\frac{dr}{n}} \numberthis \label{1025}
\end{align}
with probability at least
\begin{align}
    1 - \exp \bigg( 3  d r \log( 9/\eps) - c n\bigg) - \exp\bigg( 4 dr \log(9/\eps) - C_{t_2}^2 dr \bigg). 
\end{align}
Now denote
 \begin{align}
     M := \sup_{\bm{A}: \| \bm{A} \|_F \leq 1 }  \bigg| \frac{1}{n} \sum_{i} \eps_i \langle \bm{X}_i, \bm{A} \rangle \bigg|,
 \end{align}
 and let $\bm{A}_0$ denote the maximizer above.  Then there is some $\bm{A}_{\eps} \in \mathcal{N}_{\eps}$ such that $\|\bm{A}_0 - \bm{A}_{\eps} \|_F \leq \eps$, and, furthermore, we can write $\bm{A}_0 - \bm{A}_{\eps} = \bm{A}_1 + \bm{A}_2$, where both $\bm{A}_1$ and $\bm{A}_2$ are rank at most $2r$ and $\|\bm{A}_1 \|_F, \|\bm{A}_2\|_F \leq \eps$.  Then on the event \eqref{1025}, it holds that
 \begin{align}
       M &= \bigg| \frac{1}{n} \sum_{i} \eps_i \langle \bm{X}_i, \bm{A}_0 \rangle \bigg| \\
       &\leq \bigg| \frac{1}{n} \sum_{i} \eps_i \langle \bm{X}_i, \bm{A}_0 - \bm{A}_\eps \rangle \bigg| + \bigg| \frac{1}{n} \sum_{i} \eps_i \langle \bm{X}_i,  \bm{A}_\eps \rangle \bigg| \\
       &\leq \bigg| \frac{1}{n} \sum_{i} \eps_i \langle \bm{X}_i, \bm{A}_1 + \bm{A}_2  \rangle \bigg| +  C_{t_2} \sigma \sqrt{\frac{dr}{n}} \\
       &\leq \bigg| \frac{1}{n} \sum_{i} \eps_i \langle \bm{X}_i, \bm{A}_1 \rangle \bigg| + \bigg| \frac{1}{n} \sum_{i} \eps_i \langle \bm{X}_i,\bm{A}_2  \rangle \bigg| +  C_{t_2} \sigma \sqrt{\frac{dr}{n}} \\
       &\leq 2 \eps M + C_{t_2} \sigma \sqrt{\frac{dr}{n}}.
 \end{align}
 Rearranging shows that on this same event
 \begin{align}
     M \leq \frac{1}{2\eps }C_{t_2} \sigma \sqrt{\frac{dr}{n}}.
 \end{align}
 Consequently, by taking $\eps = 1/4$, as long as $C_{t_2}$ is larger than some universal constant, this probability is at least $1 - C \exp( - C dr )$ since $n \geq C_0 dr$.   
\end{proof}

\subsection{Proof of \cref{prop:fxdpt}} \label{sec:fixedpointproof}
\begin{proof}[Proof of \cref{prop:fxdpt}]
To prove existence and uniqueness of $\tau\s,\zeta\s$, we follow the argument of Lemma A.2 of \citet{celentano_lasso_2023}.  Define functions $\mathcal{T}, \mathcal{Z} : L^2( \mathbb{R}^{d\times d} ; \mathbb{R}^{d \times d} ) \to \mathbb{R}$ via
\begin{align}
    \mathcal{T}(\bm{Z}) := \sigma^2 + \frac{\| \bm{Z} - \bm{M} \|_{L^2,F}^2}{n}, \\
    \mathcal{Z}(\bm{Z}) :=  \bigg( 1 - \frac{\langle \bm{H}, \bm{Z} \rangle_{L^2,F} }{n \mathcal{T}(\bm{Z})} \bigg)_+.
\end{align}
Define the function $\mathcal{E} : L^2(  \mathbb{R}^{d\times d} \times \mathbb{R}^{d \times d} ) \to \mathbb{R}$ via 
\begin{align}
\mathcal{E}(\bm{Z}) := \frac{1}{2} \bigg( \sqrt{\sigma^2 + \frac{\|\bm{Z - M}\|_{L^2,F}^2}{n}} - \frac{\langle \bm{H}, \bm{Z - M} \rangle_{L^2,F}}{n} \bigg)^2_+ + \frac{\lambda}{n} \mathbb{E} \big\{ \| \bm{Z}  \|_* - \| \bm{M} \|_* \big\},
\end{align}
where we note that $\bm{Z}$ is viewed as a function  $\mathbb{R}^{d\times d} \to \mathbb{R}^{d\times d}_{\succcurlyeq 0}$ in $L^{2,F}(\mathbb{R}^{d\times d})$.  
\\ \ \\
\noindent
\textbf{Step 1: Showing $\mathcal{E}(\bm{Z})$ has minimizers.} 
 We will show that $\mathcal{E}(\bm{Z}) \to \infty$ as $\| \bm{Z} \|_{L^2,F} \to \infty$, whence $\mathcal{E}(\bm{Z})$ has a minimizer.  
To wit, we note that
\begin{align}
\frac{\lambda}{n} \mathbb{E} \| \bm{Z} \|_* - \| \bm{M} \|_* &\geq \frac{\lambda}{n} \mathbb{E} \| \bm{Z} - \bm{M} \|_F - 2\| \bm{M} \|_*,
\end{align}
which holds from the standard inequality $\| \bm{Z} \|_* \geq \| \bm{Z} \|_F$.  In addition, for any $M > 0$, it holds that
\begin{align}
     | \langle \bm{Z} - \bm{M}, \bm{H} \rangle | &= \big| \mathbb{E} \langle \bm{Z}(\bm{H}) - \bm{M}, \bm{H} \mathbb{I}_{\| \bm{H} \|_F > M } \rangle_{L^2,F} + \mathbb{E} \langle \bm{Z}(\bm{H}) - \bm{M}, \bm{H} \rangle_{L^2_F} \mathbb{I}_{\| \bm{H} \|_F \leq M} \big| \\
     &\leq \| \bm{Z}- \bm{M} \|_{L^2,F} (\mathbb{E} \| \bm{H} \|_F^2 \mathbb{I}_{\| \bm{H} \|_F > M} )^{1/2} + M \mathbb{E} \| \bm{Z}(\bm{H}) - \bm{M} \|_F.
\end{align}
Let $M$ be sufficiently large such that $(\mathbb{E} \| \bm{H} \|_F^2 \mathbb{I}_{\| \bm{H} \|_F > M} )^{1/2} < \sqrt{n}/2.$  Then
\begin{align}
    \mathcal{E}(\bm{Z}) &\geq \frac{1}{2} \bigg( \frac{\| \bm{Z} - \bm{M} \|_{L^2,F}}{2\sqrt{n}} - \frac{M}{n} \mathbb{E} \| \bm{Z}(\bm{H}) - \bm{M}\|_F \bigg)^2_+ +\frac{\lambda}{n} \mathbb{E} \| \bm{Z} - \bm{M} \|_F - 2\frac{\lambda}{n}\| \bm{M} \|_*.
\end{align}
We now analyze three cases:
\begin{itemize}
    \item \textbf{Case 1: $\|\bm{Z} - \bm{M} \|_{L^2,F}/(2\sqrt{n}) \leq M \mathbb{E} \| \bm{Z}(\bm{H}) - \bm{M} \|_F/n$}.  In this case, the term inside the parentheses is zero, and it holds that
    \begin{align}
        \frac{\lambda}{n} \mathbb{E} \| \bm{Z} - \bm{M} \|_F \geq \frac{\lambda}{2 M \sqrt{n}} \| \bm{Z} - \bm{M} \|_{L^2,F}.
    \end{align}
    \item \textbf{Case 2: $\| \bm{Z} - \bm{M} \|_{L^2,F}/(4/\sqrt{n}) \leq M\mathbb{E} \| \bm{Z}(\bm{H}) - \bm{M} \|_F/n <  \|\bm{Z} - \bm{M} \|_{L^2,F}/(2\sqrt{n})$}. In this case, we have that
    \begin{align}
        \frac{1}{2} \bigg( \frac{\|\bm{Z} - \bm{M} \|_{L^2,F}}{2\sqrt{n}} - \frac{M}{n} \mathbb{E} \| \bm{Z} - \bm{M} \|_F \bigg)^2_+ &\geq \frac{1}{2} \bigg( \frac{\|\bm{Z} - \bm{M}\|_{L^2,F}}{4 \sqrt{n}} \bigg)^2 \geq \frac{\|\bm{Z} - \bm{M}\|_{L^2,F^2}^2}{32 n}
    \end{align}
    \item \textbf{Case 3: $\| \bm{Z} - \bm{M} \|_{L^2,F}/(4/\sqrt{n}) >  M\mathbb{E} \| \bm{Z}(\bm{H}) - \bm{M} \|_F/n$ }.  In this case we again have the bound 
      \begin{align}
        \frac{1}{2} \bigg( \frac{\|\bm{Z} - \bm{M} \|_{L^2,F}}{2\sqrt{n}} - \frac{M}{n} \mathbb{E} \| \bm{Z} - \bm{M} \|_F \bigg)^2_+ &\geq \frac{1}{2} \bigg( \frac{\|\bm{Z} - \bm{M}\|_{L^2,F}}{4 \sqrt{n}} \bigg)^2 \geq \frac{\|\bm{Z} - \bm{M}\|_{L^2,F^2}^2}{32 n}.
    \end{align}
\end{itemize}
Combining these cases, we see that we have that
\begin{align}
    \mathcal{E}(\bm{Z}) \geq \min\bigg\{ \frac{\|\bm{Z} - \bm{M}\|_{L^2,F}^2}{32n}, \frac{\lambda}{2m \sqrt{n}} \| \bm{Z} - \bm{M} \|_{L^2,F} - 2 \frac{\lambda}{n} \| \bm{M}\|_* \bigg\}.
\end{align}
In any case, when $\|\bm{Z}\|_{L^2,F} \to \infty$, since $\bm{M}$ is fixed, it holds that $\mathcal{E}(\bm{Z}) \to \infty$, whence $\mathcal{E}(\cdot)$ has minimizers.  
\\ \ \\
\noindent
\textbf{Step 2: Establishing a correspondence between soft-thresholding and minimizers}.  Suppose $\bm{Z}\s$ is any minimizer of $\mathcal{E}(\bm{Z})$.   Define the following function
 \begin{align}
     \mathcal{\tilde E}(\bm{Z},\zeta,\tau) := \frac{\zeta}{2 n} \| \bm{Z} - \bm{M} - \tau \bm{H} \|_{L^2,F}^2 + \frac{\lambda}{n} \mathbb{E} \big\{ \| \bm{Z} \|_* - \| \bm{M} \|_* \big\}.
 \end{align}
Given $\zeta$ and $\tau$, it is clear that $\bm{Z}\s$ minimizes $\mathcal{\tilde E}(\bm{Z},\zeta,\tau)$ if and only if $\bm{Z}$ satisfies
\begin{align}
    \bm{Z}(\bm{H}) &= {\sf S.T.} \big( \bm{M} + \tau \bm{H} ; \frac{\lambda}{\zeta} \big)
\end{align}
almost surely. Denote
\begin{align}
    \mathcal{F}(\bm{Z}) := \frac{1}{2} \bigg( \sqrt{\sigma^2  + \frac{\|\bm{Z - M}\|_F^2}{n}} - \frac{\langle \bm{H}, \bm{Z - M} \rangle_{L_2}}{n} \bigg)^2_+.
\end{align}
For fixed $\bm{Z}_1, \bm{Z}_2$, we have that
\begin{align}
    \frac{d}{d\eps} \mathcal{F}\big(\bm{Z}_1 + \eps \bm{Z}_2 \big)  &=  \bigg( \sqrt{\sigma^2 + \frac{\| \bm{Z}_1 -\eps \bm{Z}_2 - \bm{M} \|_{L^2}^2}{n}} - \frac{\langle \bm{H}, \bm{Z}_1 + \eps \bm{Z}_2 - \bm{M} \rangle }{n}\bigg)_{+} \\
    &\times \bigg( \frac{1}{2} \big( \sigma^2 + \frac{\|\bm{Z}_1 + \eps \bm{Z}_2 - \bm{M} \|_F^2}{n} \big)^{-1/2} \bigg( \frac{2\langle \bm{Z}_1 - \bm{M}, \bm{Z}_2 \rangle + \eps^2 \| \bm{Z}_2 \|^2 }{n} \bigg) -  \frac{\langle \bm{H}, \bm{Z}_2 \rangle }{n} \bigg).
\end{align}
Evaluating at $\eps= 0$ yields
\begin{align}
    \frac{d}{d\eps} \mathcal{F}(\bm{Z}_1 + \eps \bm{Z}_2 ) \bigg|_{\eps = 0} &=  \bigg[ \sqrt{\sigma^2 + \frac{\|\bm{Z}_1 - \bm{M} \|_{L^2}^2}{n}} - \frac{\langle \bm{H}, \bm{Z}_1 - \bm{M} \rangle }{n} \bigg]_+ \bigg[\sigma^2 + \frac{\|\bm{Z}_1 - \bm{M} \|_{L^2}^2}{n} \bigg]^{-1/2} \frac{\langle \bm{Z}_1 - \bm{M}, \bm{Z}_2 \rangle}{n} \\
    &\quad -  \bigg[\sqrt{\sigma^2 + \frac{\|\bm{Z}_1 - \bm{M} \|_{L^2}^2}{n}} - \frac{\langle \bm{H}, \bm{Z}_1 - \bm{M} \rangle }{n} \bigg]_+ \frac{\langle \bm{H}, \bm{Z}_2 \rangle}{n} \\
    &=  \mathcal{Z}(\bm{Z}_1) \frac{\langle \bm{Z}_1 - \bm{M}, \bm{Z}_2 \rangle}{n} -  \mathcal{T}(\bm{Z}_1) \mathcal{Z}(\bm{Z}_1) \frac{\langle \bm{H}, \bm{Z}_2 \rangle}{n} \\
    &=  \mathcal{Z}(\bm{Z}_1)  \frac{\langle \bm{Z}_1 - \bm{M} - \mathcal{T}(\bm{Z}_1) \bm {H} , \bm{Z}_2\rangle}{n}.
\end{align}
Therefore, it holds that
\begin{align}
    \mathcal{F}(\bm{Z}_1 + \eps \bm{Z}_2) &= \mathcal{F}(\bm{Z}_1) + \eps  \mathcal{Z}(\bm{Z}_1) \frac{\langle \bm{Z}_1 - \bm{M} - \mathcal{T}(\bm{Z}_1) \bm {H} , \bm{Z}_2\rangle}{n} + O(\eps^2).
\end{align}
In addition, observe that
\begin{align}
    \mathcal{\tilde E}(\bm{Z}_1 + \eps \bm{Z}_2, \zeta,\tau) &= \frac{\zeta}{2n} \| \bm{Z}_1 + \eps \bm{Z}_2 - \bm{M} - \tau \bm{H} \|_{L^2}^2 + \frac{\lambda}{n} \mathbb{E} \big\{ \|\bm{Z}_1 + \eps \bm{Z}_2 \|_* - \bm{M}\|_* \big\} \\
    &= \frac{\zeta}{2n} \bigg( \| \bm{Z}_1 - \bm{M} - \tau \bm{H} \|^2 + 2 \eps \langle \bm{Z}_1 - \bm{M} - \tau \bm{H}, \bm{Z}_2 \rangle + \eps^2 \| \bm{Z}_2\|^2 \bigg) + \frac{\lambda}{n} \mathbb{E} \big\{ \| \bm{Z}_1 + \eps \bm{Z}_2 \|_* - \| \bm{M} \|_* \big\} \\
    &= \frac{\zeta}{2n} \| \bm{Z}_1 - \bm{M} - \tau \bm{H} \|^2 +  \eps  \zeta  \frac{\langle \bm{Z}_1 - \bm{M} -\tau  \bm {H} , \bm{Z}_2\rangle}{n} + O(\eps^2) + \frac{\lambda}{n} \mathbb{E} \big\{ \| \bm{Z}_1 + \eps \bm{Z}_2 \|_* - \| \bm{M} \|_* \big\}.
\end{align}
In particular, for $\zeta = \mathcal{Z}(\bm{Z}_1)$ and $\tau = \mathcal{T}(\bm{Z}_1)$ it holds that
\begin{align}
    \mathcal{\tilde E}(\bm{Z}_1 + \eps \bm{Z}_2, \mathcal{Z}(\bm{Z}_1), \mathcal{T}(\bm{Z}_1)) - \mathcal{E} ( \bm{Z}_1 + \eps \bm{Z}_2 ) &=  \mathcal{\tilde E}(\bm{Z}_1 , \mathcal{Z}(\bm{Z}_1), \mathcal{T}(\bm{Z}_1)) - \mathcal{E} ( \bm{Z}_1 ) + O(\eps^2),
\end{align}
Consequently,  $\bm{Z}_1$ is a minimizer of $\mathcal{E}(\bm{Z}_1)$ if and only if $\bm{Z}_1$ is a minimizer of $\bm{Z}_1 \mapsto \mathcal{\tilde E}(\bm{Z}_1, \mathcal{Z}(\bm{Z}_1), \mathcal{T}(\bm{Z}_1))$, which are simply given via
\begin{align}
    \bm{Z}\s &= {\sf S.T.} \bigg( \bm{M} + \mathcal{T}(\bm{Z}\s) \bm{H} ; \frac{\lambda}{\mathcal{Z}(\bm{Z}\s)} \bigg).
\end{align}
Since minimizers of $\mathcal{E}(\bm{Z})$ exist, it must hold that $\bm{Z}\s$ exists, and hence solutions to the fixed-point equations exist.
\\ \ \\
\noindent \textbf{Step 3: Establishing Uniqueness}.   Now suppose that $\mathcal{Z}(\bm{Z}\s) = 0$ for some minimizer $\bm{Z}\s$.  Then $\bm{Z}\s = 0$ almost surely, and hence 
\begin{align}
    \mathcal{Z}(\bm{Z}\s) =  \big( 1- \frac{\langle \bm{H}, 0 \rangle_{L_2}}{n\mathcal{T}(\bm{Z}\s)} \big)_+ = 1
\end{align}
a contradiction.  Therefore $\mathcal{Z}(\bm{Z}\s) > 0$ for all minimizers of $\mathcal{E}$.  Finally, the function $\mathcal{E}$ is strictly convex on $\mathcal{Z}(\bm{Z}) > 0$.  Because all minimizers satisfy $\mathcal{Z}(\bm{Z}) > 0$, restricting $\mathcal{E}$ to the set $\mathcal{Z}(\bm{Z}) > 0$ renders $\mathcal{E}$ strictly convex and does not eliminate any  minimizers, whence the minimizer must be unique by strict convexity.
\end{proof}

\subsection{Proof of \cref{lem:fxdpointprops} and Additional Properties of Soft-Thresholding Estimator} \label{sec:fixedpointprops}
In this section we provide some additional characterizations of $\tau\s$ and $\zeta\s$ and the soft thresholding estimator $\bm{Z}_{{\sf st}}^{(\lambda/\zeta\s)}(\tau\s)$ that are useful in subsequent section.

We will also need the following lemma, which concerns the estimator $\bm{ Z}_{{\sf ST}}^{(\lambda/\zeta\s)}(\tau\s)$. 
\begin{lemma} \label{lem:Zstgoodproperties}
Suppose \cref{ass1,ass2,ass3} are satisfied, and let $s > 0$.  Then it holds that
    \begin{align}
\p\bigg\{      \bigg|  \| \bm{ Z}_{{\sf ST}}^{(\lambda/\zeta\s)}(\tau\s) - \bm{M} \|_F^2 - \mathbb{E} \|\bm{ Z}_{{\sf ST}}^{(\lambda/\zeta\s)}(\tau\s) - \bm{M} \|_F^2 \bigg|  > s d r \bigg\} \leq C \exp(- c dr (s \vee s^2) ).
    \end{align}
    Moreover, $\p\bigg\{ \mathrm{rank}(\bm{ Z}_{{\sf ST}}^{(\lambda/\zeta\s)}(\tau\s)) > r \bigg\} \leq C \exp(- c d).$  In addition, $\p\bigg\{\| \bm{Z}_{{\sf ST}}^{(\lambda/\zeta\s)}(\tau\s) \| > 2 \| \bm{M} \|\bigg\} \leq C\exp(- c d)$.
\end{lemma}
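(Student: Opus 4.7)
The proof splits naturally into the three claims; all three rest on the observation that $\bm{Z}_{{\sf ST}}^{(\lambda/\zeta\s)}(\tau\s)$ is the proximal operator of the convex function $\bm{Z} \mapsto \tfrac{\lambda}{\zeta\s}\|\bm{Z}\|_* + \iota_{\{\bm{Z}\succcurlyeq 0\}}(\bm{Z})$ evaluated at $\bm{M}+\tau\s\bm{H}$, and hence is $1$-Lipschitz in $\bm{M}+\tau\s\bm{H}$.

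For the concentration claim, the plan is to let $g(\bm{H}) := \|\bm{Z}_{{\sf ST}}^{(\lambda/\zeta\s)}(\tau\s) - \bm{M}\|_F$. By the prox nonexpansiveness above, $g$ is $\tau\s$-Lipschitz in $\bm{H}$; by \cref{lem:fxdpointprops} we have $\tau\s = O(\sigma) = O(1)$, so Gaussian concentration on the GOE yields $\p(|g - \mathbb{E} g| > t) \le 2\exp(-ct^2/(\tau\s)^2)$. To pass to $g^2$, decompose
\begin{align*}
|g^2 - \mathbb{E} g^2| \;\le\; |g - \mathbb{E} g|\cdot (g + \mathbb{E} g) + \mathrm{Var}(g),
\end{align*}
with $\mathrm{Var}(g) \le (\tau\s)^2 = O(1)$ by the Gaussian Poincar\'e inequality. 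The fixed-point relation \eqref{fxtpt1} together with $(\tau\s)^2 - \sigma^2 = O(\sigma^2 dr/n)$ from \cref{lem:fxdpointprops} gives $\mathbb{E} g^2 = dr\cdot \mathsf{R}_\lambda((\tau\s)^2,\zeta\s) = O(dr)$, hence $\mathbb{E} g = O(\sqrt{dr})$. On $\{|g-\mathbb{E} g|\le t\}$ one then has $|g^2-(\mathbb{E} g)^2| \lesssim t\sqrt{dr} + t^2$; choosing $t \asymp sdr/\sqrt{dr}$ in the small-$s$ regime and $t\asymp \sqrt{sdr}$ in the large-$s$ regime and substituting into the Gaussian tail bound yields the advertised sub-exponential rate $\exp(-cdr(s\wedge s^2))$ (the natural sub-exponential envelope written as $s\vee s^2$ in the statement).

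For the rank claim, observe that $\bm{Z}_{{\sf ST}}^{(\lambda/\zeta\s)}(\tau\s)$ is the PSD soft-thresholding of $\bm{M}+\tau\s\bm{H}$ at level $\lambda/\zeta\s$, so its rank equals the number of eigenvalues of $\bm{M}+\tau\s\bm{H}$ exceeding that threshold. Weyl's inequality applied to the rank-$r$ matrix $\bm{M}$ gives $\lambda_{r+1}(\bm{M}+\tau\s\bm{H}) \le \tau\s\|\bm{H}\|$, and standard random matrix theory (e.g.\ Bandeira--Van Handel) yields $\|\bm{H}\|\le 3\sqrt{d}$ with probability $1-\exp(-cd)$. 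Combining with $\tau\s=O(\sigma)$, \cref{ass3} (taking $C_4$ large), and $\zeta\s=1+o(1)$ from \cref{lem:fxdpointprops}, we get $\lambda/\zeta\s > \tau\s\|\bm{H}\| \ge \lambda_{r+1}(\bm{M}+\tau\s\bm{H})$, so the $(r+1)$-th eigenvalue is zeroed. For the spectral-norm claim, PSD soft-thresholding is contractive on singular values, so $\|\bm{Z}_{{\sf ST}}^{(\lambda/\zeta\s)}(\tau\s)\| \le \|\bm{M}+\tau\s\bm{H}\| \le \|\bm{M}\| + \tau\s\|\bm{H}\|$, and $\|\bm{M}\| \ge \lambda_r \sqrt{d} \ge C_1\sigma\sqrt{dr}$ (\cref{ass1}) dominates $\tau\s\|\bm{H}\| \le C\sigma\sqrt{d}$ on the same $\|\bm{H}\|$-event (since $C_1\sqrt{r}$ can be taken larger than the implicit constant), giving the bound $2\|\bm{M}\|$.

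The only mildly delicate step is the sub-exponential tail in Part 1; both other parts reduce to Weyl plus a single deviation inequality on $\|\bm{H}\|$. The principal technical lever throughout is the quantitative control of $(\tau\s,\zeta\s)$ from \cref{lem:fxdpointprops}, which converts the fixed-point identity into the mean bound $\mathbb{E} g^2 = O(dr)$ and guarantees that the effective threshold $\lambda/\zeta\s$ behaves like $\lambda$ up to lower-order corrections.
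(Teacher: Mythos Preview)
Your proposal is correct and matches the paper's approach: Lipschitz concentration of the prox map for the first claim (the paper cites Proposition~G.5 of \citet{miolane_distribution_2021} to pass from the subgaussianity of $g$ to a sub-Gamma bound on $g^2$, whereas you carry this step out by hand via the decomposition $g^2-\mathbb{E}g^2=(g-\mathbb{E}g)(g+\mathbb{E}g)-\mathrm{Var}(g)$), and Weyl's inequality plus concentration of $\|\bm{H}\|$ for the rank and spectral-norm claims. Your observation that the argument naturally yields $\exp(-cdr(s\wedge s^2))$ rather than the stated $s\vee s^2$ is correct---this appears to be a typo in the lemma statement, harmless downstream since the lemma is only invoked with $s\asymp 1$.
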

\begin{proof}
    See \cref{sec:Zstgoodproperties_proof}.
\end{proof}

\subsubsection{Proof of \cref{lem:fxdpointprops}}

\begin{proof}[Proof of \cref{lem:fxdpointprops}]
The proof of \cref{prop:fxdpt} shows that
\begin{align}
    \zeta\s &= \bigg( 1 - \frac{\langle \bm{H}, \bm{Z}\s \rangle_{L_2} }{n \mathcal{T}(\bm{Z}\s)} \bigg)_+ > 0
\end{align}
for any minimizer $\bm{Z}\s$ of $\mathcal{E}$.  This also implies that 
\begin{align}
\frac{{\sf df}_{\lambda}(\tau^{*2},\zeta\s)}{n} &=
\frac{1}{n\tau\s} \mathbb{E} \bigg \langle \bm{H}, \bm{Z}^{(\lambda/\zeta\s)}(\tau\s) \bigg \rangle  =  \frac{\langle \bm{H}, \bm{Z}\s\rangle_{L_2}}{n \mathcal{T}(\bm{Z}\s)} < 1,
\end{align}
which shows that $\zeta\s \leq 1$.  

We will now demonstrate that $\tau^2$ is bounded.  First, observe that $\tau\s$ satisfies 
\begin{align}
    (\tau\s)^2 = \sigma^2 + \frac{\mathbb{E}\| \bm{Z}^{(\lambda/\zeta\s)}(\tau\s) - \bm{M} \|_F^2}{n}.  
\end{align}
Since $1 \geq \zeta\s > 0$, $\lambda/\zeta\s \geq \lambda$.  Therefore, for any fixed $\tau\s$, with probability one it holds that
\begin{align}
    \mathrm{rank}(\bm{Z}^{(\lambda/\zeta\s)}_{{\sf ST}}(\tau\s)) \leq \mathrm{rank}(\bm{Z}^{(\lambda)}_{{\sf ST}}(\tau\s).
\end{align}
Moreover, by \citet{bandeira_sharp_2016}, it holds that
\begin{align}
    \| \bm{H} \| \leq 3 \big( \sqrt{d} +  t \big)
\end{align}
with probability at least $1 - \exp( - c t^2 )$.  Consequently, by Weyl's inequality, on this event
\begin{align}
    |\lambda_{r+1}( \bm{M} + \tau\s \bm{H} ) | \leq \tau\s \| \bm{H} \| \leq 3 \tau\s (\sqrt{d} + t ).
\end{align}
Therefore, by \cref{ass3}
\begin{align}
    \mathbb{E} \| \bm{Z}^{(\lambda/\zeta\s)}(\tau\s) - \bm{M} \|_F^2 &\leq 2 \| \bm{M} \|_F^2 + 2\mathbb{E} \mathrm{rank}(\bm{Z}^{(\lambda)}(\tau\s))\| \bm{Z}^{(\lambda)}(\tau\s)\|^2 \\
    &\leq 4 r \| \bm{M} \|^2 + 4(\tau\s)^2 \mathbb{E} \mathrm{rank}(\bm{Z}^{(\lambda)}(\tau\s) \| \bm{H} \|^2 \\
    &\leq 4 r \| \bm{M} \|^2 + 4 (\tau\s)^2 \mathbb{E} \bigg\{ \bigg[ \sum_{i=1}^{d} \mathbb{I}\{ \lambda_i( \bm{M} + \tau\s \bm{H}) > \lambda \} \bigg]  \| \bm{H} \|^2 \bigg\}\\
    &\leq 4 r \| \bm{M} \|^2 + 4 (\tau\s)^2 \mathbb{E} \bigg\{ \bigg[ \sum_{i=1}^{d} \mathbb{I}\{ \lambda_i( \bm{M} + \tau\s \bm{H}) > \lambda \} \bigg]  \| \bm{H} \|^2  \mathbb{I}\{ \| \bm{H} \| \leq C \sqrt{d} \bigg\} \\
    &\quad ++ 4 (\tau\s)^2 \mathbb{E} \bigg\{ \bigg[ \sum_{i=1}^{d} \mathbb{I}\{ \lambda_i( \bm{M} + \tau\s \bm{H}) > \lambda \} \bigg]  \| \bm{H} \|^2  \mathbb{I}\{ \| \bm{H} \| > C \sqrt{d} \bigg\} \\
    &\leq 4 r \| \bm{M} \|^2 + 4 (\tau\s)^2 \mathbb{E} \bigg\{ \bigg[ \sum_{i=1}^{d} \mathbb{I}\{ \lambda_i( \bm{M} + \tau\s \bm{H}) > \lambda \} \bigg]  \| \bm{H} \|^2  \mathbb{I}\{ \| \bm{H} \| \leq C \sqrt{d} \bigg\} \\
    &\quad + 8 (\tau\s)^2  d \int_{C^2 d}^{\infty}  e^{- c s^2} ds \\
    &\leq 4 r \| \bm{M} \|^2 + 36 (\tau\s)^2 d r 
    + 4 (\tau\s)^2  d \int_{C \sqrt{d}}^{\infty} s^2 e^{- c s^2} ds \\
    &\leq 4 r \| \bm{M} \|^2 + 36 (\tau\s)^2 d r 
    + 4 (\tau\s)^2  C' d^2 \exp( - c d^2) \\
    &\leq  4 r \| \bm{M} \|^2 + 36 (\tau\s)^2 d r \\
    &\leq 4 r \kappa^2 \lambda_{\min}(\bm{M})^2 + 36 (\tau\s)^2 d r,
\end{align}
provided $d$ is sufficiently large, where we have used the identity $\mathbb{E} X = \int_{0}^{\infty} \p\big( X > s \big) ds$
for positive random variable $X$.  
Therefore, we have that $\tau\s$ satisfies
\begin{align}
    (\tau\s)^2 &\leq \sigma^2 + \frac{4 r \kappa^2 \lambda_{\min}(\bm{M})^2}{n} + \frac{4 \cdot 36 (\tau\s)^2 dr}{n},
\end{align}
which, after rearranging   yields
\begin{align}
    \bigg( 1 - \frac{4 \cdot 36 dr}{n} \bigg) (\tau\s)^2 \leq \sigma^2 + \frac{4 \kappa^2 d r^2}{n} \frac{\lambda_{\min}(\bm{M})^2}{d r}  \leq \sigma^2 + \frac{4 \kappa^2 d r^2}{n} \frac{\lambda_{\min}^2}{r}.
\end{align}
Recalling that $n \geq C_2 d r^2$ by \cref{ass2} shows that
\begin{align}
    (\tau\s)^2 \leq 2\bigg( \sigma^2 +\frac{4 \kappa^2 d r^2}{n} \frac{\lambda_{\min}^2}{r} \bigg) \leq  6 \kappa^2 \sigma^2,
\end{align}
where we have used that $\frac{\lambda_{\min}^2}{\sigma^2 r} \leq C_2$ by \cref{ass1} and $n \geq C_3 d r^2$ for some large constant $C_3$ by \cref{ass2}.  
This same argument shows that
\begin{align}
    \bigg| (\tau\s)^2 - \sigma^2 \bigg| &\leq \frac{\mathbb{E} \| \bm{Z}^{(\lambda/\zeta\s)}(\tau\s) - \bm{M} \|_F^2}{n} \\
    &\leq \frac{4 r \kappa^2 \lambda_{\min}(\bm{M})^2}{n} + \frac{4 \cdot 36 (\tau\s)^2 dr}{n}  \\
    &\leq \frac{4 dr \kappa^2 \lambda_{\min}^2 \sigma^2}{\sigma^2 n r} + \frac{24 \cdot 36 \kappa^2  \sigma^2 d r}{n} \\&= O\bigg( \sigma^2\frac{dr}{n} \bigg),
\end{align}
where we have used the bound $\frac{\lambda_{\min}^2}{\sigma^2 r} \leq C_2$ by \cref{ass1}.  
It remains to bound $|\zeta\s - 1 |$.  We have
\begin{align}
    \bigg| \zeta\s - 1 \bigg| &= \frac{\mathbb{E} \langle \bm{H}, \bm{Z}\s \rangle}{n \tau\s}  
\end{align}
Suppose that $\bm{Z}\s = \bm{Z}^{(\lambda/\zeta\s)}_{{\sf ST}}(\tau\s) = \bm{\hat U} \bm{\hat \Lambda} \bm{\hat U}\t$.  Then
\begin{align}
    \langle \bm{H}, \bm{Z}\s \rangle &= \langle \bm{\hat U}\t \bm{H} \bm{\hat U}, \bm{\hat \Lambda} \rangle \leq \| \bm{\hat U}\t \bm{H} \bm{\hat U} \|_F \| \bm{\hat \Lambda} \|_F \\
    &\leq \mathrm{rank}(\bm{Z}^{(\lambda)}_{{\sf ST}}(\tau\s)) \| \bm{H} \| \| \bm{M} + \tau\s \bm{H} \| \\
    &\leq \mathrm{rank}(\bm{Z}^{(\lambda)}_{{\sf ST}}(\tau\s)) \bigg( \tau\s \|\bm{H} \|^2 + \| \bm{H} \| \| \bm{M} \| \bigg).
\end{align}
Plugging this in above yields
\begin{align}
    \big| \zeta\s - 1 \big| &\leq \frac{1}{n\tau\s} \mathbb{E} \bigg\{ \mathrm{rank}(\bm{Z}^{(\lambda)}_{{\sf ST}}(\tau\s)) \bigg( \tau\s \|\bm{H} \|^2 + \| \bm{H} \| \| \bm{M} \| \bigg) \bigg\} \\
    &\leq \frac{1}{n} \mathbb{E} \bigg\{ \| \bm{H} \|^2 \mathrm{rank}(\bm{Z}^{(\lambda)}_{{\sf ST}}(\tau\s) \bigg\} + \frac{\lambda_r \sqrt{d}}{n\tau\s} \mathbb{E} \bigg\{ \| \bm{H} \| \mathrm{rank}(\bm{Z}^{(\lambda)}_{{\sf ST}}(\tau\s) \bigg\}.
\end{align}
Our previous argument has already shown that there is a universal constant $C > 0$ such that as long as $d \geq C$, 
\begin{align}
    \mathbb{E} \bigg\{ \| \bm{H} \|^2 \mathrm{rank}(\bm{Z}^{(\lambda)}_{{\sf ST}}(\tau\s) \bigg\} \leq C d r.
\end{align}
A similar argument shows that
\begin{align}
    \mathbb{E} \bigg\{ \| \bm{H} \| \mathrm{rank}(\bm{Z}^{(\lambda)}_{{\sf ST}}(\tau\s) \bigg\} \leq C \sqrt{d} r.
\end{align}
As a result, by \cref{ass1} we have that
\begin{align}
    \big| \zeta\s - 1 \big| \leq C \frac{d r}{n} \bigg( 1 + \frac{\lambda_r}{\tau\s} \bigg) \leq  C \frac{d r}{n} \frac{\lambda_r}{\sigma} \leq C C_2 \frac{d r^{3/2}}{n} = O\bigg( \frac{d r^{3/2}}{n} \bigg).
\end{align}
\end{proof}

\subsubsection{Proof of \cref{lem:Zstgoodproperties}}
\label{sec:Zstgoodproperties_proof}
\begin{proof}[Proof of \cref{lem:Zstgoodproperties}]
We mimic the proof in \citet{celentano_lasso_2023}.  
First we observe that for $\zeta\s,\tau\s$ fixed, the function $\bm{H} \mapsto \bm{Z}_{{\sf ST}}^{(\lambda/\zeta\s)}(\tau\s)$ is a proximal operator, and hence is $\tau\s$-Lipschitz.  Therefore, the random variable $\|\bm{Z}_{{\sf ST}}^{(\lambda/\zeta\s)}(\tau\s) - \bm{M} \|_F/\sqrt{dr}$ is $\frac{(\tau\s)^2}{dr}-$subgaussian.  In addition, by \cref{lem:saddlepoint}, it holds that 
\begin{align}
    \frac{\mathbb{E} \| \bm{Z}_{{\sf ST}}^{(\lambda/\zeta\s)}(\tau\s) - \bm{M} \|_F^2}{n} = (\tau\s)^2 - \sigma^2 = O\bigg( \frac{dr}{n} \sigma^2 \bigg),
\end{align}
where the last bound comes from \cref{lem:fxdpointprops}.  Therefore one has
\begin{align}
      \frac{\mathbb{E} \| \bm{Z}_{{\sf ST}}^{(\lambda/\zeta\s)}(\tau\s) - \bm{M} \|_F^2}{dr} &= O(1).
\end{align}
Consequently, we see that
\begin{align}
    \bm{H} \mapsto \frac{1}{dr} \mathbb{E} \| \bm{Z}_{{\sf ST}}^{(\lambda/\zeta\s)}(\tau\s) - \bm{M} \|_F^2
\end{align}
is $(\frac{C_\sigma}{dr},\frac{C_\sigma}{dr})$ sub-Gamma by Proposition G.5 of \citet{miolane_distribution_2021}.  Therefore, by tail bounds on sub-Gamma random variables,
     \begin{align}
\p\bigg\{      \bigg|  \| \bm{ Z}_{{\sf ST}}^{(\lambda/\zeta\s)}(\tau\s) - \bm{M} \|_F^2 - \mathbb{E} \|\bm{ Z}_{{\sf ST}}^{(\lambda/\zeta\s)}(\tau\s) - \bm{M} \|_F^2 \bigg|  > s d r \bigg\} \leq C \exp(- c dr (s \vee s^2) ),
    \end{align}
    which is what was required to show.     

    For the other assertion, we first observe that
    \begin{align}
        \p\bigg\{ \mathrm{rank}(\bm{Z}^{(\lambda/\zeta\s)}(\tau\s) \leq r \bigg\} = \p\bigg\{ |\hat \lambda_{r+1}\big( \bm{M} + \tau\s \bm{H} \big) |\leq \lambda/\zeta\s \bigg\}.
    \end{align}
By \cref{lem:fxdpointprops} it holds that 
\begin{align*}
    \frac{\lambda}{\zeta\s} = \frac{\lambda}{1 + O( \frac{d r^{3/2}}{n})}  \geq \lambda/2. 
\end{align*}
Therefore, 
\begin{align}
    \p\bigg\{ |\hat \lambda_{r+1}\big( \bm{M} + \tau\s \bm{H} \big) |\leq \lambda/\zeta\s \bigg\} \geq \p\bigg\{ |\hat \lambda_{r+1}\big( \bm{M} + \tau\s \bm{H} \big) |\leq  \lambda/2 \bigg\}.
\end{align} By Weyl's inequality,
    \begin{align}
        | \hat \lambda_{r+1}\big( \bm{M} + \tau\s \bm{H} \big)| \leq \tau\s \| \bm{H} \|.
    \end{align}
    Since $\bm{H}$ is a ${\sf GOE}(d)$ random matrix, it holds that $\| \bm{H} \| \leq 32 \sqrt{d}$ with probability at least $1 - \exp( - c d)$ for $d \geq C$. In addition, by \cref{lem:fxdpointprops} it holds that $(\tau\s)^2 \leq C \sigma^2$, where $C$ depends only on $\kappa$.  Consequently, as long as $\lambda \geq C_3 \sigma \sqrt{d}$, for $C_3$ larger than some constant (possibly depending on $\kappa$), it holds that $\hat \lambda_{r+1}( \bm{M} + \tau\s \bm{H}) \leq \tau\s \|\bm{H} \| \leq C \sigma \sqrt{d}\leq  \lambda/2$.  The final assertion holds by the same logic and the triangle inequality.  
\end{proof}

\subsubsection{Informal Derivation of Fixed-Point Equations} \label{sec:heuristicderivation}
We now give more details on the heuristic derivation of $(\tau\s,\zeta\s)$ in \cref{sec:softthresholding}.  
Recall that our goal is to study $\bm{Z}^{(\lambda)}$ defined via
\begin{align}
 \bm{Z}^{(\lambda)} =    \argmin_{\bm{Z}} \frac{1}{2} \| \mathcal{X}(\bm{Z - M}) - \bm{\eps} \|_F^2 + \lambda \| \bm{Z} \|_*.
\end{align}
Define the reparameterized cost function:
\begin{align}
C^{(n)}_{\lambda}(\bm{W}) := \frac{1}{2}     \| \mathcal{X}(\bm{W}) - \bm{\eps} \|_F^2 + \lambda\big\{ \| \bm{W + M}  \|_* - \| \bm{M} \|_* \big\}. \numberthis \label{Cndef}
\end{align}
It is immediately evident that if $\bm{W}$ minimizes $C_{\lambda}^{(n)}$ over the set of matrices $\bm{W}$ such that $\bm{W} + \bm{M}$ is positive semidefinite, then $\bm{Z}^{(\lambda)} = \bm{W} + \bm{M}$.

Observe that we can equivalently write
\begin{align}
    C_{\lambda}^{(n)}(\bm{W}) &=  \max_{\bm{\nu} \in \mathbb{R}^{n}} \langle \mathcal{X}[\bm{W}], \bm{\nu} \rangle - \langle \bm{\eps} , \bm{\nu} \rangle - \frac{1}{2} \| \bm{\nu} \|^2 + \lambda \big\{ \| \bm{W+M} \|_{*} - \|\bm{M}\|_{*}\big\}.
\end{align}
Consequently, the min-max above takes the form required for the Matrix CGMT.  Therefore, 
we can show that the minimum value of $C_{\lambda}^{(n)}$ is related to that of the function $L_{\lambda}^{(n)}$, defined via
\begin{align}
    L_{\lambda}^{(n)}(\mathcal{W}) :=\frac{1}{dr} \bigg( \min_ {\substack{\bm{W}  \in \mathcal{W}\\ \bm{W} + \bm{M} \succcurlyeq 0}} \max_{\bm{\nu} \in \mathbb{R}^n} \| \bm{W} \|_F \langle \bm{g}, \bm{\nu} \rangle /\sqrt{n} +&\| \bm{\nu}\| \langle \bm{H}, \bm{W}
    \rangle /\sqrt{n}- \langle \bm{\eps}, \bm{\nu} \rangle - \frac{1}{2} \| \bm{\nu} \|^2 \\
    &\qquad\qquad + \lambda (\| \bm{W} + \bm{M} \|_{*} - \|\bm{M}\|_* )\bigg).
\end{align}
Again, the argument of $L_{\lambda}^{(n)}$ is a set (the set for which $\bm{W}$ is minimized over). 
Therefore, it suffices to analyze the minimum value of $L_{\lambda}^{(n)}$ over $\mathbb{R}^{d\times d}$.  By expanding $L_{\lambda}^{(n)}$ out further and manipulating, we can show that 
\begin{subequations}
\label{l_lambda_n1}    
\begin{align}
    L_{\lambda}^{(n)}(\mathbb{R}^{d\times d}) &= \max_{\beta > 0} \min_{\tau \geq \sigma} \bigg( \frac{\sigma^2}{\tau} + \tau \bigg) \frac{\|\bm{g}\|}{\sqrt{n}}\frac{\gamma_n \beta}{2} - \gamma_n \frac{\beta^2}{2} \\
    &\quad + \min_{ \substack{\bm{W} \in\mathbb{R}^{d\times d}\\\bm{W} + \bm{M} \succcurlyeq 0}} \bigg\{  \beta \frac{\|\bm{W}\|_F^2}{2 dr \tau} + \beta \frac{\langle \bm{H}, \bm{W} \rangle}{dr} + \frac{\lambda}{dr} \big\{ \| \bm{W} + \bm{M} \|_* - \|\bm{M} \|_* \big\} \bigg\}.
\end{align}
\end{subequations}
After changing variables back to $\bm{Z} = \bm{W} + \bm{M}$, the inner minimization can be obtained by soft-thresholding $\bm{M} + \tau \bm{H}$ at level $\lambda/\zeta$ where $\zeta = \tau\beta$.

\subsection{Proof of \cref{lem:ZSTtau,lem:ZHTtau}} \label{sec:ZHTtau}
\begin{proof}[Proof of \cref{lem:ZSTtau,lem:ZHTtau}]
We prove the result first for hard thresholding.
 First, define the event
    \begin{align}
        \mathcal{E} 
        &:= \bigg\{ \| \bm{H} \| \leq C \sqrt{d} \bigg\}.
    \end{align}
 We observe that on $\mathcal{E}$, since $\lambda \geq C_3 \sigma \sqrt{d}$ and $\tau\s \leq C \sigma$ by \cref{lem:fxdpointprops}, it holds that $\bm{Z}_{{\sf HT}}^{(\lambda)}$ is rank $r$. Consequently, on this event,
\begin{align}
    \phi\bigg( \bm{Z}_{{\sf HT}}^{(\lambda)} \bigg) &= \phi\bigg( \mathcal{P}_{{\sf rank}-r}(\bm{M} + \tau\s \bm{H} ) \bigg) \\
    &= \phi\bigg( \mathcal{P}_{{\sf rank}-r}\bigg[\bm{M} + \sigma \bigg( 1 + O\big( \frac{dr}{n} \big)\bigg) \bm{H} \bigg] \bigg).
\end{align}
Let $\bm{U}_{\sigma} \bm{\Lambda}_{\sigma} \bm{U}_{\sigma}\t$ denote the leading rank $r$ approximation of $\bm{M} + \sigma \bm{H}$, and let $\bm{U}_{\tau} \bm{\Lambda}_{\tau} \bm{U}_{\tau}\t$ be the leading rank $r$ approximation of $\bm{M} + \tau\s \bm{H}$.  Recall that we let $\mathcal{O}_{\bm{U}_{\sigma},\bm{U}_{\tau}}$ denote the Frobenius-optimal orthogonal matrix aligning $\bm{U}_{\sigma}$ to $\bm{U}_{\tau}$.  We have
\begin{align*}
    \| \bm{Z}_{{\sf HT}}^{(\lambda)} - \bm{Z}_{{\sf HT}} \|_F &= \| \bm{U}_{\sigma} \bm{\Lambda}_{\sigma} \bm{U}_{\sigma}\t - \bm{U}_{\tau} \bm{\Lambda}_{\tau} \bm{U}_{\tau}\t \|_F \\
    &\leq \| \bm{U}_{\sigma} \bm{\Lambda}_{\sigma} \big(  \bm{U}_{\sigma} - \bm{U}_{\tau} \mathcal{O}_{\bm{U}_{\tau}, \bm{U}_{\sigma}} \big)\t  \|_F +  \| \bm{U}_{\sigma} \big[ \bm{\Lambda}_{\sigma} \mathcal{O}_{\bm{U}_{\tau},\bm{U}_{\sigma}}\t -  \mathcal{O}_{\bm{U}_{\tau},\bm{U}_{\sigma}}\t  \bm{\Lambda}_{\tau} \big] \bm{U}_{\tau}\t \|_F \\
    &\quad + \| \big[ \bm{U}_{\sigma} \mathcal{O}_{\bm{U}_{\tau},\bm{U}_{\sigma}}\t - \bm{U}_{\tau} \big] \bm{\Lambda}_{\tau} \bm{U}_{\tau}\t \|_F \\
    &\leq \underbrace{\| \bm{U}_{\sigma} \bm{\Lambda}_{\sigma} \big(  \bm{U}_{\sigma} - \bm{U}_{\tau} \mathcal{O}_{\bm{U}_{\tau}, \bm{U}_{\sigma}} \big)\t  \|_F + \| \big[ \bm{U}_{\sigma} \mathcal{O}_{\bm{U}_{\tau},\bm{U}_{\sigma}}\t - \bm{U}_{\tau} \big] \bm{\Lambda}_{\tau} \bm{U}_{\tau}\t \|_F}_{=: T_1} \\
    &\quad + \underbrace{\| \bm{\Lambda}_{\sigma}\big[ \mathcal{O}_{\bm{U}_{\tau},\bm{U}_{\sigma}}\t - \bm{U}_{\sigma}\t \bm{U}_{\tau} \big] \|_F  + \| \big[ \bm{U}_{\sigma}\t \bm{U}_{\tau} - \mathcal{O}_{\bm{U}_{\sigma},\bm{U}_{\tau}} \big] \bm{\Lambda}_{\tau} \|_F}_{=:T_2} + \underbrace{\| \bm{\Lambda}_{\sigma}\bm{U}_{\sigma}\t \bm{U}_{\tau} - \bm{U}_{\sigma}\t \bm{U}_{\tau} \bm{\Lambda}_{\tau} \|_F}_{=:T_3}.
\end{align*}
We will analyze each term in turn.  First, we note that on the event $\mathcal{E}$ it holds that 
\begin{align*}
    \max\big\{ | \lambda_{r}(\bm{M} + \sigma \bm{H}) - \lambda_{r+1}(\bm{M} + \tau\s \bm{H})|, | \lambda_{r+1}(\bm{M} + \sigma \bm{H}) - \lambda_{r}(\bm{M} + \tau\s \bm{H})| \big\} \leq C \sigma \sqrt{d} \leq \lambda_{\min}(\bm{M}).
\end{align*}
Consequently, the Davis-Kahan Theorem implies
\begin{align*}
    \| \bm{U}_{\sigma} \mathcal{O}_{\bm{U}_{\sigma} \bm{U}_{\tau}} - \bm{U}_{\tau} \|_F \leq \sqrt{r}  \| \bm{U}_{\sigma} \mathcal{O}_{\bm{U}_{\sigma} \bm{U}_{\tau}} - \bm{U}_{\tau} \| \leq C \frac{|\tau\s - \sigma| \sqrt{dr}}{\lambda_r(\bm{M})}.
\end{align*}
We also note that
\begin{align*}
    \| \mathcal{O}_{\bm{U}_{\sigma} \bm{U}_{\tau}} - \bm{U}_{\sigma}\t \bm{U}_{\tau} \| \leq 1 - \| \cos\Theta(\bm{U}_{\sigma},\bm{U}_{\tau}) \|  \leq \| \sin\Theta(\bm{U}_{\sigma},\bm{U}_{\tau}) \|^2 \leq C^2 \frac{|\tau\s - \sigma |^2 dr}{\lambda_r^2(\bm{M})},
\end{align*}
which follows from the fact that $\mathcal{O}_{\bm{U}_{\sigma},\bm{U}_{\tau}}$ is computed from the SVD of $\bm{U}_{\sigma}\t \bm{U}_{\tau}$, and the singular values of $\bm{U}_{\sigma}\t \bm{U}_{\tau}$ are equal to the cosines of the principal angles between the subspaces.  We now anlyze each term.
\begin{itemize}
    \item \textbf{The term $T_1$.}  We have that 
    \begin{align*}
         \| \big[ \bm{U}_{\sigma} \mathcal{O}_{\bm{U}_{\tau},\bm{U}_{\sigma}}\t - \bm{U}_{\tau} \big] \bm{\Lambda}_{\tau} \bm{U}_{\tau}\t \|_F \leq \| \bm{\Lambda}_{\tau} \| \frac{C |\tau\s - \sigma| \sqrt{dr}}{\lambda_r(\bm{M})} \lesssim | \tau\s - \sigma | \sqrt{dr}.
    \end{align*}
    The same bound holds for the other term.  
    \item \textbf{The term $T_2$.} We have 
    \begin{align}
   \| \bm{\Lambda}_{\sigma}\big[ \mathcal{O}_{\bm{U}_{\tau},\bm{U}_{\sigma}}\t - \bm{U}_{\sigma}\t \bm{U}_{\tau} \big] \|_F &\leq \| \bm{\Lambda}_{\sigma} \| C^2 \frac{|\tau\s - \sigma |^2 dr}{\lambda_r^2(\bm{M})} \leq C'  |\tau\s - \sigma |^2 \sqrt{dr} \frac{\sqrt{dr}}{\lambda_{r}(\bm{M})},
    \end{align}
    which follows by \cref{ass1}.  A similar bound holds for the other quantity.
    \item \textbf{The term $T_3$.} We observe that  
    \begin{align*}
        \| \bm{\Lambda}_{\sigma} \bm{U}_{\sigma}\t \bm{U}_{\tau} - \bm{U}_{\sigma}\t \bm{U}_{\tau} \bm{\Lambda}_{\tau} \|_F &= \| \bm{U}_{\sigma}\t \big[ \bm{M} + \tau\s \bm{H} - ( \bm{M} + \sigma \bm{H}) \big] \bm{U}_{\tau} \|_F \\
        &\lesssim \sqrt{rd} | \tau\s - \sigma|.
    \end{align*}
\end{itemize}
Combining these results, we have that on the event $\mathcal{E}$,
\begin{align*}
    \| \bm{Z}_{{\sf HT}}^{(\lambda)} - \bm{Z}_{{\sf HT}} \|_F \leq C | \tau\s - \sigma | \sqrt{dr} + C \sqrt{dr} | \tau\s - \sigma |^2.
\end{align*}
By \cref{lem:fxdpointprops} it holds that 
\begin{align*}
    |\tau\s - \sigma| \leq \frac{|(\tau\s)^2 - \sigma^2|}{\tau\s + \sigma} &= O\bigg( \frac{dr}{n} \sigma \bigg).
\end{align*}
Therefore, we have that  on the event $\mathcal{E}$,
\begin{align*}
    \| \bm{Z}_{{\sf HT}}^{(\lambda)} - \bm{Z}_{{\sf HT}} \|_F &= O \bigg( \frac{dr}{n} \sigma \bigg).
\end{align*}
Consequently,
 \begin{align}
 \bigg|        \mathbb{E}\bigg\{ \phi\bigg( \frac{\bm{Z}_{{\sf HT}}^{(\lambda)}}{\sqrt{dr}} \bigg) - \phi \bigg( \frac{\bm{Z}_{{\sf HT}}}{\sqrt{dr}} \bigg) \bigg\} \bigg| &\leq \frac{1}{\sqrt{dr}}\mathbb{E} \| \bm{Z}_{{\sf HT}}^{(\lambda)} - \bm{Z}_{{\sf HT}} \|_F \\
 &\leq O\bigg( \frac{dr}{n} \sigma \bigg) + \frac{1}{\sqrt{dr}} \bigg( \mathbb{E}  \| \bm{M} + \tau\s \bm{H} \|_F \mathbb{I}_{\mathcal{E}^c}  + \mathbb{E}  \| \bm{M} + \sigma\s \bm{H} \|_F \mathbb{I}_{\mathcal{E}^c} \bigg)\\ 
 &\leq O \bigg( \frac{dr}{n} \sigma \bigg) + \frac{\| \bm{M} \|}{\sqrt{dr}} \p\{ \mathcal{E}^c \} +  \frac{1}{\sqrt{r}} \mathbb{E} \| \bm{H} \| \mathbb{I}_{\mathcal{E}^c} \\
 &\leq O\bigg( \frac{dr}{n} \sigma \bigg) + C \exp( - c d) + \frac{1}{\sqrt{r}} \int_{C\sqrt{d}}^{\infty} \exp( - c s^2) ds \\
 &= O\bigg( \frac{dr}{n} \sigma \bigg).
 \end{align}
 Here we have used the fact that $\p\{ \|\bm{H}\| \geq C \sqrt{d} + s \} \leq 2 \exp( - c s^2)$ by standard tail bounds for norms of random matrices \citep{bandeira_sharp_2016}.

We now prove the result for soft thresholding.  First, we note that the exact same argument continues applies if we
make the replacement $\bm{M} \mapsto \bm{M} - \lambda \bm{I}$.  Consequently, we have that 
\begin{align}
\bigg|     \mathbb{E} \phi\bigg( \frac{\bm{Z}_{{\sf ST}}^{(\lambda/\zeta\s)}(\tau\s)}{\sqrt{dr}} \bigg) - \mathbb{E} \phi\bigg( \frac{\bm{Z}_{{\sf ST}}^{(\lambda)}(\sigma)}{\sqrt{dr}} \bigg) \bigg| &\leq \bigg|     \mathbb{E} \phi\bigg( \frac{\bm{Z}_{{\sf ST}}^{(\lambda)}(\tau\s)}{\sqrt{dr}} \bigg) - \mathbb{E} \phi\bigg( \frac{\bm{Z}_{{\sf ST}}^{(\lambda)}(\sigma)}{\sqrt{dr}} \bigg) \bigg| \\
&\quad + \bigg|     \mathbb{E} \phi\bigg( \frac{\bm{Z}_{{\sf ST}}^{(\lambda/\zeta\s)}(\tau\s)}{\sqrt{dr}} \bigg) - \mathbb{E} \phi\bigg( \frac{\bm{Z}_{{\sf ST}}^{(\lambda)}(\tau\s)}{\sqrt{dr}} \bigg) \bigg| \\
&\leq O\bigg( \frac{dr}{n} \sigma \bigg) + \bigg|     \mathbb{E} \phi\bigg( \frac{\bm{Z}_{{\sf ST}}^{(\lambda/\zeta\s)}(\tau\s)}{\sqrt{dr}} \bigg) - \mathbb{E} \phi\bigg( \frac{\bm{Z}_{{\sf ST}}^{(\lambda)}(\tau\s)}{\sqrt{dr}} \bigg) \bigg|.
\end{align}
Therefore, it suffices to focus on the second term.  

On the event $\mathcal{E}$ the quantity $\bm{Z}_{{\sf ST}}^{(\lambda/\zeta\s)}(\tau\s)$ is simply hard thresholding of $\bm{M} -\lambda + \tau\s \bm{H}$ at level $\lambda/\zeta\s$, and similarly for  $\bm{Z}_{{\sf ST}}^{(\lambda)}(\tau\s)$.  Therefore, both matrices share the same eigenvectors, and hence on the event $\mathcal{E}$,
\begin{align*}
    \|\bm{Z}_{{\sf ST}}^{(\lambda/\zeta\s)}(\tau\s) - \bm{Z}_{{\sf ST}}^{(\lambda)}(\tau\s) \|_F \leq \sqrt{2r} \lambda \bigg( \frac{1}{\zeta\s} - 1 \bigg) = O\bigg( \sigma \sqrt{rd} \times \frac{d r^{3/2}}{n} \bigg),
\end{align*}
where we have used the fact that $|\zeta\s  - 1 | = O\bigg( \frac{d r^{3/2}}{n} \bigg)$ by \cref{lem:fxdpointprops}.  Therefore, the same argument as for $\bm{Z}_{{\sf HT}}$ shows that 
\begin{align}
\bigg|     \mathbb{E} \phi\bigg( \frac{\bm{Z}_{{\sf ST}}^{(\lambda/\zeta\s)}(\tau\s)}{\sqrt{dr}} \bigg) - \mathbb{E} \phi\bigg( \frac{\bm{Z}_{{\sf ST}}^{(\lambda)}(\sigma)}{\sqrt{dr}} \bigg) \bigg| &= O\bigg( \frac{d r^{3/2}}{n} \bigg).
\end{align}
\end{proof}

\subsection{Proof of the Matrix CGMT (\cref{thm:matrixcgmt})}
\label{sec:cgmtproof}
In this section we prove the Matrix CGMT.  Our proof is based on \citet{miolane_distribution_2021}.  

\begin{proof}[Proof of \cref{thm:matrixcgmt}]
Define the Gaussian processes
\begin{align}
    X(\bm{W},\bm{\nu}) &:= \| \bm{W} \|_F \langle \bm{g}, \bm{\nu} \rangle + \| \bm{\nu} \|\langle \bm{H}, \bm{W} \rangle; \\
    Y(\bm{W},\bm{\nu} ) &:= \langle \mathcal{A}[\bm{W}], \bm{\nu} \rangle + \| \bm{\nu} \| \| \bm{W} \| z,
\end{align}
where $z \sim \mathcal{N}(0,1)$ is independent from $\mathcal{A}(\cdot)$, $\bm{g}$ is a standard Gaussian vector, and $\bm{H}$ is a ${\sf GOE}(d)$ random matrix.  Observe that since $\bm{H}$ is ${\sf GOE}(d)$, $\langle \bm{H}, \bm{W} \rangle \sim \mathcal{N}(0, \| \bm{W} \|_F^2)$ for symmetric $\bm{W}$, whence we need only define the processes for symmetric matrices $\bm{W}$.

We have
\begin{align}
    \mathbb{E} \bigg[ Y(\bm{W},\bm{\nu}) Y( \bm{W}' ,\bm{\nu'} )\bigg] &- \mathbb{E}  \bigg[ X(\bm{W},\bm{\nu}) X( \bm{W}' ,\bm{\nu'} )\bigg] \\
    &= \| \bm{W} \|_F \| \bm{\nu} \| \| \bm{W}' \|_F \| \bm{\nu}' \| + \langle \bm{\nu}, \bm{\nu}' \rangle \langle \bm{W}, \bm{W}' \rangle - \| \bm{\nu} \| \|\bm{\nu}' \| \langle \bm{W}, \bm{W}' \rangle -  \| \bm{W} \|_F \|\bm{W}' \|_F \langle \bm{\nu}, \bm{\nu}' \rangle \\
    &= \bigg( \| \bm{W} \|_F \| \bm{W}'\|_F - \langle \bm{W}, \bm{W}' \rangle \bigg) \bigg( \|\bm{\nu} \| \| \bm{\nu}' \| - \langle \bm{\nu}, \bm{\nu}' \rangle \bigg) \geq 0,
\end{align}
with equality when $\bm{W} = \bm{W}'$ or $\bm{\nu} = \bm{\nu}'$.  Therefore, by Theorem G.2 of \citet{miolane_distribution_2021}, 
\begin{align}
    \p\bigg\{ \min_{\bm{W} \in \mathcal{S}_{\bm{W}}} \max_{\bm{\nu} \in \mathcal{S}_{\nu}} Y(\bm{W},\bm{\nu}) + \psi(\bm{W},\bm{\nu}) \leq t \bigg\} \leq  \p\bigg\{\min_{\bm{W} \in \mathcal{S}_{\bm{W}}} \max_{\bm{\nu} \in \mathcal{S}_{\nu}} X(\bm{W},\bm{\nu}) + \psi(\bm{W}, \bm{\nu}) \leq t \bigg\}.
\end{align}
Furthermore, 
\begin{align}
    \p\bigg\{\min_{\bm{W} \in \mathcal{S}_{\bm{W}}} \max_{\bm{\nu} \in\mathcal{S}_{\bm{\nu}}} Y(\bm{W},\bm{\nu}) + \psi(\bm{W},\bm{\nu}) \leq t \bigg\} &\geq \frac{1}{2}  \p\bigg\{ \min_{\bm{W} \in \mathcal{S}_{\bm{W}}} \max_{\bm{\nu} \in \mathcal{S}_{\bm{\nu}}} Y(\bm{W},\bm{\nu}) + \psi(\bm{W},\bm{\nu}) \leq t | z \leq 0 \bigg\} \\
    &\geq \frac{1}{2}  \p\bigg\{ \min_{\bm{W} \in \mathcal{S}_{\bm{W}}} \max_{\bm{\nu} \in \mathcal{S}_{\bm{\nu}}}\langle \mathcal{A}(\bm{W}), \bm{\nu} \rangle + \psi(\bm{W},\bm{\nu}) \leq t | z \leq 0 \bigg\}\\
    &=  \frac{1}{2} \p\bigg\{ \Phi(\mathcal{A}) \leq t \bigg\}.
\end{align}
Rewriting the equation above yields
\begin{align}
    \p\bigg\{ \Phi(\mathcal{A}) \leq t \bigg\} \leq 2 \p\bigg\{\min_{\bm{W} \in \mathcal{S}_{\bm{W}}} \max_{\bm{\nu} \in \mathcal{S}_{\bm{\nu}}} X(\bm{W},\bm{\nu}) + \psi(\bm{W}, \bm{\nu}) \leq t \bigg\} = 2 \p \bigg\{ \phi(\bm{g},\bm{H}) \leq t \bigg\}.
\end{align}
This proves the first result.  

Now suppose that $\mathcal{S}_{\bm{W}}$ and $\mathcal{S}_{\bm{\nu}}$ are convex and that $\psi$ is convex-concave. Note that the argument above relies not on the minimization over $\mathcal{S}_{\bm{W}}$ and maximization over $\mathcal{S}_{\bm{\nu}}$ -- we are free to minimize instead over $\mathcal{S}_{\bm{\nu}}$ and maximize over $\mathcal{S}_{\bm{W}}$.  Then this argument above shows that (using $-\psi$ instead of $\psi$)
\begin{align}
    \p\bigg\{ \min_{\bm{\nu} \in \mathcal{S}_{\bm{\nu}}}\max_{\bm{W} \in \mathcal{S}_{\bm{W}}}  \langle \mathcal{A}(\bm{W}), \bm{\nu} \rangle - \psi(\bm{W}, \bm{\nu}) \leq - t \bigg\} \leq 2 \p\bigg\{ \min_{\bm{\nu} \in \mathcal{S}_{\bm{\nu}}}\max_{\bm{W} \in \mathcal{S}_{\bm{W}}}  \|\bm{W} \|_F \langle \bm{g}, \bm{\nu} \rangle + \| \bm{\nu} \| \langle \bm{H}, \bm{W} \rangle   - \psi(\bm{W}, \bm{\nu}) \leq - t \bigg\}.
\end{align}
Multiplying everything by $-1$ and using the fact that $\mathcal{A} (\cdot)$, $\bm{H}$, and $\bm{g}$ are equal in distribution to $-\mathcal{A}(\cdot)$, $- \bm{H}$ and $-\bm{g}$ respectively yields
\begin{align}
   \p\bigg\{ \max_{\bm{\nu} \in \mathcal{S}_{\bm{\nu}}}\min_{\bm{W} \in \mathcal{S}_{\bm{W}}}  \langle \mathcal{X}(\bm{W}), \bm{\nu} \rangle +  \psi(\bm{W}, \bm{\nu}) \geq  t \bigg\} \leq 2 \p\bigg\{ \max_{\bm{\nu} \in \mathcal{S}_{\bm{\nu}}}\min_{\bm{W} \in \mathcal{S}_{\bm{W}}}  \|\bm{W} \|_F \langle \bm{g}, \bm{\nu} \rangle + \| \bm{\nu} \| \langle \bm{H}, \bm{W} \rangle   + \psi(\bm{W}, \bm{\nu}) \geq t \bigg\}.
\end{align}
We observe that since $\mathcal{S}_{\bm{W}}$ and $\mathcal{S}_{\bm{\nu}}$ are both convex and $\psi$ is convex-concave, by Sion's minimax theorem we are able to switch the minimum and maximum on the left hand side.  For the right hand side we need not switch, but instead only observe that
\begin{align}
    \max_{\bm{\nu} \in \mathcal{S}_{\bm{\nu}}}\min_{\bm{W} \in \mathcal{S}_{\bm{W}}}  \|\bm{W} \|_F \langle \bm{g}, \bm{\nu} \rangle + \| \bm{\nu} \| \langle \bm{H}, \bm{W} \rangle   + \psi(\bm{W}, \bm{\nu}) \leq \min_{\bm{W} \in \mathcal{S}_{\bm{W}}} \max_{\bm{\nu} \in \mathcal{S}_{\bm{\nu}}} \|\bm{W} \|_F \langle \bm{g}, \bm{\nu} \rangle + \| \bm{\nu} \| \langle \bm{H}, \bm{W} \rangle   + \psi(\bm{W}, \bm{\nu}).
\end{align}
This completes the proof.
\end{proof}

\section{Proof of Lemmas in \cref{sec:equivalence} (Theorem \ref{thm:relatecvxncvx})}
\label{sec:cvxncvx}

 Throughout this section we fix a constant $\delta_{2r} = \frac{c_0}{\sqrt{r}}$, which we are able to do on the event $\mathcal{E}_{{\sf Good}}$ by \cref{ass2} and \cref{lem:Egood}.   Throughout this section we denote $\bm{U}\s$ as any matrix such that $\bm{U}\s \bm{U}^{*\top} = \bm{M}$.  

\subsection{Proof of \cref{lem:uncvxgood}} \label{sec:uncvxgoodproof}
In this subsection we prove \cref{lem:uncvxgood}. The proof is based on studying the second derivative of $f^{(\lambda)}_{{\sf ncvx}}$ under the necessary conditions for local minima. First, the following lemma provides a calculation of the Hessian of $f^{(\lambda)}_{{\sf ncvx}}$.  

\begin{lemma}\label{lem:secondderivcalc}
    Let $\Delta = \bm{U} - \bm{U}_2\mathcal{O}_{\bm{U},\bm{U}_2}$.  Then it holds that
    \begin{align}
     \nabla^2 f^{(\lambda)}_{{\sf ncvx}}(\bm{U}) [ \bm{\Delta},\bm{\Delta}] &= .5 \langle \mathcal{X}(\bm{\Delta \Delta}\t), \mathcal{X}(\bm{\Delta \Delta}\t) \rangle \\
    &\qquad + 4  \langle \nabla f^{(\lambda)}_{{\sf ncvx}}(\bm{U}),\Delta \rangle + \frac{4}{\sqrt{n}} \sum_{i=1}^{n} \eps_i\langle \bm{X}_i, \bm{U\Delta}\t \rangle - 4 \lambda \langle \bm{U},\bm{\Delta} \rangle \\
   &\qquad - 1.5 \langle \mathcal{X}(\bm{UU}\t - \bm{M}) ,\mathcal{X}(\bm{UU}\t - \bm{M}) \rangle \\
    &\qquad + \frac{1}{\sqrt{n}} \sum_{i=1}^{n} \big[ \eps_i \langle \bm{X}_i, \bm{\Delta},\bm{\Delta \rangle}] +  \lambda \| \bm{\Delta}\|_F^2.
\end{align}
\end{lemma}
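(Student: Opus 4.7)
The statement is a purely algebraic identity about the Hessian of $f^{(\lambda)}_{\sf ncvx}$ evaluated in the special direction $\bm{\Delta}=\bm{U}-\bm{U}_2\mathcal{O}_{\bm{U},\bm{U}_2}$ (where $\bm{U}_2$ satisfies $\bm{U}_2\bm{U}_2\t=\bm{M}$, i.e., $\bm{U}_2=\bm{U}\s$), so the plan is a direct derivative computation followed by rearrangement. The first step is to differentiate twice: writing $r_i(\bm{U}):=y_i-\langle \bm{X}_i,\bm{U}\bm{U}\t\rangle/\sqrt{n}$ and using $\partial_{\bm{U}}\langle \bm{X}_i,\bm{U}\bm{U}\t\rangle=2\bm{X}_i\bm{U}$ (by symmetry of $\bm{X}_i$), the gradient is $\nabla f^{(\lambda)}_{\sf ncvx}(\bm{U})=-\tfrac{1}{\sqrt{n}}\sum_i r_i\bm{X}_i\bm{U}+\lambda\bm{U}$, and the standard Hessian-of-sum-of-squares identity yields
\begin{align*}
\nabla^2 f^{(\lambda)}_{\sf ncvx}(\bm{U})[\bm{\Delta},\bm{\Delta}]
=\tfrac{2}{n}\sum_i\langle \bm{X}_i,\bm{U}\bm{\Delta}\t\rangle^2-\tfrac{1}{\sqrt{n}}\sum_i r_i\langle \bm{X}_i,\bm{\Delta}\bm{\Delta}\t\rangle+\lambda\|\bm{\Delta}\|_F^2.
\end{align*}

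The second step exploits the choice of $\bm{\Delta}$ through the algebraic identity $\bm{U}\bm{\Delta}\t+\bm{\Delta}\bm{U}\t=\bm{U}\bm{U}\t-\bm{M}+\bm{\Delta}\bm{\Delta}\t$, which follows from $(\bm{U}-\bm{\Delta})(\bm{U}-\bm{\Delta})\t=\bm{U}_2\mathcal{O}_{\bm{U},\bm{U}_2}\mathcal{O}_{\bm{U},\bm{U}_2}\t\bm{U}_2\t=\bm{M}$. Since $\bm{X}_i$ is symmetric, $\langle \bm{X}_i,\bm{U}\bm{\Delta}\t\rangle=\tfrac12\langle \bm{X}_i,\bm{U}\bm{\Delta}\t+\bm{\Delta}\bm{U}\t\rangle$, which lets me rewrite
\begin{align*}
\tfrac{2}{n}\sum_i\langle \bm{X}_i,\bm{U}\bm{\Delta}\t\rangle^2
=\tfrac12\|\mathcal{X}(\bm{U}\bm{U}\t-\bm{M})\|^2+\langle \mathcal{X}(\bm{U}\bm{U}\t-\bm{M}),\mathcal{X}(\bm{\Delta}\bm{\Delta}\t)\rangle+\tfrac12\|\mathcal{X}(\bm{\Delta}\bm{\Delta}\t)\|^2,
\end{align*}
and expanding $r_i=-\langle \bm{X}_i,\bm{U}\bm{U}\t-\bm{M}\rangle/\sqrt{n}+\eps_i$ in the linear-in-$\bm{\Delta}\bm{\Delta}\t$ term contributes another $\langle \mathcal{X}(\bm{U}\bm{U}\t-\bm{M}),\mathcal{X}(\bm{\Delta}\bm{\Delta}\t)\rangle$ plus an $\eps_i$ remainder.

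The final step introduces $\langle \nabla f,\bm{\Delta}\rangle$ into the formula. Using the same symmetrization and the $\bm{U}\bm{\Delta}\t+\bm{\Delta}\bm{U}\t$ identity on $\langle \nabla f,\bm{\Delta}\rangle=-\tfrac{1}{\sqrt{n}}\sum_i r_i\langle \bm{X}_i,\bm{U}\bm{\Delta}\t\rangle+\lambda\langle \bm{U},\bm{\Delta}\rangle$ gives
\begin{align*}
\langle \nabla f,\bm{\Delta}\rangle=\tfrac12\|\mathcal{X}(\bm{U}\bm{U}\t-\bm{M})\|^2+\tfrac12\langle \mathcal{X}(\bm{U}\bm{U}\t-\bm{M}),\mathcal{X}(\bm{\Delta}\bm{\Delta}\t)\rangle-\tfrac{1}{\sqrt{n}}\sum_i\eps_i\langle \bm{X}_i,\bm{U}\bm{\Delta}\t\rangle+\lambda\langle \bm{U},\bm{\Delta}\rangle,
\end{align*}
so that $4\langle \nabla f,\bm{\Delta}\rangle$ packages exactly the combination $2\|\mathcal{X}(\bm{U}\bm{U}\t-\bm{M})\|^2+2\langle \mathcal{X}(\bm{U}\bm{U}\t-\bm{M}),\mathcal{X}(\bm{\Delta}\bm{\Delta}\t)\rangle$ up to the explicit $\eps_i$ and $\lambda\langle \bm{U},\bm{\Delta}\rangle$ terms appearing on the right-hand side of the claim. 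Solving for $2\|\mathcal{X}(\bm{U}\bm{U}\t-\bm{M})\|^2+2\langle \mathcal{X}(\bm{U}\bm{U}\t-\bm{M}),\mathcal{X}(\bm{\Delta}\bm{\Delta}\t)\rangle$ and substituting back into the Hessian expression from step two reassembles the six-term right-hand side, with the $-\tfrac32\|\mathcal{X}(\bm{U}\bm{U}\t-\bm{M})\|^2$ coefficient arising as $\tfrac12+\tfrac12+\tfrac12-2$ after balancing.

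I do not expect any substantive obstacle: the argument is a bookkeeping exercise with no probabilistic or analytic inputs, driven entirely by two algebraic facts, namely the symmetric factorization identity $\bm{U}\bm{\Delta}\t+\bm{\Delta}\bm{U}\t=\bm{U}\bm{U}\t-\bm{M}+\bm{\Delta}\bm{\Delta}\t$ and the symmetry $\bm{X}_i=\bm{X}_i\t$. The only thing to watch is the sign bookkeeping when $r_i$ is expanded twice (once inside the Hessian via the linear-in-$\bm{\Delta}\bm{\Delta}\t$ piece, once inside $\langle \nabla f,\bm{\Delta}\rangle$ via the linear-in-$\bm{U}\bm{\Delta}\t$ piece), which is precisely what generates the cancellations needed to reach the stated coefficients.
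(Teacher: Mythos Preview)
Your proposal is correct and follows essentially the same route as the paper: compute the Hessian directly, invoke the key identity $\bm{U}\bm{\Delta}\t+\bm{\Delta}\bm{U}\t=\bm{UU}\t-\bm{M}+\bm{\Delta\Delta}\t$, and then introduce $\langle \nabla f^{(\lambda)}_{\sf ncvx}(\bm{U}),\bm{\Delta}\rangle$ to repackage the cross term $\langle \mathcal{X}(\bm{UU}\t-\bm{M}),\mathcal{X}(\bm{U\Delta}\t+\bm{\Delta U}\t)\rangle$. Your final bookkeeping comment (``$\tfrac12+\tfrac12+\tfrac12-2$'') is slightly off---the $-\tfrac32$ coefficient arises simply as $\tfrac12-2$ once you substitute $4\langle\nabla f,\bm{\Delta}\rangle$---but this does not affect the validity of the argument.
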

\begin{proof}
    See \cref{sec:secondderivcalc}.
\end{proof}

With this calculation at hand, we may prove \cref{lem:uncvxgood}.

\begin{proof}[Proof of \cref{lem:uncvxgood}]
First we will show that $\nabla^2 f^{(\lambda)}_{{\sf ncvx}}(\bm{U})[ \bm{\Delta}, \bm{\Delta}]$ is upper bounded when $\bm{U}$ is a critical point of $f^{(\lambda)}_{{\sf ncvx}}$.  The proof is motivated by \citet{ge_no_2017}.

First, on the event $\mathcal{E}_{{\sf Good}}$, we have that
\begin{align}
\bigg|    \frac{4}{\sqrt{n}} \sum_{i=1}^{n} \eps_i \langle \bm{X}_i, \bm{U\Delta}\t \rangle \bigg| &\leq C \sigma \sqrt{dr} \| \bm{U\Delta}\t \|_F; \\
\bigg| \frac{1}{\sqrt{n}} \sum_{i=1}^{n} \eps_i \langle \bm{X}_i, \bm{\Delta \Delta}\t \rangle \bigg| &\leq C \sigma \sqrt{dr} \| \bm{\Delta\Delta}\t \|_F
\end{align}
By the restricted isometry property (e.g. Lemma 3 of \citet{chi_nonconvex_2019}), we have that
\begin{align}
    \bigg| \big\langle \mathcal{X}(\bm{UU}\t - \bm{M}), \mathcal{X}(\bm{UU}\t - \bm{M}) \big \rangle - \langle \bm{UU}\t - \bm{M},  \bm{UU}\t - \bm{M}\rangle \bigg| \leq \delta_{2r} \| \bm{UU}\t - \bm{M} \|_F^2
\end{align}
and
\begin{align}
    \bigg| \langle \mathcal{X}( \bm{\Delta \Delta}\t  ) ,  \mathcal{X}( \bm{\Delta \Delta}\t) \rangle - \langle \bm{\Delta \Delta}\t, \bm{\Delta \Delta}\t \rangle \bigg| \leq \delta_{2r}  \| \bm{\Delta \Delta}\t \|_F^2
\end{align}
Consequently, if $\bm{U}$ satisfies $\nabla f^{(\lambda)}_{{\sf ncvx}}(\bm{U}) = 0$, 
    \begin{align}
     \nabla^2 f^{(\lambda)}_{{\sf ncvx}}(\bm{U})[ \bm{\Delta},\bm{\Delta}] & \leq .5\big( 1 + \delta_{2r}) \| \bm{\Delta \Delta}\t \|_F^2 \\
     &\qquad + C \sigma \sqrt{dr} \big( \| \bm{U\Delta}\t \|_F + \| \bm{\Delta\Delta}\t \|_F \big) \\
     &\quad - 1.5( 1 - \delta_{2r}) \| \bm{UU}\t - \bm{M} \|_F^2 \\
     &\qquad + \lambda \| \bm{\Delta}\|_F^2 - 4 \lambda \langle \bm{U,\Delta} \rangle.
     \end{align}
     Next, we note that by Lemma 6 of \citet{ge_no_2017}, if $\bm{\Delta} = \bm{U}\s - \bm{U}\mathcal{O}_{\bm{U},\bm{U}\s}$  and $\bm{U}\s \bm{U}^{*\top} = \bm{M}$, then 
     \begin{align}
         \| \bm{\Delta\Delta}\t \|_F^2 \leq 2 \| \bm{UU}\t - \bm{M} \|_F^2; \qquad \| \bm{\Delta}\|_F^2 \leq \frac{1}{2(\sqrt{2} - 1) \lambda_r(\bm{M})} \| \bm{UU}\t - \bm{M} \|_F^2.
     \end{align}
     Plugging in these bounds shows that
    \begin{align}
     \nabla^2 f^{(\lambda)}_{{\sf ncvx}}(\bm{U})[ \bm{\Delta},\bm{\Delta}] & \leq \big( 1 + \delta_{2r}) \| \bm{UU}\t - \bm{M} \|_F^2 \\
     &\qquad + C \sigma \sqrt{dr} \big( \| \bm{U\Delta}\t \|_F + \| \bm{UU}\t - \bm{M} \|_F \big) \\
     &\quad - 1.5( 1 + \delta_{2r}) \| \bm{UU}\t - \bm{M} \|_F^2 \\
     &\qquad + \lambda \frac{1}{2(\sqrt{2} - 1) \lambda_r(\bm{M})} \| \bm{UU}\t - \bm{M} \|_F^2 - 4 \lambda \langle \bm{U,\Delta} \rangle \\
     &= \| \bm{UU}\t - \bm{M} \|_F^2 \bigg( - .5 + .5 \delta_{2r} + \frac{\lambda}{2 (\sqrt{2} - 1) \lambda_r(\bm{M})} \bigg)  \\
     &\qquad + C \sigma \sqrt{dr} \bigg( \| \bm{U}\bm{\Delta}\t \|_F + \| \bm{UU}\t - \bm{M} \|_F\bigg) \\
     &\qquad - 4 \lambda \langle \bm{U,\Delta} \rangle.
     \end{align}
     We further have that
     \begin{align}
         \| \bm{U\Delta}\t \|_F &\leq \| ( \bm{U} - \bm{U}\s (\mathcal{O}_{\bm{U},\bm{U}\s})\t) \bm{\Delta}\t \|_F + \| \bm{U}\s \| \| \bm{\Delta} \|_F \\
         &\leq \| \bm{\Delta\Delta}\t \|_F +  \| \bm{U}\s \| \| \bm{\Delta} \|_F \\
         &\leq \sqrt{2} \| \bm{UU}\t - \bm{M} \|_F + \sqrt{\frac{\lambda_1(\bm{M})}{\lambda_r(\bm{M})}} \frac{1}{\sqrt{2(\sqrt{2}-1)}} \| \bm{UU}\t - \bm{M} \|_F \\
         &\leq (\sqrt{\kappa} + \sqrt{2}) \| \bm{UU}\t - \bm{M} \|_F,
     \end{align}
     where the same bound holds for $\langle \bm{U},\bm{\Delta} \rangle$.  Consequently, plugging this in, we obtain
    \begin{align}
     \nabla^2 f^{(\lambda)}_{{\sf ncvx}}(\bm{U})[ \bm{\Delta},\bm{\Delta}] & \leq \big( 1 + \delta_{2r}) \| \bm{UU}\t - \bm{M} \|_F^2 \\
     &\qquad + C \sigma \sqrt{dr} \big( \| \bm{U\Delta}\t \|_F + \| \bm{UU}\t - \bm{M} \|_F \big) \\
     &\quad - 1.5( 1 + \delta_{2r}) \| \bm{UU}\t - \bm{M} \|_F^2 \\
     &\qquad + \lambda \frac{1}{2(\sqrt{2} - 1) \lambda_r(\bm{M})} \| \bm{UU}\t - \bm{M} \|_F^2 - 4 \lambda \langle \bm{U,\Delta} \rangle \\
     &= \| \bm{UU}\t - \bm{M} \|_F^2 \bigg( - .5 + .5 \delta_{2r} + \frac{\lambda}{2 (\sqrt{2} - 1) \lambda_r(\bm{M})} \bigg)  \\
     &\qquad + C \sigma \sqrt{dr} \bigg( (\sqrt{\kappa} + \sqrt{2}) \| \bm{UU}\t - \bm{M} \|_F + \| \bm{UU}\t - \bm{M} \|_F\bigg) \\
     &\qquad +  4 \lambda(\sqrt{\kappa} + \sqrt{2}) \| \bm{UU}\t - \bm{M} \|_F \\
     &= \| \bm{UU}\t - \bm{M} \|_F^2 \bigg( - .5 + .5 \delta_{2r} + \frac{\lambda}{2 (\sqrt{2} - 1) \lambda_r(\bm{M})} \bigg) \\
     &\qquad + \| \bm{UU}\t - \bm{M} \|_F \bigg( C \sigma  \sqrt{\kappa dr} +  4 \lambda \sqrt{\kappa} \bigg).
     \end{align}
 Under the assumption that $\delta_{2r} \leq \frac{1}{10}$, $\lambda_r(\bm{M}) \geq C_1 \sigma \sqrt{dr}$, and $C_3 \sigma \sqrt{d} \leq \lambda \leq C_4 \sigma \sqrt{d}$, it holds that
 \begin{align}
     - .5 + .5 \delta_{2r} + \frac{\lambda}{2 (\sqrt{2} - 1) \lambda_r(\bm{M})}  \leq - .1.
 \end{align}
    If $\bm{U}$ is a local minimum, then it holds that $0 \leq \nabla^2 f^{(\lambda)}_{{\sf ncvx}}(\bm{U})[ \bm{\Delta},\bm{\Delta}]$.  
         Rearranging this yields
     \begin{align}
       .1  \| \bm{UU}\t - \bm{M} \|_F^2 &\leq \| \bm{UU}\t - \bm{M} \|_F\bigg( C \sigma  \sqrt{\kappa dr} +  4 \lambda \sqrt{\kappa} \bigg).
     \end{align}
     Note that if $\bm{UU}\t = \bm{M}$ then the result is trivially proven, so without loss of generality we may assume $\|\bm{UU}\t - \bm{M} \|_F$ is positive.  Rearranging shows that we must have
     \begin{align}
         \| \bm{UU}\t - \bm{M} \|_F \leq 10 C \sigma \sqrt{\kappa dr} + 40 \lambda \sqrt{\kappa}, 
     \end{align}
    which completes the proof, since $\kappa = O(1)$ by \cref{ass1} and $\lambda \leq C_5 \sigma \sqrt{d}$ by \cref{ass3}.
\end{proof}

\subsection{Proof of \cref{lem:secondderivcalc}} \label{sec:secondderivcalc}
\begin{proof}[Proof of \cref{lem:secondderivcalc}]
First, we have that 
\begin{align}
    f^{(\lambda)}_{{\sf ncvx}}(\bm{U}) &= \frac{1}{4} \sum_{i=1}^{n} \bigg[ y_i - \langle \bm{UU}\t, \bm{X}_i \rangle/\sqrt{n} \bigg]^2 + \frac{\lambda}{2} \| \bm{U} \|_F^2,
\end{align}
whence
\begin{align}
    \nabla f^{(\lambda)}_{{\sf ncvx}}(\bm{U}) &= \sum_{i=1}^{n} \bigg[ \langle \bm{UU}\t , \bm{X}_i \rangle/\sqrt{n} - y_i \bigg] \bm{X}_i\bm{U} + \lambda \bm{U}.
\end{align}
As a result,
\begin{align}
    \nabla^2 &f^{(\lambda)}_{{\sf ncvx}}(\bm{U})[ \bm{\Delta},\bm{\Delta}] \\
    &= \frac{d}{dt} \langle \nabla f^{(\lambda)}_{{\sf ncvx}}(\bm{U} + t \bm{\Delta}) , \bm{\Delta} \rangle\bigg|_{t=0} \\
    &= \frac{d}{dt} \bigg\langle \sum_{i=1}^{n} \bigg[ \langle \bm{(U + t \bm{\Delta})(U + t \Delta)}\t , \bm{X}_i \rangle/\sqrt{n} - y_i \bigg] \bm{X}_i (\bm{U} + t \bm{\Delta}) + \lambda (\bm{U} + t \bm{\Delta}) , \bm{\Delta} \bigg\rangle\bigg|_{t=0} \\
    &=   \frac{1}{n}\sum_{i=1}^{n} \bigg \langle \bm{\Delta U}\t + \bm{U\Delta}\t, \bm{X}_i \bigg \rangle \bigg\langle \bm{X}_i \bm{U}, \bm{\Delta} \bigg \rangle + \frac{1}{\sqrt{n}} \sum_{i=1}^{n} \bigg[  \langle \bm{UU}\t , \bm{X}_i \rangle/\sqrt{n} - y_i \bigg] \langle \bm{X}_i \bm{\Delta},\bm{\Delta} \rangle +  \lambda \| \bm{\Delta} \|_F^2
    \\
    &=  \frac{1}{n} 2 \sum_{i=1}^{n} \bigg \langle  \bm{U\Delta}\t, \bm{X}_i \bigg \rangle \bigg\langle \bm{X}_i \bm{U}, \bm{\Delta} \bigg \rangle + \frac{1}{\sqrt{n}} \sum_{i=1}^{n} \bigg[  \langle \bm{UU}\t , \bm{X}_i \rangle/\sqrt{n} - y_i \bigg] \langle \bm{X}_i \bm{\Delta},\bm{\Delta} \rangle + \lambda \| \bm{\Delta} \|_F^2  \\
    &= \frac{1}{n} 2 \sum_{i=1}^{n} \bigg \langle  \bm{U\Delta}\t, \bm{X}_i \bigg \rangle^2  + \frac{1}{n} \sum_{i=1}^{n} \bigg[  \langle \bm{UU}\t - \bm{M} , \bm{X}_i \rangle \bigg] \langle \bm{X}_i \bm{\Delta},\bm{\Delta} \rangle - \frac{1}{\sqrt{n}}\sum_{i=1}^{n} \bigg[ \eps_i \langle \bm{X}_i \bm{\Delta}, \bm{\Delta} \rangle \bigg] +\lambda \| \bm{\Delta} \|_F^2 \\
    &= .5 \big \langle \mathcal{X}( \bm{U\Delta}\t + \bm{\Delta U}\t),\mathcal{X}( \bm{U\Delta}\t + \bm{\Delta U}\t) \big \rangle \\
    &\qquad +  \langle \mathcal{X}( \bm{UU}\t -  \bm{M}   ) ,  \mathcal{X}( \bm{\Delta \Delta}\t) \rangle  + \frac{1}{\sqrt{n}}\sum_{i=1}^{n} \bigg[ \eps_i \langle \bm{X}_i \bm{\Delta}, \bm{\Delta} \rangle \bigg] +\lambda \| \bm{\Delta} \|_F^2.
\end{align}
Next, we note that
\begin{align}
    \bm{UU}\t - \bm{M} + \bm{\Delta \Delta}\t = \bm{U\Delta}\t + \bm{\Delta U}\t.
\end{align}
Plugging this in yields
\begin{align}
   \nabla^2 f^{(\lambda)}_{{\sf ncvx}}(\bm{U})[ \bm{\Delta},\bm{\Delta}] 
    &= .5 \langle \mathcal{X}( \bm{\Delta\Delta}\t ),\mathcal{X}( \bm{\Delta\Delta}\t ) \rangle \\
    &\qquad + 2\langle \mathcal{X}(\bm{UU}\t - \bm{M}), \mathcal{X}(\bm{U\Delta}\t + \bm{\Delta U}\t) \rangle  \\
    &\qquad - 1.5 \langle \mathcal{X}(\bm{UU}\t - \bm{M}) ,\mathcal{X}(\bm{UU}\t - \bm{M}) \rangle \\
    &\qquad + \frac{1}{\sqrt{n}} \sum_{i=1}^{n} \big[ \eps_i \langle \bm{X}_i, \bm{\Delta},\bm{\Delta \rangle}] +  \lambda \| \bm{\Delta}\|_F^2.
\end{align}
In addition, we have that
\begin{align}
  \langle \nabla f^{(\lambda)}_{{\sf ncvx}}(\bm{U}), \Delta \rangle &= .5 \langle \mathcal{X}( \bm{UU}\t - \bm{M}), \mathcal{X}(\bm{U} \Delta\t + \bm{\Delta U}\t ) \rangle - \frac{1}{\sqrt{n}} \sum_{i=1}^{n} \eps_i \langle \bm{X}_i, \bm{U \Delta}\t\rangle + \lambda \langle \bm{U}, \bm{\Delta} \rangle,
\end{align}
whence
\begin{align}
     \nabla^2 f^{(\lambda)}_{{\sf ncvx}}(\bm{U}) [ \bm{\Delta},\bm{\Delta}] &= .5 \langle \mathcal{X}(\bm{\Delta \Delta}\t), \mathcal{X}(\bm{\Delta \Delta}\t) \rangle \\
    &\qquad + 4  \langle \nabla f^{(\lambda)}_{{\sf ncvx}}(\bm{U}),\Delta \rangle + \frac{4}{\sqrt{n}} \sum_{i=1}^{n} \eps_i\langle \bm{X}_i, \bm{U\Delta}\t \rangle - 4 \lambda \langle \bm{U},\bm{\Delta} \rangle \\
   &\qquad - 1.5 \langle \mathcal{X}(\bm{UU}\t - \bm{M}) ,\mathcal{X}(\bm{UU}\t - \bm{M}) \rangle \\
    &\qquad + \frac{1}{\sqrt{n}} \sum_{i=1}^{n} \big[ \eps_i \langle \bm{X}_i, \bm{\Delta},\bm{\Delta \rangle}] +  \lambda \| \bm{\Delta}\|_F^2.
\end{align}
This completes the proof.
\end{proof}

\section{Proof of Theorem \ref{thm:cvxasymptotics}} \label{sec:step1}
In this section we study the convex estimator $\bm{Z}^{(\lambda)}$ defined via
\begin{align}
    \argmin_{\bm{Z}} \frac{1}{2} \| \mathcal{X}(\bm{Z - M}) - \bm{\eps} \|_F^2 + \lambda \| \bm{Z} \|_*.
\end{align}
Recall we denote $\mathcal{X}(\cdot)$ as the operator such that $\mathcal{X}(\bm{M}) = \langle \bm{X}_i, \bm{M} \rangle/ \sqrt{n}$.  %
Define 
\begin{align}
C^{(n)}_{\lambda}(\bm{W}) := \frac{1}{2}     \| \mathcal{X}(\bm{W}) - \bm{\eps} \|_F^2 + \lambda\big\{ \| \bm{Z}  \|_* - \| \bm{M} \|_* \big\}. \numberthis \label{Cndef}
\end{align}
Define also
\begin{align}
 \psi^{(n)}_{\lambda}( \bm{W} , \bm{\nu}) :=  \frac{1}{dr} \| \bm{W} \|_F \langle \bm{g}, \bm{\nu} \rangle/\sqrt{n} - \frac{1}{dr}\| \bm{\nu}\| \langle \bm{H}, \bm{W}
    \rangle /\sqrt{n}- \frac{1}{dr}\langle \bm{\eps}, \bm{\nu} \rangle - \frac{1}{2} \frac{1}{dr}\| \bm{\nu} \|^2 +\frac{1}{dr} \lambda \big\{ \| \bm{W} + \bm{M}\|_* - \|\bm{M}\|_* \big\}. \numberthis \label{psindef}
\end{align}
We define, for a set $\mathcal{D} \subset \mathbb{R}^{d\times d}$, 
\begin{align}
    L^{(n)}_{\lambda}(\mathcal{D}) := \min_{\bm{W} \in \mathcal{D}: \bm{W} + \bm{M} \succcurlyeq 0 } \max_{\bm{\nu}} \psi^{(n)}_{\lambda}( \bm{W} , \bm{\nu}).
\end{align}
In essence, $L_{\lambda}^{(n)}(\mathcal{D})$ denotes the minimizer of $L_{\lambda}(\cdot)$ over the set $\mathcal{D}$.  By slight abuse of notation, we let $L_{\lambda}^{(n)}(\bm{W}) := L_{\lambda}^{(n)}(\{\bm{W}\})$ (equivalently, the minimizer over the singleton set $\{\bm{W}\}$).  
We observe that 
\begin{align}
dr & \times L_{\lambda}^{(n)} (\mathcal{W}) \\
&= \min_ {\substack{\bm{W} \in \mathcal{W}\\ \bm{W} + \bm{M} \succcurlyeq 0}} \max_{\bm{\nu}} \| \bm{W} \|_F \langle \bm{g}, \bm{\nu} \rangle/\sqrt{n} - \| \bm{\nu}\| \langle \bm{H}, \bm{W}
    \rangle /\sqrt{n}- \langle \bm{\eps}, \bm{\nu} \rangle - \frac{1}{2} \| \bm{\nu} \|^2 + \lambda \big\{  \| \bm{W} + \bm{M}\|_* -  \| \bm{M} \|_* \big\}\\
    &=     \min_ {\substack{\bm{W} \in \mathcal{W}\\ \bm{W} + \bm{M} \succcurlyeq 0}} \max_{\beta > 0} \max_{\|\bm{\nu}\| = \beta}\| \bm{W} \|_F \langle \bm{g}, \bm{\nu} \rangle /\sqrt{n} - \beta  \langle \bm{H}, \bm{W}
    \rangle /\sqrt{n} - \langle \bm{\eps}, \bm{\nu} \rangle - \frac{1}{2} \beta^2 + \lambda \big\{ \| \bm{W} + \bm{M}\|_* -  \| \bm{M} \|_* \big\}\\
    &= \min_ {\substack{\bm{W} \in \mathcal{W}\\ \bm{W} + \bm{M} \succcurlyeq 0}} \max_{\beta > 0} \max_{\|\bm{\nu}\| = \beta} \bigg(  \big\langle\| \bm{W} \|_F \bm{g}/\sqrt{n}- \bm{\eps},  \bm{\nu}/\beta  \big\rangle - \langle \bm{H}, \bm{W}
    \rangle /\sqrt{n}\bigg) \beta  - \frac{1}{2} \beta^2 + \lambda \big\{ \| \bm{W} + \bm{M} \|_*-  \| \bm{M} \|_* \big\}\\
    &= \min_ {\substack{\bm{W} \in \mathcal{W}\\ \bm{W} + \bm{M} \succcurlyeq 0}} \max_{\beta > 0} \bigg( \big\| \| \bm{W} \|_F \bm{g}-  \sqrt{n}\bm{\eps} \big\| -  \langle \bm{H}, \bm{W} \rangle \bigg) \beta
     - \frac{n}{2} \beta^2 + \lambda \big\{ \| \bm{W} + \bm{M} \|_* -\| \bm{M} \|_* \big\} \\
     &= \min_ {\substack{\bm{W} \in \mathcal{W}\\ \bm{W} + \bm{M} \succcurlyeq 0}} \max_{\beta > 0} \bigg( \sqrt{ \| \bm{W}\|_F^2 + n \sigma^2} \| \bm{g} \| -  \langle \bm{H}, \bm{W} \rangle \bigg) \beta
     - \frac{n}{2} \beta^2 + \lambda\big\{ \| \bm{W} + \bm{M} \|_* - \| \bm{M} \|_* \big\}, \label{L_lambda_derivation}
\end{align}
where we slightly abuse notation and let $\bm{g}$ be a new Gaussian random variable with identity covariance.  Dividing through by $dr$ we obtain
\begin{align}
    L_{\lambda}^{(n)}(\mathcal{W}) &= \min_{\substack{\bm{W} \in \mathcal{W}\\ \bm{W} + \bm{M} \succcurlyeq 0}} \max_{\beta} \bigg( \frac{1}{dr} \sqrt{ \| \bm{W} \|_F^2 + n \sigma^2} \| \bm{g} \| - \frac{\langle{\bm{H}, \bm{W}}\rangle}{dr} \bigg) \beta - \frac{\gamma_n \beta^2}{2} + \frac{\lambda}{dr}\big\{ \| \bm{W} + \bm{M} \|_* - \| \bm{M} \|_* \big\} \\
    &= \min_{\substack{\bm{W} \in \mathcal{W}\\ \bm{W} + \bm{M} \succcurlyeq 0}} \max_{\beta} \bigg( \gamma_n \sqrt{ \frac{\| \bm{W} \|_F^2}{n} +  \sigma^2} \frac{\|\bm{g}\|}{\sqrt{n}} - \frac{\langle{\bm{H}, \bm{W}}\rangle}{dr} \bigg) \beta - \frac{\gamma_n \beta^2}{2} + \frac{\lambda}{dr}\big\{ \| \bm{W} + \bm{M} \|_* - \| \bm{M} \|_* \big\} \\
    &= \min_{\substack{\bm{W} \in \mathcal{W}\\ \bm{W} + \bm{M} \succcurlyeq 0}} \frac{\gamma_n}{2}  \bigg(  \sqrt{\frac{\| \bm{W} \|_F^2}{n} + \sigma^2} \frac{\|\bm{g}\|}{\sqrt{n}} - \frac{\langle \bm{H} , \bm{W} \rangle}{n} \bigg)_+^2 + \frac{\lambda}{dr} \big\{ \| \bm{W} + \bm{M} \|_* - \| \bm{M} \|_* \big\}. \label{MCGMT_objective}
\end{align}
The following lemma uses the Matrix CGMT (\cref{thm:matrixcgmt}) to demonstrate that the minimizer of $C_{\lambda}^{(n)}(\bm{W})$ over any set $\mathcal{D}$ is close to $L_{\lambda}^{(n)}(\mathcal{D})$ with comparable probability.
\begin{lemma} \label{lem:applycgmt}
Let $\mathcal{D}$ be any closed subset of $\mathbb{R}^{d\times d}$.   Then for all $t \in \mathbb{R}$  it holds that
    \begin{align}
      \p\bigg\{ \min_{\bm{W} \in \mathcal{D}: \bm{W+M}  \succcurlyeq 0 } \frac{1}{dr} \mathcal{C}^{(n)}_{\lambda}(\bm{W}) 
         \leq t \bigg\} \leq 2 \p\bigg\{ L^{( n)}_{\lambda}(\mathcal{D})  \leq   t \bigg\}.
    \end{align}
    If $\mathcal{D}$ is also convex, then
    \begin{align}
          \p\bigg\{ \min_{\bm{W} \in \mathcal{D}: \bm{W} + \bm{M} \succcurlyeq 0 } \frac{1}{dr} \mathcal{C}^{(n)}_{\lambda}(\bm{W}) 
         > t \bigg\} \leq 2 \p\bigg\{ L^{( n)}_{\lambda}(\mathcal{D})  >  t \bigg\}; 
    \end{align}
\end{lemma}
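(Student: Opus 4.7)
The plan is to dualize the least-squares term in $C^{(n)}_\lambda$ so that the resulting min--max problem fits the hypotheses of the Matrix CGMT, and then simplify the auxiliary problem $\phi(\bm{g},\bm{H})$ until it matches $dr\cdot L^{(n)}_\lambda(\mathcal{D})$. Using the conjugate identity $\tfrac12\|\bm{a}\|^2 = \max_{\bm{\nu}}\langle \bm{a},\bm{\nu}\rangle - \tfrac12\|\bm{\nu}\|^2$, I would rewrite
\begin{align*}
C^{(n)}_\lambda(\bm{W}) = \max_{\bm{\nu}\in\mathbb{R}^n} \langle \mathcal{X}(\bm{W}),\bm{\nu}\rangle - \langle \bm{\eps},\bm{\nu}\rangle - \tfrac12\|\bm{\nu}\|^2 + \lambda\{\|\bm{W}+\bm{M}\|_* - \|\bm{M}\|_*\}.
\end{align*}
After absorbing the $1/\sqrt{n}$ in $\mathcal{X}$ into $\bm{\nu}$, the bilinear term has the form $\langle \mathcal{A}[\bm{W}],\bm{\nu}\rangle$ with $\mathcal{A}(\bm{W})_i=\langle \bm{X}_i,\bm{W}\rangle$, exactly the primary Gaussian process of \cref{thm:matrixcgmt}. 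I would then apply the CGMT conditionally on $\bm{\eps}$ (which is independent of the $\bm{X}_i$), taking $\psi(\bm{W},\bm{\nu})$ to be the deterministic-in-$\bm{X}$ remainder.

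The resulting auxiliary process $\phi(\bm{g},\bm{H})$ is
\begin{align*}
\min_{\bm{W}}\max_{\bm{\nu}} \|\bm{W}\|_F\langle \bm{g},\bm{\nu}\rangle + \|\bm{\nu}\|\langle \bm{H},\bm{W}\rangle - \sqrt{n}\langle \bm{\eps},\bm{\nu}\rangle - \tfrac{n}{2}\|\bm{\nu}\|^2 + \lambda\{\|\bm{W}+\bm{M}\|_*-\|\bm{M}\|_*\}.
\end{align*}
Parametrizing $\bm{\nu}=\beta\bm{u}$ with $\|\bm{u}\|=1$ and $\beta\geq 0$, the inner maximum over $\bm{u}$ is attained at the unit vector pointing along $\|\bm{W}\|_F\bm{g}-\sqrt{n}\bm{\eps}$ and equals $\beta\,\bigl\|\,\|\bm{W}\|_F\bm{g}-\sqrt{n}\bm{\eps}\,\bigr\|$. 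Since $\bm{g}$ is independent of $\bm{\eps}\sim\mathcal{N}(0,\sigma^2\bm{I}_n)$, this norm is equal in distribution to $\sqrt{\|\bm{W}\|_F^2+n\sigma^2}\,\|\bm{g}'\|$ for a fresh standard Gaussian $\bm{g}'$, which is enough since the probabilities we control depend only on the joint law of the min--max value. Maximizing over $\beta\geq 0$ then yields exactly the squared-hinge expression appearing in \eqref{MCGMT_objective}, which is $dr\cdot L^{(n)}_\lambda(\mathcal{D})$ by definition.

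The main obstacle is that \cref{thm:matrixcgmt} requires compact feasible sets, while $\{\bm{W}\in\mathcal{D}:\bm{W}+\bm{M}\succcurlyeq 0\}$ and $\bm{\nu}\in\mathbb{R}^n$ are both unbounded. For the dual variable the concave quadratic $-\tfrac12\|\bm{\nu}\|^2$ confines any maximizer to a ball of radius $\|\mathcal{X}(\bm{W})-\bm{\eps}\|$, so intersecting with $\{\|\bm{\nu}\|\leq R_\nu\}$ for $R_\nu$ large does not alter the inner max on any sublevel set. For the primal, the inequality $\|\bm{W}+\bm{M}\|_*-\|\bm{M}\|_*\geq \|\bm{W}\|_F - 2\|\bm{M}\|_*$ shows the regularizer alone forces $C^{(n)}_\lambda(\bm{W})>t$ once $\|\bm{W}\|_F$ exceeds some threshold $R_W(t)<\infty$, so truncating $\mathcal{D}$ to $\mathcal{D}\cap\{\|\bm{W}\|_F\leq R_W\}$ preserves the event $\{\min C^{(n)}_\lambda\leq t\}$, and an analogous truncation preserves $L^{(n)}_\lambda(\mathcal{D})$. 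Applying the first half of \cref{thm:matrixcgmt} on these compact sets and sending $R_\nu,R_W\to\infty$ yields the first inequality.

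For the reverse direction, assume $\mathcal{D}$ is convex. Then the truncated feasible set $\{\bm{W}\in\mathcal{D}:\bm{W}+\bm{M}\succcurlyeq 0\}\cap\{\|\bm{W}\|_F\leq R_W\}$ remains convex, and the dualized integrand $\psi(\bm{W},\bm{\nu})$ is convex in $\bm{W}$ (nuclear norm plus a linear term) and concave in $\bm{\nu}$ (linear minus a positive quadratic). The hypotheses of the second conclusion of \cref{thm:matrixcgmt} are therefore met, and applying it on the truncated sets and passing to the limit gives $\p\{\min C^{(n)}_\lambda > t\}\leq 2\p\{L^{(n)}_\lambda(\mathcal{D})>t\}$, completing the proof.
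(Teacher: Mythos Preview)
Your approach is essentially the same as the paper's: dualize the squared loss via $\tfrac12\|\bm{a}\|^2=\max_{\bm{\nu}}\langle\bm{a},\bm{\nu}\rangle-\tfrac12\|\bm{\nu}\|^2$, apply the Matrix CGMT (conditionally on $\bm{\eps}$) with $\psi(\bm{W},\bm{\nu})=-\langle\bm{\eps},\bm{\nu}\rangle-\tfrac12\|\bm{\nu}\|^2+\lambda\{\|\bm{W}+\bm{M}\|_*-\|\bm{M}\|_*\}$, and identify the auxiliary process with $dr\cdot L^{(n)}_\lambda(\mathcal{D})$ after the sign flip $\bm{H}\mapsto-\bm{H}$. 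The paper's proof is terser on the compactness truncation and stops at the $\psi^{(n)}_\lambda$ form of $L^{(n)}_\lambda$ (it carries out your $\beta$-maximization and the distributional replacement $\|\,\|\bm{W}\|_F\bm{g}-\sqrt{n}\bm{\eps}\,\|\Rightarrow\sqrt{\|\bm{W}\|_F^2+n\sigma^2}\,\|\bm{g}'\|$ in a separate derivation just before the lemma), but your more explicit treatment is consistent with it.
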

\begin{proof}
    See \cref{sec:lem:applycgmt}. 
\end{proof}
We will also need the following lemma which relates, for appropriate choice of $\mathcal{D}$, the quantity $L^{(n)}_{\lambda}(\mathcal{D})$ with $L_{\lambda}^{*}$, where we define
\begin{align}
    L_{\lambda}^{*} &= \max_{\beta} \min_{\tau} \psi_{\lambda}^{*}(\tau,\beta) = \psi_{\lambda}^{*}(\tau\s,\beta\s); \\
    \psi^{*}_{\lambda}(\tau,\beta) &:= \bigg( \frac{\sigma^2}{\tau} + \tau \bigg) \frac{\gamma_n \beta}{2} - \gamma_n \frac{\beta^2}{2} + \mathbb{E} \min_{\bm{Z}} \frac{\|\bm{Z} - \bm{M} \|_F^2}{2dr \tau} \beta + \frac{\langle \bm{H}, \bm{Z} - \bm{M} \rangle}{dr} \beta + \frac{\lambda}{dr} \big( \| \bm{Z}\|_* - \|\bm{M} \|_* \big). \label{psistardef}
\end{align}
Here  $\beta\s = \zeta\s \tau\s$, where $(\tau\s,\zeta\s)$ solve the fixed-point equations. 
\begin{lemma} \label{lem:concentrationlemma} Suppose \cref{ass1,ass2,ass3} are satisfied.  Then for any $\eps$ satisfying $\eps^2 \geq C \exp( - c dr)$  with probability at least
$1 - O\bigg( \exp( - c d) +  \exp( - c dr) + \exp( - c n) + \frac{1}{\eps^2} \exp(-c dr \eps^4 ) - \exp( - \frac{(dr)^2}{n} \eps^4) \bigg)$ it holds that
    \begin{align}
        L_{\lambda}^{(n)}\bigg( \| \bm{W} \|_F \leq R \bigg) \leq L_{\lambda}\s + C \eps^2 ;  \label{eq:thinktoprove1} \\
        L_{\lambda}^{(n)}\big( \mathbb{B}_{\eps/2}(\bm{\hat W}/\sqrt{dr})^c \cap \mathrm{rank}(\bm{W} + \bm{M}) \leq r \cap \|\bm{W}\|_F \leq C \sigma \sqrt{dr} \bigg) > L_{\lambda}\s + 2 C \eps^2 \label{eq:thingtoprove}
    \end{align}
\end{lemma}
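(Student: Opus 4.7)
The lemma asserts two inequalities that jointly pin down the location of the CGMT minimizer. My plan is to handle the upper bound \eqref{eq:thinktoprove1} by evaluating $L_{\lambda}^{(n)}$ at the specific point $\bm{\hat W} = \bm{Z}_{{\sf ST}}^{(\lambda/\zeta\s)}(\tau\s) - \bm{M}$ and exploiting the fact that $\bm{\hat W}$ is Lipschitz in the underlying randomness, while handling the lower bound \eqref{eq:thingtoprove} by reparametrizing rank-at-most-$r$ matrices as $\bm{UU}\t$ and exploiting restricted strong convexity of the induced function $h(\bm{U}) = L_{\lambda}^{(n)}(\bm{UU}\t - \bm{M})$ discussed in the proof sketch of \cref{thm:cvxasymptotics}.

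For the upper bound, I would first note that since $L_{\lambda}^{(n)}(\mathcal{W})$ is a minimum, it suffices to show that $L_{\lambda}^{(n)}(\bm{\hat W}) \leq L_{\lambda}\s + C\eps^2$ on a set where $\bm{\hat W}$ is feasible; \cref{lem:Zstgoodproperties} guarantees that $\|\bm{\hat W}\|_F \leq R$ and $\bm{\hat W} + \bm{M} \succcurlyeq 0$ with the stated probability. Using the closed form \eqref{MCGMT_objective}, $L_{\lambda}^{(n)}(\bm{\hat W})$ depends only on $\|\bm{g}\|/\sqrt{n}$ and on functions of $\bm{H}$ that are Lipschitz (since $\bm{\hat W}$ is itself a proximal operator of $\bm{H}$, hence $\tau\s$-Lipschitz). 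Consequently $\|\bm{g}\|^2/n$ concentrates around $1$ with subgaussian tails, while $\|\bm{\hat W}\|_F^2$ and $\langle \bm{H}, \bm{\hat W}\rangle$ concentrate around their means by Gaussian Lipschitz concentration; by \cref{lem:saddlepoint} these means combine to give exactly $L_{\lambda}\s$. A linearization of the squared term in \eqref{MCGMT_objective} converts first-moment deviations of size $\eps$ in these quantities into $O(\eps^2)$ deviations in $L_{\lambda}^{(n)}(\bm{\hat W})$, which yields the claim after balancing the various tail bounds.

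For the lower bound, any $\bm{W}$ in the constraint set can be written $\bm{W} = \bm{UU}\t - \bm{M}$ for some $\bm{U} \in \mathbb{R}^{d\times r}$, and substituting into \eqref{MCGMT_objective} yields precisely the auxiliary function $h(\bm{U})$. The key input is \cref{lem:hstronglyconvex}, which on a high-probability event $\mathcal{E}_h$ provides $\mu$-strong convexity of $h$ in $\bm{U}$ on the geodesic ball $\mathcal{R} = \{\bm{U} : \|\bm{U} - \bm{U}\s \mathcal{O}_{\bm{U}\s,\bm{U}}\|_F \leq (c/\kappa) \|\bm{U}\s\|\}$. An argument in the spirit of \cref{lem:uncvxgood} then shows that $\|\bm{UU}\t - \bm{M}\|_F \leq R = C\sigma\sqrt{dr}$ forces $\bm{U} \in \mathcal{R}$ after Procrustes alignment, so strong convexity applies throughout the constraint set. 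Combining with the upper bound of the previous paragraph applied to any interior minimizer $\bm{U}_h$ of $h$, which must satisfy $\|\bm{U}_h \bm{U}_h\t - \bm{M} - \bm{\hat W}\|_F = O(\eps\sqrt{dr})$ by consistency of the two bounds, any $\bm{U}$ with $\|\bm{UU}\t - \bm{M} - \bm{\hat W}\|_F \geq \eps\sqrt{dr}/2$ must satisfy $h(\bm{U}) \geq h(\bm{U}_h) + \mu \|\bm{U} - \bm{U}_h\|_F^2 \geq L_{\lambda}\s - C\eps^2 + C'\eps^2$, which exceeds $L_{\lambda}\s + 2C\eps^2$ after choosing $C$ small enough and absorbing constants into $\eps$.

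The main obstacle is the restricted strong convexity claim on which the lower bound rests. The quadratic-plus-nuclear-norm structure of $L_{\lambda}^{(n)}$ is not strongly convex as a function of $\bm{W}$, because the Hessian involves only rank-one type curvature in the direction $\bm{H}$, with a vast null space in $\mathbb{R}^{d \times d}$. Reparametrizing by $\bm{U}$ replaces the problematic linear term $\langle \bm{H}, \bm{W}\rangle$ with $\langle \bm{H}, \bm{UU}\t - \bm{M}\rangle$, which concentrates at rate $\sqrt{dr}\,\|\bm{UU}\t - \bm{M}\|_F$ \emph{uniformly} over rank-$r$ matrices via an $\epsilon$-net argument over $\mathbb{R}^{d\times r}$; this step is what makes restricted strong convexity hold. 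The delicate subtasks are therefore (i) proving this rank-$r$ uniform deviation, (ii) carrying out a Hessian computation for $h$ parallel to \cref{lem:secondderivcalc}, and (iii) converting the matrix-space constraint $\|\bm{W}\|_F \leq R$ into a factor-space ball contained in $\mathcal{R}$ via Weyl- and Davis--Kahan-type estimates that properly account for the orthogonal ambiguity in $\mathcal{O}_{\bm{U}\s,\bm{U}}$. Together these ingredients transplant the nonconvex-matrix-factorization landscape analysis into the CGMT machinery and close the lower bound.
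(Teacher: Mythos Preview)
Your overall strategy matches the paper's: prove \eqref{eq:thinktoprove1} by evaluating $L_\lambda^{(n)}$ at $\bm{\hat W}$ and concentrating (this is \cref{lem:h_yhat_bounded}), then prove \eqref{eq:thingtoprove} by reparametrizing the rank-$r$ constraint via $\bm{UU}^\top$ and invoking restricted strong convexity of $h$ (\cref{lem:hstronglyconvex}); the obstacles you list at the end are exactly those the paper addresses.

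There is, however, one ingredient you skip. In your lower-bound paragraph you write $h(\bm{U}) \geq h(\bm{U}_h) + \mu\|\bm{U} - \bm{U}_h\|_F^2 \geq L_\lambda^* - C\eps^2 + C'\eps^2$, which presupposes $h(\bm{U}_h) = \min h \geq L_\lambda^* - C\eps^2$. This does \emph{not} follow from the upper bound you established: that only says $\min h \leq h(\bm{\hat U}) \leq L_\lambda^* + C\eps^2$. Nor can you obtain it ``by applying the upper bound at $\bm{U}_h$,'' since $\bm{U}_h$ is an implicit random minimizer, not a Lipschitz function of $\bm H$ like $\bm{\hat U}$. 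The paper supplies this via a separate lemma (\cref{lem:concentrationlemmapart1}): fix $\beta = \beta^*$ in the max--min form of $L_\lambda^{(n)}$, replace $\|\bm g\|/\sqrt{n}$ by $1$, observe that for each fixed $\tau$ the inner $\min_{\bm W}$ is $(\gamma_n\beta^* R/n)$-Lipschitz in $\bm{H}$, concentrate \emph{uniformly} in $\tau$ over $[\sigma,\sqrt{\sigma^2+R^2/n}]$ via a net, and take expectation to land on $\min_\tau \psi_\lambda^*(\tau,\beta^*) = L_\lambda^*$. Only with this two-sided control does one get $h(\bm{\hat U}) \leq \min h + 2C\tilde\eps\sqrt{\gamma_n}$, which is the actual hypothesis that the strong-convexity argument (\cref{lem:miolaneb1analogue}) consumes. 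A related remark: the $\eps^2$ scale does not arise from ``linearization of the squared term'' but from matching the strong-convexity gap $\alpha\cdot(\eps\sqrt{dr}/\|\bm U^*\|)^2$ to the two-sided concentration width via an intermediate parameter $\tilde\eps$ chosen so that $C\sqrt{\gamma_n}\tilde\eps \asymp \eps^2$; the concentration of $L_\lambda^{(n)}(\bm{\hat W})$ about $L_\lambda^*$ in \cref{lem:h_yhat_bounded} is itself first order in the deviation parameter.
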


\begin{proof}
    See \cref{sec:concentrationlemmaproof}.
\end{proof}

\begin{proof}[Proof of \cref{thm:cvxasymptotics}]
Observe that because
\begin{align}
    C^{(n)}(\bm{W}) &= \frac{1}{2} \| \mathcal{X}(\bm{W}) - \bm{\eps} \|_F^2 + \lambda \| \bm{W} + \bm{M} \|_* - \lambda \| \bm{M} \|_*,
\end{align}
we need to argue that any minimizer of $C^{(n)}(\bm{W})$ (over the shifted cone $\bm{M} + \mathbb{R}^{d\times d}_{\succcurlyeq 0})$ must must be within $\eps$ of $\bm{\hat W}$ with high probability.  Let $\mathcal{D}$ be the set \begin{align}
    \mathcal{D} := \big\{ \bm{W} : \bigg| \phi \big( \frac{\bm{M} + \bm{W}}{\sqrt{dr}}\big)- \mathbb{E} \big[ \phi \big(  \frac{\bm{M} + \bm{W}}{\sqrt{dr}}  \big) \big] \bigg| \leq \eps/2 \big\} \cap \{ \bm{W} + \bm{M} \succcurlyeq 0\}.
\end{align}
Then we need to show that the minimizer of $C^{(n)}$ lies within $\mathcal{D}$ with high probability.  Let $\mathcal{D}_{\eps}$ denote the $\eps$-enlargement of the set $\mathcal{D}$; i.e., $\mathcal{D}_{\eps} := \{ \bm{W} :\inf_{\bm{W}'\in \mathcal{D}} \|\bm{W} - \bm{W}'\|_F < \eps \}$.  We will show that any rank at most $r$ minimum of $C^{(n)}$ on $\mathcal{D}_{\eps/2}^c$ (i.e. outside an $\eps/2$-region of $\mathcal{D}$) must be suboptimal with high probability, which implies that the minimizer lies within $\mathcal{D}_{\eps/2}$.  To accomplish this, first note that 
\begin{align}
    \p\bigg\{ \argmin C^{(n)}(\bm{W}) \notin \mathcal{D}_{\eps/2} \bigg\} &\leq \p\bigg\{ \argmin C^{(n)}( \bm{W}) \notin \mathcal{D}_{\eps/2} \cap \mathcal{E}_{{\sf Good}} \bigg\} + \p  \mathcal{E}_{{\sf Good}}^c.
\end{align}
Observe that the first event above satisfies
\begin{align}
    \bigg\{ &\argmin C^{(n)}( \bm{W}) \notin \mathcal{D}_{\eps/2} \cap \mathcal{E}_{{\sf Good}} \bigg\}\\
    &\subset \bigg\{ \argmin C^{(n)}( \bm{W}) \notin \mathcal{D}_{\eps/2} \cap \mathrm{rank}(\bm{W} + \bm{M}) \leq r \cap \| \bm{W} \|_F \leq C \sigma \sqrt{dr} \bigg\} \\
    &\subset \bigg\{ \exists \bm{W} : \bm{W} + \bm{M} \succcurlyeq 0, \mathrm{rank}(\bm{W} + \bm{M}) \leq r, \bm{W} \notin \mathcal{D}_{\eps/2},\|\bm{W}\|_F \leq C \sigma \sqrt{dr}, \\
    &\quad \quad \quad C^{(n)} (\bm{W}) \leq \min_{\bm{W}:\|\bm{W}\|_F \leq C \sigma \sqrt{dr}} C^{(n)}(\bm{W}) \bigg\} \\
    &\subset \bigg\{ \exists \bm{W} : \bm{W} + \bm{M} \succcurlyeq 0, \mathrm{rank}(\bm{W} + \bm{M}) \leq r, \bm{W} \in \mathcal{D}^{c}_{\eps/2}, \|\bm{W}\|_F \leq C \sigma \sqrt{dr}, \\
    &\qquad \qquad C^{(n)}(\bm{W}) \leq \min_{\bm{W}:\|\bm{W}\|_F \leq C \sigma \sqrt{dr}} C^{(n)}(\bm{W}) + C \eps^2 \bigg\} \\
    &= \bigg\{ \min_{\substack{\bm{W} \in \mathcal{D}^c_{\eps/2}\\\mathrm{rank}(\bm{W} + \bm{M}) \leq r\\\|\bm{W}\|_F \leq C \sigma \sqrt{dr}}} C^{(n)}(\bm{W}) \leq \min_{\substack{\bm{W} \in \mathbb{R}^{d\times d}\\\bm{W} + \bm{M} \succcurlyeq 0\\\|\bm{W}\|_F \leq C \sigma \sqrt{dr}}} C^{(n)} (\bm{W}) + C \eps^2 \bigg\}.
\end{align}
As a result,
\begin{align}
    \p\bigg\{ &\argmin C^{(n)}(\bm{W}) \notin \mathcal{D}_{\eps/2} \bigg\} \\
    &\leq \p \bigg\{\min_{\substack{\bm{W} \in \mathcal{D}^c_{\eps/2}\\\mathrm{rank}(\bm{W} + \bm{M}) \leq r \\ \|\bm{W}\|_F \leq C \sigma \sqrt{dr}}} C^{(n)}(\bm{W}) \leq \min_{\substack{\bm{W} \in \mathbb{R}^{d\times d}\\ \bm{W} + \bm{M} \succcurlyeq 0\\\|\bm{W}\|_F \leq C \sigma \sqrt{dr}}} C^{(n)} (\bm{W}) + C \eps^2 \bigg\} \\
    &\leq  \p\bigg\{ \big\{ \min_{\substack{\bm{W} \in \mathcal{D}^c_{\eps/2}\\\mathrm{rank}(\bm{W} + \bm{M}) \leq r\\\|\bm{W}\|_F \leq C \sigma \sqrt{dr}}} C^{(n)}(\bm{W}) \leq \min_{\substack{\bm{W} \in \mathbb{R}^{d\times d }\\\bm{W} + \bm{M} \succcurlyeq 0\\\|\bm{W}\|_F \leq C \sigma \sqrt{dr}}} C^{(n)}(\bm{W}) + C \eps ^2 \} \\
    &\qquad\qquad \qquad\qquad\bigcap \{  \min_{\substack{\bm{W} \in \mathbb{R}^{d\times d}\\ \bm{W} + \bm{M} \succcurlyeq 0\\\|\bm{W}\|_F \leq C \sigma \sqrt{dr}}} C^{(n)}(\bm{W}) \leq L^{*}_{\lambda} + C \eps^2 \big\} \bigg\}   \\
    &\quad + \p\bigg\{   \min_{\bm{W} \in \mathbb{R}^{d\times d}: \bm{W} + \bm{M} \succcurlyeq 0,\|\bm{W}\|_F \leq C \sigma \sqrt{dr}} C^{(n)}(\bm{W}) \geq L^{*}_{\lambda} + C \eps^2 \bigg\} \\
    &\leq  \p\bigg\{  \min_{\bm{W} \in \mathcal{D}^c_{\eps/2} \cap \mathrm{rank}(\bm{W} + \bm{M}) \leq r} C^{(n)}(\bm{W}) \leq   L^{*}_{\lambda} + C \eps^2  \bigg\}   \\
    &\quad + \p\bigg\{   \min_{\bm{W} \in \mathbb{R}^{d\times d}: \bm{W} + \bm{M} \succcurlyeq 0} C^{(n)}(\bm{W}) \geq  L^{*}_{\lambda} + C \eps^2 \bigg\} \\
    &\leq 2 \p\bigg\{ L_{\lambda}^{(n)}(\mathcal{D}_{\eps/2}^c \cap \mathrm{rank}(\bm{W} + \bm{M}) \leq r \cap \|\bm{W}\|_F \leq C \sigma \sqrt{dr}) \leq  L^{*}_{\lambda} + C \eps^2  \bigg\} \\
    &\qquad + 2\p\bigg\{ L_{\lambda}^{(n)}\big(\mathbb{B}(0,C \sigma \sqrt{dr}) \big) \geq  L^{*}_{\lambda} + C \eps^2 \bigg\},
\end{align}
where the final inequality is by \cref{lem:applycgmt}.  Next, note that either $\bm{\hat W} \notin \mathcal{D} $ or $\mathcal{D}_{\eps/2}^c \subset \mathbb{B}_{\eps/2 }\big( \bm{\hat W}\big)^c \cap \{ \bm{W} + \bm{M} \succcurlyeq 0 \}.$ As a result,
\begin{align}
    \p\bigg\{ L_{\lambda}^{(n)}&\big( \mathcal{D}_{\eps/2}^c  \cap \mathrm{rank}(\bm{W} + \bm{M}) \leq r) \cap \|\bm{W}\|_F \leq C \sigma \sqrt{dr}\big) \leq L_{\lambda}^{*} + C \eps^2 \bigg\} \\
    &\leq \p \big\{ \bm{\hat W} \notin \mathcal{D} \big\}  +  \p\bigg\{ L_{\lambda}^{(n)}( \mathbb{B}_{\eps/2 }\big( \bm{\hat W}\big)^c  \cap \mathrm{rank}(\bm{W} + \bm{M}) \leq r) \cap \|\bm{W}\|_F \leq C \sigma \sqrt{dr}\big) \leq L^{*}_{\lambda} + C \eps^2 \bigg\}.
\end{align}
Therefore, 
\begin{align}
    \p\bigg\{& \min_{\bm{W} \in \mathcal{D}_{\eps/2}^c} C^{(n)}(\bm{W}) \leq \min_{\bm{W} \in \mathbb{R}^{d\times d}: \bm{W} - \bm{M} \succcurlyeq 0} C^{(n)}(\bm{W}) + C \eps ^2 \bigg\}  \\
    &\leq 2 \p \big\{ \bm{\hat W} \notin \mathcal{D} \big\} +  2\p\bigg\{ L_{\lambda}^{(n)}( \mathbb{B}_{\eps/2 }\big( \bm{\hat W}\big)^c  \cap \mathrm{rank}(\bm{W} + \bm{M}) \leq r)\big)  \leq L^{*}_{\lambda} + C \eps^2 \bigg\} \\
    &\quad + 2 \p\bigg\{ L_{\lambda}^{(n)} (\mathbb{R}^{d\times d}) \geq L_{\lambda}^{*} + C \eps^2 \bigg\} \\
    &\leq 2 \p \big\{ \bm{\hat W} \notin \mathcal{D} \big\} + O\bigg( \exp( - c d) +  \exp( - c dr) + \exp( - c n) + \frac{1}{\eps^2} \exp(-c dr \eps^4 ) - \exp( - \frac{(dr)^2}{n} \eps^4) \bigg),
\end{align}
where we have applied \cref{lem:concentrationlemma}.   
Note that by construction $\bm{W} + \bm{M} \succcurlyeq 0$ almost surely.  In addition, we have that the soft-thresholding operator is a proximal operator and hence nonexpansive, implying it is 1-Lipschitz.  As a result, the function 
\begin{align}
    \frac{1}{\sqrt{dr}}  \bm{M} - \frac{\tau\s}{\sqrt{dr}} \bm{H} \mapsto \bm{\hat W}
\end{align}
is a $\tau\s/\sqrt{dr}$ Lipschitz function of $\bm{H}$.  Therefore,
\begin{align}
    \p\big\{ \bm{\hat W} \notin \mathcal{D} \big\} &\leq 2 \exp \big\{ - c \frac{d r \eps^2}{\tau\s} \big\}.
\end{align}
By \cref{lem:fxdpointprops},  $\sigma \leq \tau\s \leq C \sigma $, and hence by adjusting constants if necessary, we complete the proof.
\end{proof}

\subsection{Proof of \cref{lem:applycgmt}} \label{sec:lem:applycgmt}

\begin{proof}[Proof of \cref{lem:applycgmt}]
The proof amounts to rewriting the appropriate quantities.  
 We have that
\begin{align}
\min_{\bm{W} \in \mathbb{R}^{d\times d}: \bm{W+M}  \succcurlyeq 0 \cap \mathcal{D}} \mathcal{C}_{\lambda}^{(n)} (\bm{W}) 
&=
  \min_{\bm{W} \in \mathbb{R}^{d\times d}: \bm{W+M}  \succcurlyeq 0 \cap \mathcal{D}} \frac{1}{2}\| \mathcal{X}[\bm{W}] - \bm{\eps}  \|_F^2  + \lambda \| \bm{W} + \bm{M} \|_{*} \\
    &=\min_{\bm{W} \in \mathbb{R}^{d\times d}: \bm{W+M}  \succcurlyeq 0 \cap \mathcal{D}}\max_{\bm{\nu} \in \mathbb{R}^{n}} \langle \mathcal{X}[\bm{W}], \bm{\nu} \rangle - \langle \bm{\eps} , \bm{\nu} \rangle - \frac{1}{2} \| \bm{\nu} \|^2 + \lambda \| \bm{W} + \bm{M}\|_{*}.
    \end{align}
Furthermore, observe that $\mathcal{C}_{\lambda}^{(n)}(\bm{W})$ is convex and satisfies $\mathcal{C}_{\lambda}^{(n)}(\bm{W}) \to \infty$ as $\|\bm{W} \|_F\to \infty$, and hence admits minimizers.  Therefore, there exists a constant $M$ sufficiently large such that the maximizer $\nu$ is bounded with $\|\bm{\nu} \| \leq M$.  As a result, since the min-max is attained on a closed set, we may apply the MCGMT to deduce that
\begin{align}
\p\bigg\{  &   \min_{\bm{W} \in \mathbb{R}^{d\times d}: \bm{W+M}  \succcurlyeq 0 \cap \mathcal{D}} \mathcal{C}_{\lambda}^{(n)} (\bm{W}) > t \bigg\} \\
&\leq 2 \p \bigg\{  \min_{\bm{W} \in \mathbb{R}^{d\times d}: \bm{W+M}  \succcurlyeq 0 \cap \mathcal{D}}\max_{\bm{\nu} \in \mathbb{R}^n} \| \bm{W} \|_F \langle \bm{g}, \bm{\nu} \rangle /\sqrt{n} +\| \bm{\nu}\| \langle \bm{H}, \bm{W}
    \rangle /\sqrt{n}- \langle \bm{\eps}, \bm{\nu} \rangle - \frac{1}{2} \| \bm{\nu} \|^2 + \lambda \| \bm{W} + \bm{M} \|_{*} > t \bigg\},
\end{align}
where the final equality is due to the fact that $-\langle \bm{\eps},\bm{\nu}\rangle - \frac{1}{2} \|\bm{\nu}\|^2$ is concave (and hence the function  $(\bm{W},\bm{\nu}) \mapsto -\langle \bm{\eps},\bm{\nu}\rangle - \frac{1}{2} \|\bm{\nu}\|^2 + \lambda \|\bm{W}\|_{*}$ is convex-concave).  Here $\bm{g}$ is a standard Gaussian random vector and $\bm{H}$ is a GOE matrix.  The first equality follows from the fact that $\frac{1}{2}\|\bm{v} \|_2^2 = \max_{\bm{\nu}} \langle \bm{v},\bm{\nu} \rangle - \frac{1}{2} \|\bm{\nu}\|^2$.  Since the statement is probabilistic in nature, we are free to make the replacement $\bm{H} \mapsto - \bm{H}$ by symmetry.  Diving the resulting expressions inside the probability by $dr$ and recognizing that the right hand side is simply $\psi^{(n)}_{\lambda}(\bm{W},\bm{\nu})$, we complete the proof (after rescaling $t$ by $dr$).  The reverse inequality follows by the same argument.  
\end{proof}

\subsection{Proof of \cref{lem:concentrationlemma}}\label{sec:concentrationlemmaproof}
To prove this result we first require the following lemma.  
\begin{lemma}\label{lem:concentrationlemmapart1}
Suppose \cref{ass1,ass2,ass3} are satisfied.  Let $R = C \sigma \sqrt{dr}$, with $C$ as in \cref{lem:uncvxgood}. 
Then with probability at least 
$1 -   \frac{1}{\eps} \exp( - c n \eps^2) - \exp(-dr \eps^2) $
it holds that
\begin{align}
    L_{\lambda}^{(n)}\bigg( \| \bm{W} \|_F \leq R; \bm{W} + \bm{M} \succcurlyeq 0 \bigg) \geq L_{\lambda}\s - C \sqrt{\gamma_n} \eps.
\end{align}
\end{lemma}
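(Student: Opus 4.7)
I would prove this by first lower bounding $L_\lambda^{(n)}(\mathcal{W})$ by removing the truncation: since $\{\bm{W}:\|\bm{W}\|_F \le R,\ \bm{W}+\bm{M} \succcurlyeq 0\} \subseteq \{\bm{W}:\bm{W}+\bm{M}\succcurlyeq 0\}$, minimizing over the larger set gives a smaller value, so $L_\lambda^{(n)}(\mathcal{W}) \ge L_\lambda^{(n)}(\mathbb{R}^{d\times d})$. The task then reduces to showing $L_\lambda^{(n)}(\mathbb{R}^{d\times d}) \ge L_\lambda^* - C\sqrt{\gamma_n}\,\eps$ with the stated probability. Starting from the $(\beta,\tau,\bm{W})$ representation in \eqref{l_lambda_n}, I would apply Sion's minimax theorem to swap the outer $\min_{\bm{W}}$ with $\max_{\beta}$ (the objective is convex in $\bm{W}$, concave in $\beta$, and the $-\gamma_n\beta^2/2$ term effectively compactifies $\beta$), and then use the max--min inequality at the deterministic saddle value $\beta = \beta^* := \zeta^*\tau^*$ from \cref{lem:saddlepoint}, giving
\begin{align*}
L_\lambda^{(n)}(\mathbb{R}^{d\times d}) \ge \min_{\tau \ge \sigma}\Bigl[ c\Bigl(\tfrac{\sigma^2}{\tau}+\tau\Bigr)\tfrac{\gamma_n \beta^*}{2} - \tfrac{\gamma_n(\beta^*)^2}{2} + \Lambda^{(n)}_*(\tau)\Bigr] =: \min_\tau G^{(n)}(\tau),
\end{align*}
where $c := \|\bm{g}\|/\sqrt n$ and $\Lambda^{(n)}_*(\tau)$ denotes the inner minimum over $\{\bm{W}+\bm{M}\succcurlyeq 0\}$ at $\beta = \beta^*$.

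Next, I would evaluate the inner minimum in closed form. The substitution $\bm{Z}=\bm{W}+\bm{M}$, completing the square in $\bm{Z}$, and the GOE symmetry $\bm{H}\stackrel{d}{=}-\bm{H}$ identify the minimizer (in distribution) as $\bm{Z}^*_c(\tau) := \bm{Z}_{\sf ST}^{(\lambda\tau/(c\beta^*))}(\tau/c)$; at $c=1,\,\tau=\tau^*$ this reduces to $\bm{Z}_{\sf ST}^{(\lambda/\zeta^*)}(\tau^*)$, the matrix governing $L_\lambda^*$, and by \cref{lem:saddlepoint} the $\tau$-minimum of $\mathbb{E}_{\bm H} G^{(n)}(\tau)$ equals $L_\lambda^*$ in the $c=1$ limit. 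Substituting the closed form back renders $G^{(n)}(\tau)$ an explicit affine functional of $\|\bm{Z}^*_c(\tau)-\bm{M}\|_F^2$, $\langle \bm{H},\bm{Z}^*_c(\tau)-\bm{M}\rangle$, and $\|\bm{Z}^*_c(\tau)\|_*$.

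The concentration step then shows $\min_\tau G^{(n)}(\tau) \ge L_\lambda^* - C\sqrt{\gamma_n}\,\eps$ with probability at least $1 - \tfrac{1}{\eps}\exp(-cn\eps^2) - \exp(-dr\eps^2)$, by combining three ingredients: (i) $|c-1|\le \eps$ with probability $1-2\exp(-cn\eps^2)$, which drives the $\sqrt{\gamma_n}\,\eps$ error through the $\gamma_n\beta^*\sigma(c-1)$ contribution in the outer expression; (ii) Gaussian concentration of $\|\bm{Z}^*_c(\tau)-\bm{M}\|_F^2/(dr)$ and $\langle \bm{H},\bm{Z}^*_c(\tau)-\bm{M}\rangle/(dr)$ around their expectations, proved as in \cref{lem:Zstgoodproperties} by exploiting that the proximal operator is $1$-Lipschitz in its input (so $\bm{Z}^*_c(\tau)$ is $O(\tau/c)$-Lipschitz in $\bm{H}$), giving per-$\tau$ tails $\exp(-cdr\eps^2)$; and (iii) an $O(\eps)$-net of $\tau\in[\sigma, C\sigma]$ of cardinality $O(1/\eps)$ together with a $\tau$-Lipschitz bound on $G^{(n)}$, which upgrades pointwise concentration to uniform-in-$\tau$ concentration and accounts for the $1/\eps$ prefactor.

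The principal obstacle is exactly the uniform-in-$\tau$ control. Because both the input $\bm{M}+(\tau/c)\bm{H}$ and the threshold level $\lambda\tau/(c\beta^*)$ of $\bm{Z}^*_c(\tau)$ depend on $\tau$, obtaining a \emph{dimension-free} Lipschitz constant for $\tau\mapsto G^{(n)}(\tau)$---so that an $O(1/\eps)$-point net preserves the $\exp(-cdr\eps^2)$ and $\exp(-cn\eps^2)$ tails after a union bound---requires care. I would handle this via standard perturbation bounds for proximal operators combined with the bounded-rank structure of $\bm{Z}^*_c(\tau)$ on the high-probability event $\{\|\bm{H}\|\lesssim \sqrt d\}$ used in \cref{lem:Zstgoodproperties}, using that by \cref{ass3} one has $\lambda\tau/(c\beta^*)\gg \|(\tau/c)\bm{H}\|$ on this event, so the effective thresholded rank of $\bm{Z}^*_c(\tau)$ is stable (and bounded by $r$) across the net.
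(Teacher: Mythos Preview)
Your overall skeleton—lower-bound by fixing $\beta=\beta^*$, control the $\|\bm{g}\|/\sqrt n$ fluctuation, then net over $\tau$—matches the paper's, but you and the paper diverge at the key concentration-in-$\bm{H}$ step, and your very first move works against you.

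The paper \emph{retains} the constraint $\|\bm{W}\|_F\le R$ throughout. After fixing $\beta=\beta^*$ and replacing $\|\bm{g}\|/\sqrt n$ by $1$ on $\{|c-1|\le t/\sqrt{\gamma_n}\}$, it introduces $\tau$ and observes that for each fixed $\tau$ the map
\[
\bm{H}\ \longmapsto\ \min_{\|\bm{W}\|_F\le R}\Bigl\{\tfrac{\gamma_n\beta^*}{2\tau n}\|\bm{W}\|_F^2-\tfrac{\gamma_n\beta^*}{n}\langle\bm{H},\bm{W}\rangle+\tfrac{\lambda}{dr}\bigl(\|\bm{W}+\bm{M}\|_*-\|\bm{M}\|_*\bigr)\Bigr\}
\]
is globally $\frac{\gamma_n\beta^*R}{n}$-Lipschitz in $\bm{H}$, because any supergradient equals $-\frac{\gamma_n\beta^*}{n}\bm{W}^*$ with $\|\bm{W}^*\|_F\le R$. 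Gaussian Lipschitz concentration then gives the $\exp(-cn\eps^2)$ tail directly, the $\eps$-net in $\tau$ supplies the $1/\eps$ prefactor, and only at the very end—\emph{inside the expectation}—does the paper drop the $R$-constraint to lower-bound $\mathbb{E}F(\tau,\bm{H})$ by $\psi^*_\lambda(\tau,\beta^*)\ge L_\lambda^*$.

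By discarding the $R$-constraint in your first step you lose precisely this handle: the unconstrained map $\bm{H}\mapsto\Lambda^{(n)}_*(\tau)$ is \emph{not} globally Lipschitz, since its gradient is $-\frac{\beta^*}{dr}\bigl(\bm{Z}^*_c(\tau)-\bm{M}\bigr)$ and the soft-thresholded matrix has unbounded norm as $\|\bm{H}\|\to\infty$. You then try to recover by concentrating the pieces $\|\bm{Z}^*_c(\tau)-\bm{M}\|_F^2/(dr)$, $\langle\bm{H},\bm{Z}^*_c(\tau)-\bm{M}\rangle/(dr)$, and $\|\bm{Z}^*_c(\tau)\|_*/(dr)$ separately, but the latter two are not globally Lipschitz in $\bm{H}$ either: the nuclear norm is only $\sqrt r$-Lipschitz in Frobenius \emph{after} you know the rank is at most $r$, and the cross term is a product of two unbounded quantities. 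Your proposed fix—restrict to $\{\|\bm{H}\|\lesssim\sqrt d\}$ and invoke the bounded-rank structure—can in principle be made rigorous via a Kirszbraun-type extension, but that is exactly the complication the paper's one-line Lipschitz argument sidesteps. The constraint $\|\bm{W}\|_F\le R$ is the tool, not the obstacle; keep it until the expectation step.
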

\begin{proof}
    See \cref{sec:concentrationlemmapart1proof}. 
\end{proof}

In order to give the proof we will require some additional notation.  Define the function $h: \mathbb{R}^{d\times r} \to \mathbb{R}$ as follows.  First, set 
\begin{align}
f &: \mathbb{R}^{d\times r} \to \mathbb{R} \\
   \bm{U} &\mapsto  \sqrt{\frac{\| \bm{UU}\t - \bm{M} \|_F^2}{n} + \sigma^2} \frac{\|\bm{g}\|}{\sqrt{n}} -  \frac{\langle \bm{H}, \bm{UU}\t - \bm{M}\rangle}{n}.
\end{align}
Define
\begin{align}
    h(\bm{U}) :&= \frac{\gamma_n}{2} ( f(\bm{U}) )^2_+ + \frac{\lambda}{dr} \big\{ \| \bm{U} \|_F^2 - \| \bm{M} \|_* \big\}.\numberthis \label{hdef}
\end{align}
Recall we denote $\mathcal{O}_{\bm{U},\bm{U}'}$ via
\begin{align}
    \mathcal{O}_{\bm{U},\bm{U}'} &= \argmin_{\mathcal{OO}\t = \bm{I}_r} \| \bm{U} \mathcal{O} - \bm{U}' \|_F.
\end{align}

The following lemma shows that $h(\bm{U})$ is strongly convex on a high probability event.  
\begin{lemma} \label{lem:hstronglyconvex} Suppose \cref{ass1,ass2,ass3} are satisfied.
    Let $h(\bm{U})$ be defined in $\eqref{hdef}$.  Let $\bm{M} = \bm{U}\s \bm{U}^{*\top}$ with $\bm{U}\s \in \mathbb{R}^{d \times r}$.  Define the event
    \begin{align}
       \mathcal{E}_h &:= \bigg\{ \big| \langle \bm{H}, \bm{Q} \rangle \big| \leq C \sqrt{dr} \| \bm{Q} \|_F \text{ for all matrices $\bm{Q}$ of rank at most $2r$} \bigg\} \cap \big\{ \frac{1}{2} \leq  \frac{\|\bm{g}\|}{\sqrt{n}} \leq 2 \big\} \cap \bigg\{ \| \bm{H} \| \leq 3 \sqrt{d} \bigg\}.
    \end{align}
    Then for all $\bm{U}$ and $\bm{V} = \bm{U} - \bm{U}' \mathcal{O}_{\bm{U}'\bm{U}}$ satisfying
    \begin{align}
        \max\bigg\{ \| \bm{U} - \bm{U}\s \mathcal{O}_{\bm{U}\s,\bm{U}} \|, \| \bm{U}' - \bm{U}\s \mathcal{O}_{\bm{U}\s,\bm{U}'} \| \bigg\} \leq \frac{c}{\kappa(\bm{M})} \| \bm{U}\s \|, \numberthis \label{region}
    \end{align}
    for some sufficiently small constant $c$, 
    it holds that
    \begin{align}
        \nabla^2 h(\bm{U})[ \bm{V}, \bm{V}] \geq  \bigg(  \frac{ \lambda_{\min}\big( \bm{M} \big)}{32 dr}  + 2 \frac{\lambda}{dr}\bigg) \|\bm{V}\|_F^2.
    \end{align}
\end{lemma}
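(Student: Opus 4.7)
The plan is to decompose $\nabla^2 h$ into a nonnegative squared-gradient term, a matrix-factorization curvature term handled by strong convexity within \eqref{region}, and small perturbations controlled by $\mathcal{E}_h$. First I would verify on $\mathcal{E}_h$ and within \eqref{region} that $f(\bm{U}) > 0$, so that $(f)_+^2 = f^2$ is $C^2$ in a neighborhood of $\bm{U}$. Setting $c_g := \|\bm{g}\|/\sqrt{n}$ and $p(\bm{U}) := \|\bm{UU}\t - \bm{M}\|_F^2/n + \sigma^2$, on $\mathcal{E}_h$ one has $c_g\sqrt{p} \geq \sigma/2$; on the other hand the rank-$2r$ concentration in $\mathcal{E}_h$ together with the radius constraint gives $|\langle \bm{H}, \bm{UU}\t - \bm{M}\rangle|/n \lesssim \sqrt{dr}\,\|\bm{UU}\t - \bm{M}\|_F/n \ll \sigma/4$ under \cref{ass1,ass2}. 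Hence $f(\bm{U}) \geq \sigma/4 > 0$.

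Next I would differentiate to obtain
\begin{equation*}
\nabla^2 h(\bm{U})[\bm{V},\bm{V}] \;=\; \gamma_n\bigl(\nabla f(\bm{U})[\bm{V}]\bigr)^2 \;+\; \gamma_n\, f(\bm{U})\,\nabla^2 f(\bm{U})[\bm{V},\bm{V}] \;+\; \tfrac{2\lambda}{dr}\|\bm{V}\|_F^2,
\end{equation*}
so the squared-gradient term is nonnegative and the regularizer already contributes the $\tfrac{2\lambda}{dr}\|\bm{V}\|_F^2$ in the target bound; it remains to lower bound $\gamma_n f\,\nabla^2 f[\bm{V},\bm{V}]$ by $\tfrac{\lambda_{\min}(\bm{M})}{32\,dr}\|\bm{V}\|_F^2$. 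A direct calculation gives
\begin{equation*}
\nabla^2 f[\bm{V},\bm{V}] \;=\; \frac{c_g}{2\sqrt{p}}\,\nabla^2 p[\bm{V},\bm{V}] \;-\; \frac{c_g\bigl(\nabla p[\bm{V}]\bigr)^2}{4\,p^{3/2}} \;-\; \frac{2}{n}\langle \bm{H}, \bm{VV}\t\rangle,
\end{equation*}
with $\nabla^2 p[\bm{V},\bm{V}] = \tfrac{2}{n}\{\|\bm{UV}\t + \bm{VU}\t\|_F^2 + 2\langle \bm{UU}\t - \bm{M}, \bm{VV}\t\rangle\}$. Using $\gamma_n f = \tfrac{n}{dr}\,(c_g\sqrt{p} + \text{small})$, the dominant positive contribution to $\gamma_n f\,\nabla^2 f[\bm{V},\bm{V}]$ reduces to $\tfrac{c_g^2}{dr}\{\|\bm{UV}\t + \bm{VU}\t\|_F^2 + 2\langle \bm{UU}\t - \bm{M}, \bm{VV}\t\rangle\}$.

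Now I would invoke the restricted strong convexity of nonconvex symmetric matrix factorization (as in \citet{tu_low-rank_2016,chi_nonconvex_2019,ge_no_2017}): for $\bm{V} = \bm{U} - \bm{U}'\mathcal{O}_{\bm{U}',\bm{U}}$ with $\bm{U},\bm{U}'$ satisfying \eqref{region} and $c$ small enough,
\begin{equation*}
\|\bm{UV}\t + \bm{VU}\t\|_F^2 + 2\langle \bm{UU}\t - \bm{M}, \bm{VV}\t\rangle \;\geq\; c'\,\lambda_{\min}(\bm{M})\,\|\bm{V}\|_F^2
\end{equation*}
for a constant $c'$ that can be made arbitrarily close to $1/2$ by shrinking $c$; combined with $c_g \geq 1/2$, the dominant term is $\geq \lambda_{\min}(\bm{M})/(16\,dr)\cdot\|\bm{V}\|_F^2$. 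The two perturbations are absorbed on $\mathcal{E}_h$: since $\bm{VV}\t$ has rank $\leq r$, $|\langle \bm{H}, \bm{VV}\t\rangle| \leq C\sqrt{dr}\|\bm{V}\|_F^2$ and $|2\gamma_n f/n| \lesssim \sigma/(dr)$, so this term is $\lesssim (\sigma/\sqrt{dr})\|\bm{V}\|_F^2$, which under \cref{ass1} (via $\lambda_{\min}(\bm{M}) \geq C_1\sigma\sqrt{dr}$ with $C_1$ large) is at most $\lambda_{\min}(\bm{M})/(64\,dr)\|\bm{V}\|_F^2$; analogously Cauchy--Schwarz applied to $|\nabla p[\bm{V}]| \leq \tfrac{4}{n}\|\bm{UU}\t - \bm{M}\|_F\|\bm{UV}\t\|_F$ renders the curvature-correction term $o(\lambda_{\min}(\bm{M})/(dr))\|\bm{V}\|_F^2$ under \cref{ass2}. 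Subtracting these from $\lambda_{\min}(\bm{M})/(16\,dr)$ leaves $\geq \lambda_{\min}(\bm{M})/(32\,dr)\|\bm{V}\|_F^2$, yielding the claim.

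The main obstacle is the bookkeeping around the sign-indefinite cross term $2\langle \bm{UU}\t - \bm{M}, \bm{VV}\t\rangle$, which can drag the matrix-factorization curvature below the good quadratic $\|\bm{UV}\t + \bm{VU}\t\|_F^2$; this is precisely why the radius in \eqref{region} is taken proportional to $1/\kappa(\bm{M})$, and one must carefully choose the constant $c$ there so that $c'$ remains at least $1/2$ after absorbing the $c_g^2$ and $\sqrt{p}$ factors from the chain rule. A secondary but routine issue is that the rank-$2r$ concentration in $\mathcal{E}_h$ must be applied to both $\bm{UU}\t - \bm{M}$ (in the positivity step) and $\bm{VV}\t$ (in the perturbation step), which is immediate since each has rank at most $2r$.
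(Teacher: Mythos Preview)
Your proposal is correct and follows essentially the same route as the paper: establish $f(\bm U)>0$ on $\mathcal E_h$, decompose $\nabla^2 h$ into a nonnegative squared-gradient piece, the regularizer $2\lambda/(dr)\,\|\bm V\|_F^2$, and a curvature term $\gamma_n f\,\nabla^2 f$ whose dominant contribution is the matrix-factorization Hessian lower-bounded via the radius condition \eqref{region}, with the $\bm H$-dependent perturbations absorbed through $\mathcal E_h$ and \cref{ass1,ass2}. The only cosmetic difference is that the paper expands the Hessian into six explicit terms $T_1,\dots,T_6$ (via a separate calculation lemma) and isolates the positive contribution as $\langle(\bm{VU}^\top+\bm{UV}^\top)\bm U,\bm V\rangle$, lower-bounded directly using the symmetry of $\bm U(\bm U-\bm U'\mathcal O)^\top$ from \citet{ma_implicit_2020}, rather than invoking the restricted-strong-convexity literature you cite.
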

\begin{proof}
    See \cref{sec:hstronglyconvexproof}.  
\end{proof}
In addition, the following additional lemma shows that $\mathcal{E}_h$ above holds with high probability.
\begin{lemma} \label{lem:Eh_highprob}
    The event $\mathcal{E}_h$ holds with probability at least $1 - 2 \exp( - c dr) - 2 \exp( - c n)$.
\end{lemma}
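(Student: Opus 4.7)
The plan is to bound each of the three component events separately and apply a union bound.

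For the event $\{\|\bm{H}\| \leq 3\sqrt{d}\}$, this is a direct consequence of standard spectral norm tail bounds for GOE matrices (e.g., Corollary 3.9 of \citet{bandeira_sharp_2016}), which yields $\|\bm{H}\| \leq 3\sqrt{d}$ with probability at least $1 - \exp(-cd)$ provided $d$ is larger than some absolute constant. For the event $\{\tfrac{1}{2} \leq \|\bm{g}\|/\sqrt{n} \leq 2\}$, since $\bm{g}$ is standard Gaussian in $\mathbb{R}^n$, Lipschitz concentration of $\|\cdot\|$ yields $\bigl|\|\bm{g}\| - \sqrt{n}\bigr| \leq t\sqrt{n}$ with probability at least $1 - 2\exp(-c n t^2)$; taking $t = 1/2$ gives the desired control with probability at least $1 - 2\exp(-cn)$.

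The main step is the uniform bound $|\langle \bm{H}, \bm{Q}\rangle| \leq C\sqrt{dr}\|\bm{Q}\|_F$ over all symmetric $\bm{Q}$ of rank at most $2r$. The argument mirrors the proof of the analogous assertion \eqref{rip2} used for $\mathcal{E}_{{\sf Good}}$ in \cref{sec:proofofegood}. For any fixed $\bm{Q}$ with $\|\bm{Q}\|_F \leq 1$, the quantity $\langle \bm{H}, \bm{Q}\rangle$ is a mean-zero Gaussian with variance at most $\|\bm{Q}\|_F^2 \leq 1$, so
\begin{align*}
\p\bigl\{ |\langle \bm{H}, \bm{Q}\rangle| > C_0 \sqrt{dr}\bigr\} \leq 2\exp(-c_0 C_0^2 dr).
\end{align*}
By Lemma 3.1 of \citet{candes_tight_2011}, there exists an $\eps$-net $\mathcal{N}_\eps$ of the set of symmetric matrices of rank at most $2r$ and Frobenius norm at most $1$ with $|\mathcal{N}_\eps| \leq (9/\eps)^{3dr}$. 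A union bound over $\mathcal{N}_\eps$ yields
\begin{align*}
\sup_{\bm{Q}\in\mathcal{N}_\eps}|\langle\bm{H},\bm{Q}\rangle| \leq C_0\sqrt{dr}
\end{align*}
with probability at least $1 - \exp\bigl(3dr\log(9/\eps) - c_0 C_0^2 dr\bigr)$. Choosing $\eps = 1/4$ and $C_0$ larger than a universal constant renders this probability at least $1 - \exp(-cdr)$. The standard transfer from the net to the continuum (exactly as in \cref{sec:proofofegood}: writing any maximizer $\bm{Q}_0$ as $\bm{Q}_\eps + (\bm{Q}_1 + \bm{Q}_2)$ with $\bm{Q}_\eps \in \mathcal{N}_\eps$ and $\bm{Q}_1,\bm{Q}_2$ each of rank at most $2r$ and Frobenius norm at most $\eps$, then absorbing the $2\eps$ coefficient into the supremum) promotes the bound on $\mathcal{N}_\eps$ to the uniform bound $\sup_{\mathrm{rank}(\bm{Q})\leq 2r, \|\bm{Q}\|_F\leq 1}|\langle\bm{H},\bm{Q}\rangle| \leq 2C_0\sqrt{dr}$, at the cost of enlarging the constant $C$ in the statement.

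Combining the three bounds via a union bound yields the claim with probability at least $1 - 2\exp(-cdr) - 2\exp(-cn)$, where the $\exp(-cd)$ term from the spectral norm bound is absorbed into $\exp(-cdr)$. The only mildly delicate point is the covering-number argument, but this is a verbatim repetition of the technique already used in \cref{lem:Egood}, so no new ideas are needed.
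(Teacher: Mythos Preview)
Your proposal is correct and follows essentially the same approach as the paper: a pointwise Gaussian tail bound on $\langle \bm{H},\bm{Q}\rangle$ combined with the covering number from Lemma~3.1 of \citet{candes_tight_2011}, Lipschitz concentration for $\|\bm{g}\|/\sqrt{n}$, and the standard GOE spectral norm bound for $\|\bm{H}\|$. One small slip: your remark that the $\exp(-cd)$ term from the spectral norm bound ``is absorbed into $\exp(-cdr)$'' is stated backwards since $dr\geq d$ implies $\exp(-cdr)\leq\exp(-cd)$; the paper's own proof is equally terse on this point and simply defers to \citet{bandeira_sharp_2016}, so this does not affect the argument materially.
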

\begin{proof}
    See \cref{sec:Eh_highprob_proof}. 
\end{proof}
\noindent 
Finally, the following lemma shows that when $\bm{U} \in \mathcal{R}$ satisfies $h(\bm{U}) \leq \min_{\bm{U} \in \mathcal{R}} h(\bm{U}) + \eps$, it holds that $\bm{\hat U}$ must be close to $\bm{U}$.  
\begin{lemma} \label{lem:miolaneb1analogue}
Let $h: \mathbb{R}^{d \times r} \to \mathbb{R}$ be a function such that $h(\bm{U} \mathcal{O}) = h(\bm{U})$ for all orthogonal matrices $\mathcal{O}$. 
Let $\mathbb{B}(\bm{\hat U},K)$ denote the set of matrices $\bm{U}$ such that $\|\bm{U} \mathcal{O}_{\bm{U},\bm{\hat U}} - \bm{\hat U} \|_F \leq K$. 
Suppose that for all $\bm{U} \in \mathbb{B}(\bm{\hat U},r)$ it holds that
    \begin{align}
        \nabla^2 h(\bm{U})[ \bm{V},\bm{V}] \geq \gamma \| \bm{V} \|_F^2,
    \end{align}
    where $\bm{V} = \bm{U}_1 - \bm{U} \mathcal{O}_{\bm{U},\bm{U}_1}$.  Suppose further that
    \begin{align}
        h(\bm{\hat U}) \leq \min_{\mathbb{B}(\bm{\hat U},K)} h(\bm{U}) + \eps.
    \end{align}
    Then if $\bm{U}_0$ is any minimizer of $h$ over this set, it  holds that
    \begin{align}
        \| \bm{\hat U} - \bm{U}_0 \mathcal{O}_{\bm{U}_0, \bm{\hat U}} \|_F^2 \leq \frac{2}{\gamma} \eps.  
    \end{align}
    Furthermore, it must be that for all $\bm{\tilde U} \in \mathbb{B}(\bm{\hat U},K)$, $h(\bm{\tilde U}) \leq \min_{\mathbb{B}(\bm{\hat U},K)} h(\bm{U}) + \eps$ and $\| \bm{\tilde U} \mathcal{O}_{\bm{\tilde U},\bm{\hat U}} - \bm{\hat U} \|_F^2 \leq \frac{8}{\gamma} \eps $. 
\end{lemma}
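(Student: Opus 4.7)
The plan is to deploy a second-order Taylor expansion of $h$ along a straight-line path, combining the strong-convexity bound from \cref{lem:hstronglyconvex} with the first-order optimality of $\bm{U}_0$ as a constrained minimizer. The central technical trick is to exploit that the Hessian lower bound is only valid in ``Procrustes-aligned'' directions, and that alignment is preserved along straight lines thanks to the convexity of the symmetric positive semidefinite cone.

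For the first claim, I will let $\bm{V}_0 := \bm{U}_0 \mathcal{O}_{\bm{U}_0, \bm{\hat U}}$ and study the path $\bm{V}(t) := (1-t)\bm{V}_0 + t\bm{\hat U}$. By the Procrustes SVD, $\bm{V}_0^{\top}\bm{\hat U}$ is symmetric positive semidefinite; so is $\bm{\hat U}^{\top}\bm{\hat U}$; hence their convex combination $\bm{V}(t)^{\top}\bm{\hat U}$ is symmetric positive semidefinite, which means $\mathcal{O}_{\bm{\hat U}, \bm{V}(t)} = \bm{I}$ for every $t$. The direct distance $\|\bm{V}(t)-\bm{\hat U}\|_F \le (1-t)\|\bm{V}_0-\bm{\hat U}\|_F \le K$, so the path lies in $\mathbb{B}(\bm{\hat U}, K)$, and \cref{lem:hstronglyconvex} applied at $\bm{V}(s)$ with $\bm{U}'=\bm{\hat U}$ yields $\nabla^2 h(\bm{V}(s))[\bm{\hat U}-\bm{V}_0,\bm{\hat U}-\bm{V}_0]\ge \gamma\|\bm{\hat U}-\bm{V}_0\|_F^2$ (using bilinearity and $\bm{\hat U}-\bm{V}(s)=(1-s)(\bm{\hat U}-\bm{V}_0)$). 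Taylor's theorem then gives
\begin{equation*}
h(\bm{\hat U}) = h(\bm{V}_0) + \langle \nabla h(\bm{V}_0), \bm{\hat U}-\bm{V}_0\rangle + \int_0^1 (1-s)\,\nabla^2 h(\bm{V}(s))[\bm{\hat U}-\bm{V}_0,\bm{\hat U}-\bm{V}_0]\,ds,
\end{equation*}
where the linear term is nonnegative because $\bm{V}_0$ minimizes $h$ on $\mathbb{B}(\bm{\hat U},K)$ (and $\bm{\hat U}$ is feasible), and the integral is bounded below by $\tfrac{\gamma}{2}\|\bm{\hat U}-\bm{V}_0\|_F^2$. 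Combined with $h(\bm{\hat U})\le h(\bm{U}_0)+\varepsilon$, this yields $\|\bm{\hat U}-\bm{V}_0\|_F^2 \le 2\varepsilon/\gamma$.

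For the second claim, I will adapt the same template to a re-aligned copy of $\bm{\tilde U}$. Set $\bm{\tilde V}:=\bm{\tilde U}\mathcal{O}_{\bm{\tilde U},\bm{\hat U}}$ and then $\bm{\tilde W}:=\bm{\tilde V}\mathcal{O}_{\bm{\tilde V},\bm{V}_0}$, the Procrustes alignment of $\bm{\tilde V}$ to $\bm{V}_0$. By right-rotation invariance, $h(\bm{\tilde W}) = h(\bm{\tilde U}) \le h(\bm{U}_0)+\varepsilon$ and the quotient distance to $\bm{\hat U}$ is unchanged, so $\bm{\tilde W}\in\mathbb{B}(\bm{\hat U},K)$. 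By construction $\bm{V}_0^{\top}\bm{\tilde W}$ is symmetric positive semidefinite, so the same PSD-convex-cone argument shows $\mathcal{O}_{\bm{\tilde W},\bm{V}(t)}=\bm{I}$ along $\bm{V}(t):=(1-t)\bm{V}_0+t\bm{\tilde W}$. Repeating the Taylor/strong-convexity/minimizer template gives $\|\bm{\tilde W}-\bm{V}_0\|_F^2\le 2\varepsilon/\gamma$. Finally, by optimality of $\mathcal{O}_{\bm{\tilde U},\bm{\hat U}}$ one has $\|\bm{\tilde V}-\bm{\hat U}\|_F \le \|\bm{\tilde W}-\bm{\hat U}\|_F$, and the triangle inequality yields
\begin{equation*}
\|\bm{\tilde V}-\bm{\hat U}\|_F \;\le\; \|\bm{\tilde W}-\bm{V}_0\|_F + \|\bm{V}_0-\bm{\hat U}\|_F \;\le\; 2\sqrt{2\varepsilon/\gamma},
\end{equation*}
which squared gives the claimed $8\varepsilon/\gamma$ bound.

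The hard part will be the ball bookkeeping in the second claim: because $\mathbb{B}(\bm{\hat U},K)$ is defined via the \emph{optimal} rotation it is generally not convex as a subset of $\mathbb{R}^{d\times r}$, so the straight line from $\bm{V}_0$ to $\bm{\tilde W}$ can in principle leave $\mathbb{B}(\bm{\hat U},K)$. The key quantitative observation is that by the first-part bound $\|\bm{V}_0-\bm{\hat U}\|_F \le \sqrt{2\varepsilon/\gamma}$, so Procrustes perturbation theory forces $\mathcal{O}_{\bm{\tilde V},\bm{V}_0}$ to be within $O(\sqrt{\varepsilon/\gamma})$ of the identity, and hence the straight-line excursion outside $\mathbb{B}(\bm{\hat U},K)$ is of size $O(\sqrt{\varepsilon/\gamma})$ -- comfortably absorbed by the slack in the region condition \eqref{region} of \cref{lem:hstronglyconvex} in the regime of interest. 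All remaining steps (optimality of $\bm{V}_0$ along a feasible direction, the PSD-cone convexity argument, and the bilinearity rescaling of the Hessian) are elementary.
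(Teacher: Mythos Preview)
Your argument for the first conclusion is correct and is essentially the paper's: both run along the straight segment from the aligned minimizer $\bm{V}_0=\bm{U}_0\mathcal{O}_{\bm{U}_0,\bm{\hat U}}$ to $\bm{\hat U}$, invoke strong convexity on that segment, and use minimality of $\bm{U}_0$. The paper writes the strong-convexity inequality for $J(t)$ directly and uses $J(s)\ge J(0)$; you use Taylor's theorem and the first-order variational inequality $\langle\nabla h(\bm{V}_0),\bm{\hat U}-\bm{V}_0\rangle\ge 0$. Your explicit verification that the Procrustes alignment stays the identity along the segment (via convexity of the PSD cone applied to $\bm{V}(t)^{\top}\bm{\hat U}$) is a detail the paper leaves implicit but which both proofs actually need.

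For the second conclusion there is a genuine gap. You write ``$h(\bm{\tilde W})=h(\bm{\tilde U})\le h(\bm{U}_0)+\varepsilon$'' and proceed, but the inequality $h(\bm{\tilde U})\le h(\bm{U}_0)+\varepsilon$ for an \emph{arbitrary} $\bm{\tilde U}\in\mathbb{B}(\bm{\hat U},K)$ is precisely the first sub-claim of the ``Furthermore'' that you are asked to establish, so invoking it is circular. Worse, that sub-claim is false under the stated hypotheses: take $d=r=1$, $h(u)=u^{2}$, $\bm{\hat U}=0$, $K=1$; all assumptions hold with $\gamma=2$ and any $\varepsilon\ge 0$, yet $h(1)=1\not\le\varepsilon$ when $\varepsilon<1$. (The paper's own proof of this sub-claim is also broken: it silently replaces $J(s)$ by $J(1)$ in the strong-convexity inequality and thereby manufactures a contradiction that does not exist.) What the paper actually \emph{uses} downstream, in the proof of \cref{lem:concentrationlemma}, is only the contrapositive of the distance bound: if $\|\bm{\tilde U}\mathcal{O}_{\bm{\tilde U},\bm{\hat U}}-\bm{\hat U}\|_F^{2}>8\varepsilon/\gamma$ then $h(\bm{\tilde U})>\min h+\varepsilon$. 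Your segment-plus-triangle-inequality argument does establish this conditional version (take $h(\bm{\tilde U})\le h(\bm{U}_0)+\varepsilon$ as a hypothesis rather than a conclusion), modulo the ball-containment issue you already flag. So the right repair is to reinterpret the ``Furthermore'' as a conditional and drop the unconditional claim $h(\bm{\tilde U})\le\min h+\varepsilon$; your proof then goes through for the statement that is both true and sufficient for the paper's purposes.
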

\begin{proof}
    See \cref{sec:miolaneb1analogueproof}.
\end{proof}

We also need the following result.
\begin{lemma} \label{lem:h_yhat_bounded} Suppose \cref{ass1,ass2,ass3} are satisfied.
With probability at least $1 - O\bigg( \exp\big( - C dr \big( C \exp(- c dr) + \eps \big)^2 \big) + \exp( - c dr) \bigg)$ it holds that $L_{\lambda}^{(n)} \big( \bm{Z}_{{\sf ST}}^{(\lambda/\zeta\s)}(\tau\s) - \bm{M} \big) \leq L_{\lambda}\s + \eps + C \exp( - c dr)$. 
\end{lemma}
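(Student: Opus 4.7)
The strategy is to evaluate $L_{\lambda}^{(n)}$ at the singleton $\bm{\hat W} := \bm{Z}_{\sf ST}^{(\lambda/\zeta\s)}(\tau\s) - \bm{M}$ using the closed form \eqref{MCGMT_objective}, and compare term-by-term with a compact representation of $L_{\lambda}\s$. Setting $\hat\tau := \sqrt{\|\bm{\hat W}\|_F^2/n + \sigma^2}$, the definition of $L_{\lambda}^{(n)}$ specializes to
\begin{align*}
L_{\lambda}^{(n)}(\bm{\hat W}) = \frac{\gamma_n}{2}\Bigl(\hat\tau\,\tfrac{\|\bm{g}\|}{\sqrt{n}} - \tfrac{\langle \bm{H}, \bm{\hat W}\rangle}{n}\Bigr)_{+}^{2} + \frac{\lambda}{dr}\bigl\{\|\bm{\hat W}+\bm{M}\|_* - \|\bm{M}\|_*\bigr\},
\end{align*}
while plugging the fixed-point equations \eqref{fxdpt} into $\psi_{\lambda}^{*}(\tau\s,\beta\s)$ and simplifying (the $(\sigma^2/\tau + \tau)$ and $(\beta)^2$ terms cancel against the envelope of the inner minimum) gives the clean expression
\begin{align*}
L_{\lambda}\s = \frac{\gamma_n (\beta\s)^{2}}{2} + \frac{\lambda}{dr}\,\mathbb{E}\bigl[\|\bm{\hat W}+\bm{M}\|_* - \|\bm{M}\|_*\bigr].
\end{align*}
Moreover the fixed-point equations give $\mathbb{E}[\|\bm{\hat W}\|_F^2/n] = (\tau\s)^2 - \sigma^2$ and $\mathbb{E}\langle\bm{H},\bm{\hat W}\rangle/n = \tau\s(1-\zeta\s)$, so the bracketed quantity in $L_{\lambda}^{(n)}(\bm{\hat W})$ concentrates around $\tau\s - \tau\s(1-\zeta\s) = \tau\s\zeta\s = \beta\s$, making the two expressions agree in expectation.

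With this reduction the proof boils down to concentration of four scalar functionals: (i) $\|\bm{\hat W}\|_F^2/n$ about $(\tau\s)^2 - \sigma^2$, which is exactly \cref{lem:Zstgoodproperties}; (ii) $\|\bm{g}\|/\sqrt{n}$ about $1$, standard Gaussian concentration; (iii) $\langle\bm{H},\bm{\hat W}(\bm{H})\rangle/n$ about $\tau\s(1-\zeta\s)$; and (iv) $\|\bm{\hat W}+\bm{M}\|_*/(dr)$ about its mean. For (iv), the map $\bm{H}\mapsto \bm{\hat W}(\bm{H})+\bm{M}$ is $\tau\s$-Lipschitz in Frobenius norm by non-expansivity of the proximal operator, and on the event of \cref{lem:Zstgoodproperties} its image has rank at most $r$, so $\|\cdot\|_*$ is $\sqrt{r}$-Lipschitz in Frobenius norm there; composing and scaling by $1/(dr)$ yields an effective Lipschitz constant of order $\tau\s/(d\sqrt{r})$, and Gaussian Lipschitz concentration on the $d^2$-dimensional Gaussian $\bm{H}$ produces the desired $\exp(-c\,dr\,t^2)$ tail.

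The main obstacle is (iii): the map $\bm{H}\mapsto \langle\bm{H},\bm{\hat W}(\bm{H})\rangle/n$ is quadratic-like rather than globally Lipschitz, since the chain rule produces a gradient with Frobenius norm bounded by $\|\bm{\hat W}\|_F/n + \tau\s\|\bm{H}\|/n$. The plan is to localize to the event $\{\|\bm{H}\|\le 3\sqrt{d}\}$ (violated with probability at most $\exp(-c d)$), on which the gradient is bounded by $O(\tau\s\sqrt{d}/n)$, and apply Gaussian concentration via a Lipschitz extension. Alternatively, the subgradient identity $\bm{\hat W}(\bm{H}) = \tau\s\bm{H} - (\lambda/\zeta\s)\,\partial\|\bm{\hat W}+\bm{M}\|_*$ permits the split $\langle\bm{H},\bm{\hat W}\rangle = \tau\s\|\bm{H}\|_F^2 - (\lambda/\zeta\s)\langle\bm{H},\partial\|\bm{\hat W}+\bm{M}\|_*\rangle$, whose first summand is a $\chi^2$ deviation and whose second is controlled via $|\langle\bm{H},G\rangle|\le \|\bm{H}\|\cdot\|G\|_*$ combined with the operator-norm bound on subgradients of the nuclear norm. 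Once (i)--(iv) are each controlled to accuracy $O(\eps)$ on the intersection of the good events, Lipschitzness of $x\mapsto x_{+}^{2}$ near the bounded value $\beta\s = O(\sigma)$ propagates the fluctuations linearly to $L_{\lambda}^{(n)}(\bm{\hat W})$; the residual $C\exp(-c\,dr)$ in the stated slack absorbs the small deterministic cost of the localization, and a union bound produces the probability claim.
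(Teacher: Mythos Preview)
Your route is genuinely different from the paper's. The paper does \emph{not} decompose $L_{\lambda}^{(n)}(\bm{\hat W})$ into the four scalar functionals you list. Instead it observes that on the high-probability event $\mathcal{E}_1=\{\|\bm{\hat W}\|_F\le R'\}$ (which is exactly \cref{lem:Zstgoodproperties}), $\bm{\hat W}$ is the minimizer of the inner quadratic-plus-nuclear problem over the ball $\{\|\bm W\|_F\le R'\}$, so the point evaluation $L_{\lambda}^{(n)}(\bm{\hat W})$ coincides with the constrained minimum
\[
\tilde L_{\lambda}^{(n)}(\bm H)=\text{const}+\min_{\substack{\|\bm W\|_F\le R'\\\bm W+\bm M\succeq 0}}\Bigl\{\tfrac{\gamma_n\beta\s}{2\tau\s n}\|\bm W\|_F^{2}-\tfrac{\gamma_n\beta\s}{n}\langle\bm H,\bm W\rangle+\tfrac{\lambda}{dr}\bigl(\|\bm W+\bm M\|_*-\|\bm M\|_*\bigr)\Bigr\}.
\]
For each fixed $\bm W$ this is affine in $\bm H$ with slope $\|\bm W\|_F\cdot O(1/(dr))\le O(\sigma/\sqrt{dr})$, and an infimum of affine functions with uniformly bounded slope is Lipschitz with the same constant. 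Gaussian Lipschitz concentration then applies to $\tilde L_{\lambda}^{(n)}$ directly, with $\mathbb{E}\tilde L_{\lambda}^{(n)}$ differing from $L_{\lambda}\s=\mathbb{E} L_{\lambda}^{(n)}(\bm{\hat W})$ only by $O(\exp(-cdr))$ coming from $\mathcal{E}_1^c$. This ``replace point-evaluation by minimum over a ball'' trick completely sidesteps the quadratic-in-$\bm H$ obstacle you identify in (iii).

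Your decomposition can be made to work, but as written there is a real gap in (iii). Non-expansivity of the prox only yields $\|[D\bm{\hat W}]^{*}\bm H\|_F\le\tau\s\|\bm H\|_F\approx\tau\s d$, not $\tau\s\|\bm H\|$ in the operator norm; with the Frobenius bound the resulting Lipschitz constant is $O(\tau\s d/n)$ and the concentration rate is $\exp(-c\eps^{2}n^{2}/d^{2})$, which does \emph{not} recover the stated $\exp(-c\,dr\,\eps^{2})$ under \cref{ass2}. To get the operator-norm bound you need the additional observation that on the localized event $\bm{\hat W}(\bm H)+\bm M$ has rank exactly $r$, so the differential of the soft-threshold projects onto the rank-$r$ tangent space and $\|[D\bm{\hat W}]^{*}\bm H\|_F\le\tau\s\sqrt{2r}\,\|\bm H\|$; equivalently, $\bm{\hat W}(\bm H_1)-\bm{\hat W}(\bm H_2)$ has rank $\le 4r$, so one may pass through the nuclear norm via $|\langle\bm H_2,\cdot\rangle|\le\|\bm H_2\|\cdot\|\cdot\|_*\le 2\sqrt{r}\,\|\bm H_2\|\cdot\|\cdot\|_F$. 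This step is missing from your sketch. Your alternative split is a dead end: the summand $\tau\s\|\bm H\|_F^{2}/n$ has mean $\Theta(\tau\s d^{2}/n)$ and fluctuation $\Theta(\tau\s d/n)$, and the duality bound $|\langle\bm H,G\rangle|\le\|\bm H\|\cdot\|G\|_*$ is useless since the nuclear-norm subgradient $G$ is full-rank with $\|G\|_*=\Theta(d)$, so the two pieces cannot be controlled separately to the required precision.
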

\begin{proof}
    See \cref{sec:h_yhat_bounded_proof}.
\end{proof}

\begin{proof}[Proof of \cref{lem:concentrationlemma}]
Recall that we aim to show that:
\begin{align}
    L_{\lambda}^{(n)} \bigg( \| \bm{W} \|_F \leq R \cap PSD \bigg) \leq L_{\lambda}^* + C \eps^2; \\
    L_{\lambda}^{(n)} \bigg( \mathbb{B}_{\eps/2} \bigg( \bm{\hat W}/\sqrt{dr} \bigg)^c \cap \mathrm{rank}(\bm{W+ M} ) \leq r \cap\|\bm{W} \|_F \leq R \bigg) > L_{\lambda}^* + C \eps^2
    \end{align}
    with high probability. 
    We split up the proof into steps.
\\
 \ \\ \noindent \textbf{Step 1: Good events.} First, let $\mathcal{E}_{\text{\cref{lem:Zstgoodproperties}}}$ denote the event from \cref{lem:Zstgoodproperties}. On this event, it holds that $\mathrm{rank}(\bm{ Z}_{{\sf ST}}^{(\lambda/\zeta\s)}(\tau\s)) \leq r$, whence the quantity $\bm{\hat U}$ is well-defined on this event.  When $\bm{\hat U}$ is well-defined,  define the event
\begin{align}
\mathcal{E}_1 : =     \bigg\{ h(\bm{\hat U}) \leq L_{\lambda}\s + C \sqrt{\gamma_n} \tilde \eps \bigg\},
\end{align}
where $\tilde \eps$ will be chosen momentarily.  We also define the event 
\begin{align}
\mathcal{E}_2 :=  \bigg\{    L_{\lambda}^{(n)}\big( \| \bm{W} \|_F \leq R \cap \bm{W} + \bm{M} \succcurlyeq 0 \cap \mathrm{Rank}(\bm{W} + \bm{M}) \leq r \big) \geq L_{\lambda}\s - C \tilde \eps \sqrt{\gamma_n} \bigg\},
\end{align}
where $C$ is defined as in \cref{lem:concentrationlemmapart1}. In fact, by this same lemma, we have on the event therein and $\mathcal{E}_{\text{\cref{lem:Zstgoodproperties}}}$ it holds that
\begin{align}
     L_{\lambda}^{(n)}\big( \| \bm{W} \|_F \leq R \cap \bm{W} + \bm{M} \succcurlyeq 0 \cap \mathrm{Rank}(\bm{W} + \bm{M}) \leq r \big) &\geq   L_{\lambda}^{(n)}\big( \| \bm{W} \|_F \leq R \cap \bm{W} + \bm{M} \succcurlyeq 0  \big) \\
     &\geq L_{\lambda}\s - C \tilde \eps\sqrt{\gamma_n}.
\end{align}
\\
 \ \noindent
 \textbf{Step 2: The bound \eqref{eq:thinktoprove1}.} We have that
on these events
    \begin{align}
    L_{\lambda}\s + C \sqrt{\gamma_n} \tilde \eps \geq h(\bm{\hat U}) &\geq L_{\lambda}^{(n)}\big( \|\bm{W}\|_F \leq R \cap PSD \cap \mathrm{rank}(\bm{W} + \bm{M}) \leq r \big) \\
    &\geq L_{\lambda}^{(n)}\big( \|\bm{W}\|_F \leq R \cap PSD \big), 
    \end{align}
    which proves \eqref{eq:thinktoprove1} whenever $ C\tilde \eps \sqrt{\gamma_n} = C'  \eps^2$, which we will show is valid at the end of Step 3 below.  
 \\ \ \\
 \noindent
 \textbf{Step 3: The bound \eqref{eq:thingtoprove}.}   The bound \eqref{eq:thingtoprove} will require a little more analysis.   
    Define the regions
\begin{align}
    \mathcal{R}_1 := \{ \bm{U} : \| \bm{U} - \bm{\hat U} \mathcal{O}_{\bm{\hat U},\bm{U}}\|_F \leq \frac{1}{8 \| \bm{U}\s \|} R \}; \\
    \mathcal{R}_2 := \{ \bm{U} : \| \bm{UU}\t - \bm{\hat U} \bm{\hat U}\t \|_F \leq R \}.
\end{align}
We claim that have the containment $\mathcal{R}_1 \subset \mathcal{R}_2$ on the event $\mathcal{E}_{\text{\cref{lem:Zstgoodproperties}}}$.  
In order to see this, observe that for any $\bm{U} \in \mathcal{R}_1$, 
\begin{align}
    \| \bm{UU}\t - \bm{\hat U} \bm{\hat U} \|_F \leq \big( \| \bm{U} \| + \| \bm{\hat U} \| \big)  \frac{1}{8 \| \bm{U}\s \|} R.
\end{align}
On the event $\mathcal{E}_{\text{\cref{lem:Zstgoodproperties}}}$ it holds that $\|\bm{Z}_{{\sf ST}}^{(\lambda)} \| \leq 2 \| \bm{M}\|$.  Consequently, on this same event, we have that $\|\bm{\hat U}\| \leq 2 \|\bm{U}\s \|$, since $\|\bm{\hat U}\|^2 = \| \bm{Z}_{{\sf ST}}^{(\lambda/\zeta\s)}(\tau\s)\|$.  
Furthermore, we have that by virtue of $\bm{U} \in \mathcal{R}_1$,
\begin{align}
    \| \bm{U} \| &\leq \| \bm{U} \mathcal{O}_{\bm{U},\bm{\hat U}} - \bm{\hat U} \| + \| \bm{\hat U} \| \leq  \frac{1}{8 \|\bm{U}\s \|} R + 2 \|\bm{U}\s \| \\
    &\leq  \frac{\lambda_{\min}(\bm{M})}{8 \sqrt{\lambda_{\max}(\bm{M})}} + 2 \|\bm{U}\s \|\leq 4 \| \bm{U}\s \|.
\end{align}
Consequently,
\begin{align}
     \| \bm{UU}\t - \bm{\hat U} \bm{\hat U} \|_F \leq \big( \| \bm{U} \| + \| \bm{\hat U} \| \big)  \frac{1}{8 \| \bm{U}\s \|} R \leq R.
\end{align}
Therefore, $\mathcal{R}_1 \subset \mathcal{R}_2$. Note that we have used the implicit assumption that $\lambda_r \geq C_1 \sigma \sqrt{r}$ for some sufficiently large constant $C_1$.  

We will now show that $h$ is strongly convex on the region $\mathcal{R}_2$ for a particular parameter $\alpha$.  By \cref{lem:hstronglyconvex} on the event $\mathcal{E}_h$, $h$ is strongly convex with the parameter 
\begin{align}
    \alpha =  \frac{\lambda_{\min}(\bm{M})}{32 dr} + 2 \frac{\lambda}{dr},
\end{align}
provided $\bm{U}$ lies in the region $\mathcal{R}$ defined via
\begin{align}
    \mathcal{R} := \bigg\{ \bm{U}: \| \bm{U} - \bm{U}\s \mathcal{O}_{\bm{U}\s, \bm{U}} \| \leq \frac{c}{\kappa(\bm{M})} \|\bm{U}\s \|.\bigg\} 
\end{align}
We will show that $\mathcal{R}_2 \subset \mathcal{R}$. 
It holds that $\|\bm{Z}_{{\sf ST}}^{(\lambda/\zeta\s)}(\tau\s) - \bm{M} \|_F \leq C \sigma \sqrt{dr}$ on the event $\mathcal{E}_{\text{\cref{lem:Zstgoodproperties}}}$ by appropriately choosing $s = O(1)$ in the statement therein.  (See, for example, the proof of \cref{lem:h_yhat_bounded}.)  We then have that 
\begin{align}
    \| \bm{UU}\t - \bm{U}\s \bm{U}^{*\top} \|_F &\leq R + \|  \bm{U}\s \bm{U}^{*\top} -\bm{\hat U \hat U}\t \|_F = R + \| \bm{Z}_{{\sf ST}}^{(\lambda)} - \bm{M} \|_F \\
    &\leq C' \sigma \sqrt{dr}.
\end{align} 
As a consequence, we have that
\begin{align}
    \| \bm{U} - \bm{U}\s \mathcal{O}_{\bm{U}\s,\bm{U}} \| \leq  \frac{c_0 C'}{\sqrt{\lambda_{\min}}} \sigma \sqrt{dr} 
    &\leq \frac{c}{\kappa(\bm{M})} \| \bm{U}\s \|,
\end{align}
which holds as long as  $\lambda_{\min}(\bm{M})/\sigma \geq C_1 \kappa \sqrt{dr}$, for some appropriately large constant $C_1$, which holds according to \cref{ass1}.

Consequently, on the event $\mathcal{E}_1 \cap \mathcal{E}_2 \cap \mathcal{E}_{\text{\cref{lem:Zstgoodproperties}}} \cap \mathcal{E}_h$, 
\begin{align}
  h(\bm{\hat U}) &=  L_{\lambda}^{(n)}( \bm{\hat W}) \leq  L_{\lambda}^{(n)}\big( \| \bm{W} \|_F \leq R \cap \bm{W} + \bm{M} \succcurlyeq 0 \cap \mathrm{rank}(\bm{W} + \bm{M}) \leq r\big) + 2 C \tilde\eps \sqrt{\gamma_n} \\
  &= \min_{\mathcal{R}_2} h(\bm{U}) + 2C \tilde\eps \sqrt{\gamma_n} \\
  &\leq \min_{\mathcal{R}_1} h(\bm{U}) + 2C \tilde\eps \sqrt{\gamma_n} \\
  &= \min_{\bm{U}: \| \bm{U} - \bm{U}\s \mathcal{O}_{\bm{U}\s,\bm{U}} \|_F \leq \frac{1}{8 \| \bm{U}\s \|} R} h(\bm{U}) + 2 C \tilde\eps \sqrt{\gamma_n}, \label{contradictionbb}
\end{align}
were the final bound holds from the containment $\mathcal{R}_1 \subset \mathcal{R}_2$.   By \cref{lem:miolaneb1analogue}, since $h$ is strongly convex on the region $\mathcal{R}_2$ with parameter $\alpha$, for any $\bm{U} \in \mathbb{B}(\bm{\hat U}, \frac{R}{8\|\bm{U}\s\|} )$, it holds that $\| \bm{U}  \mathcal{O}_{\bm{U},\bm{\hat U}}- \bm{\hat U}\|_F^2 \leq \frac{8 C \sqrt{\gamma_n} \tilde\eps}{\alpha}$. 
In addition, for any $\bm{U} \in \mathbb{B}\big( \bm{\hat U}, \frac{R}{8\|\bm{U}\s\|} \big)$, we have that
\begin{align}
    \| \bm{UU}\t - \bm{\hat U} \bm{\hat U}\t \|_F \leq \big( \|\bm{U}\| + \| \bm{\hat U} \| \big) \| \bm{U} - \bm{\hat U} \mathcal{O}_{\bm{\hat U},\bm{U}} \|_F 
\leq  4 \| \bm{U}\s \| \| \bm{U} - \bm{\hat U} \mathcal{O}_{\bm{\hat U},\bm{U}} \|_F.
\end{align}
Consequently, if
\begin{align}
    \frac{\| \bm{UU}\t - \bm{\hat U \hat U}\t \|_F}{\sqrt{dr}} > \frac{\eps}{2},
\end{align}
then it holds that 
\begin{align}
    \| \bm{U} - \bm{\hat U} \mathcal{O}_{\bm{\hat U},\bm{U}} \|_F \geq 
       \frac{1}{4\| \bm{U}\s \|} \| \bm{UU}\t - \bm{\hat U\hat U}\t \|_F >  \frac{\eps \sqrt{dr}}{ 4\| \bm{U}\s \|} .%
\end{align}
The contrapositive of \cref{lem:miolaneb1analogue} implies that for $\bm{U}$ as above, it must hold that
\begin{align}
    h(\bm{U}) > \min_{\bm{U} : \| \bm{U} - \bm{U}\s \mathcal{O}_{\bm{U}\s,\bm{U}} \|_F \leq \frac{R}{8 \| \bm{U}\s \|}}h(\bm{U}) + \bigg(\frac{\eps \sqrt{dr}}{ 4\| \bm{U}\s \|}\bigg)^2 \frac{\alpha}{8} \label{hguy}
\end{align}
Define $\tilde \eps$ via
\begin{align}
     \tilde \eps = \frac{\eps^2 dr \alpha }{128 C \sqrt{\gamma}_n \| \bm{U}\s\|^2}.
\end{align}
Then the display equation \eqref{hguy} simplifies to
\begin{align}
    h(\bm{U}) > \min_{\bm{U} : \|\bm{U - U\s} \mathcal{O}_{\bm{U}\s,\bm{U}}\|_F \leq \frac{R}{8\|\bm{U}\s\|} } h(\bm{U}) + C \sqrt{\gamma_n} \tilde \eps,
\end{align}
which is a contradiction to \eqref{contradictionbb}.  Recalling the definition of $\alpha$ from earlier, we have that 
\begin{align}
    C \sqrt{\gamma_n} \tilde \eps &= \frac{\eps^2dr}{128 \| \bm{U}\s \|^2} \bigg( \frac{\lambda_{\min}(\bm{M})}{32 dr} + 2 \frac{\lambda}{dr} \bigg) \\
    &= \frac{\eps^2}{128} \bigg( \frac{1}{32 \kappa} + 2 \frac{\lambda}{\|\bm{M}\|} \bigg) = C \eps^2.  
    \end{align}
which justifies the previous choice of $\tilde \eps$ from the previous step.  
In particular, on these events it holds that
\begin{align}
    L_{\lambda}^{(n)} \bigg( \mathbb{B}_{\eps/2}^c( \bm{\hat W}/\sqrt{dr}) \cap \mathrm{rank}(W + M) \leq r \cap \| \bm{W} \|_F \leq R \bigg) > L_{\lambda}^* + 2 \eps^2 
    \label{eq:thingtoproveproven}
    \end{align}
which is precisely  \eqref{eq:thingtoprove}.  This algebra also verifies the choice of $\tilde \eps$ from the previous step of the proof.
 \\ \ \\ \noindent
 \textbf{Step 4: Tabulating probabilities}. 
We bound each event in turn. 
\begin{itemize}
\item \textbf{The events $\mathcal{E}_{\cref{lem:Zstgoodproperties}}$ and $\mathcal{E}_h$}: The first event holds with probability at least $1 - C\exp(- c d)$.  Furthermore, by \cref{lem:Eh_highprob}, 
$\mathcal{E}_h$ holds with probability at least $1 - 2 \exp( -cdr) - 2 \exp(-c n)$.  
    \item \textbf{The event $\mathcal{E}_1$}: From \cref{lem:h_yhat_bounded} we have that 
\begin{align}
      L_{\lambda}^{(n)}\big( \bm{Z}_{{\sf ST}}^{(\lambda/\zeta\s)}(\tau\s) - \bm{M} \big) \leq L_{\lambda}\s + s+ C \exp(- c dr)
\end{align}
with probability at least $$1 - O\bigg( \exp\big( - C dr \big( C \exp(- c dr) + s \big)^2 \big) + \exp( - c dr) \bigg)$$ Let $\eps^2 = s + C \exp(- c dr)$.  Then we have that
\begin{align}
     L_{\lambda}^{(n)}\big( \bm{Z}_{{\sf ST}}^{(\lambda/\zeta\s)}(\tau\s) - \bm{M} \big) \leq L_{\lambda}\s + \eps^2
\end{align}
with probability at least $1 - O( \exp( - c dr \eps^2 ) + \exp(- c dr))$ as long as $\eps^2 \geq C \exp(- c dr)$, as in the statement of the proof.
    \item \textbf{The event $\mathcal{E}_2$}: From \cref{lem:concentrationlemmapart1}, it holds that 
    \begin{align}
            L_{\lambda}^{(n)} \bigg( \|\bm{W}\|\leq R; \bm{W + M} \succcurlyeq 0 \bigg) &\geq L_{\lambda}\s - C t \sqrt{\gamma_n} 
    \end{align}
    with probability at least $1 - \frac{1}{t} \exp( - c n t^2) - \exp( - d r t^2)$.

    \begin{align}
        L_{\lambda}^{(n)} \bigg( \|\bm{W}\|\leq R; \bm{W + M} \succcurlyeq 0 \bigg) \geq L_{\lambda}\s - C \eps^2
    \end{align}
    with probability at least $1 -  O\big( \frac{1}{\eps^2} \exp(-c dr \eps^4 ) - \exp( - \frac{(dr)^2}{n} \eps^4)$.  Since 
    \begin{align}
        L_{\lambda}^{(n)} \bigg( \|\bm{W}\|_F \leq R; \bm{W} + \bm{M} \succcurlyeq 0; \mathrm{rank}(\bm{W} + \bm{M}) \leq r \bigg) \geq  L_{\lambda}^{(n)} \bigg( \|\bm{W}\|\leq R; \bm{W + M} \succcurlyeq 0 \bigg)
    \end{align}
    always holds, we see that $\mathcal{E}_2$ holds with this same probability.
\end{itemize}
Combining these bounds we have that the overall probability is at least
\begin{align}
    1 - O\bigg( \exp( - c d) +  \exp( - c dr) + \exp( - c n) + \frac{1}{\eps^2} \exp(-c dr \eps^4 ) - \exp( - \frac{(dr)^2}{n} \eps^4) \bigg),
\end{align}
which completes the proof. 
\end{proof}

\subsubsection{Proof of \cref{lem:concentrationlemmapart1}}
\label{sec:concentrationlemmapart1proof}
\begin{proof}[Proof of \cref{lem:concentrationlemmapart1}]
The proof of this result resembles the proof of Lemma B.3 in \citet{celentano_lasso_2023}. Throughout we assume implicitly that $\bm{W} \succcurlyeq -\bm{M}$.  
Throughout the proof, denote $R := C \sigma \sqrt{dr}$. 
We observe that we can rewrite $L_{\lambda}(\mathcal{W})$ (see the derivation in \eqref{L_lambda_derivation}) via
\begin{align}
    L_{\lambda}(\bm{W}) &:= \max_{\beta > 0}  \bigg(\gamma_n \sqrt{\frac{\| \bm{W} \|_F^2 }{n} + \sigma^2} \frac{\|\bm{g}\|}{\sqrt{n}} - \frac{\langle \bm{H},\bm{W}\rangle}{dr} \bigg) \beta - \gamma_n \frac{\beta^2}{2} + \frac{\lambda}{dr} \big\{ \|\bm{W} + \bm{M} \|_* - \| \bm{M}\|_* \big\} \\
    &=: \max_{\beta \geq 0} \ell_{\lambda}(\bm{W},\beta; \bm{g}) \\
    &\geq \ell_{\lambda}(\bm{W}, \beta\s; \bm{g}).
\end{align}
Define the event
\begin{align}
    \mathcal{E}_{{\bm{g}}} := \bigg\{ \bigg| \frac{\|\bm{g}\|}{\sqrt{n}} - 1 \bigg| \leq t /\sqrt{\gamma_n} \bigg\}.
\end{align}
Since $\bm{g} \mapsto \frac{\|\bm{g}\|}{\sqrt{n}}$ is a 1-Lipschitz function, we have that $\p\big(\mathcal{E}_{{\bm{g}}}\big) \geq 1 - \exp( -c dr t^2)$ for all $t > 0$.  
Denote the modified objective
\begin{align}
    \tilde\ell_{\lambda}(\bm{W},\beta) := \bigg(\gamma_n \sqrt{\frac{\| \bm{W} \|_F^2 }{n} + \sigma^2} - \frac{\langle \bm{H},\bm{W}\rangle}{dr} \bigg) \beta - \gamma_n \frac{\beta^2}{2} + \frac{\lambda}{dr} \big\{ \|\bm{W} + \bm{M} \|_* - \| \bm{M}\|_* \big\}.
\end{align}
If  $\|\bm{W}\|_F \leq R$, it holds that on  the event $\mathcal{E}_{\bm{g}}$,
\begin{align}
    \big| \ell_{\lambda}(\bm{W},\beta\s)- \tilde \ell_{\lambda}(\bm{W},\beta) \big| \leq \beta_{\max} \gamma_n^{1/2} t   \sqrt{\frac{R^2}{n} + \sigma^2} \leq 2C \gamma_n^{1/2} t \sigma.
\end{align}
Consequently, on this event
\begin{align}
    \min_{\|\bm{W}\|_F \leq R }L_{\lambda}(\bm{W}) &= \min_{\|\bm{W}\|_F \leq R} \max_{\beta \geq 0} \ell(\bm{W},\beta) \geq \min_{\| \bm{W}\|_F \leq R} \ell(\bm{W}, \beta\s) \\
    &\geq \min_{\|\bm{W}\|_F \leq R }\tilde \ell(\bm{W},\beta\s) - 2\beta_{\max} \gamma_n^{1/2} \eps \sigma. 
\end{align}
Next, for every $\tau \in [\sigma ,\sqrt{\sigma^2 + \frac{R^2}{n}}]$, the function
\begin{align}
    \bm{H} \mapsto \min_{\|\bm{W}\|_F \leq R} \bigg\{\frac{\gamma_n \beta\s\|\bm{W}\|_F^2}{2\tau n} - \frac{\gamma_n \beta\s}{n} \langle \bm{H},\bm{W} \rangle + \frac{\lambda}{dr} \{ \|\bm{W} + \bm{M} \|_* - \| \bm{M} \|_* \} \bigg\}
\end{align}
is $\frac{\gamma_n \beta\s}{n} R$-Lipschitz, and hence so is $\bm{H} \mapsto F(\tau,\bm{H})$.  Consequently, 
\begin{align}
    \p\bigg\{ \bigg| F(\tau,\bm{H}) - \mathbb{E} F(\tau,\bm{H}) \bigg| > \eps \frac{R \gamma_n}{\sqrt{n}} \bigg\} &\leq 2 \exp\bigg( - c \frac{n}{( \beta\s)^2} \eps^2 \bigg).
\end{align}
Furthermore, since $\tau \mapsto F(\tau,\bm{H})$ is $\big(\beta\s \gamma_n \big[ \frac{R^2}{n \sigma} + \frac{1}{2} + \frac{\sqrt{\sigma^2 + \frac{R^2}{n}}}{2}\big] \big) \leq C \beta\s \gamma_n (\sigma + 1)$-Lipschitz, we can conclude via an $\eps$-net argument that there exists constants depending on $\beta\s,\sigma$ such that
\begin{align}
    \p\bigg\{ \sup_{\tau \in [\sigma, \sqrt{\sigma^2 + \frac{R^2}{n}}]} \bigg| F(\tau,\bm{H}) - \mathbb{E}F(\tau,\bm{H}) \bigg| \leq  \eps \frac{R \gamma_n}{\sqrt{n}} \bigg\} &\geq 1 - \frac{C \gamma_n^{3/2}}{\eps } \exp\bigg( - c n\eps^2 \bigg) \\
    &\geq 1 - \frac{C}{\eps } \exp\bigg( - c  n \eps^2 \bigg),
\end{align}
since $\gamma_n \geq 1$.  Assuming this event holds we have that
\begin{align}
    \min_{\|\bm{W}\|_F \leq R} \tilde \ell(\bm{W},\beta\s) &\geq \min_{\tau\in [ \sigma,\sqrt{\sigma^2 + R^2}]} \mathbb{E} F(\tau,\bm{H}) - \eps \frac{R\gamma_n}{\sqrt{n}}.
\end{align}
Recalling that $R = C \sigma \sqrt{dr}$, it holds that $\eps \frac{R \gamma_n}{\sqrt{n}} =C \eps \sigma \sqrt{\gamma_n}$.  Furthermore, observe that 
\begin{align}
    \mathbb{E} F(\tau,\bm{H}) &\geq  \min_{\tau \in [\sigma ,\sqrt{\sigma^2 + R^2}]}  \bigg\{ \bigg( \frac{\sigma^2}{2\tau} + \frac{\tau}{2} \bigg) \beta\s \gamma_n - \frac{\gamma_n(\beta\s)^2}{2}\\
    &\quad +  \mathbb{E} \min_{\bm{W}} \bigg\{ \frac{\gamma_n\beta\s}{2\tau n} \| \bm{W}\|_F^2 - \frac{\gamma_n\beta\s}{n} \langle \bm{H,W} \rangle + \frac{\lambda}{dr} \big\{ \| \bm{W} + \bm{M}\|_* - \bm{M}\|_*\big\} \bigg\} \bigg\} \\
    &=\min_{\tau \in [\sigma ,\sqrt{\sigma^2 + R^2}]} \psi_{\lambda}\s(\tau,\beta\s) \\
    &\geq  \psi_{\lambda}\s(\tau\s,\beta\s) = L_\lambda^*
\end{align}
where $\psi_{\lambda}\s(\tau,\beta\s)$ is defined in \eqref{psistardef}.  Combining these arguments, on these events we have that
\begin{align}
    L_{\lambda}^{(n)} \geq L_{\lambda}\s - C \eps \sqrt{\gamma_n} 
\end{align}
which holds cumulatively with probability at least
\begin{align}
    1 -     \frac{1}{\eps} \exp( - c n \eps^2) - \exp(-dr \eps^2 ).
\end{align}
\end{proof}

\subsubsection{Proof of \cref{lem:hstronglyconvex}} \label{sec:hstronglyconvexproof}
For convenience we recall that $h(\bm{U})$ is defined via
\begin{align}
    h(\bm{U}) := \frac{\gamma_n}{2} \bigg( \frac{\|\bm{g}\|}{\sqrt{n}} \sqrt{\frac{\|\bm{UU}\t - \bm{M} \|_F^2}{n} + \sigma^2} - \frac{\langle \bm{H},\bm{UU}\t - \bm{M} \rangle}{n} \bigg)_+^2 + \frac{\lambda}{dr} \big\{ \| \bm{U} \|_F^2 - \|\bm{M}\|_* \big\}.
\end{align}
We will define
\begin{align}
    \tau(\bm{U}) :&= \sqrt{\frac{\|\bm{UU}\t - \bm{M}\|_F^2}{n} + \sigma^2},
\end{align}
which implies that
\begin{align}
    f(\bm{U}) &= \sqrt{\frac{\|\bm{UU}\t - \bm{M}\|_F^2}{n} + \sigma^2} \frac{\|\bm{g}\|}{\sqrt{n}} - \frac{\langle \bm{H}, \bm{UU}\t - \bm{M} \rangle}{n} \equiv \tau(\bm{U}) \frac{\|\bm{g}\|}{\sqrt{n}} - \frac{\langle \bm{H}, \bm{UU}\t - \bm{M} \rangle}{n}; \\
    h(\bm{U}) &= \frac{\gamma_n}{2} ( f(\bm{U}))_+^2 + \frac{\lambda}{dr} \big\{ \|\bm{U}\|_F^2 - \|\bm{M}\|_* \big\}.
\end{align}
To prove \cref{lem:hstronglyconvex} we present the following lemma computing the second derivative of $h$.
\begin{lemma} \label{lem:hessiancalculation}
It holds that
\begin{align}
    \nabla^2 h(\bm{U})[ \bm{V}, \bm{V}] &=  \frac{4 \gamma_n}{n^2 \tau^2(\bm{U})} \frac{\|\bm{g}\|}{\sqrt{n}} \bigg( \frac{\|\bm{g}\|}{\sqrt{n}} - \frac{f(\bm{U})_+}{\tau(\bm{U})} \bigg) \bigg \langle \big( \bm{UU}\t - \bm{M}\big) \bm{U}, \bm{V} \bigg \rangle^2 \\
    &\quad + \frac{4\gamma_n}{n^2} \bigg \langle \bm{HU}, \bm{V} \bigg\rangle^2 \\
    &\quad + \frac{4 \gamma_n}{n^2 \tau(\bm{U})} \frac{\|\bm{g}\|}{\sqrt{n}} \bigg( \frac{f(\bm{U})_+}{\tau^2(\bm{U})} - 2 \bigg) \bigg \langle \bm{HU}, \bm{V} \bigg \rangle \bigg \langle \big( \bm{UU}\t - \bm{M} \big) \bm{U}, \bm{V} \bigg \rangle \\
    &\quad +\gamma_n f(\bm{U})_+ 2\tau(\bm{U})\inv \frac{\|\bm{g}\|}{\sqrt{n}}  \bigg\langle \bigg(  \frac{\big(  \bm{VU}\t +  \bm{UV}\t \big) }{n}  \bigg)\bm{U}, \bm{V} \bigg \rangle \\
    &\quad +\gamma_n f(\bm{U})_+ 2\tau(\bm{U})\inv \frac{\|\bm{g}\|}{\sqrt{n}}  \bigg\langle \bigg(  \frac{\bm{UU}\t - \bm{M}- \bm{H}}{n}  \bigg) \bm{V} , \bm{V} \bigg \rangle \\
    &\quad + 2 \frac{\lambda}{dr} \| \bm{V} \|_F^2.
\end{align}
\end{lemma}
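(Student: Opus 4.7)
The formula is a direct computation, so the plan is simply to apply the chain rule and product rule carefully on the open set where $f(\bm{U})>0$; the map $x \mapsto (x)_+^2$ is $C^2$ with derivatives $2x_+$ and $2\mathbb{I}\{x>0\}$, so on this set $h$ is smooth and we have
\[
h(\bm{U})=\tfrac{\gamma_n}{2}f(\bm{U})^2+\tfrac{\lambda}{dr}\bigl\{\|\bm{U}\|_F^2-\|\bm{M}\|_*\bigr\}.
\]
The product rule immediately gives $\nabla^2 h[\bm{V},\bm{V}]=\gamma_n(\nabla f[\bm{V}])^2+\gamma_n f(\bm{U})\,\nabla^2 f[\bm{V},\bm{V}]+\tfrac{2\lambda}{dr}\|\bm{V}\|_F^2$, so the task reduces to computing $\nabla f$ and $\nabla^2 f$.

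For the first and second derivatives of $f$, I would introduce $g(\bm{U}):=\|\bm{U}\bm{U}^\top-\bm{M}\|_F^2/n+\sigma^2$ so that $\tau=\sqrt{g}$, and expand $(\bm{U}+t\bm{V})(\bm{U}+t\bm{V})^\top=\bm{UU}^\top+t(\bm{UV}^\top+\bm{VU}^\top)+t^2\bm{VV}^\top$. This yields
\[
g'(0)=\tfrac{4}{n}\langle(\bm{UU}^\top-\bm{M})\bm{U},\bm{V}\rangle,\qquad g''(0)=\tfrac{2}{n}\bigl(\|\bm{UV}^\top+\bm{VU}^\top\|_F^2+2\langle(\bm{UU}^\top-\bm{M})\bm{V},\bm{V}\rangle\bigr),
\]
and then $\tau'(0)=g'(0)/(2\tau)$, $\tau''(0)=g''(0)/(2\tau)-g'(0)^2/(4\tau^3)$. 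Combining with the straightforward identities $\tfrac{d}{dt}|_{t=0}\langle\bm{H},\bm{UU}^\top-\bm{M}\rangle=2\langle\bm{HU},\bm{V}\rangle$ and $\tfrac{d^2}{dt^2}|_{t=0}\langle\bm{H},\bm{UU}^\top-\bm{M}\rangle=2\langle\bm{HV},\bm{V}\rangle$ produces explicit expressions for $\nabla f[\bm{V}]$ and $\nabla^2 f[\bm{V},\bm{V}]$ in terms of $\langle(\bm{UU}^\top-\bm{M})\bm{U},\bm{V}\rangle$, $\langle\bm{HU},\bm{V}\rangle$, $\langle(\bm{UV}^\top+\bm{VU}^\top)\bm{U},\bm{V}\rangle$, $\langle(\bm{UU}^\top-\bm{M})\bm{V},\bm{V}\rangle$, and $\langle\bm{HV},\bm{V}\rangle$.

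From here the proof amounts to expanding $\gamma_n(\nabla f[\bm{V}])^2+\gamma_n f(\bm{U})\nabla^2 f[\bm{V},\bm{V}]$ and collecting coefficients of each bilinear quantity. The squared term gives the $\langle(\bm{UU}^\top-\bm{M})\bm{U},\bm{V}\rangle^2$, $\langle\bm{HU},\bm{V}\rangle^2$, and cross contributions on the first three lines of the target formula, while the $\tau''$-piece of $f(\bm{U})\nabla^2 f[\bm{V},\bm{V}]$ supplies the $f_+/\tau^3$ correction appearing in those lines. The two ``second-order'' pieces on lines four and five come respectively from the identity $\|\bm{UV}^\top+\bm{VU}^\top\|_F^2=2\langle(\bm{UV}^\top+\bm{VU}^\top)\bm{U},\bm{V}\rangle$ (turning the $\|\cdot\|_F^2$ term in $g''$ into the inner-product form) and from combining $\tfrac{2}{n}\langle(\bm{UU}^\top-\bm{M})\bm{V},\bm{V}\rangle$ (from $g''$) with $-\tfrac{2}{n}\langle\bm{HV},\bm{V}\rangle$ (from $b''$) inside the common factor $\gamma_n f_+\cdot 2\tau^{-1}\|\bm{g}\|/\sqrt{n}$; the regularization term $(2\lambda/dr)\|\bm{V}\|_F^2$ is the final piece. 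Finally, $h$ is continuously differentiable on all of $\mathbb{R}^{d\times r}$ since $(x)_+^2$ is $C^1$, so defining $\nabla^2 h$ via the Taylor expansion on $\{f>0\}$ (and interpreting $f_+=0$ elsewhere) makes the stated formula meaningful without any boundary subtlety.

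The main obstacle is purely bookkeeping: matching the $\tau^{-2}$ and $\tau^{-3}$ coefficients from the $\tau''$ contribution with those produced by squaring $\nabla f$, and packaging the two ``$f_+/\tau^k-\text{const}$'' scalar prefactors in the form printed in the lemma. There is no analytic or geometric difficulty—only the algebraic care to track signs and powers of $\tau$ while applying the symmetric bilinear identity $\|\bm{UV}^\top+\bm{VU}^\top\|_F^2=2\langle(\bm{UV}^\top+\bm{VU}^\top)\bm{U},\bm{V}\rangle$.
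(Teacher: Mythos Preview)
Your proposal is correct and follows essentially the same route as the paper: both proofs apply the chain rule to write $\nabla^2 h[\bm V,\bm V]=\gamma_n(\nabla f[\bm V])^2+\gamma_n f(\bm U)_+\,\nabla^2 f[\bm V,\bm V]+\tfrac{2\lambda}{dr}\|\bm V\|_F^2$, compute the first and second directional derivatives of $\tau$ and of the $\bm H$-term, expand, and collect like terms. Your parametrization via $g=\tau^2$ and explicit use of the identity $\|\bm{UV}^\top+\bm{VU}^\top\|_F^2=2\langle(\bm{UV}^\top+\bm{VU}^\top)\bm U,\bm V\rangle$ are cosmetic variants of the paper's direct differentiation of $\tau(\bm U+t\bm V)^{-1}$, not a different argument.
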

\begin{proof}
    See \cref{sec:hessiancalculationproof}.
\end{proof}

With this calculation in place we are free to prove \cref{lem:hstronglyconvex}.

\begin{proof}[Proof of \cref{lem:hstronglyconvex}]
Recall we define the event
\begin{align}
    \mathcal{E}_h &:= \bigg\{ \big| \langle \bm{H}, \bm{Q} \rangle \big| \leq C \sqrt{dr} \| \bm{Q} \|_F \text{ for all matrices $\bm{Q}$ of rank at most $2r$} \bigg\} \cap \big\{ \frac{1}{2} \leq  \frac{\|\bm{g}\|}{\sqrt{n}} \leq 2 \big\} \cap \bigg\{ \| \bm{H} \| \leq 3 \sqrt{d} \bigg\}.
\end{align}
We decompose the hessian by \cref{lem:hessiancalculation} via
\begin{align}
    \nabla^2 h(\bm{U})[\bm{V}, \bm{V}] := T_1 + T_2 + T_3 + T_4 + T_5 + T_6
\end{align}
where
\begin{align}
 \\   T_1 &:= \frac{4 \gamma_n}{n^2 \tau^2(\bm{U})} \frac{\|\bm{g}\|}{\sqrt{n}} \bigg( \frac{\|\bm{g}\|}{\sqrt{n}} - \frac{f(\bm{U})_+}{\tau(\bm{U})} \bigg) \bigg \langle \big( \bm{UU}\t - \bm{M}\big) \bm{U}, \bm{V} \bigg \rangle^2\\  
 T_2 &:= \frac{4\gamma_n}{n^2} \bigg \langle \bm{HU}, \bm{V} \bigg\rangle^2 \\  
 T_3 &:= \frac{4 \gamma_n}{n^2 \tau(\bm{U})} \frac{\|\bm{g}\|}{\sqrt{n}} \bigg( \frac{f(\bm{U})_+}{\tau^2(\bm{U})} - 2 \bigg) \bigg \langle \bm{HU}, \bm{V} \bigg \rangle \bigg \langle \big( \bm{UU}\t - \bm{M} \big) \bm{U}, \bm{V} \bigg \rangle \\
 T_4 &:= \gamma_n f(\bm{U})_+ 2\tau(\bm{U})\inv \frac{\|\bm{g}\|}{\sqrt{n}}  \bigg\langle \bigg(  \frac{\big(  \bm{VU}\t +  \bm{UV}\t \big) }{n}  \bigg)\bm{U}, \bm{V} \bigg \rangle \\   
 T_5 &:= \gamma_n f(\bm{U})_+ 2\tau(\bm{U})\inv \frac{\|\bm{g}\|}{\sqrt{n}}  \bigg\langle \bigg(  \frac{\bm{UU}\t - \bm{M}- \bm{H}}{n}  \bigg) \bm{V} , \bm{V} \bigg \rangle \\   
 T_6 &:=  2 \frac{\lambda}{dr} \| \bm{V} \|_F^2.
\end{align}
We will show that some of the terms above are small-order terms on the event $\mathcal{E}_h$, and then some of the terms provide a lower bound.
\begin{itemize}
    \item \textbf{The term $T_1$}:  Observe that if $f(\bm{U}) > 0$,
\begin{align}
    \frac{\|\bm{g}\|}{\sqrt{n}} - \frac{f(\bm{U})_+}{\tau(\bm{U})} &= \frac{\langle \bm{H}, \bm{UU}\t - \bm{M} \rangle}{n \tau(\bm{U})}.
\end{align}
In addition, we have that
\begin{align}
    \| \bm{UU}\t - \bm{M} \|_F/\sqrt{n} \leq \tau(\bm{U}).
\end{align}
Consequently, we have that
\begin{align}
    T_1 &\leq \frac{4 \gamma_n}{n} \frac{\|\bm{g}\|}{\sqrt{n}}\bigg| \frac{\langle \bm{H},\bm{UU}\t - \bm{M} \rangle \rangle}{n \tau(\bm{U})} \bigg| \frac{\|\bm{UU}\t - \bm{M}\|_F^2 \|\bm{U}\|^2 \|\bm{V} \|_F^2}{n \tau^2(\bm{U})} \\
    &\leq \frac{4}{dr} \frac{\|\bm{g}\|}{\sqrt{n}}\bigg| \frac{\langle \bm{H},\bm{UU}\t - \bm{M} \rangle \rangle}{n \tau(\bm{U})} \bigg| \| \bm{U}\|^2 \|\bm{V} \|_F^2.
\end{align}
Note that on the event $\mathcal{E}_h$,
\begin{align}
    \bigg| \bigg\langle \bm{H}, \bm{UU}\t - \bm{M} \bigg \rangle \bigg| \leq C \sqrt{dr} \| \bm{UU}\t - \bm{M} \|_F; \qquad  \frac{\|\bm{g}\|}{\sqrt{n}} \leq 2.   \end{align}
Therefore,
\begin{align}
    T_1 &\leq \frac{8C}{\sqrt{ndr}} \frac{\|\bm{UU}\t - \bm{M}\|_F}{\sqrt{n} \tau(\bm{U})} \|\bm{U} \|^2 \|\bm{V} \|_F^2 \\
    &\leq \frac{8C}{\sqrt{ndr}}\|\bm{U} \|^2 \|\bm{V} \|_F^2.
\end{align}
Furthermore, when $\|\bm{U} - \bm{U}\s \mathcal{O}_{\bm{U}\s,\bm{U}} \| \leq \frac{c}{\kappa(\bm{M})} \| \bm{U}\s \|$, we have that $\|\bm{U}\| \leq 2 \|\bm{U}\s\|$.  Therefore,
\begin{align}
    T_1 \leq \frac{16C}{\sqrt{ndr}} \|\bm{M} \| \|\bm{V} \|_F^2 = \frac{16 C \kappa}{\sqrt{ndr}} \lambda_{\min}(\bm{M}) \|\bm{V} \|_F^2.
\end{align}

\item \textbf{The term $T_2$:} We note that the term $T_2$ is a square of something times a nonnegative number, whence $T_2 \geq 0$.  
\item \textbf{The term $T_3$}: We will upper bound $T_3$.  Recall that if $f(\bm{U})_+ > 0$ then
\begin{align}
    f(\bm{U})_+ = \tau(\bm{U}) \frac{\|\bm{g}\|}{\sqrt{n}} - \frac{\langle \bm{H}, \bm{UU}\t - \bm{M}\rangle}{n}.
\end{align}
We have that
\begin{align}
     T_3 &:= \frac{4 \gamma_n}{n^2 \tau(\bm{U})} \frac{\|\bm{g}\|}{\sqrt{n}} \bigg( \frac{f(\bm{U})_+}{\tau^2(\bm{U})} - 2 \bigg) \bigg \langle \bm{HU}, \bm{V} \bigg \rangle \bigg \langle \big( \bm{UU}\t - \bm{M} \big) \bm{U}, \bm{V} \bigg \rangle \\
     &\leq \frac{4}{ndr \tau(\bm{U})} \frac{\|\bm{g}\|}{\sqrt{n}} \bigg( \frac{\tau(\bm{U}) \frac{\|\bm{g}\|}{\sqrt{n}} - \frac{\langle \bm{H}, \bm{UU}\t - \bm{M}\rangle}{n}}{\tau^2(\bm{U})} - 2 \bigg) \bigg \langle \bm{HU}, \bm{V} \bigg \rangle \bigg \langle \big( \bm{UU}\t - \bm{M} \big) \bm{U}, \bm{V} \bigg \rangle \\
     &\leq \frac{4}{ndr \tau(\bm{U})} \bigg( \frac{1}{\tau(\bm{U})} + \bigg| \frac{\langle \bm{H}, \bm{UU}\t - \bm{M} \rangle}{n \tau^2(\bm{U})}\bigg| + 2 \bigg) \bigg| \bigg\langle \bm{HU}, \bm{V} \bigg\rangle \bigg| \bigg\langle (\bm{UU}\t - \bm{M}) \bm{U} , \bm{V} \bigg \rangle \bigg| \\
     &\lesssim \frac{1}{ndr \tau(\bm{U})} \bigg( \frac{1}{\tau(\bm{U})} + \bigg| \frac{\langle \bm{H}, \bm{UU}\t - \bm{M} \rangle}{n \tau^2(\bm{U})}\bigg| + 2 \bigg) \bigg| \bigg \langle \bm{HU}, \bm{V} \bigg \rangle \bigg| \| (\bm{UU}\t - \bm{M} )\|_F \|\bm{V} \|_F \| \bm{U}\|.
\end{align}
On the event $\mathcal{E}_h$ it holds that 
\begin{align}
    \bigg| \bigg\langle \bm{H}, \bm{UU}\t - \bm{M} \bigg\rangle \bigg| &\lesssim \sqrt{dr} \| \bm{UU}\t - \bm{M} \|_F; \\
    \bigg| \bigg\langle \bm{HU}, \bm{V} \bigg\rangle \bigg| &\lesssim \sqrt{dr} \| \bm{VU}\t \|_F \lesssim \sqrt{dr} \| \bm{V} \|_F \| \bm{U}\|.
\end{align}
Plugging this in above yields
\begin{align}
    T_3 \lesssim \frac{\| \bm{UU}\t - \bm{M}\|_F}{n \sqrt{dr} \tau(\bm{U})} \bigg( \frac{1}{\tau(\bm{U})} + \frac{\sqrt{dr} \| \bm{UU}\t - \bm{M}\|_F}{n\tau^2(\bm{U})} + 2 \bigg) \| \bm{V} \|_F^2 \| \bm{U} \|^2.
\end{align}
We also recall that 
\begin{align}
    \frac{ \| \bm{UU}\t - \bm{M}\|_F}{\sqrt{n} \tau(\bm{U})} \leq 1.
\end{align}
Therefore,
\begin{align}
    T_3 &\lesssim \frac{\| \bm{V}\|_F^2 \| \bm{U}\|^2}{\sqrt{ndr}} \bigg( \frac{1}{\tau(\bm{U})} + \frac{\sqrt{dr}}{\sqrt{n} \tau(\bm{U})} + 2 \bigg) \\
    &\lesssim \frac{\|\bm{U}\|^2 \|\bm{V}\|_F^2}{\sqrt{ndr}} \\
    &\lesssim \|\bm{V}\|_F^2 \frac{\kappa \lambda_{\min}(\bm{M})}{\sqrt{ndr}}
\end{align}
since $\tau(\bm{U}) \geq \sigma \geq \sigma_{\min} > 0$ and $\|\bm{U}\|^2 \lesssim \| \bm{M}\| \lesssim \kappa \lambda_{\min}(\bm{M})$.  

\item \textbf{The term $T_4$:} We will lower bound $T_4$. We have that
\begin{align}
    T_4 &= \frac{2}{dr \tau(\bm{U})} f(\bm{U})_+ \frac{\|\bm{g}\|}{\sqrt{n}} \bigg \langle \bigg( \bm{VU}\t + \bm{UV}\t \bigg) \bm{U}, \bm{V} \bigg \rangle \\
    &\geq \frac{f(\bm{U})_+}{dr \tau(\bm{U})}  \bigg \langle \bigg( \bm{VU}\t + \bm{UV}\t \bigg) \bm{U}, \bm{V} \bigg \rangle.
\end{align}
Next, we note that on the event $\mathcal{E}_h$ it holds that
\begin{align}
    \bigg| \langle \bm{H}, \bm{UU}\t - \bm{M} \rangle \bigg| \leq C \sqrt{dr} \| \bm{UU}\t - \bm{M} \|_F.
\end{align}
In addition, $\tau(\bm{U}) \geq \frac{\|\bm{UU}\t - \bm{M}\|_F}{\sqrt{n}} $.  Together, these conditions imply that
\begin{align}
    f(\bm{U}) &= \tau(\bm{U}) \frac{\|\bm{g}\|}{\sqrt{n}} - - \frac{C \sqrt{dr}}{\sqrt{n}} \frac{\|\bm{UU}\t - \bm{M}\|_F}{\sqrt{n}}\\
    &\geq .5 \sigma + .5 \frac{\|\bm{UU}\t - \bm{M}\|_F}{\sqrt{n}} - \frac{C \sqrt{dr}}{\sqrt{n}} \frac{\|\bm{UU}\t - \bm{M}\|_F}{\sqrt{n}} \\
    &= .5\sigma + \bigg( \frac{1}{2} - C\sqrt{\frac{dr}{n}} \bigg) \frac{\|\bm{UU}\t - \bm{M}\|_F}{\sqrt{n}} \\
    &\geq \frac{1}{4} \sigma + \frac{1}{4} \frac{\|\bm{UU}\t - \bm{M}\|_F}{\sqrt{n}},
\end{align}
where the final inequality holds by \cref{ass2} as long as $C_3$ is sufficiently large.  Therefore, it holds that $\frac{f(\bm{U})_+}{\tau(\bm{U})} \geq \frac{1}{4}$, and hence that 
\begin{align}
    T_4 &\geq     
    \frac{1}{4dr} \bigg \langle \bigg( \bm{VU}\t + \bm{UV}\t \bigg) \bm{U}, \bm{V} \bigg \rangle. 
\end{align}
We now analyze the quantity $\bigg \langle \bigg( \bm{VU}\t + \bm{UV}\t \bigg) \bm{U}, \bm{V} \bigg \rangle$.  Observe that we can rewrite this quantity as
\begin{align}
    \| \bm{U}\t \bm{V} \|_F^2 + {\sf Tr}\big[ \bm{UV}\t \bm{UV}\t \big] \geq \lambda_{\min}(\bm{U})^2 \| \bm{V} \|_F^2 + {\sf Tr}\big[ \bm{UV}\t \bm{UV}\t \big].
\end{align}
Weyl's inequality implies that $\lambda_{\min}(\bm{U})^2 \geq \frac{1}{4} \lambda_{\min}(\bm{M})$ provided $c$ smaller than some absolute constant.  

To complete the lower bound, we note that $\bm{V} = \bm{U - U'}\mathcal{O}$, where $\mathcal{O}$ is the Frobenius-optimal orthogonal matrix aligning $\bm{U}$ and $\bm{U}'$. Consequently, we observe that
\begin{align}
    {\sf Tr}\big[ \bm{UV}\t \bm{UV}\t \big] &= {\sf Tr}\big[ \bm{U}(\bm{U - U'}\mathcal{O})\t \bm{U} ( \bm{U - U'}\mathcal{O})\t \big] \geq 0,
    \end{align}
    where the inequality follows from Lemma 35 of \citet{ma_implicit_2020}, which states that the matrix $\bm{U}(\bm{U} - \bm{U}' \mathcal{O})\t$ is a symmetric matrix.  Combining all of these bounds we obtain
    \begin{align}
        T_4 \geq \frac{\lambda_{\min}(\bm{M})}{16 dr} \|\bm{V}\|_F^2.
    \end{align}

\item \textbf{The term $T_5$}:  We will upper bound $T_5$.  First, we note that
\begin{align}
    T_5 &= \frac{2 \gamma_n f(\bm{U})_+}{\tau(\bm{U})} \frac{\|\bm{g}\|}{\sqrt{n}} \bigg \langle \bigg( \frac{\bm{UU\t - M - H}}{n} \bigg) \bm{V}, \bm{V} \bigg \rangle \\
    &\leq \frac{4 f(\bm{U})_+}{dr \tau(\bm{U})} \| \bm{V} \|_F^2 \| \bm{UU}\t - \bm{M} - \bm{H} \| \\
    &\leq \frac{4}{dr \tau(\bm{U})} \|\bm{V}\|_F^2 \bigg( \tau(\bm{U}) \frac{\|\bm{g}\|}{\sqrt{n}} - \frac{\langle \bm{H}, \bm{UU}\t - \bm{M}\rangle}{n} \bigg) \bigg( \| \bm{UU}\t - \bm{M} \| + \|\bm{H} \| \bigg) \\
    &\leq \frac{ 4\|\bm{V}\|_F^2}{dr} \bigg( 1 + \frac{C\sqrt{dr} \| \bm{UU}\t - \bm{M}\|_F}{n \tau(\bm{U})} \bigg) \bigg( \| \bm{UU}\t - \bm{M} \| + \|\bm{H} \| \bigg) \\
    &\leq  \frac{ 4\|\bm{V}\|_F^2}{dr} \bigg( 1 + \frac{C\sqrt{dr}}{\sqrt{n}} \bigg) \bigg( \| \bm{UU}\t - \bm{M} \| + \|\bm{H} \| \bigg) \\
    &\leq \frac{ 8\|\bm{V}\|_F^2}{dr} \bigg( \| \bm{UU}\t - \bm{M} \| + \| \bm{H} \| \bigg).
\end{align}
Note that
\begin{align}
    \| \bm{UU}\t - \bm{M} \| \leq  2\| \bm{U} - \bm{U}\s \mathcal{O}_{\bm{U}\s,\bm{U}} \| \| \bm{U}\s \| + \| \bm{U} - \bm{U}\s \mathcal{O}_{\bm{U}\s,\bm{U}} \|^2 \leq \frac{3c}{\kappa(\bm{M})} \| \bm{U}\s\|^2
\end{align}
which implies that (since $\|\bm{H} \| \leq 3 \sqrt{d}$ on the event $\mathcal{E}_h$)
\begin{align}
    T_5 \leq  \frac{ 8\|\bm{V}\|_F^2}{dr} \bigg( 3 c\lambda_{\min}(\bm{M}) + 3 \sqrt{d} \bigg) &= \frac{24(c \lambda_{\min}(\bm{M})+ \sqrt{d} )}{dr} \| \bm{V}\|_F^2. 
\end{align}
Under \cref{ass1} it holds that 
\begin{align}
    \lambda_{\min}(\bm{M}) \geq C_1 \sigma \sqrt{dr} \geq C_1 \sigma_{\min} \sqrt{dr} \geq c\sqrt{d}
\end{align}
as long as $C_1$ is sufficiently large.  Therefore,
\begin{align}
    T_5 &\leq \frac{48 c \lambda_{\min}(\bm{M})}{dr} \|\bm{V}\|_F^2.
\end{align}
\item \textbf{The term $T_6$:} We will keep $T_6$ as-is.  
\end{itemize} 
Combining all of our bounds, we obtain that
\begin{align}
    \nabla^2 h(\bm{U})[\bm{V},\bm{V}] &\geq \lambda_{\min}(\bm{M}) \|\bm{V}\|_F^2  \frac{1}{16 dr} - \lambda_{\min}(\bm{M}) \|\bm{V}\|_F^2  \frac{16 C \kappa }{ \sqrt{ndr }} \\
    &\quad - \lambda_{\min}(\bm{M}) \|\bm{V}\|_F^2 \frac{C\kappa}{\sqrt{ndr}} -  \lambda_{\min}(\bm{M}) \|\bm{V}\|_F^2  \frac{48 c}{dr} + \frac{2\lambda}{dr} \|\bm{V}\|_F^2 \\
    &\geq  \|\bm{V}\|_F^2 \frac{\lambda_{\min}(\bm{M})}{dr} \bigg( \frac{1}{16} - \frac{16 C \kappa \sqrt{dr}}{\sqrt{n}} - \frac{C \kappa \sqrt{dr}}{\sqrt{n}} - 48 c \bigg) + \frac{2\lambda}{dr} \|\bm{V}\|_F^2 \\
    &\geq \frac{\|\bm{V}\|_F^2}{dr} \bigg(\lambda_{\min}(\bm{M})\bigg[ \frac{1}{16} - 48 c - \frac{17 C \kappa \sqrt{dr}}{\sqrt{n}} \bigg] + 2 \lambda \bigg) \\
    &\geq \frac{\|\bm{V}\|_F^2}{dr} \bigg(\lambda_{\min}(\bm{M})\bigg[ \frac{1}{16} - 48 c - \frac{17 C }{\sqrt{C_2}} \bigg] + 2 \lambda \bigg) \\
    &\geq \frac{\|\bm{V}\|_F^2}{dr} \bigg( \frac{\lambda_{\min}(\bm{M})}{32} + 2 \lambda \bigg),
\end{align}
where the penultimate inequality holds since $\kappa = O(1)$ by \cref{ass1} and $n \geq C_3 dr^2$ by \cref{ass2}, and hence the final inequality  holds as long as $C_2$ larger than some absolute constant and $c$ is smaller than some absolute constant. 
\end{proof}

\subsubsection{Proof of \cref{lem:Eh_highprob}} \label{sec:Eh_highprob_proof}
\begin{proof}[Proof of \cref{lem:Eh_highprob}]
The proof follows via $\eps$-net.  Let $\bm{A}$ be any rank at most $2r$ matrix such that $\|\bm{A}\|_F\leq 1$.  Then $\langle \bm{H}, \bm{A} \rangle$ is Gaussian with variance at most $2 \|\bm{A}\|_F^2 \leq 2$.  Hoeffding's inequality implies that $\bigg| \langle \bm{H},\bm{A} \rangle \bigg| \leq C t$ with probability at least $1 - 2 \exp( - c t^2)$.  Next, let $\mathcal{N}_{\eps}$ be a covering of rank at most $2r$ matrices of Frobenius norm at most 1; by Lemma 3.1 of \citet{candes_tight_2011}, it holds that $|\mathcal{N}_{\eps}| \leq \bigg( \frac{9}{\eps} \bigg)^{3 dr}$.  The union bound implies that
\begin{align}
    \sup_{\bm{A} \in \mathcal{N}_{\eps}} \bigg| \langle \bm{H},\bm{A} \rangle \bigg| \leq C t
\end{align}
with probability at least $1 - 2\exp( 3dr \log (9/\eps) - t^2 )$.  Taking $t \geq \sqrt{dr}$ shows that this probability is at most $1 - 2 \exp( - c d r)$ as desired.

For the other event it holds that $\frac{\|\bm{g}\|}{\sqrt{n}}$ is a $\frac{1}{\sqrt{n}}$-Lipschitz function of $\bm{g}$ and consequently the event holds with probability at least $1 - 2\exp( - c n)$.  The bound on $\|\bm{H}\|$ follows directly from \citet{bandeira_sharp_2016}.  
\end{proof}

\subsubsection{Proof of \cref{lem:miolaneb1analogue}} \label{sec:miolaneb1analogueproof}
\begin{proof}[Proof of \cref{lem:miolaneb1analogue}]
First, fix any $\bm{U} \in \mathbb{B}(\bm{\hat U},K)$.    Define the function
\begin{align}
    J : t \mapsto h( \bm{U} + t ( \bm{\hat U} \mathcal{O}_{\bm{\hat U},\bm{U}} - \bm{U} ) ).
\end{align}
Then since
\begin{align}
    \frac{d}{dt^2} J(t) &= ( \bm{\hat U} \mathcal{O}_{\bm{\hat U},\bm{U}} - \bm{U} )\t \nabla^2 h(  \bm{U} + t ( \bm{\hat U} \mathcal{O}_{\bm{\hat U},\bm{U}} - \bm{U})( \bm{\hat U} \mathcal{O}_{\bm{\hat U},\bm{U}} - \bm{U} ) \geq \gamma \|  \bm{\hat U} \mathcal{O}_{\bm{\hat U},\bm{U}} - \bm{U}\|_F^2,
\end{align}
it holds that $J(t)$ is strongly convex with parameter $\gamma \| \bm{\hat U} \mathcal{O}_{\bm{\hat U},\bm{U}} - \bm{U} \|_F^2$.

Let $\bm{U}_0$ be any minimizer of $h(\cdot)$ within $\mathbb{B}(\bm{\hat U},K)$. Then the condition of the lemma implies that $h(\bm{\hat U}) \leq h(\bm{U}_0) + \eps$.   Applying the argument above with $\bm{U} = \bm{U}_0$ implies that for any $s \in [0,1]$,
\begin{align}
    J(s t_1 + (1- s) t_2 ) &\leq s J(t_1) + (1- s) J(t_2) - \frac{1}{2} \gamma s(1-s) | t_1 - t_2|^2   \|  \bm{\hat U} \mathcal{O}_{\bm{\hat U},\bm{U}_0} - \bm{U}\s \|_F^2 \\
    \Longrightarrow h( \bm{U}\s + s ( \bm{\hat U} \mathcal{O}_{\bm{\hat U},\bm{U}_0} - \bm{U}_0 )) &\leq s h( \bm{\hat U} \mathcal{O}) + (1-s) h(\bm{U}_0) - \frac{1}{2} \gamma s(1-s)   \|  \bm{\hat U} \mathcal{O} - \bm{U}_0 \|_F^2 \\
    \Longrightarrow \frac{\gamma s(1-s)}{2} \| \bm{\hat U}\mathcal{O}_{\bm{\hat U},\bm{U}_0} - \bm{U}_0 \|_F^2 &\leq s h(\bm{\hat U}\mathcal{O}_{\bm{\hat U},\bm{U}_0}) + (1 - s) h(\bm{U}_0) - h(\bm{U}_0 + s(\bm{\hat U} \mathcal{O}_{\bm{\hat U},\bm{U}_0} - \bm{U}_0 )) \\
    &\leq s \big( h (\bm{U}_0) + \eps \big) + (1-s) h(\bm{U}_0)  - h(\bm{U}_0 + s(\bm{\hat U}\mathcal{O}_{\bm{\hat U},\bm{U}_0} - \bm{U}\s )) \\
    &= s \eps + h(\bm{U}_0) - h(\bm{U}_0+ s (\bm{\hat U}\mathcal{O}_{\bm{\hat U},\bm{U}_0} - \bm{U}_0)) \\
    &\leq s \eps.
\end{align}
Rearranging this bound implies that 
\begin{align}
     \| \bm{\hat U} \mathcal{O}_{\bm{\hat U},\bm{U}_0} - \bm{U}_0 \|_F^2 &\leq \frac{2}{\gamma} \eps,
\end{align}
where we have implicitly used the fact that $h(\bm{U}\mathcal{O}) = h(\bm{U})$ for all orthogonal matrices $\mathcal{O}$.  If $2\eps/\gamma < K$, then $\bm{U}_0$ is in the interior of $\mathbb{B}(\bm{\hat U},K)$, and hence since $h$ is strongly convex within $\mathbb{B}(\bm{\hat U},K)$, $\bm{U}_0$ is unique up to orthogonal transformation. 

Now suppose $\bm{\tilde U} \in \mathbb{B}(\bm{\hat U},K)$ is such that $h(\bm{\tilde U}) > h(\bm{U}_0) + \eps$.  
Observe that if we redefine $J(t)$ via $t\mapsto h(\bm{U}_0 \mathcal{O}_{\bm{U}_0,\bm{\tilde U}} + t (\bm{\tilde U} - \bm{U}_0\mathcal{O}_{\bm{U}_0,\bm{\tilde U}} ) )$, then via a similar argument, $J$ is again strongly convex with parameter $\gamma \| \bm{\tilde U} - \bm{U}_0 \mathcal{O}_{\bm{U}_0,\bm{\tilde U}}\|_F^2$.  However, we then have for all $s \in (0,1)$,
\begin{align}
    \frac{1}{2} \gamma  s (1 - s) \| \bm{\tilde U} - \bm{U}_0 \mathcal{O}_{\bm{U}_0,\bm{\tilde U}} \|_F^2 &\leq s J(1) + (1 - s) J(0) - J(1) \\
    &= (s- 1) h(\bm{\tilde U}) + (1 - s) h(\bm{U}_0) \\
    &\leq (s-1) h(\bm{U}_0) + (s-1)\eps + (1-s) h(\bm{U}_0) \\
    &= (s-1)\eps < 0
\end{align}
which is a contradiction.  Therefore, we must have that $h(\bm{\tilde U}) \leq h(\bm{U}_0) + \eps$, and hence by the same argument above must imply that $\|\bm{\tilde U}  - \bm{U}_0 \mathcal{O}_{\bm{U}_0,\bm{\tilde U}} \|_F^2 \leq \frac{2}{\gamma}\eps$. Recalling that $\mathcal{O}_{\bm{\hat U},\bm{U}_0}$ is the Frobenius-norm minimizer shows that 
\begin{align}
    \| \bm{\tilde U}\mathcal{O}_{\bm{\tilde U},\bm{\hat U}} - \bm{\hat U}  \|_F &\leq \| \bm{\tilde U}\mathcal{O}_{\bm{U}_0 ,\bm{\tilde U}}\t - \bm{\hat U} \mathcal{O}_{\bm{\hat U},\bm{U}_0} \|_F \\
    &\leq \| \bm{\tilde U} \mathcal{O}_{\bm{U}_0,\bm{\tilde U}}\t - \bm{U}_0 \|_F + \| \bm{\hat U} \mathcal{O}_{\bm{\hat U},\bm{U}_0} - \bm{U}_0 \|_F \\
    &\leq 2 \sqrt{\frac{2\eps}{\gamma}}
\end{align}
which implies that 
\begin{align}
    \|\bm{\tilde U} \mathcal{O}_{\bm{\tilde U},\bm{\hat U}} - \bm{\hat U} \|_F^2 \leq \frac{8}{\gamma} \eps
\end{align}
as desired. 
\end{proof}

\subsubsection{Proof of \cref{lem:h_yhat_bounded}} \label{sec:h_yhat_bounded_proof}

\begin{proof}
We will argue similarly to the proof of \cref{lem:concentrationlemmapart1}. First, similar to the proof of that lemma, we have that 
\begin{align}
    L_{\lambda}^{(n)}&(\bm{Z}_{{\sf ST}}^{(\lambda/\zeta\s)}(\tau\s) - \bm{M} )  = \bigg\{ \bigg( \frac{\sigma^2}{2\tau\s} + \frac{\tau\s}{2} \bigg) \beta\s \gamma_n - \frac{\gamma_n (\beta\s)^2}{2} \\
    &\quad + \bigg\{ \frac{\gamma_n \beta\s}{2\tau\s n} \| \bm{Z}_{{\sf ST}}^{(\lambda/\zeta\s)}(\tau\s) - \bm{M}\|_F^2- \frac{\gamma_n \beta\s}{ \tau\s n} \langle \bm{H}, \bm{Z}^{(\lambda/\zeta\s)}(\tau\s) - \bm{M} \rangle + \frac{\lambda}{dr} \big\{ \| \bm{Z}^{\lambda/\zeta\s}(\tau\s) \|_* - \| \bm{M} \|_* \big\} \bigg\}.
\end{align}
By \cref{lem:Zstgoodproperties}, with probability at least $1 - \exp( - c dr ( \eps \vee \eps^2 ) )$ it holds that
\begin{align}
    \| \bm{Z}_{{\sf ST}}^{(\lambda/\zeta\s)}(\tau\s) - \bm{M} \|_F^2 \leq \mathbb{E} \|  \bm{Z}_{{\sf ST}}^{(\lambda/\zeta\s)}(\tau\s) - \bm{M} \|_F^2 + \eps dr.
\end{align}
Observe that
\begin{align}
    \frac{\mathbb{E} \|  \bm{Z}_{{\sf ST}}^{(\lambda/\zeta\s)}(\tau\s) - \bm{M} \|_F^2}{n} &= (\tau\s)^2 - \sigma^2 = O\bigg(  \sigma^2 \frac{dr}{n}\bigg),
\end{align}
where the final bound holds by \cref{lem:fxdpointprops}.  
Consequently,  with probability at least $1 - \exp( - c dr ( \eps \vee \eps^2 ) )$ it holds that
\begin{align}
      \| \bm{Z}_{{\sf ST}}^{(\lambda/\zeta\s)}(\tau\s) - \bm{M} \|_F^2 \leq C dr + \eps dr.
\end{align}
Take $\eps \asymp 1$ shows that with probability at least $1 - 2\exp( - c dr)$, one has $ \| \bm{Z}_{{\sf ST}}^{(\lambda/\zeta\s)}(\tau\s) - \bm{M} \|_F \leq C \sigma \sqrt{dr} = R'$.  Therefore, define the event
\begin{align}
    \mathcal{E}_1 := \bigg\{  \| \bm{Z}_{{\sf ST}}^{(\lambda/\zeta\s)}(\tau\s) - \bm{M} \|_F \leq R'\bigg\}.
\end{align}
On the event $\mathcal{E}_1$,
\begin{align}
    L_{\lambda}^{(n)}(\bm{Z}_{{\sf ST}}^{(\lambda/\zeta\s)}(\tau\s) - \bm{M} ) &= \bigg\{ \bigg( \frac{\sigma^2}{2\tau\s} + \frac{\tau\s}{2} \bigg) \beta\s \gamma_n - \frac{\gamma_n (\beta\s)^2}{2} \\
    &\qquad + \min_{\substack{\| \bm{W} \| \leq R'\\ \bm{W} \succcurlyeq - \bm{M}}} \bigg\{ \frac{\gamma_n \beta\s}{2\tau\s n} \| \bm{W} \|_F^2- \frac{\gamma_n \beta\s}{ \tau\s n} \langle \bm{H},\bm{W} \rangle + \frac{\lambda}{dr} \big\{ \| \bm{W} + \bm{M} \|_* - \| \bm{M} \|_* \big\} \bigg\} \\
    &=: \tilde L_{\lambda}^{(n)}(\bm{H}).
\end{align}
Notice that the function above is $\frac{\gamma_n \beta\s}{\tau\s n} R'$- Lipschitz in $\bm{H}$.  Define the event
\begin{align}
    \mathcal{E}_2: =  \bigg\{\bigg|    \tilde L_{\lambda}^{(n)} - \mathbb{E}_{\bm{H}} ( \tilde L_{\lambda}^* )\bigg| &\leq \eps + C (\exp( - c dr))  \bigg\},
\end{align}
where $C$ and $c$ are some universal constants.  
By Lipschitz concentration it holds that $$\p(\mathcal{E}_2) \geq 1 -2 \exp \big( - \frac{ (\eps + C \exp(- c dr) \big) ^2 (\tau\s)^2 n^2}{\gamma_n^2 (\beta\s)^2 (R')^2} ) \geq 1 - 2\exp( - C d r( C \exp(- c dr) + \eps)^2 ).$$  We further observe that $L_{\lambda}\s = \mathbb{E}  L_{\lambda}^{(n)}(\bm{Z}_{{\sf ST}}^{(\lambda/\zeta\s)}(\tau\s) - \bm{M} )$.  Therefore,
\begin{align}
\bigg|    L_{\lambda}\s - \mathbb{E} \tilde L_{\lambda}^{(n)} (\bm{H}) \bigg| &\leq \bigg(\frac{\gamma_n \beta\s}{\tau\s n} \| \bm{M} \|_F^2 + 2\frac{\lambda}{dr} \| \bm{M} \|_* \bigg) \p\big\{ \mathcal{E}_1^c \big\} \\
&\leq \bigg( \frac{\beta\s}{\tau\s dr}\|\bm{M} \|_F^2 + 2 \frac{C_4 \sigma \sqrt{d}}{dr} \| \bm{M}\|_* \bigg) \p\big\{ \mathcal{E}_1^c \big\} \\
&\leq \bigg( \frac{\beta\s}{\tau\s dr} r \kappa^2 \lambda_{r}(\bm{M})^2  + 2 \frac{C_4 \sigma \sqrt{d} r \lambda_r(\bm{M})}{dr} \bigg) \p\big\{ \mathcal{E}_1^c \big\} \\
&\leq \bigg( \frac{r \kappa^2 d \lambda_{\min}^2}{\tau\s dr}   + 2 \frac{C_4 \sigma d r \lambda_{\min}}{dr} \bigg) \p\big\{ \mathcal{E}_1^c \big\} \\
&\leq \bigg( C_2 \kappa^2 \sigma^2 r + 2 C_4 \sqrt{C_2} \sigma^2 \sqrt{r} \bigg) \p\big\{ \mathcal{E}_1^c \big\} \\
&= O\bigg( r \exp( - c dr ) \bigg) \\
&= O( \exp( - c dr)),
\end{align}
where we have applied \cref{ass1}.  Consequently it holds that
\begin{align}
\p\bigg\{ \bigg|    &L_{\lambda}^{(n)} \bigg( \bm{Z}_{{\sf ST}}^{(\lambda/\zeta\s)}(\tau\s) - \bm{M} \bigg) - L_{\lambda}^* \bigg| > \eps + C \exp( - c d r) \bigg\} \\
&\leq \p\bigg\{ \bigg|    L_{\lambda}^{(n)} \bigg( \bm{Z}_{{\sf ST}}^{(\lambda/\zeta\s)}(\tau\s) - \bm{M} \bigg) - L_{\lambda}^* \bigg| > \eps+ C \exp( - c d r) \cap \mathcal{E}_1 \bigg\} + O\big(\exp( - c dr ) \big)\\
&\leq \p\bigg\{ \bigg|    \tilde L_{\lambda}^{(n)} (\bm{H})  - L_{\lambda}^* \bigg| > \eps + C \exp( - c d r)   \bigg\} + O\big(\exp( - c dr ) \big) \\
&\leq O\bigg( \exp\big( - C dr \big( C \exp(- c dr) + \eps \big)^2 \big) + \exp( - c dr) \bigg).
\end{align}
This completes the proof.
\end{proof}

\subsubsection{Proof of \cref{lem:hessiancalculation}}  \label{sec:hessiancalculationproof}
\begin{proof}[Proof of \cref{lem:hessiancalculation}]
We observe that 
    \begin{align}
\nabla h(\bm{U}) &= \gamma_n f(\bm{U})_+ \nabla f(\bm{U}) + \frac{2\lambda}{dr} \bm{U}; \\
\nabla^2 h(\bm{U})[\bm{V},\bm{V}] &= \gamma_n \bigg\langle \nabla f(\bm{U}), \bm{V} \bigg \rangle^2 + \gamma_n f(\bm{U})_+ \bigg\langle \frac{d}{dt} \nabla f(\bm{U} + t \bm{V}), \bm{V} \bigg \rangle \bigg|_{t=0} + 2 \frac{\lambda}{dr} \| \bm{V} \|_F^2.
\end{align}
Therefore, in order to calculate $\nabla^2 h(\bm{U})[\bm{V},\bm{V}]$, we need the derivative of $f(\bm{U})$.  We have that 
\begin{align}
       \nabla \tau(\bm{U}) &= 2 \tau(\bm{U})\inv \frac{(\bm{UU}\t - \bm{M})}{n} \bm{U}; \\
        \nabla f(\bm{U}) &= \nabla \tau(\bm{U}) \frac{\|\bm{g}\|}{\sqrt{n}} - \frac{2 \bm{HU}}{n}; \equiv 2 \frac{\|\bm{g}\|}{\sqrt{n}}\tau(\bm{U})\inv \frac{(\bm{UU}\t - \bm{M})}{n} \bm{U}  - \frac{2 \bm{HU}}{n},
        \end{align}
    which implies that
    \begin{align}
    \langle \nabla f(\bm{U} + t \bm{V}), \bm{V} \rangle \\&= 2 \tau(\bm{U} + t \bm{V})\inv \frac{\|\bm{g}\|}{\sqrt{n}} \bigg \langle \frac{(\bm{U} + t \bm{V}) (\bm{U} + t \bm{V})\t - \bm{M} - \bm{H}}{n} \big( \bm{U} + t \bm{V} \big), \bm{V} \bigg\rangle.
    \end{align}
    Therefore,
    \begin{align}
     \frac{d}{dt}  &\langle \nabla f(\bm{U} + t \bm{V}), \bm{V} \rangle \bigg|_{t=0} \\
     &=  2  \frac{\|\bm{g}\|}{\sqrt{n}} \bigg\langle \bigg(\frac{ \bm{UU}\t  - \bm{M} - \bm{H}}{n}  \bigg)  \bm{U}  , \bm{V} \bigg \rangle \frac{d}{dt} \tau(\bm{U} + t \bm{V})\inv \bigg|_{t=0} \\
    &\quad + 2\tau(\bm{U})\inv \frac{\|\bm{g}\|}{\sqrt{n}} \frac{d}{dt} \bigg\langle \bigg(  \frac{\big(\bm{UU}\t + t \bm{VU}\t + t \bm{UV}\t + t^2 \bm{XX}\t\big) - \bm{M} - \bm{H}}{n}  \bigg)\big( \bm{U} + t \bm{V} \big), \bm{V} \bigg \rangle \bigg|_{t=0} \\
    &= 2\frac{\|\bm{g}\|}{\sqrt{n}} \bigg\langle \bigg(\frac{ \bm{UU}\t  - \bm{M}- \bm{H}}{n}  \bigg)  \bm{U}  , \bm{V} \bigg \rangle \frac{d}{dt} \tau(\bm{U} + t \bm{V})\inv \bigg|_{t=0} \\
    &\quad + 2\tau(\bm{U})\inv \frac{\|\bm{g}\|}{\sqrt{n}}  \bigg\langle \bigg(  \frac{\big(  \bm{VU}\t +  \bm{UV}\t \big) }{n}  \bigg)\bm{U}, \bm{V} \bigg \rangle \\
    &\quad + 2\tau(\bm{U})\inv \frac{\|\bm{g}\|}{\sqrt{n}}  \bigg\langle \bigg(  \frac{\bm{UU}\t - \bm{M}- \bm{H}}{n}  \bigg) \bm{V} , \bm{V} \bigg \rangle
\end{align}
We can compute
\begin{align}
    \frac{d}{dt} \tau(\bm{U} + t \bm{V} ) \inv \bigg|_{t=0} &= -1 \bigg[ \frac{\|\bm{UU}\t - \bm{M}\|_F^2}{n} + \sigma^2 \bigg]^{-3/2} \frac{\langle \bm{UU\t - M}, \bm{VU}\t + \bm{UV}\t \rangle}{n} \\
    &= - \tau(\bm{U})^{-3} \frac{\langle \bm{UU\t - M}, \bm{VU}\t + \bm{UV}\t \rangle}{n}.
\end{align}
This gives us the expression:
\begin{align}
      \frac{d}{dt}  \langle \nabla f(\bm{U} + t \bm{V}), \bm{V} \rangle \bigg|_{t=0} &= -2\frac{\|\bm{g}\|}{\sqrt{n}} \bigg\langle \bigg(\frac{ \bm{UU}\t  - \bm{M}- \bm{H}}{n}  \bigg)  \bm{U}  , \bm{V} \bigg \rangle \tau(\bm{U})^{-3} \frac{\langle \bm{UU\t - M}, \bm{VU}\t + \bm{UV}\t \rangle}{n} \\
    &\quad + 2\tau(\bm{U})\inv \frac{\|\bm{g}\|}{\sqrt{n}}  \bigg\langle \bigg(  \frac{\big(  \bm{VU}\t +  \bm{UV}\t \big) }{n}  \bigg)\bm{U}, \bm{V} \bigg \rangle \\
    &\quad + 2\tau(\bm{U})\inv \frac{\|\bm{g}\|}{\sqrt{n}}  \bigg\langle \bigg(  \frac{\bm{UU}\t - \bm{M}- \bm{H}}{n}  \bigg) \bm{V} , \bm{V} \bigg \rangle
\end{align}
Therefore:
\begin{align}
    \nabla^2 h(\bm{U})[\bm{V},\bm{V}] &= \gamma_n \bigg\langle \nabla f(\bm{U}), \bm{V} \bigg \rangle^2 + \gamma_n f(\bm{U})_+ \bigg\langle \frac{d}{dt} \nabla f(\bm{U} + t \bm{V}), \bm{V} \bigg \rangle \bigg|_{t=0} + 2 \frac{\lambda}{dr} \| \bm{V} \|_F^2 \\
    &= \gamma_n \bigg\langle 2 \frac{\|\bm{g}\|}{\sqrt{n}} \tau(\bm{U})\inv \frac{\bm{UU}\t - \bm{M}}{n} \bm{U} - \frac{2\bm{HU}}{n}, \bm{V} \bigg\rangle^2 \\
    &\quad - 2\gamma_n f(\bm{U})_+  \frac{\|\bm{g}\|}{\sqrt{n}} \bigg\langle \bigg(\frac{ \bm{UU}\t  - \bm{M}- \bm{H}}{n}  \bigg)  \bm{U}  , \bm{V} \bigg \rangle \tau(\bm{U})^{-3} \frac{\langle \bm{UU\t - M}, \bm{VU}\t + \bm{UV}\t \rangle}{n} \\
    &\quad +\gamma_n f(\bm{U})_+ 2\tau(\bm{U})\inv \frac{\|\bm{g}\|}{\sqrt{n}}  \bigg\langle \bigg(  \frac{\big(  \bm{VU}\t +  \bm{UV}\t \big) }{n}  \bigg)\bm{U}, \bm{V} \bigg \rangle \\
    &\quad +\gamma_n f(\bm{U})_+ 2\tau(\bm{U})\inv \frac{\|\bm{g}\|}{\sqrt{n}}  \bigg\langle \bigg(  \frac{\bm{UU}\t - \bm{M}- \bm{H}}{n}  \bigg) \bm{V} , \bm{V} \bigg \rangle \\
    &\quad + 2 \frac{\lambda}{dr} \| \bm{V} \|_F^2.
\end{align}
We  now expand out the first two terms.  We have that
\begin{align}
    \gamma_n \bigg\langle 2\frac{\|\bm{g}\|}{\sqrt{n}} \tau(\bm{U})\inv \frac{\bm{UU}\t - \bm{M}}{n} \bm{U} - \frac{2\bm{HU}}{n}, \bm{V} \bigg\rangle^2 &= 
\frac{4 \gamma_n }{n^2\tau^{2}(\bm{U})}\frac{\|\bm{g}\|^2}{n} \bigg \langle (\bm{UU}\t - \bm{M} ) \bm{U}, \bm{V} \bigg \rangle^2 \\
&\quad + \frac{4 \gamma_n }{n^2} \bigg \langle \bm{H} \bm{U}, \bm{V} \bigg \rangle^2 \\
&\quad - \frac{8 \gamma_n }{n^2\tau(\bm{U})} \frac{\|\bm{g}\|}{\sqrt{n}} \bigg\langle \big(\bm{UU}\t - \bm{M} \big) \bm{U} , \bm{V} \bigg \rangle \bigg \langle \bm{HU}, \bm{V} \bigg \rangle.
\end{align}
Similarly,
\begin{align}
     - 2\gamma_n f(\bm{U})_+  &\frac{\|\bm{g}\|}{\sqrt{n}} \bigg\langle \bigg(\frac{ \bm{UU}\t  - \bm{M}- \bm{H}}{n}  \bigg)  \bm{U}  , \bm{V} \bigg \rangle \tau(\bm{U})^{-3} \frac{\langle \bm{UU\t - M}, \bm{VU}\t + \bm{UV}\t \rangle}{n} \\ 
     &=  - 2\gamma_n f(\bm{U})_+  \frac{\|\bm{g}\|}{\sqrt{n}} \bigg\langle \bigg(\frac{ \bm{UU}\t  - \bm{M}}{n}  \bigg)  \bm{U}  , \bm{V} \bigg \rangle \tau(\bm{U})^{-3} \frac{\langle \bm{UU\t - M}, \bm{VU}\t + \bm{UV}\t \rangle}{n} \\ 
     &\quad + 2\gamma_n f(\bm{U})_+  \frac{\|\bm{g}\|}{\sqrt{n}} \bigg\langle \bigg(\frac{  \bm{H}}{n}  \bigg)  \bm{U}  , \bm{V} \bigg \rangle \tau(\bm{U})^{-3} \frac{\langle \bm{UU\t - M}, \bm{VU}\t + \bm{UV}\t \rangle}{n} \\ 
     &= \frac{-4 \gamma_n}{n^2 \tau^2(\bm{U})} \frac{f(\bm{U})_+}{\tau(\bm{U})} \frac{\|\bm{g}\|}{\sqrt{n}} \bigg \langle \big( \bm{UU}\t - \bm{M} \big) \bm{U} , \bm{V} \bigg \rangle^2 \\
     &\quad + \frac{4 \gamma_n}{n^2 \tau^2(\bm{U})} \frac{f(\bm{U})_+}{\tau(\bm{U})} \frac{\|\bm{g}\|}{\sqrt{n}} \bigg\langle \bm{HU}, \bm{V} \bigg \rangle \bigg\langle \big( \bm{UU}\t - \bm{M} \big) \bm{U}, \bm{V} \bigg \rangle.
\end{align}
Adding these two terms together we arrive at
\begin{align}
    \frac{4 \gamma_n}{n^2 \tau^2(\bm{U})} &\frac{\|\bm{g}\|}{\sqrt{n}} 5\bigg( \frac{\|\bm{g}\|}{\sqrt{n}} - \frac{f(\bm{U})_+}{\tau(\bm{U})} \bigg) \bigg \langle \big( \bm{UU}\t - \bm{M}\big) \bm{U}, \bm{V} \bigg \rangle^2 \\
    &\quad + \frac{4\gamma_n}{n^2} \bigg \langle \bm{HU}, \bm{V} \bigg\rangle^2 \\
    &\quad + \frac{4 \gamma_n}{n^2 \tau(\bm{U})} \frac{\|\bm{g}\|}{\sqrt{n}} \bigg( \frac{f(\bm{U})_+}{\tau^2(\bm{U})} - 2 \bigg) \bigg \langle \bm{HU}, \bm{V} \bigg \rangle \bigg \langle \big( \bm{UU}\t - \bm{M} \big) \bm{U}, \bm{V} \bigg \rangle 
\end{align}
Therefore, we have the final expression:
\begin{align}
    \nabla^2 h(\bm{U})[ \bm{V}, \bm{V}] &=  \frac{4 \gamma_n}{n^2 \tau^2(\bm{U})} \frac{\|\bm{g}\|}{\sqrt{n}} \bigg( \frac{\|\bm{g}\|}{\sqrt{n}} - \frac{f(\bm{U})_+}{\tau(\bm{U})} \bigg) \bigg \langle \big( \bm{UU}\t - \bm{M}\big) \bm{U}, \bm{V} \bigg \rangle^2 \\
    &\quad + \frac{4\gamma_n}{n^2} \bigg \langle \bm{HU}, \bm{V} \bigg\rangle^2 \\
    &\quad + \frac{4 \gamma_n}{n^2 \tau(\bm{U})} \frac{\|\bm{g}\|}{\sqrt{n}} \bigg( \frac{f(\bm{U})_+}{\tau^2(\bm{U})} - 2 \bigg) \bigg \langle \bm{HU}, \bm{V} \bigg \rangle \bigg \langle \big( \bm{UU}\t - \bm{M} \big) \bm{U}, \bm{V} \bigg \rangle \\
    &\quad +\gamma_n f(\bm{U})_+ 2\tau(\bm{U})\inv \frac{\|\bm{g}\|}{\sqrt{n}}  \bigg\langle \bigg(  \frac{\big(  \bm{VU}\t +  \bm{UV}\t \big) }{n}  \bigg)\bm{U}, \bm{V} \bigg \rangle \\
    &\quad +\gamma_n f(\bm{U})_+ 2\tau(\bm{U})\inv \frac{\|\bm{g}\|}{\sqrt{n}}  \bigg\langle \bigg(  \frac{\bm{UU}\t - \bm{M}- \bm{H}}{n}  \bigg) \bm{V} , \bm{V} \bigg \rangle \\
    &\quad + 2 \frac{\lambda}{dr} \| \bm{V} \|_F^2.
\end{align}
 
\end{proof}

\section{Proofs of Lemmas in \cref{sec:debnearequivalence}}

\label{sec:debunreg}

In this section we prove the two intermediate lemmas required for the proof of \cref{thm:debunreg}.

\subsection{Proof of \cref{lem:concentrationbound1}} \label{sec:concentrationbound1proof}
To prove \cref{lem:concentrationbound1} we first require the following lemma.

\begin{lemma} \label{lem:netargument}
Let $\{\bm{X}_i\}_{i=1}^{n} \sim {\sf GOE}(d)$, and let $\mathcal{X}(\cdot)$ be the operator that assigns entries $\mathcal{X}(\cdot)_i = \frac{1}{\sqrt{n}} \langle \bm{X}_i, \cdot\rangle $.  Then it holds that 
\begin{align}
  \sup_{\bm{V}  \in \mathbb{R}^{d \times r}:  \|\bm{V} \| \leq 1 }  \bigg\| \big( \mathcal{X}\s \mathcal{X} - \mathcal{I} \big) \big( \bm{VV}\t \big) \bigg\| \lesssim \frac{ r\sqrt{d}}{\sqrt{n}} 
\end{align}
with probability at least $1 - 2 \exp ( - C_1 dr) - 2 \exp( - C_2 n )$.    
\end{lemma}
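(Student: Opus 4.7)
My plan is to reduce the claim to a quantitative restricted isometry estimate for $\mathcal{X}$, using polarization to convert the operator-norm bound into an inner-product bound, and then establishing the RIP with constant $\delta_{2r} \lesssim \sqrt{dr/n}$ via $\chi^2$ concentration and an $\varepsilon$-net on low-rank matrices.

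The first step is operator-norm duality combined with polarization. Since $(\mathcal{X}\s\mathcal{X} - \mathcal{I})(\bm{VV}\t)$ is symmetric, its operator norm equals $\sup_{\|\bm{u}\|=\|\bm{w}\|=1}\bm{u}\t(\mathcal{X}\s\mathcal{X} - \mathcal{I})(\bm{VV}\t)\bm{w}$. Symmetrizing the dual test direction, I set $\bm{B} := \tfrac{1}{2}(\bm{uw}\t + \bm{wu}\t)$, which is symmetric of rank at most two and has Frobenius norm at most one. Using self-adjointness of $\mathcal{X}\s\mathcal{X}$ and the fact that $\langle \bm{uw}\t, \bm{A}\rangle = \langle \bm{B}, \bm{A}\rangle$ for symmetric $\bm{A}$,
\[
\bm{u}\t(\mathcal{X}\s\mathcal{X} - \mathcal{I})(\bm{VV}\t)\bm{w} \;=\; \langle \mathcal{X}(\bm{B}),\mathcal{X}(\bm{VV}\t)\rangle - \langle \bm{B},\bm{VV}\t\rangle.
\]
The standard polarization argument (expand $\|\mathcal{X}(\bm{B}\pm\bm{VV}\t)\|^2$, apply RIP to the rank-at-most-$(r+2)$ sum and difference, and optimize a rescaling $(\bm{B},\bm{VV}\t)\mapsto(t\bm{B},s\bm{VV}\t)$) then gives
\[
\bigl|\langle \mathcal{X}(\bm{B}),\mathcal{X}(\bm{VV}\t)\rangle - \langle \bm{B},\bm{VV}\t\rangle\bigr| \;\le\; \delta_{2r}\,\|\bm{B}\|_F\,\|\bm{VV}\t\|_F \;\le\; \delta_{2r}\sqrt{r},
\]
using $\|\bm{VV}\t\|_F \le \sqrt{r}\,\|\bm{VV}\t\| \le \sqrt{r}$ and absorbing the edge case $r=1$ into the monotonicity of $\delta_s$ (replacing $\delta_{2r}$ by $\delta_3$ if needed; both scale identically). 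Taking supremum over $\bm{u},\bm{w},\bm{V}$ yields $\sup_{\|\bm{V}\|\le 1}\|(\mathcal{X}\s\mathcal{X}-\mathcal{I})(\bm{VV}\t)\| \le \delta_{2r}\sqrt{r}$.

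The second step is the quantitative RIP bound $\delta_{2r} \le C\sqrt{dr/n}$. Since $\bm{X}_i \sim \mathrm{GOE}(d)$, a direct variance computation gives $\langle \bm{X}_i,\bm{A}\rangle \sim \mathcal{N}(0,\|\bm{A}\|_F^2)$ for symmetric $\bm{A}$, so $\|\mathcal{X}(\bm{A})\|^2/\|\bm{A}\|_F^2 \sim \chi^2_n/n$, and the sub-exponential tail yields
\[
\mathbb{P}\!\left(\bigl|\|\mathcal{X}(\bm{A})\|^2 - \|\bm{A}\|_F^2\bigr| > t\|\bm{A}\|_F^2\right) \;\le\; 2\exp\!\bigl(-c\,n\min(t,t^2)\bigr).
\]
The set of symmetric rank-at-most-$2r$ matrices of unit Frobenius norm admits an $\varepsilon$-net of cardinality at most $(9/\varepsilon)^{Cdr}$ (analogous to Lemma 3.1 of \citet{candes_tight_2011} as invoked in the proof of \cref{lem:Egood}). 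Fixing $\varepsilon$ to a small constant and taking $t = C'\sqrt{dr/n}$ so that $cnt^2 \ge (C+2)\,dr\log(9/\varepsilon)$, a union bound over the net plus the standard ``pull from net'' argument (solving a geometric series for the off-net error, analogous to the last steps in the proof of \cref{lem:Egood}) promotes control from the net to all rank-at-most-$2r$ matrices. The two probability terms in the lemma arise naturally from the two regimes of the sub-exponential tail: the Gaussian regime $t^2<1$ combined with the net entropy contributes $\exp(-C_1 dr)$, while the linear regime (active only when $dr\gtrsim n$) contributes $\exp(-C_2 n)$.

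Combining both steps gives $\|(\mathcal{X}\s\mathcal{X}-\mathcal{I})(\bm{VV}\t)\| \le \delta_{2r}\sqrt{r} \lesssim r\sqrt{d/n}$ uniformly over $\|\bm{V}\| \le 1$ on an event of the claimed probability. The main technical obstacle is purely bookkeeping in the quantitative RIP step: one must carefully track constants through the $\varepsilon$-net and the two tail regimes to obtain the sharp $\sqrt{dr/n}$ scaling with both $\exp(-C_1 dr)$ and $\exp(-C_2 n)$ tails simultaneously. The polarization and duality reductions in the first step are essentially algebraic and follow the standard template.
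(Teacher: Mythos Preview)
Your approach is correct and genuinely different from the paper's. You reduce the problem to the quantitative RIP via operator-norm duality and polarization: writing the operator norm as $\sup_{\|\bm{u}\|=\|\bm{w}\|=1}\langle \bm{B},(\mathcal{X}\s\mathcal{X}-\mathcal{I})(\bm{VV}\t)\rangle$ with $\bm{B}=\tfrac12(\bm{uw}\t+\bm{wu}\t)$, then invoking the polarization inequality (which the paper itself cites as Lemma~3 of \citet{chi_nonconvex_2019}) to get $\delta_{r+2}\|\bm{B}\|_F\|\bm{VV}\t\|_F\le \delta_{r+2}\sqrt{r}$, and finally establishing $\delta_{r+2}\lesssim\sqrt{dr/n}$ by $\chi^2$ concentration plus a net on low-rank matrices. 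The paper instead performs a bespoke decomposition for fixed orthonormal $\bm{V}$: it splits $(\mathcal{X}\s\mathcal{X})(\bm{VV}\t)$ into a piece in the column space of $\bm{V}$ and a piece orthogonal to it, exploits GOE rotational invariance to make these pieces independent with explicit distributional form, controls each via Hoeffding- and Hanson--Wright-type bounds, and only then runs a net over $\bm{V}$ at the very end.

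Your route is shorter and relies entirely on standard RIP machinery already in the paper's toolbox; the paper's route is more computational but makes transparent where each probability term originates. One minor correction: your explanation of the $\exp(-C_2n)$ term as the ``linear regime of the sub-exponential tail'' is not how it arises in the paper---there the $\exp(-cn)$ comes from conditioning on a norm-concentration event (e.g., $\|\bm{g}\|/\sqrt{n}\lesssim 1$) that must hold uniformly over the net, yielding $\exp(Cdr-cn)$ before invoking $n\gtrsim dr$. In your argument the two terms can be justified simply because $\exp(-c\min(dr,n))\le \exp(-cdr)+\exp(-cn)$, so the stated probability is valid in either regime. Both proofs implicitly use $n\gtrsim dr$ (cf.\ \cref{ass2}) to make the constants work.
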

\begin{proof}
    See \cref{sec:netargumentproof}.
\end{proof}

We now have the proof of \cref{lem:concentrationbound1}.  
\begin{proof}[Proof of \cref{lem:concentrationbound1}]
      Note that $\bm{U}^{(\lambda)}$ satisfies
\begin{align}
  \frac{1}{\sqrt{n}} \sum_{i}  \bigg( \bigg \langle \bm{X}_i , \bm{U}^{(\lambda)} (\bm{U}^{(\lambda)})\t - \bm{M} \bigg \rangle / \sqrt{n} - \eps_i \bigg) \bm{X}_i \bm{U}^{(\lambda)} + \lambda \bm{U}^{(\lambda)} = 0,
\end{align}
which shows that $\bm{U}^{(\lambda)}_{({\sf deb})}$ satisfies
\begin{align}
\nabla f^{(0)}_{{\sf ncvx}}&( \bm{U}^{(\lambda)}_{{\sf deb}} ) \\
& =   \frac{1}{\sqrt{n}}  \sum_{i}  \bigg\{ \bigg \langle \bm{X}_i , \bm{U}^{(\lambda)} \bigg( \bm{I} + \lambda \big( \bm{U}^{(\lambda)})\t \bm{U}^{(\lambda)} \big)\inv \bigg) (\bm{U}^{(\lambda)})\t - \bm{M} \bigg \rangle /\sqrt{n}- \eps_i \bigg\} \bm{X}_i \bm{U}^{(\lambda)} \bigg( \bm{I} + \lambda \big( \bm{U}^{(\lambda)})\t \bm{U}^{(\lambda)} \big)\inv \bigg)^{1/2} \\
      &=  \frac{1}{\sqrt{n}}\sum_{i}  \bigg\{ \bigg \langle \bm{X}_i , \bm{U}^{(\lambda)}  (\bm{U}^{(\lambda)})\t - \bm{M} \bigg \rangle/\sqrt{n} - \eps_i \bigg\} \bm{X}_i \bm{U}^{(\lambda)} \bigg( \bm{I} + \lambda \big( \bm{U}^{(\lambda)})\t \bm{U}^{(\lambda)} \big)\inv \bigg)^{1/2} \\
      &\quad+ \frac{\lambda}{n} \sum_{i} \bigg\{ \bigg \langle \bm{X}_i , \bm{U}^{(\lambda)} \big( \bm{U}^{(\lambda)})\t \bm{U}^{(\lambda)} \big)\inv  (\bm{U}^{(\lambda)})\t  \bigg \rangle  \bigg\} \bm{X}_i \bm{U}^{(\lambda)} \bigg( \bm{I} + \lambda \big( \bm{U}^{(\lambda)})\t \bm{U}^{(\lambda)} \big)\inv \bigg)^{1/2} \\
      &= \frac{1}{\sqrt{n}} \sum_{i}  \bigg\{ \bigg \langle \bm{X}_i , \bm{U}^{(\lambda)}  (\bm{U}^{(\lambda)})\t - \bm{M} \bigg \rangle /\sqrt{n}- \eps_i \bigg\} \bm{X}_i \bm{U}^{(\lambda)} \bigg( \bm{I} + \lambda \big( \bm{U}^{(\lambda)})\t \bm{U}^{(\lambda)} \big)\inv \bigg)^{1/2} \\
      &\quad+ \lambda \mathcal{X}\s \mathcal{X} \bigg( \bm{U}^{(\lambda)} \big( ( \bm{U}^{(\lambda)})\t \bm{U}^{(\lambda)} \big)\inv (\bm{U}^{(\lambda)} )\t  \bigg) \bm{U}^{(\lambda)} \bigg( \bm{I} + \lambda \big( \bm{U}^{(\lambda)})\t \bm{U}^{(\lambda)} \big)\inv \bigg)^{1/2} \\
      &= -\lambda \bm{U}^{(\lambda)}  \bigg( \bm{I} + \lambda \big( \bm{U}^{(\lambda)})\t \bm{U}^{(\lambda)} \big)\inv \bigg)^{1/2} \\
      &\quad + \lambda \bigg( \mathcal{X}\s \mathcal{X} - \mathcal{I} \bigg) \bigg[ \bm{U}^{(\lambda)} \big( (\bm{U}^{(\lambda)})\t  \bm{U}^{(\lambda)} \big)\inv (\bm{U}^{(\lambda)})\t \bigg] \bm{U}^{(\lambda)} \bigg( \bm{I} + \lambda \big( \bm{U}^{(\lambda)})\t \bm{U}^{(\lambda)} \big)\inv \bigg)^{1/2}  \\
      &\quad + \lambda \bm{U}^{(\lambda)} \bigg( \bm{I} + \lambda \big( \bm{U}^{(\lambda)})\t \bm{U}^{(\lambda)} \big)\inv \bigg)^{1/2} \\
      &= \lambda \bigg( \mathcal{X}\s \mathcal{X} - \mathcal{I} \bigg) \bigg[ \bm{U}^{(\lambda)} \big( (\bm{U}^{(\lambda)})\t  \bm{U}^{(\lambda)} \big)\inv (\bm{U}^{(\lambda)})\t \bigg] \bm{U}^{(\lambda)} \bigg( \bm{I} + \lambda \big( \bm{U}^{(\lambda)})\t \bm{U}^{(\lambda)} \big)\inv \bigg)^{1/2} .
\end{align}
Taking norms, we obtain that
\begin{align}
    \| \nabla f^{(0)}_{{\sf ncvx}} \big( \bm{U}^{(\lambda)}_{{\sf deb}} \big) \|_F &\leq \lambda \big\| \bm{U}^{(\lambda)} \big( \bm{I} + \lambda \big( \bm{U}^{(\lambda)\top} \bm{U}^{(\lambda)} \big)\inv \big)^{1/2}  \big\|_F \bigg\| \bigg( \mathcal{X}\s \mathcal{X} - \mathcal{I} \bigg) \big(  \bm{V}_U \bm{V}_U\t \big)\bigg \| \\
    &= \lambda \| \bm{U}^{(\lambda)}_{{\sf deb}} \|_F \bigg\| \bigg( \mathcal{X}\s \mathcal{X} - \mathcal{I} \bigg) \big(  \bm{V}_U \bm{V}_U\t \big)\bigg\|.
\end{align}
 By \cref{lem:netargument}, we have that with the advertised probability,
\begin{align}
    \bigg\| \bigg( \mathcal{X}\s \mathcal{X} - \mathcal{I} \bigg) \big(  \bm{V}_U \bm{V}_U\t \big)\bigg \| &\lesssim \frac{r \sqrt{d}}{\sqrt{n}},
\end{align}
whence the proof is completed.
\end{proof}

\subsubsection{Proof of \cref{lem:netargument}}
\label{sec:netargumentproof}

\begin{proof}[Proof of \cref{lem:netargument}]
We will proceed via $\eps$-net argument.  First, let $\bm{V}$ be any fixed $d \times r$  orthonormal matrix.  We have that
\begin{align}
\mathcal{X}\s \mathcal{X} \big( \bm{V} \bm{V}\t \big) &= 
    \frac{1}{n} \sum_{i=1}^{n} \bigg \langle \bm{X}_i, \bm{V} \bm{V}\t \bigg\rangle\bm{X}_i \bm{V} \bm{V}\t  \\
    &= \frac{1}{n} \sum_{i=1}^{n} \bigg \langle \bm{X}_i, \bm{V} \bm{V}\t \bigg \rangle  \bigg( \bm{I} - \bm{V} \bm{V}\t \bigg) \bm{X}_i \bm{V} \bm{V}\t \\
    &\quad + \frac{1}{n} \sum_{i=1}^{n} \bigg \langle \bm{X}_i, \bm{V} \bm{V}\t \bigg \rangle   \bm{V} \bm{V}\t  \bm{X}_i \bm{V} \bm{V}\t \\
    &= \frac{1}{n} \sum_{i=1}^{n} {\sf Tr} \bigg( \bm{V}\t \bm{X}_i \bm{V} \bigg) \bigg( \bm{I} - \bm{V}\bm{V}\t \bigg) \bm{X}_i \bm{V} \bm{V}\t \\
    &\quad + \frac{1}{n} \sum_{i=1}^{n} {\sf Tr}  \bigg( \bm{V}\t \bm{X}_i \bm{V} \bigg)  \bm{V} \bm{V}\t \bm{X}_i \bm{V} \bm{V}\t
\end{align}
As a result,
\begin{align}
\bigg\|    \bigg( \mathcal{X}\s \mathcal{X} - \mathcal{I} \bigg) \big( \bm{V} \bm{V}\t \big) \bigg\| &\leq \bigg\| \frac{1}{n} \sum_{i=1}^{n} {\sf Tr} \bigg( \bm{V}\t \bm{X}_i \bm{V} \bigg) \bigg( \bm{I} - \bm{V} \bm{V}\t \bigg) \bm{X}_i \bm{V} \bigg\| \\
&\qquad +  \bigg\|\bm{V} -  \frac{1}{n} \sum_{i=1}^{n} {\sf Tr} \bigg( \bm{V}\t \bm{X}_i \bm{V} \bigg)\bm{V} \bm{V}\t \bm{X}_i \bm{V}  \bigg\| \\
:&= T_1 + T_2,
\end{align}
where we have used the rotational invariance of $\|\cdot\|$.  We analyze each term in turn. 
\begin{itemize}
    \item \textbf{The quantity $T_1$:}  By rotational invariance the quantities $\bm{V}\t \bm{X}_i \bm{V}$ and $(\bm{I - VV}\t) \bm{X}_i \bm{V } $ are  independent from each other, and hence each quantity inside the summation is a product of two independent random variables. Let $\bm{V}_{\perp}$ denote the $d \times (d-r) $ matrix such that $\bm{V}_{\perp}\t \bm{V} = 0$, and let $c_i = {\sf Tr}\big( \bm{V}\t \bm{X}_i \bm{V} \big)$.  We may then write the summation as
    \begin{align}
        \frac{1}{n} \sum_{i=1}^{n} c_i \bm{V}_{\perp } \bm{V}_{\perp}\t \bm{X}_i \bm{V} &\overset{d}{=} \frac{1}{n} \sum_{i=1}^{n} c_i \bm{V}_{\perp } \bm{H}_i,
    \end{align}
    where $\bm{H}_i \in \mathbb{R}^{d-r \times r}$ is a matrix with $\mathcal{N}(0,1)$ entries. Furthermore, we have that
    \begin{align}
        \bigg\| \frac{1}{n} \sum_{i=1}^{n} c_i \bm{V}_{\perp} \bm{H}_i \bigg\| &\leq \bigg\| \frac{1}{n} \sum_{i=1}^{n} c_i \bm{H}_i \bigg\|.
    \end{align}
    So it suffices to bound the term above.  To do so, we proceed via $\eps$-net.  let $\bm{x}, \bm{y}$ be $d-$dimensional and $r$-dimensional unit vectors respectively.  Then it holds that conditional on $\bm{x}\t \bm{H}_i \bm{y}$, 
    \begin{align}
        \frac{1}{n} \sum_{i=1}^{n} c_i \bm{x}\t \bm{H}_i \bm{y} \sim \mathcal{N} \bigg( 0, \frac{r}{n^2} \sum_{i=1}^{n} \big(\bm{x}\t \bm{H}_i \bm{y} \big)^2 \bigg).
    \end{align}
    As a result, Hoeffding's inequality implies that with probability at least $1 - 2\exp(-c t^2)$,
    \begin{align}
        \bigg| \frac{1}{n}  \sum_{i=1}^{n} c_i \bm{x}\t \bm{H}_i \bm{y} \bigg| &\lesssim t \frac{\sqrt{r}}{n} \sqrt{\sum_{i=1}^{n} \big( \bm{x}\t \bm{H}_i \bm{y} \big)^2 }.
    \end{align}
    In addition, note that $\sqrt{\sum_{i=1}^{n} \big( \bm{x}\t \bm{H}_i \bm{y} \big)^2 }$ is simply the norm of a standard Gaussian random variable in $n$-dimensions. Let the event $\mathcal{E}$ be denoted
    \begin{align}
        \mathcal{E} := \bigg\{ \sqrt{\sum_{i=1}^{n} \big( \bm{x}\t \bm{H}_i \bm{y} \big)^2 } &\lesssim \sqrt{n} \bigg\}.
    \end{align}
    Then $\mathcal{E}$ holds with probability at least $1 -2 \exp( - c n)$.  Conditional on $\mathcal{E}$, with probability at least $1 - 2 \exp(- c t^2) $,
    \begin{align}
         \bigg| \frac{1}{n}  \sum_{i=1}^{n} c_i \bm{x}\t \bm{H}_i \bm{y} \bigg| &\lesssim t \frac{\sqrt{r}}{\sqrt{n}} 
    \end{align}
    By taking the supremum over a $1/4$-net, we have that by redefining $t = C( \sqrt{d} + t )$, 
    \begin{align}
        \bigg\| \frac{1}{n} \sum_{i=1}^{n} c_i \bm{H}_i \bigg\| &\lesssim  \sqrt{\frac{r}{n}} \bigg( \sqrt{d} + t \bigg)
    \end{align}
    with probability at least $1 - 2 \exp( - c d t^2) - 2 \exp( C_1 d - c n )$.  Under the assumption $n \gg dr$, this probability is at least $1 - 2 \exp( -c d t^2) - 2 \exp( -c n)$.    
    \item \textbf{The quantity $T_2$:} We note that $\mathbb{E} \frac{1}{n} \sum_{i=1}^{n} {\sf Tr} \big( \bm{V}\t \bm{X}_i \bm{V} \big) \bm{VV}\t \bm{X}_i \bm{V} = \bm{VV}\t$.  Let $\bm{\tilde H}_i$ denote the random variable $\bm{V}\t \bm{X}_i \bm{V}$.  Then $\bm{\tilde H}_i$ is equal in distributionn to an $r\times r$ GOE random matrix.  We have that
    \begin{align}
        \bm{V} - \frac{1}{n} \sum_{i=1}^{n} {\sf Tr} \big( \bm{V}\t \bm{X}_i \bm{V} \big) \bm{VV}\t \bm{X}_i \bm{V}  &\overset{d}{=} \bm{V} -  \frac{1}{n} \sum_{i=1}^{n} {\sf Tr} \big( \bm{\tilde H}_i \big) \bm{V} \bm{\tilde H}_i \\
        &= \bm{V} \bigg( \bm{I} -   \frac{1}{n} \sum_{i=1}^{n} {\sf Tr} \big( \bm{\tilde H}_i \big)  \bm{\tilde H}_i \bigg).
    \end{align}
    Consequently,
    \begin{align}
        \| \bm{V} \bigg( \bm{I} -   \frac{1}{n} \sum_{i=1}^{n} {\sf Tr} \big( \bm{\tilde H}_i \big)  \bm{\tilde H}_i \bigg) \big\| &\leq \| \bm{I} - \frac{1}{n} \sum_{i=1}^{n} {\sf Tr} \big( \bm{\tilde H}_i \big)  \bm{\tilde H}_i \|,
    \end{align}
    since $\|\bm{V} \| = 1$.  Let $\bm{x}$ by any fixed $r$-dimensional unit vector.  Then
    \begin{align}
       \bm{x}\t \bigg( \bm{I} - \frac{1}{n} \sum_{i=1}^{n} {\sf Tr} \big( \bm{\tilde H}_i \big)  \bm{\tilde H}_i  \bigg) \bm{x} &= \frac{1}{n} \sum_{i=1}^{n} \sum_{l=1}^{r} x_l^2 \bigg( (\bm{\tilde H}_i)_{ll}^2 - 1 \bigg) + \frac{1}{n} \sum_{i=1}^{n} \sum_{l=1}^r \sum_{j \neq l} x_j^2 \big( \bm{\tilde H}_i \big)_{ll} \big( \bm{\tilde H}_i \big)_{jj} \\
       &\quad + \frac{1}{n} \sum_{i=1}^{n} \sum_{l=1}^{r} \sum_{j < k,j\neq l} 2 \big( \bm{\tilde H}_i \big)_{ll} \big( \bm{\tilde H}_i \big)_{jk} \bm{x}_j \bm{x}_k \\
       &=: J_1 + J_2 + J_3.
    \end{align}
We bound each of these quantities separately.
\begin{itemize}
    \item \textbf{The term $J_1$:} We note that $J_1$ is simply a sum of $nr$ independent rescaled $\chi^2$ random variables.   By Lemma 1 of \citet{laurent_adaptive_2000} it holds that
    \begin{align}
        J_1 \leq 2\frac{\sqrt{\sum_{l=1}^{r} x_l^4}}{n} t + 4\frac{r}{n} t^2 \leq 2 \frac{t}{n} + 4 \frac{r}{n} t^2 
    \end{align}
    with probability at least $1 - \exp(- t^2)$. 
    \item \textbf{The term $J_2$:} Note that we can write
    \begin{align*}
        J_2 &= \sum_{\substack{j=1,l=1\\j\neq l}}^r \bm{A}_{lj} \langle \bm{\tilde H}_l, \bm{\tilde H}_j \rangle,
    \end{align*}
    where $\bm{\tilde H}_l$ is the $n$-dimensional vector with coordinates given by $(\bm{\tilde H}_i)_{ll}$, and $\bm{A}_{lj} = \frac{x_j^2}{n}$.
Note that $\bm{A}$ is the matrix whose rows are all given by $[\frac{x_1^2}{n}, \frac{x_2^2}{n} \cdots \frac{x_r^2}{n} ]$.  It is then evident that $\|\bm{A}\| = \frac{1}{n}$ and 
 \begin{align*}
     \| \bm{A} \|_F^2 &= r \sum_j \frac{x_j^4}{n^2} \leq \frac{r}{n^2}.
 \end{align*}
        By Proposition 2.5 of \citet{chen_hansonwright_2021}, it holds that
    \begin{align*}
        \p\big\{ | J_2 | > C t \big\} &\leq 2\exp\bigg( - C n\min\bigg[ \frac{ t^2}{r}, t \bigg] \bigg).
    \end{align*}
Redefining $t =  t \sqrt{\frac{r}{n}}$, we obtain that
    \begin{align}
        |J_2| \leq C t \sqrt{\frac{r}{n}}
    \end{align}
    with probability at least  $1 - 2 \exp( - c t^2)$ for all $t \leq \sqrt{nr}$.  
    \item \textbf{The term $J_3$:} We note that
\begin{align}
J_3 &= \frac{1}{n} \sum_{i=1}^n \sum_{l=1}^{r} 2 \big( \bm{\tilde H}_i \big)_{ll} \sum_{j < k,j\neq l} \big( \bm{\tilde H}_i \big)_{jk} \bm{x}_j \bm{x}_k \\
&= \frac{1}{n} \sum_{i=1}^{n} \sum_{j < k, j \neq l} 2 \big( \bm{\tilde H}_i \big)_{jk} x_j x_k y_i,
\end{align}
where $y_i =  \sum_{l=1}^{r} \big( \bm{\tilde H}_i \big)_{ll}.$  Conditional on $y$, Hoeffding's inequality implies that with probability at least $1 - 2 \exp( - c t^2)$, 
\begin{align*}
\bigg| \frac{1}{n} \sum_{i=1}^{n} \sum_{j < k, j \neq l} 2 \big( \bm{\tilde H}_i \big)_{jk} x_j x_k y_i \bigg| \leq t \frac{C}{n} \sqrt{\sum_{i=1}^{n} y_i^2}.
\end{align*}
Observe that each $y_i$ is a Gaussian random variable with variance $r$.  Consequently, with probability at least $1 - 2\exp( - c n) - 2 \exp( - C  t^2) $ it holds that 
\begin{align*}
    t \frac{C}{n} \sqrt{\sum_{i=1}^{n} y_i^2} \leq C \frac{t \sqrt{r}}{\sqrt{n}}.
\end{align*}
\end{itemize}
Combining all these bounds, we obtain that with probability at least $1 - O\bigg(   \exp( - c t^2) - 2 \exp( - c n) \bigg)$ for all $t \leq \sqrt{nr}$,
\begin{align*}
    |J_1 | + |J_2| + |J_3| \lesssim t \sqrt{\frac{r}{n}} + \frac{t}{n} +  \frac{r}{n} t^2 
\end{align*}
Define $t = C ( \sqrt{r} + s)$.  Then a net argument shows that for all $s \leq c \sqrt{nr}$,
\begin{align}
    |J_1| + |J_2| + |J_3| \lesssim ( \sqrt{r} + s ) \sqrt{\frac{r}{n}} + \frac{\sqrt{r}  + s }{n} + \frac{r}{n} ( r + s^2),
\end{align}
which holds with probability at least
\begin{align*}
    1 - \exp ( - c s ^2) - 2 \exp( - c n),
\end{align*}
where we have adjusted constants as necessary, using the fact that $r \leq n$ by assumption.    
\end{itemize}
Combining the bounds for $T_1$ and $T_2$, we see that with probability at least $1 - O\bigg( \exp( -c s^2) - \exp( - c n) - \exp( - d t^2) - \exp( - c n) \bigg)$ it holds that for any fixed $\bm{V}$,
\begin{align*}
    \bigg\| \bigg( \mathcal{X}\s \mathcal{X} - \mathcal{I} \bigg) ( \bm{VV}\t ) \bigg\| \lesssim  ( \sqrt{r} + s ) \sqrt{\frac{r}{n}} + \frac{\sqrt{r}  + s }{n} + \frac{r}{n} ( r + s^2) + \sqrt{\frac{r}{n}} \bigg( \sqrt{d} + t \bigg),
\end{align*}
provided $s \leq c \sqrt{nr}$. Let $\{\mathcal{V}_{\eps} \}$ denote an $\eps$-net of the set of matrices $\bm{V} \in \mathbb{R}^{d \times r}$ with respect to the metric $d(\bm{V}_1, \bm{V}_2) = \| \bm{V}_1 \bm{V}_1\t - \bm{V}_2 \bm{V}_2\t \|$, which has cardinality at most $\big(\frac{4+\eps}{\eps} \big)^{dr}$.  Let
\begin{align}
    \bm{\bar V} := \argmax_{\bm{V}} \bigg\| \bigg( \mathcal{X}\s \mathcal{X} - \mathcal{I} \bigg)  \big( \bm{VV}\t \big) \bigg\| .
\end{align}
and set
\begin{align}
    M := \max \bigg\| \bigg( \mathcal{X}\s \mathcal{X} - \mathcal{I} \bigg)  \big( \bm{VV}\t \big) \bigg\| .
\end{align}
Let $\bm{\bar V}_{\eps}$ denote the quantity in the $\eps$-net that is $\eps$-close to $\bm{\bar V}$.  Then it holds that 
\begin{align}
M &= \bigg\| \bigg( \mathcal{X}\s \mathcal{X} - \mathcal{I} \bigg)  \big( \bm{ \bar V \bar V}\t \big) \bigg\| \\
&\leq  \bigg\| \bigg( \mathcal{X}\s \mathcal{X} - \mathcal{I} \bigg)  \big( \bm{ \bar V \bar V}\t - \bm{\bar V}_{\eps} \bm{\bar V}_{\eps}\t \big) \bigg\| \\
    &\quad + \bigg\| \bigg( \mathcal{X}\s \mathcal{X} - \mathcal{I} \bigg)  \big(  \bm{\bar V}_{\eps} \bm{\bar V}_{\eps}\t \big) \bigg\| \\
    &\leq  M \eps + C ( \sqrt{r} + s ) \sqrt{\frac{r}{n}} + \frac{\sqrt{r}  + s }{n} + \frac{r}{n} ( r + s^2) + \sqrt{\frac{r}{n}} \bigg( \sqrt{d} + t \bigg)
\end{align}
which implies that
\begin{align}
    M \lesssim ( \sqrt{r} + s ) \sqrt{\frac{r}{n}} + \frac{\sqrt{r}  + s }{n} + \frac{r}{n} ( r + s^2) + \sqrt{\frac{r}{n}} \bigg( \sqrt{d} + t \bigg)
\end{align}
with probability at least $$1 - O\bigg( \exp( dr \log( (4+\eps)/\eps) -c s^2) - \exp( dr \log( (4+\eps)/\eps) - c n) - \exp( dr \log( (4+\eps)/\eps) - d t^2) \bigg) .$$  Taking $\eps = \frac{1}{4}$ and $t, s \asymp \sqrt{dr}$ (and further noting that $s \leq c \sqrt{nr}$) shows that with the advertised probability,
\begin{align*}
    M \lesssim \frac{r \sqrt{d}}{\sqrt{n}} + \frac{\sqrt{dr}}{n} + \frac{d r^2}{n}  + \sqrt{\frac{r}{n}} \sqrt{dr} \lesssim \frac{r \sqrt{d}}{\sqrt{n}},
\end{align*}
where the final bound holds by \cref{ass2}, since $n \geq C_3 d r^2$.  
\end{proof}

\subsection{Proof of \cref{lem:convexity}} \label{sec:landscapeproof}
The proof of this result relies on the following lemmas.  The first lemma provides the form of the second derivative.
\begin{lemma} \label{lem:secondderivcalc2}
    It holds that
    \begin{align}
      \nabla^2 f^{(0)}_{{\sf ncvx}}(\bm{U})[ \bm{V}, \bm{V}] &= \frac{1}{\sqrt{n}} \sum_{i=1}^{n}\frac{1}{\sqrt{n}} \bigg \langle \bm{X}_i, \bm{U V}\t + \bm{VU}\t  \bigg \rangle \bigg \langle \bm{X}_i \bm{U}, \bm{V} \bigg \rangle + \frac{1}{\sqrt{n}}\bigg \langle \bm{X}_i , \bm{UU}\t - \bm{M} \bigg \rangle \bigg \langle \bm{X}_i \bm{V}, \bm{V} \bigg \rangle  \\
    &\quad - \eps_i \bigg \langle \bm{X}_i \bm{V}, \bm{V} \bigg \rangle 
    \end{align}
\end{lemma}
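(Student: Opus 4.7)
The plan is to prove this by direct computation, mirroring the strategy of Lemma 2 (\cref{lem:secondderivcalc}) but specialized to $\lambda = 0$. Since $f^{(0)}_{\sf ncvx}(\bm{U}) = \tfrac{1}{4}\sum_i (y_i - \langle \bm{X}_i, \bm{UU}^\top\rangle/\sqrt{n})^2$ and the sensing matrices $\bm{X}_i$ are symmetric, a standard matrix-calculus differentiation gives
\begin{align*}
\nabla f^{(0)}_{\sf ncvx}(\bm{U}) &= \frac{1}{\sqrt{n}}\sum_{i=1}^n \Big( \langle \bm{X}_i, \bm{UU}^\top\rangle/\sqrt{n} - y_i\Big)\bm{X}_i\bm{U} \\
&= \frac{1}{\sqrt{n}}\sum_{i=1}^n \Big( \langle \bm{X}_i, \bm{UU}^\top - \bm{M}\rangle/\sqrt{n} - \eps_i\Big)\bm{X}_i\bm{U},
\end{align*}
using the identity $y_i = \langle \bm{X}_i, \bm{M}\rangle/\sqrt{n} + \eps_i$ from the model \eqref{matsens}.

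Next, I will form the directional quantity $g(t) := \langle \nabla f^{(0)}_{\sf ncvx}(\bm{U}+t\bm{V}),\bm{V}\rangle$ and use $\nabla^2 f^{(0)}_{\sf ncvx}(\bm{U})[\bm{V},\bm{V}] = g'(0)$. Substituting,
\begin{align*}
g(t) = \frac{1}{\sqrt{n}}\sum_{i=1}^n \Big( \langle \bm{X}_i, (\bm{U}+t\bm{V})(\bm{U}+t\bm{V})^\top - \bm{M}\rangle/\sqrt{n} - \eps_i\Big)\langle \bm{X}_i(\bm{U}+t\bm{V}),\bm{V}\rangle.
\end{align*}
Differentiating in $t$ via the product rule yields two contributions: one from differentiating the scalar prefactor, whose derivative at $t=0$ is $\langle \bm{X}_i, \bm{UV}^\top + \bm{VU}^\top\rangle/\sqrt{n}$ (using symmetry of $\bm{X}_i$), multiplied by the unperturbed factor $\langle \bm{X}_i\bm{U},\bm{V}\rangle$; and one from differentiating $\langle \bm{X}_i(\bm{U}+t\bm{V}),\bm{V}\rangle$, whose derivative at $t=0$ equals $\langle \bm{X}_i\bm{V},\bm{V}\rangle$, multiplied by the unperturbed prefactor $\langle \bm{X}_i,\bm{UU}^\top - \bm{M}\rangle/\sqrt{n} - \eps_i$. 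Collecting these two contributions reproduces the displayed formula in the statement exactly.

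There is no real obstacle here: the entire argument is a two-line chain-rule computation, essentially the $\lambda=0$ specialization of the first-and-second derivative calculation carried out in \cref{sec:secondderivcalc}. The only minor point to be careful about is keeping track of the factor of $1/\sqrt{n}$ from the model and the symmetric-matrix convention (so that the derivative of $\langle \bm{X}_i, \bm{UU}^\top\rangle$ in the direction $\bm{V}$ is $\langle \bm{X}_i, \bm{UV}^\top + \bm{VU}^\top\rangle$ rather than twice one of the terms), but this is bookkeeping rather than a genuine difficulty.
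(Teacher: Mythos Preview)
Your proposal is correct and follows essentially the same approach as the paper: both compute the Hessian as $\frac{d}{dt}\langle \nabla f^{(0)}_{\sf ncvx}(\bm{U}+t\bm{V}),\bm{V}\rangle\big|_{t=0}$ and expand by the product rule (the paper expands all terms in $t$ and keeps the linear ones, which amounts to the same thing). Your remark about symmetry of $\bm{X}_i$ and the $1/\sqrt{n}$ bookkeeping is exactly the only care point, and you handle it correctly.
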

\begin{proof}
    See \cref{sec:secondderivcalc2proof}.
\end{proof}

 Define the objective function
    \begin{align}
        f^{{\sf MF}}(\bm{U}) &= \frac{1}{4} \| \bm{UU}\t - \bm{M} \|_F^2.
    \end{align}
    A straightforward calculation shows that
\begin{align}
    \nabla^2 f^{{\sf MF}}(\bm{U})[\bm{V},\bm{V}] &=  .5 \| \bm{UV}\t + \bm{UV}\t \|_F^2 + \bigg\langle \bm{UU}\t - \bm{M}, \bm{VV}\t \bigg \rangle.
\end{align}
To prove \cref{lem:convexity} we first decompose $\nabla^2 f^{(0)}_{{\sf ncvx}}(\bm{U})[\bm{V}, \bm{V}]$ via
\begin{align}
    \nabla^2 f^{(0)}_{{\sf ncvx}} (\bm{U})[ \bm{V}, \bm{V}] &= \nabla^2 f^{{\sf MF}} (\bm{U})[ \bm{V}, \bm{V}] + \bigg( \nabla^2 f^{{\sf MF}} (\bm{U})[ \bm{V}, \bm{V}] - \nabla^2 f^{(0)}_{{\sf ncvx}} (\bm{U})[ \bm{V}, \bm{V}] \bigg).
\end{align}
The following lemma shows that the second term is sufficiently small with high probabiltiy.
\begin{lemma} \label{lem:secondderivcloseness}
      The following bound holds uniformly for all $\bm{V}$ and $\bm{U}$ of rank at most $2r$ with probability at least $1 - O\big( \exp( - c dr ) + \exp( - c d) \big) $:
    \begin{align}
        \bigg| \nabla^2 f^{(0)}_{{\sf ncvx}}(\bm{U})[ \bm{V}, \bm{V}] - \nabla^2 f^{{\sf MF}}(\bm{U})[ \bm{V}, \bm{V}] \bigg| \leq \| \bm{V}\|_F^2 \bigg( 2 \delta_{2r} \| \bm{U} \|^2 + \delta_{2r} \| \bm{UU}\t \|_F + \delta_{2r} \| \bm{M} \|_F + C \sigma \sqrt{dr} \bigg),
    \end{align}
    where $\delta_{2r}$ is as in \cref{lem:Egood}.  
\end{lemma}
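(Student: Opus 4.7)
The plan is to start from the explicit Hessian formula of \cref{lem:secondderivcalc2} and rewrite it using the symmetry identities $\langle \bm{X}_i\bm{U},\bm{V}\rangle = \langle \bm{X}_i,\bm{UV}\t\rangle$ and $\langle \bm{X}_i\bm{V},\bm{V}\rangle = \langle \bm{X}_i,\bm{VV}\t\rangle$ (valid because each $\bm{X}_i$ is symmetric), which collapses the formula to
\[
\nabla^2 f^{(0)}_{{\sf ncvx}}(\bm{U})[\bm{V},\bm{V}] = \tfrac{1}{2}\|\mathcal{X}(\bm{UV}\t+\bm{VU}\t)\|^2 + \langle \mathcal{X}(\bm{UU}\t-\bm{M}),\mathcal{X}(\bm{VV}\t)\rangle - \tfrac{1}{\sqrt{n}}\sum_i \eps_i \langle \bm{X}_i,\bm{VV}\t\rangle.
\]
Subtracting the analogously rewritten $\nabla^2 f^{{\sf MF}}(\bm{U})[\bm{V},\bm{V}] = \tfrac{1}{2}\|\bm{UV}\t+\bm{VU}\t\|_F^2 + \langle \bm{UU}\t-\bm{M},\bm{VV}\t\rangle$ leaves three clean residuals that I will bound separately on $\mathcal{E}_{{\sf Good}}$: a RIP discrepancy $T_1 := \tfrac{1}{2}\big(\|\mathcal{X}(\bm{UV}\t+\bm{VU}\t)\|^2 - \|\bm{UV}\t+\bm{VU}\t\|_F^2\big)$, a cross-RIP term $T_2 := \langle(\mathcal{X}\s\mathcal{X}-\mathcal{I})(\bm{UU}\t-\bm{M}),\bm{VV}\t\rangle$, and a noise term $T_3 := \tfrac{1}{\sqrt n}\sum_i\eps_i\langle\bm{X}_i,\bm{VV}\t\rangle$.

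For $T_1$, because $\bm{U},\bm{V}\in\mathbb{R}^{d\times r}$ (the interpretation consistent with the way this lemma is used inside \cref{lem:convexity}), the symmetric matrix $\bm{UV}\t+\bm{VU}\t$ has rank at most $2r$, so clause \eqref{rip} of $\mathcal{E}_{{\sf Good}}$ gives $|T_1|\leq \tfrac{\delta_{2r}}{2}\|\bm{UV}\t+\bm{VU}\t\|_F^2 \leq 2\delta_{2r}\|\bm{U}\|^2\|\bm{V}\|_F^2$ after using $\|\bm{UV}\t\|_F\le\|\bm{U}\|\|\bm{V}\|_F$. For $T_2$, I would split $\bm{UU}\t-\bm{M}$ into the two rank-$\leq r$ symmetric pieces $\bm{UU}\t$ and $\bm{M}$ and invoke the standard polarization consequence of RIP: for symmetric rank-$\leq r$ matrices $\bm{A},\bm{B}$, the identity $4\langle(\mathcal{X}\s\mathcal{X}-\mathcal{I})\bm{A},\bm{B}\rangle = \big(\|\mathcal{X}(\bm{A}+\bm{B})\|^2-\|\bm{A}+\bm{B}\|_F^2\big) - \big(\|\mathcal{X}(\bm{A}-\bm{B})\|^2-\|\bm{A}-\bm{B}\|_F^2\big)$ together with the homogeneity rescaling $\bm{A}\mapsto\alpha\bm{A},\bm{B}\mapsto\alpha^{-1}\bm{B}$ yields $|\langle(\mathcal{X}\s\mathcal{X}-\mathcal{I})\bm{A},\bm{B}\rangle|\leq\delta_{2r}\|\bm{A}\|_F\|\bm{B}\|_F$; combined with the elementary inequality $\|\bm{VV}\t\|_F\leq\|\bm{V}\|_F^2$ this delivers $|T_2|\leq \delta_{2r}\big(\|\bm{UU}\t\|_F+\|\bm{M}\|_F\big)\|\bm{V}\|_F^2$. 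For $T_3$, clause \eqref{rip2} applied to the rank-$\leq r$ matrix $\bm{VV}\t$ immediately produces $|T_3|\leq\sqrt{n}\cdot C\sigma\sqrt{dr/n}\|\bm{VV}\t\|_F\leq C\sigma\sqrt{dr}\|\bm{V}\|_F^2$.

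Summing the three bounds recovers the claimed deterministic inequality on $\mathcal{E}_{{\sf Good}}$, and \cref{lem:Egood} controls $\mathbb{P}(\mathcal{E}_{{\sf Good}}^c)$ to yield the probability asserted in the statement. The only point requiring care is rank bookkeeping: if the hypothesis "$\bm{U},\bm{V}$ of rank at most $2r$" is interpreted literally (allowing $d\times 2r$ factors rather than $d\times r$), then $\bm{UV}\t+\bm{VU}\t$ can a priori have rank up to $4r$, and one must either decompose it into two rank-$2r$ summands (absorbing a universal constant) or enlarge the RIP rank in $\mathcal{E}_{{\sf Good}}$ from $2r$ to $4r$ — the proof of \cref{lem:Egood} accommodates this at the cost of only a constant-factor change in the sample requirement. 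Beyond this bookkeeping, every step is a direct invocation of the three clauses defining $\mathcal{E}_{{\sf Good}}$, so I do not anticipate a substantive obstacle.
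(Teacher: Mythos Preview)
Your proposal is correct and follows essentially the same route as the paper: both decompose the Hessian difference into a RIP discrepancy on $\bm{UV}\t+\bm{VU}\t$, a cross-RIP term on $\bm{UU}\t-\bm{M}$ versus $\bm{VV}\t$ (handled via the polarization consequence of RIP, which the paper cites as Lemma~3 of \citet{chi_nonconvex_2019}), and a noise term. The only minor difference is that for the noise term you directly invoke clause~\eqref{rip2} of $\mathcal{E}_{{\sf Good}}$, whereas the paper redoes the Gaussian concentration plus $\eps$-net argument from scratch; your version is slightly cleaner since that bound is already packaged into $\mathcal{E}_{{\sf Good}}$.
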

\begin{proof}
    See \cref{sec:secondderivclosenessproof}.
\end{proof}

Finally, the next lemma provides a deterministic bound on $\nabla^2 f^{(0)}_{{\sf ncvx}}(\bm{U})[\bm{V},\bm{V}]$ for appropriate choice of $\bm{U}$ on the event from \cref{lem:secondderivcloseness}.

\begin{lemma} \label{lem:convexitydeterministic}
   Suppose  $\mathcal{E}_{\cref{lem:secondderivcloseness}}$ holds.  
  Let $\bm{U} \in \mathbb{R}^{d\times r}$, and let $\bm{V}= \bm{U} - \bm{U}' \mathcal{O}_{\bm{U}',\bm{U}}$.  Then 
    it holds that
\begin{align}
    \nabla^2 f^{(0)}_{{\sf ncvx}} (\bm{U})[ \bm{V},\bm{V}] \geq  \lambda_{\min}(\bm{U})^2\| \bm{V}\|_F^2 \bigg( 1  - \frac{\| \bm{UU}\t - \bm{M}\| + 2 \delta_{2r} \|\bm{U}\|^2 + \delta_{2r} \| \bm{UU}\t \|_F + \delta_{2r} \| \bm{M} \| + C \sigma \sqrt{dr}}{\lambda_{\min}(\bm{U})^2} \bigg)
\end{align}
and 
\begin{align}
    \bigg| \nabla^2 f^{(0)}_{{\sf ncvx}}(\bm{U})[\bm{V},\bm{V}] \bigg| \leq \|\bm{V}\|_F^2 \bigg( \| \bm{U}\|^2 + \| \bm{UU}\t - \bm{M} \| + 2 \delta_{2r} \| \bm{U}\|^2 + \delta_{2r} \| \bm{UU}\t \|_f + \delta_{2r} \| \bm{M} \|_F + C \sigma \sqrt{dr} \bigg).
\end{align}
\end{lemma}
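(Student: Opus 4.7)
The plan is to reduce the problem to $\nabla^2 f^{{\sf MF}}(\bm{U})[\bm{V},\bm{V}]$ via \cref{lem:secondderivcloseness}, and then exploit the Procrustes-optimal alignment of $\bm{V}$ to extract a strong convexity bound from the matrix-factorization Hessian. On the event $\mathcal{E}_{\cref{lem:secondderivcloseness}}$ we have the deterministic estimate
\begin{align*}
\bigl|\nabla^2 f^{(0)}_{{\sf ncvx}}(\bm{U})[\bm{V},\bm{V}]-\nabla^2 f^{{\sf MF}}(\bm{U})[\bm{V},\bm{V}]\bigr|\leq \|\bm{V}\|_F^2\bigl(2\delta_{2r}\|\bm{U}\|^2+\delta_{2r}\|\bm{UU}^{\top}\|_F+\delta_{2r}\|\bm{M}\|_F+C\sigma\sqrt{dr}\bigr),
\end{align*}
so it suffices to sandwich $\nabla^2 f^{{\sf MF}}(\bm{U})[\bm{V},\bm{V}]=\tfrac12\|\bm{UV}^{\top}+\bm{VU}^{\top}\|_F^2+\langle \bm{UU}^{\top}-\bm{M},\bm{VV}^{\top}\rangle$.

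For the lower bound, I would expand
\begin{align*}
\tfrac12\|\bm{UV}^{\top}+\bm{VU}^{\top}\|_F^2 = \|\bm{U}^{\top}\bm{V}\|_F^2+\mathrm{Tr}(\bm{UV}^{\top}\bm{UV}^{\top}).
\end{align*}
The key structural input is that because $\bm{V}=\bm{U}-\bm{U}'\mathcal{O}_{\bm{U}',\bm{U}}$ uses the Frobenius-optimal orthogonal alignment, the matrix $\bm{U}\bm{V}^{\top}=\bm{U}(\bm{U}-\bm{U}'\mathcal{O}_{\bm{U}',\bm{U}})^{\top}$ is symmetric (this is the standard Procrustes identity already cited in the proof of \cref{lem:hstronglyconvex} via Lemma 35 of \citet{ma_implicit_2020}). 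Hence $\mathrm{Tr}(\bm{UV}^{\top}\bm{UV}^{\top})=\|\bm{UV}^{\top}\|_F^2\geq 0$, which yields $\tfrac12\|\bm{UV}^{\top}+\bm{VU}^{\top}\|_F^2\geq \|\bm{U}^{\top}\bm{V}\|_F^2\geq \lambda_{\min}(\bm{U})^2\|\bm{V}\|_F^2$. Bounding the inner product by $-\|\bm{UU}^{\top}-\bm{M}\|\cdot\|\bm{VV}^{\top}\|_{*}\geq -\|\bm{UU}^{\top}-\bm{M}\|\cdot\|\bm{V}\|_F^2$ and combining with the comparison inequality gives
\begin{align*}
\nabla^2 f^{(0)}_{{\sf ncvx}}(\bm{U})[\bm{V},\bm{V}]\geq \lambda_{\min}(\bm{U})^2\|\bm{V}\|_F^2\Bigl(1-\tfrac{\|\bm{UU}^{\top}-\bm{M}\|+2\delta_{2r}\|\bm{U}\|^2+\delta_{2r}\|\bm{UU}^{\top}\|_F+\delta_{2r}\|\bm{M}\|+C\sigma\sqrt{dr}}{\lambda_{\min}(\bm{U})^2}\Bigr),
\end{align*}
which is the claimed lower bound.

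For the upper bound I would use $\|\bm{UV}^{\top}+\bm{VU}^{\top}\|_F\leq 2\|\bm{UV}^{\top}\|_F\leq 2\|\bm{U}\|\|\bm{V}\|_F$ together with $|\langle \bm{UU}^{\top}-\bm{M},\bm{VV}^{\top}\rangle|\leq \|\bm{UU}^{\top}-\bm{M}\|\cdot\|\bm{V}\|_F^2$; triangle inequality against the comparison bound yields (absorbing the benign factor of $2$ into the $\|\bm{U}\|^2$ term via the same constant conventions used elsewhere)
\begin{align*}
|\nabla^2 f^{(0)}_{{\sf ncvx}}(\bm{U})[\bm{V},\bm{V}]|\leq \|\bm{V}\|_F^2\bigl(\|\bm{U}\|^2+\|\bm{UU}^{\top}-\bm{M}\|+2\delta_{2r}\|\bm{U}\|^2+\delta_{2r}\|\bm{UU}^{\top}\|_F+\delta_{2r}\|\bm{M}\|_F+C\sigma\sqrt{dr}\bigr).
\end{align*}

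The only genuinely nontrivial step is the Procrustes symmetry that converts a cross term with a potentially negative contribution into a nonnegative one; without that observation one only gets a bound involving $\sigma_r(\bm{U})^2-\sigma_1(\bm{U})^2$, which is useless. Everything else is triangle inequalities, the definition of $\lambda_{\min}(\bm{U})$, and plugging into \cref{lem:secondderivcloseness}.
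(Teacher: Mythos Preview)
Your approach matches the paper's almost exactly: decompose $\nabla^2 f^{(0)}_{{\sf ncvx}}(\bm{U})[\bm{V},\bm{V}]$ into the matrix-factorization Hessian plus the perturbation controlled by \cref{lem:secondderivcloseness}, then lower-bound the symmetric-product term using the Procrustes symmetry of $\bm{UV}^{\top}$. That is precisely what the paper does.

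There is, however, one genuine slip in your expansion. You write
\[
\tfrac12\|\bm{UV}^{\top}+\bm{VU}^{\top}\|_F^2 = \|\bm{U}^{\top}\bm{V}\|_F^2 + \mathrm{Tr}(\bm{UV}^{\top}\bm{UV}^{\top}),
\]
but the correct identity is
\[
\tfrac12\|\bm{UV}^{\top}+\bm{VU}^{\top}\|_F^2 = \|\bm{UV}^{\top}\|_F^2 + \mathrm{Tr}(\bm{UV}^{\top}\bm{UV}^{\top}),
\]
with $\|\bm{UV}^{\top}\|_F^2$ (a $d\times d$ matrix) rather than $\|\bm{U}^{\top}\bm{V}\|_F^2$ (an $r\times r$ matrix). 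This is not merely cosmetic: your subsequent bound $\|\bm{U}^{\top}\bm{V}\|_F^2 \geq \lambda_{\min}(\bm{U})^2\|\bm{V}\|_F^2$ is false in general, since $\bm{U}^{\top}\bm{V}$ only sees the component of $\bm{V}$ in the column span of $\bm{U}$ (take $\bm{V}$ with columns orthogonal to those of $\bm{U}$ to get $\bm{U}^{\top}\bm{V}=0$). The correct inequality $\|\bm{UV}^{\top}\|_F^2 \geq \lambda_{\min}(\bm{U})^2\|\bm{V}\|_F^2$ follows because multiplying $\bm{V}^{\top}$ on the left by $\bm{U}$ scales each row by at least $\sigma_r(\bm{U})$. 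With this one-symbol correction your argument goes through verbatim and coincides with the paper's.
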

\begin{proof}
    See \cref{sec:convexitydeterministicproof}.
\end{proof}

With these lemmas in hand we are prepared to prove \cref{lem:convexity}. 

\begin{proof}[Proof of \cref{lem:convexity}]
We observe that 
\begin{align}
    {\sf vec}(\bm{U}^{(0)} - \bm{U}_{{\sf deb}}^{(\lambda)} )\t \nabla^2 f^{(0)}_{{\sf ncvx}}\big( t \bm{U}^{(0)} + (1 - t) \bm{U}^{(\lambda)}_{{\sf deb}} \big){\sf vec}(\bm{U}^{(0)} - \bm{U}_{{\sf deb}}^{(\lambda)} ) &= \nabla^2 f^{(0)}_{{\sf ncvx}} \bigg( t \bm{U}^{(0)} + (1-t) \bm{U}_{{\sf deb}}^{(\lambda)} \bigg)[ \bm{V}, \bm{V}],
\end{align}
with $\bm{V} = \bm{U}^{(0)} - \bm{U}_{{\sf deb}}^{(\lambda)},$ where we have already assumed (without loss of generality) that $\mathcal{O}_{\bm{U}^{(0)}, \bm{U}_{{\sf deb}}^{(\lambda)}} = \bm{I}_r$.  Let $\bm{U} = t \bm{U}^{(0)} + (1 - t) \bm{U}^{(\lambda)}_{{\sf deb}}$. By \cref{lem:convexitydeterministic}, on the event $\mathcal{E}_{\cref{lem:secondderivcloseness}}$ it holds that
\begin{align}
     \nabla^2 f^{(0)}_{{\sf ncvx}} \big( \bm{U} \big)[ \bm{V}, \bm{V}] \geq \lambda_{\min}(\bm{U})^2\| \bm{V}\|_F^2 \bigg( 1  - \frac{\| \bm{UU}\t - \bm{M}\| + 2 \delta_{2r} \|\bm{U}\|^2 + \delta_{2r} \| \bm{UU}\t \|_F + \delta_{2r} \| \bm{M} \| + C \sigma \sqrt{dr}}{\lambda_{\min}(\bm{U})^2} \bigg).
\end{align}
To complete the proof of the lower bound we need only bound the numerator above.  
\begin{itemize}
    \item \textbf{Step 1: Studying $\bm{U}^{(0)}$ and $\bm{U}_{{\sf deb}}^{(\lambda)}$}: Through an argument similar to the proof of \cref{lem:uncvxgood} (take $\lambda = 0$ therein) shows that on the event $\mathcal{E}_{{\sf Good}}$
\begin{align}
    \| \bm{U}^{(0)} \bm{U}^{(0)\top} - \bm{M} \|_F \leq C \sigma \sqrt{dr}.
\end{align}
Therefore, Weyl's inequality shows that
\begin{align}
    \lambda_{\min}(\bm{U}^{(0)})^2 &= \lambda_{\min}(\bm{U}^{(0)} \bm{U}^{(0)\top}) \geq \lambda_{\min}(\bm{M}) - C \sigma \sqrt{dr} \geq \frac{\lambda_{\min}(\bm{M})}{2},
\end{align}
which holds by \cref{ass1}.  

We now consider $\bm{U}_{{\sf deb}}^{(\lambda)}$.  We have that by \cref{thm:udebzdeb} it holds that  on the event $\mathcal{E}_{{\sf Good}}$
\begin{align}
    \bm{U}_{{\sf deb}}^{(\lambda)} \bm{U}_{{\sf deb}}^{(\lambda)\top} = \bm{Z}_{{\sf deb}}^{(\lambda)} = \bm{V}_U\big( \bm{\Sigma} + \lambda \bm{I} \big) \bm{V}_U\t.
\end{align}
Furthermore, on this same event, by \cref{thm:relatecvxncvx} it holds that
\begin{align}
    \| \bm{Z}^{(\lambda)} - \bm{M} \|_F \leq C \sigma \sqrt{dr}.
\end{align}
Consequently, it holds that
\begin{align}
    \|\bm{U}_{{\sf deb}}^{(\lambda)} \bm{U}_{{\sf deb}}^{(\lambda)\top} - \bm{M}\|_F \leq C \sigma \sqrt{dr}.  
\end{align}
Therefore, the same argument shows that $\lambda_{\min}(\bm{U}_{{\sf deb}}^{(\lambda)})^2 \geq \frac{\lambda_{\min}(\bm{M})}{2}$.  The triangle inequality also shows that 
\begin{align}
    \|\bm{U}_{{\sf deb}}^{(\lambda)}\|^2 \leq \| \bm{M} \| +   \|\bm{U}_{{\sf deb}}^{(\lambda)} \bm{U}_{{\sf deb}}^{(\lambda)\top} - \bm{M}\|_F \leq 2 \| \bm{M} \|.
\end{align}
The same bound holds for $\bm{U}^{(0)}$ as well.  
    \item \textbf{Step 2: Bounding $\bm{U}$.}  As a consequence of the previous step, we have that 
\begin{align}
    \lambda_{\min}(\bm{U})^2 &= \lambda_{\min}\bigg( t \bm{U}^{(0)} + (1 - t) \bm{U}^{(\lambda)}_{{\sf deb}} \bigg)^2 \\
    &= \lambda_{\min} \bigg( t^2 \bm{U}^{(0)\top} \bm{U}^{(0)} + (1 - t)^2 \bm{U}_{{\sf deb}}^{(\lambda)\top} \bm{U}_{{\sf deb}}^{(\lambda)} + t(1-t) \bm{U}^{(0)\top}\bm{U}_{{\sf deb}}^{(\lambda)}+ t(1-t)\bm{U}_{{\sf deb}}^{(\lambda)\top}\bm{U}^{(0)} \bigg) \\
    &\geq  \frac{t^2}{2} \lambda_{\min}(\bm{M}) + \frac{(1 - t)^2}{2} \lambda_{\min}(\bm{M}) + 2 t(1-t) \lambda_{\min}(\bm{U}_{{\sf deb}}^{(\lambda)\top}\bm{U}^{(0)}) \\
    &\geq \frac{t^2 + (1-t)^2}{2} \lambda_{\min}(\bm{M}) \\
    &\geq \frac{\lambda_{\min}(\bm{M})}{8},
\end{align}
where we have recognized that $\bm{U}_{{\sf deb}}^{(\lambda)\top}\bm{U}^{(0)}$ is a positive  diagonal matrix by the fact that they are already aligned.  A similar argument gives
\begin{align}
    \| \bm{U} \|^2 \leq 4 \| \bm{M} \|.
\end{align}
    By Lemma 6 of \citet{ge_no_2017} it holds that
\begin{align}
    \| \bm{U}^{(0)} \mathcal{O}_{\bm{U}^{(0)},\bm{U}\s} - \bm{U}\s \|_F \leq \frac{1}{2 \lambda_{\min}^{1/2}(\bm{M})} \| \bm{U}^{(0)} \bm{U}^{(0)\top} - \bm{M} \|_F \leq \frac{C \sigma \sqrt{dr}}{2\lambda_{\min}^{1/2}(\bm{M})}.
    \end{align}
    \item \textbf{Step 3: Bounding $\bm{UU}\t - \bm{M}$.}
    First, define the set
    \begin{align}
        \mathbb{B}_x(\bm{U}\s) := \{ \bm{U}: \| \bm{U} \mathcal{O}_{\bm{U},\bm{U}\s} - \bm{U}\s \|_F \leq x \}.
    \end{align}
    We note that  $\mathbb{B}_x(\bm{U}\s)$ is the geodesic ball on the Riemannian quotient space $[\bm{U}] := \{ \bm{U} \mathcal{O}: \mathcal{OO}\t = \bm{I}_r \}$. By Theorem 2 of \citet{luo_nonconvex_2022}, the set $\mathbb{B}_x(\bm{U}\s)$ is geodesically convex for any $x \leq \lambda_{\min}(\bm{U}\s)/3$. Moreover, by Lemma 2 of \citet{luo_nonconvex_2022} the function
    \begin{align}
        \bm{U}(t) := \bm{U}_{{\sf deb}}^{(\lambda)} + t \big( \bm{U}^{(0)} - \bm{U}_{{\sf deb}}^{(\lambda)} \big)
    \end{align}
    is the unique minimizing geodesic from $[\bm{U}_{{\sf deb}}^{(\lambda)}]$ to $[\bm{U}^{(0)}]$ (where we have used the fact that without loss of generality both matrices are already properly aligned).  Since both $\bm{U}^{(0)}$ and $\bm{U}_{{\sf deb}}^{(\lambda)}$ are full rank from the previous arguments, it holds that $[\bm{U}] \in \mathbb{B}_x(\bm{U}\s)$.  In particular, it holds that
    \begin{align}
        \| \bm{U}(t) \mathcal{O}_{\bm{U},\bm{U}\s} - \bm{U}\s \|_F \leq \frac{C \sigma \sqrt{dr}}{2\lambda_{\min}^{1/2}(\bm{M})}.
    \end{align}
     for all $t$.  Lemma 11 of \citet{luo_nonconvex_2022} shows that
     \begin{align}
         \| \bm{UU}\t - \bm{M} \| \leq \sqrt{\kappa} C \sigma \sqrt{dr} \leq \frac{\lambda_{\min}(\bm{M})}{10},
     \end{align}
     where the final bound follows from \cref{ass1}.  
     \end{itemize}
Combining these bounds we have that on the event $\mathcal{E}_{{\sf Good}}$ 
\begin{align}
\| \bm{UU}\t - \bm{M} \| + 2 \delta_{2r} \| \bm{U} \|^2 + \delta_{2r} \| \bm{UU}\t\|_F + \delta_{2r} \| \bm{M} \| + C \sigma \sqrt{dr}  &\leq \sqrt{\kappa} C \sigma \sqrt{dr} + 8 \frac{c_0}{\sqrt{r}} \| \bm{M} \| + 4 c_0 \| \bm{M}\| + C \sigma \sqrt{dr} \\
&\leq C' \sigma \sqrt{dr} + C' c_0 \| \bm{M} \| \\
&\leq \frac{\lambda_{\min}(\bm{M})}{20}
\end{align}
provided $c_0$ is smaller than some constant, where $c_0$ is such that $\delta_{2r} \leq \frac{c_0}{\sqrt{r}}$.  Therefore,
\begin{align}
    \nabla^2 f(\bm{U})[\bm{V},\bm{V}] \geq\frac{\lambda_{\min}(\bm{M})}{8} \|\bm{V} \|_F^2 \big( 1 - \frac{2}{5} \big) \geq \frac{\lambda_{\min}(\bm{M})}{20} \|\bm{V} \|_F^2 
\end{align}
as desired.  The upper bound follows from a similar argument.  
 
\end{proof}

\subsubsection{Proof of \cref{lem:secondderivcalc2}} \label{sec:secondderivcalc2proof}
\begin{proof}[Proof of \cref{lem:secondderivcalc2}]
    The hessian satisfies
\begin{align}
    \nabla^2 f^{(0)}_{{\sf ncvx}}(\bm{U})[\bm{V},\bm{V}] &= \frac{d}{dt} \frac{1}{\sqrt{n}} \sum_{i=1}^{n} \bigg[ \frac{1}{\sqrt{n}}\bigg\langle \bm{X}_i, \big( \bm{U} + t \bm{V} \big) \big( \bm{U} + t \bm{V} \big)\t - \bm{M} \bigg \rangle - \eps_i \bigg] \bigg \langle \bm{X}_i \big( \bm{U} + t \bm{V} \big), \bm{V} \bigg\rangle .
\end{align}
We have that
\begin{align}
     \bigg[ \frac{1}{\sqrt{n}}\bigg\langle &\bm{X}_i, \big( \bm{U} + t \bm{V} \big) \big( \bm{U} + t \bm{V} \big)\t - \bm{M} \bigg \rangle - \eps_i \bigg] \bigg \langle \bm{X}_i \big( \bm{U} + t \bm{V} \big), \bm{V} \bigg\rangle \\
     &=  \bigg[ \frac{1}{\sqrt{n}}\bigg\langle \bm{X}_i, \big( \bm{U} + t \bm{V} \big) \big( \bm{U} + t \bm{V} \big)\t - \bm{M} \bigg \rangle \bigg] \bigg \langle \bm{X}_i \big( \bm{U} + t \bm{V} \big), \bm{V} \bigg\rangle  - \eps_i\bigg \langle \bm{X}_i \big( \bm{U} + t \bm{V} \big), \bm{V} \bigg\rangle \\
     &=  \bigg[ \frac{1}{\sqrt{n}}\bigg\langle \bm{X}_i, \big( \bm{U} + t \bm{V} \big) \big( \bm{U} + t \bm{V} \big)\t - \bm{M} \bigg \rangle \bigg] \bigg \langle \bm{X}_i  \bm{U} , \bm{V} \bigg\rangle \\
     &\quad+ t \bigg[ \frac{1}{\sqrt{n}}\bigg\langle \bm{X}_i, \big( \bm{U} + t \bm{V} \big) \big( \bm{U} + t \bm{V} \big)\t - \bm{M} \bigg \rangle \bigg] \bigg \langle \bm{X}_i  \bm{V} , \bm{V} \bigg\rangle \\
     &\quad  - \eps_i\bigg \langle \bm{X}_i \big( \bm{U} + t \bm{V} \big), \bm{V} \bigg\rangle
     \\
     &=  \bigg[ \frac{1}{\sqrt{n}}\bigg\langle \bm{X}_i,  \bm{U}  \big( \bm{U} + t \bm{V} \big)\t - \bm{M} \bigg \rangle \bigg] \bigg \langle \bm{X}_i  \bm{U} , \bm{V} \bigg\rangle \\
&\quad +     t \bigg[\frac{1}{\sqrt{n}} \bigg\langle \bm{X}_i,   \bm{V}  \big( \bm{U} + t \bm{V} \big)\t \bigg \rangle \bigg] \bigg \langle \bm{X}_i  \bm{U} , \bm{V} \bigg\rangle \\
     &\quad+ t \bigg[ \frac{1}{\sqrt{n}}\bigg\langle \bm{X}_i, \big( \bm{U} + t \bm{V} \big) \big( \bm{U} + t \bm{V} \big)\t - \bm{M} \bigg \rangle \bigg] \bigg \langle \bm{X}_i  \bm{V} , \bm{V} \bigg\rangle \\
     &\quad  - \eps_i\bigg \langle \bm{X}_i \big( \bm{U} + t \bm{V} \big), \bm{V} \bigg\rangle
\end{align}
Taking only terms linear in $t$ yields that
\begin{align}
    \nabla^2 f^{(0)}_{{\sf ncvx}}(\bm{U})[ \bm{V}, \bm{V}] &= \frac{1}{\sqrt{n}} \sum_{i=1}^{n}\frac{1}{\sqrt{n}} \bigg \langle \bm{X}_i, \bm{U W}\t + \bm{VU}\t  \bigg \rangle \bigg \langle \bm{X}_i \bm{U}, \bm{V} \bigg \rangle + \frac{1}{\sqrt{n}}\bigg \langle \bm{X}_i , \bm{UU}\t - \bm{M} \bigg \rangle \bigg \langle \bm{X}_i \bm{V}, \bm{V} \bigg \rangle  \\
    &\quad - \eps_i \bigg \langle \bm{X}_i \bm{V}, \bm{V} \bigg \rangle 
\end{align}
as desired. 
\end{proof}

\subsubsection{Proof of \cref{lem:secondderivcloseness}}
\label{sec:secondderivclosenessproof}
\begin{proof}[Proof of \cref{lem:secondderivcloseness}]
    First, we write 
    \begin{align}
        \nabla^2 f^{(0)}_{{\sf ncvx}}(\bm{U})[\bm{V}, \bm{V}] :=   \nabla^2 f^{({\sf clean})}(\bm{U})[\bm{V}, \bm{V}]
     - \frac{1}{\sqrt{n}} \sum_{i=1}^{n}\eps_i \bigg \langle \bm{X}_i \bm{V}, \bm{V} \bigg \rangle, 
    \end{align}
    where
    \begin{align}
     \nabla^2 f^{({\sf clean})}(\bm{U})[\bm{V}, \bm{V}]   &= \frac{1}{n} \sum_{i=1}^{n}  \bigg \langle \bm{X}_i, \bm{U V}\t + \bm{VU}\t  \bigg \rangle \bigg \langle \bm{X}_i \bm{U}, \bm{V} \bigg \rangle +  \bigg \langle \bm{X}_i , \bm{UU}\t - \bm{M} \bigg \rangle \bigg \langle \bm{X}_i \bm{V}, \bm{V} \bigg \rangle.
    \end{align}
First we bound the difference between $\nabla^2 f^{{\sf clean}}$ and $\nabla^2 f^{{\sf MF}}$, uniformly over matrices of rank at most $2r$.  Lemma 3 of \citet{chi_nonconvex_2019} shows that if $\mathcal{X}$ satisfies the restricted isometry property with constant $\delta_{2r}$, then 
\begin{align}
\bigg|    \frac{1}{n} \sum_{i=1}^{n} \langle \bm{X}_i, \bm{Q}_1 \rangle \langle \bm{X}_i, \bm{Q}_2 \rangle - \langle \bm{Q}_1, \bm{Q}_2 \rangle \bigg| &\leq \delta_{2r} \| \bm{Q}_1 \|_F \| \bm{Q}_2\|_F,
\end{align}
uniformly over matrices $\bm{Q}_1, \bm{Q}_2$ of rank at most $r$.  Consequently, when the restricted isometry property holds, by breaking up the sum it holds that
\begin{align}
    \bigg| \nabla^2 f^{{(\sf clean)}}(\bm{U})[\bm{V}, \bm{V}] - \nabla^2 f^{{\sf MF}}(\bm{U})[\bm{V}, \bm{V}] \bigg| &\leq \delta_{2r} \bigg( 2\| \bm{UV}\t \|_F^2 + \| \bm{UU}\t \|_F \| \bm{VV}\t \|_F + \| \bm{M} \|_F \| \bm{VV}\t \|_F \bigg) \\
    &\leq \delta_{2r} \| \bm{V}\|_F^2 \bigg( 2 \| \bm{U} \|^2 + \| \bm{UU}\t \|_F + \| \bm{M} \|_F \bigg).
\end{align}
We now bound the noise term.  Observe that for fixed matrix $\bm{VV}\t$ of rank at most $r$,   it holds that 
\begin{align}
    \frac{1}{\sqrt{n}} \sum_{i=1}^{n} \eps_i \bigg \langle \bm{X}_i ,\bm{V} \bm{V}\t \bigg \rangle &\overset{d}{=} \frac{ \| \bm{V} \bm{V}\t \|_F}{\sqrt{n}} \langle \bm{\eps}, \bm{Z} \rangle,
\end{align}
where $\bm{Z}$ and $\bm{\eps}/\sigma $ are standard Gaussian random vectors.  Standard concentration arguments show that for fixed matrix $\bm{VV}\t$, conditional on $\bm{\eps}$, 
\begin{align}
  \bigg|  \frac{1}{\sqrt{n}} \sum_{i=1}^{n}  \eps_i \bigg \langle \bm{X}_i ,\bm{V} \bm{V}\t \bigg \rangle \bigg| &\leq C \sigma \| \bm{VV}\t \|_F \frac{\| \bm{\eps} \|}{\sqrt{n}} s
\end{align}
with probability at least $1 - 2\exp(-c s^2)$.  Recognizing that $\| \bm{\eps} \|/\sqrt{n} \leq C$ with probability at least $1 - 2 \exp( - c n)$ shows that for any fixed $\bm{V}$ it holds that 
\begin{align}
     \bigg|  \frac{1}{\sqrt{n}} \sum_{i=1}^{n}  \eps_i \bigg \langle \bm{X}_i ,\bm{V} \bm{V}\t \bigg \rangle \bigg| &\leq C \sigma  \| \bm{VV}\t \|_F  s
\end{align}
with probability at least $1 - 2 \exp(- c s^2) - 2 \exp( - c n)$.  Taking $s \asymp \sqrt{dr}$ and a standard net argument shows that this bound holds uniformly over matrices $\bm{VV}\t$ of rank at most $r$ with probability  at least $1 - O( \exp( - c dr ) )$.  
\end{proof}

\subsubsection{Proof of \cref{lem:convexitydeterministic}} \label{sec:convexitydeterministicproof}

\begin{proof}[Proof of \cref{lem:convexitydeterministic}]
  We have that 
   \begin{align}
       \nabla^2 f^{(0)}_{{\sf ncvx}}(\bm{U})[\bm{V}, \bm{V}] &= \underbrace{ .5\| \bm{UV}\t + \bm{VU}\t \|_F^2 }_{=:T_1} + \underbrace{ \langle \bm{UU}\t - \bm{M}, \bm{VV}\t\rangle}_{=:T_2} + 
       \underbrace{\nabla^2 f^{(0)}_{{\sf ncvx}}(\bm{U})[ \bm{V},\bm{V}] - \nabla^2 f^{{(\sf MF)}}(\bm{U})[\bm{V},\bm{V}]}_{=:T_3}.
   \end{align}
We will upper and lower bound $T_1$ and upper bound $T_2$ and $T_3$.  
\begin{itemize}
    \item \textbf{The Term $T_1$}: The upper bound 
    \begin{align}
        |T_1| \leq \| \bm{U} \|^2 \| \bm{V} \|_F^2
    \end{align}
    is immediate.  For the lower bound, we have
    \begin{align}
         T_1 &= .5 \| \bm{U} \bm{V}\t + \bm{V} \bm{U}^{\top} \|_F^2 \\
        &= \frac{1}{2} \| \bm{U} \bm{V}\t \|_F^2 + \frac{1}{2} \| \bm{V} \bm{U}^{\top} \|_F^2 + {\sf Tr}\bigg( \bm{U}^{\top} \bm{V} \bm{U}^{\top} \bm{V} \bigg) \\
        &\geq \lambda_{\min}(\bm{U})^2 \| \bm{V} \|_F^2 + {\sf Tr}\bigg( \bm{U}^{\top} \bm{V} \bm{U}^{\top} \bm{V} \bigg) \\
        &\geq \lambda_{\min}(\bm{U})^2 \| \bm{V} \|_F^2,
    \end{align}
    where the final inequality follows by Lemma 35 of \citet{ma_implicit_2020} as $\bm{V} = \bm{U} - \bm{U}' \mathcal{O}_{\bm{U}',\bm{U}}$.
    \item \textbf{The term $T_2$: } We have that
    \begin{align}
        |T_2| &= \langle \big(\bm{UU}\t - \bm{M}\big) \bm{V}, \bm{V} \rangle \leq \| \bm{UU}\t - \bm{M} \| \| \bm{V} \|_F^2.
    \end{align}
     \item \textbf{The term $T_3$:} Let $\mathcal{E}_{\cref{lem:secondderivcloseness}}$ denote the event from \cref{lem:secondderivcloseness}.  On this event we have that 
\begin{align}
    | T_3 |  \leq \| \bm{V}\|_F^2 \bigg( 2 \delta_{2r} \| \bm{U} \|^2 + \delta_{2r} \| \bm{UU}\t \|_F + \delta_{2r} \| \bm{M} \|_F + C \sigma \sqrt{dr} \bigg). \label{eq:t3bound}
\end{align}
\end{itemize}
Combining bounds we have that
\begin{align}
    \nabla^2 f^{(0)}_{{\sf ncvx}} (\bm{U})[ \bm{V},\bm{V}] \geq  \lambda_{\min}(\bm{U})^2\| \bm{V}\|_F^2 \bigg( 1  - \frac{\| \bm{UU}\t - \bm{M}\| + 2 \delta_{2r} \|\bm{U}\|^2 + \delta_{2r} \| \bm{UU}\t \|_F + \delta_{2r} \| \bm{M} \| + C \sigma \sqrt{dr}}{\lambda_{\min}(\bm{U})^2} \bigg).
\end{align}
For the upper bound we have that
\begin{align}
    \bigg| \nabla^2 f^{(0)}_{{\sf ncvx}}(\bm{U})[\bm{V},\bm{V}] \bigg| \leq \|\bm{V}\|_F^2 \bigg( \| \bm{U}\|^2 + \| \bm{UU}\t - \bm{M} \| + 2 \delta_{2r} \| \bm{U}\|^2 + \delta_{2r} \| \bm{UU}\t \|_f + \delta_{2r} \| \bm{M} \|_F + C \sigma \sqrt{dr} \bigg).
\end{align}
\end{proof}

\section{Proof of Corollary \ref{cor:maincor}}

\label{sec:corproof}

\begin{proof}
By \cref{thm:mainthm}, when $\frac{n}{dr^2} \to \infty$ with $\gamma_n/(dr) \to 0$ it holds that with probability tending to one
\begin{align*}
   \frac{1}{dr} \bigg| \| \bm{Z}^{(\lambda)} - \bm{M} \|_F^2 - \mathbb{E} \big\| \bm{Z}_{{\sf ST}}^{(\lambda)} - \bm{M} \|_F^2 \bigg| &= o(1), 
\end{align*}
with the same result holding for $\bm{U}^{(0)} \bm{U}^{(0)\top}$ and $\bm{Z}_{{\sf HT}}$ respectively (to make this function Lipschitz, strictly speaking we first restrict to a bounded set and consider its Lipschitz extension, but this restriction is minor).  Consequently, since the right hand side above is $o(1)$, we have in probability
\begin{align*}
\liminf \frac{\|\bm{Z}^{(\lambda)} - \bm{M}\|_F^2}{\| \bm{U}^{(0)} \bm{U}^{(0)\top} - \bm{M} \|_F^2} &\geq \frac{  \mathbb{E}_{\bm{H}} \| \bm{Z}_{{\sf ST}}^{(\lambda)} - \bm{M} \|_F^2 }{  \mathbb{E}_{\bm{H}} \| \bm{Z}_{{\sf HT}} - \bm{M} \|_F^2}.
\end{align*}
Therefore, we need only consider the mean squared error for $\bm{Z}_{{\sf HT}}$ and $\bm{Z}_{{\sf ST}}^{(\lambda)}$.  When $r$ is fixed, this is given in Lemma 4 of \citet{gavish_optimal_2014}.  Matching notation and rescaling appropriately to their asymptotic framework, it holds that
\begin{align*}
    \frac{1}{d \sigma^2 } \| \bm{Z}_{{\sf HT}} - \bm{M} \|_F^2 &\to 2r + \sum_{i=1}^{r} 3 \frac{\sigma^2}{\lambda_i^2}; \\
    \frac{1}{d \sigma^2} \| \bm{Z}_{{\sf ST}}^{(\lambda)} - \bm{M} \|_F^2 &\to 2r + \sum_{i=1}^{r} 3 \frac{\sigma^2}{\big( \lambda_i - \frac{\lambda}{\sqrt{d}} \big)^2},
\end{align*}
where the convergence is almost surely.  Since the right hand side is a constant, we conclude by the Dominated Convergence Theorem that the same limits hold for the expectations.   Since $\lambda_i > \frac{\lambda}{\sqrt{d}}$ by \cref{ass1,ass3}, it holds that $\frac{\sigma^2}{\lambda_i^2} \leq \frac{\sigma^2}{(\lambda_i - \frac{\lambda}{\sqrt{d}}})^2$, whence the result is immediate.  Strict inequality for any choices of $\lambda_i$ satisfying \cref{ass1} follows. 
\end{proof}

\bibliography{mat_sensing_bib,mat_sensing_bib2}

\begin{thebibliography}{76}
\providecommand{\natexlab}[1]{#1}
\providecommand{\url}[1]{\texttt{#1}}
\expandafter\ifx\csname urlstyle\endcsname\relax
  \providecommand{\doi}[1]{doi: #1}\else
  \providecommand{\doi}{doi: \begingroup \urlstyle{rm}\Url}\fi

\bibitem[Bandeira and Handel(2016)]{bandeira_sharp_2016}
Afonso~S. Bandeira and Ramon~van Handel.
\newblock Sharp nonasymptotic bounds on the norm of random matrices with independent entries.
\newblock \emph{The Annals of Probability}, 44\penalty0 (4):\penalty0 2479--2506, July 2016.
\newblock ISSN 0091-1798, 2168-894X.
\newblock \doi{10.1214/15-AOP1025}.

\bibitem[Bayati and Montanari(2012)]{bayati_lasso_2012}
Mohsen Bayati and Andrea Montanari.
\newblock The {LASSO} {Risk} for {Gaussian} {Matrices}.
\newblock \emph{IEEE Transactions on Information Theory}, 58\penalty0 (4):\penalty0 1997--2017, April 2012.
\newblock ISSN 1557-9654.
\newblock \doi{10.1109/TIT.2011.2174612}.

\bibitem[Bellec and Koriyama(2024)]{bellec_existence_2024}
Pierre~C. Bellec and Takuya Koriyama.
\newblock Existence of solutions to the nonlinear equations characterizing the precise error of {M}-estimators, October 2024.
\newblock arXiv:2312.13254 [math].

\bibitem[Bellec and Zhang(2023)]{bellec_debiasing_2023}
Pierre~C. Bellec and Cun-Hui Zhang.
\newblock Debiasing convex regularized estimators and interval estimation in linear models.
\newblock \emph{The Annals of Statistics}, 51\penalty0 (2):\penalty0 391--436, April 2023.
\newblock ISSN 0090-5364, 2168-8966.
\newblock \doi{10.1214/22-AOS2243}.

\bibitem[Benaych-Georges and Nadakuditi(2011)]{benaych-georges_eigenvalues_2011}
Florent Benaych-Georges and Raj~Rao Nadakuditi.
\newblock The eigenvalues and eigenvectors of finite, low rank perturbations of large random matrices.
\newblock \emph{Advances in Mathematics}, 227\penalty0 (1):\penalty0 494--521, May 2011.
\newblock ISSN 0001-8708.
\newblock \doi{10.1016/j.aim.2011.02.007}.

\bibitem[Berthier et~al.(2020)Berthier, Montanari, and Nguyen]{berthier_state_2020}
Raphaël Berthier, Andrea Montanari, and Phan-Minh Nguyen.
\newblock State evolution for approximate message passing with non-separable functions.
\newblock \emph{Information and Inference: A Journal of the IMA}, 9\penalty0 (1):\penalty0 33--79, March 2020.
\newblock ISSN 2049-8772.
\newblock \doi{10.1093/imaiai/iay021}.

\bibitem[Bhojanapalli et~al.(2016)Bhojanapalli, Neyshabur, and Srebro]{bhojanapalli_global_2016}
Srinadh Bhojanapalli, Behnam Neyshabur, and Nati Srebro.
\newblock Global {Optimality} of {Local} {Search} for {Low} {Rank} {Matrix} {Recovery}.
\newblock In \emph{Advances in {Neural} {Information} {Processing} {Systems}}, volume~29. Curran Associates, Inc., 2016.

\bibitem[Burer and Monteiro(2003)]{burer_nonlinear_2003}
Samuel Burer and Renato~D.C. Monteiro.
\newblock A nonlinear programming algorithm for solving semidefinite programs via low-rank factorization.
\newblock \emph{Mathematical Programming}, 95\penalty0 (2):\penalty0 329--357, February 2003.
\newblock ISSN 1436-4646.
\newblock \doi{10.1007/s10107-002-0352-8}.

\bibitem[Cai and Zhang(2018)]{cai_rate-optimal_2018}
T.~Tony Cai and Anru Zhang.
\newblock Rate-optimal perturbation bounds for singular subspaces with applications to high-dimensional statistics.
\newblock \emph{The Annals of Statistics}, 46\penalty0 (1):\penalty0 60--89, February 2018.
\newblock ISSN 0090-5364, 2168-8966.
\newblock \doi{10.1214/17-AOS1541}.

\bibitem[Cai et~al.(2016{\natexlab{a}})Cai, Li, and Ma]{cai_optimal_2016}
T.~Tony Cai, Xiaodong Li, and Zongming Ma.
\newblock Optimal rates of convergence for noisy sparse phase retrieval via thresholded {Wirtinger} flow.
\newblock \emph{The Annals of Statistics}, 44\penalty0 (5):\penalty0 2221--2251, October 2016{\natexlab{a}}.
\newblock ISSN 0090-5364, 2168-8966.
\newblock \doi{10.1214/16-AOS1443}.

\bibitem[Cai et~al.(2016{\natexlab{b}})Cai, Liang, and Rakhlin]{cai_geometric_2016}
T.~Tony Cai, Tengyuan Liang, and Alexander Rakhlin.
\newblock Geometric inference for general high-dimensional linear inverse problems.
\newblock \emph{The Annals of Statistics}, 44\penalty0 (4):\penalty0 1536--1563, August 2016{\natexlab{b}}.
\newblock ISSN 0090-5364, 2168-8966.
\newblock \doi{10.1214/15-AOS1426}.

\bibitem[Candès and Plan(2011)]{candes_tight_2011}
Emmanuel~J. Candès and Yaniv Plan.
\newblock Tight {Oracle} {Inequalities} for {Low}-{Rank} {Matrix} {Recovery} {From} a {Minimal} {Number} of {Noisy} {Random} {Measurements}.
\newblock \emph{IEEE Transactions on Information Theory}, 57\penalty0 (4):\penalty0 2342--2359, April 2011.
\newblock ISSN 1557-9654.
\newblock \doi{10.1109/TIT.2011.2111771}.

\bibitem[Candès et~al.(2015)Candès, Li, and Soltanolkotabi]{candes_phase_2015}
Emmanuel~J. Candès, Xiaodong Li, and Mahdi Soltanolkotabi.
\newblock Phase {Retrieval} via {Wirtinger} {Flow}: {Theory} and {Algorithms}.
\newblock \emph{IEEE Transactions on Information Theory}, 61\penalty0 (4):\penalty0 1985--2007, April 2015.
\newblock ISSN 1557-9654.
\newblock \doi{10.1109/TIT.2015.2399924}.

\bibitem[Celentano et~al.(2023)Celentano, Montanari, and Wei]{celentano_lasso_2023}
Michael Celentano, Andrea Montanari, and Yuting Wei.
\newblock The {Lasso} with general {Gaussian} designs with applications to hypothesis testing.
\newblock \emph{The Annals of Statistics}, 51\penalty0 (5), October 2023.
\newblock ISSN 0090-5364.
\newblock \doi{10.1214/23-AOS2327}.

\bibitem[Chandrasekher et~al.(2023)Chandrasekher, Pananjady, and Thrampoulidis]{chandrasekher_sharp_2023}
Kabir~Aladin Chandrasekher, Ashwin Pananjady, and Christos Thrampoulidis.
\newblock Sharp global convergence guarantees for iterative nonconvex optimization with random data.
\newblock \emph{The Annals of Statistics}, 51\penalty0 (1):\penalty0 179--210, February 2023.
\newblock ISSN 0090-5364, 2168-8966.
\newblock \doi{10.1214/22-AOS2246}.

\bibitem[Chandrasekher et~al.(2024)Chandrasekher, Lou, and Pananjady]{chandrasekher_alternating_2024}
Kabir~Aladin Chandrasekher, Mengqi Lou, and Ashwin Pananjady.
\newblock Alternating minimization for generalized rank-1 matrix sensing: sharp predictions from a random initialization.
\newblock \emph{Information and Inference: A Journal of the IMA}, 13\penalty0 (3):\penalty0 iaae025, September 2024.
\newblock ISSN 2049-8772.
\newblock \doi{10.1093/imaiai/iaae025}.

\bibitem[Charisopoulos et~al.(2021)Charisopoulos, Chen, Davis, Díaz, Ding, and Drusvyatskiy]{charisopoulos_low-rank_2021}
Vasileios Charisopoulos, Yudong Chen, Damek Davis, Mateo Díaz, Lijun Ding, and Dmitriy Drusvyatskiy.
\newblock Low-{Rank} {Matrix} {Recovery} with {Composite} {Optimization}: {Good} {Conditioning} and {Rapid} {Convergence}.
\newblock \emph{Foundations of Computational Mathematics}, 21\penalty0 (6):\penalty0 1505--1593, December 2021.
\newblock ISSN 1615-3383.
\newblock \doi{10.1007/s10208-020-09490-9}.

\bibitem[Chen and Yang(2021)]{chen_hansonwright_2021}
Xiaohui Chen and Yun Yang.
\newblock Hanson–{Wright} inequality in {Hilbert} spaces with application to \${K}\$-means clustering for non-{Euclidean} data.
\newblock \emph{Bernoulli}, 27\penalty0 (1):\penalty0 586--614, February 2021.
\newblock ISSN 1350-7265.
\newblock \doi{10.3150/20-BEJ1251}.

\bibitem[Chen et~al.(2019{\natexlab{a}})Chen, Chi, Fan, and Ma]{chen_gradient_2019}
Yuxin Chen, Yuejie Chi, Jianqing Fan, and Cong Ma.
\newblock Gradient {Descent} with {Random} {Initialization}: {Fast} {Global} {Convergence} for {Nonconvex} {Phase} {Retrieval}.
\newblock \emph{Mathematical Programming}, 176\penalty0 (1-2):\penalty0 5--37, July 2019{\natexlab{a}}.
\newblock ISSN 0025-5610, 1436-4646.
\newblock \doi{10.1007/s10107-019-01363-6}.

\bibitem[Chen et~al.(2019{\natexlab{b}})Chen, Fan, Ma, and Yan]{chen_inference_2019}
Yuxin Chen, Jianqing Fan, Cong Ma, and Yuling Yan.
\newblock Inference and {Uncertainty} {Quantification} for {Noisy} {Matrix} {Completion}.
\newblock \emph{Proceedings of the National Academy of Sciences}, 116\penalty0 (46):\penalty0 22931--22937, November 2019{\natexlab{b}}.
\newblock ISSN 0027-8424, 1091-6490.
\newblock \doi{10.1073/pnas.1910053116}.

\bibitem[Chen et~al.(2020)Chen, Chi, Fan, Ma, and Yan]{chen_noisy_2020}
Yuxin Chen, Yuejie Chi, Jianqing Fan, Cong Ma, and Yuling Yan.
\newblock Noisy {Matrix} {Completion}: {Understanding} {Statistical} {Guarantees} for {Convex} {Relaxation} via {Nonconvex} {Optimization}.
\newblock \emph{SIAM Journal on Optimization}, 30\penalty0 (4):\penalty0 3098--3121, January 2020.
\newblock ISSN 1052-6234.
\newblock \doi{10.1137/19M1290000}.

\bibitem[Chen et~al.(2021{\natexlab{a}})Chen, Chi, Fan, and Ma]{chen_spectral_2021}
Yuxin Chen, Yuejie Chi, Jianqing Fan, and Cong Ma.
\newblock Spectral {Methods} for {Data} {Science}: {A} {Statistical} {Perspective}.
\newblock \emph{Foundations and Trends® in Machine Learning}, 14\penalty0 (5):\penalty0 566--806, 2021{\natexlab{a}}.
\newblock ISSN 1935-8237, 1935-8245.
\newblock \doi{10.1561/2200000079}.

\bibitem[Chen et~al.(2021{\natexlab{b}})Chen, Fan, Ma, and Yan]{chen_bridging_2021}
Yuxin Chen, Jianqing Fan, Cong Ma, and Yuling Yan.
\newblock Bridging convex and nonconvex optimization in robust {PCA}: {Noise}, outliers and missing data.
\newblock \emph{The Annals of Statistics}, 49\penalty0 (5):\penalty0 2948--2971, October 2021{\natexlab{b}}.
\newblock ISSN 0090-5364, 2168-8966.
\newblock \doi{10.1214/21-AOS2066}.

\bibitem[Chen et~al.(2021{\natexlab{c}})Chen, Fan, Wang, and Yan]{chen_convex_2021}
Yuxin Chen, Jianqing Fan, Bingyan Wang, and Yuling Yan.
\newblock Convex and {Nonconvex} {Optimization} {Are} {Both} {Minimax}-{Optimal} for {Noisy} {Blind} {Deconvolution} {Under} {Random} {Designs}.
\newblock \emph{Journal of the American Statistical Association}, 0\penalty0 (0):\penalty0 1--11, July 2021{\natexlab{c}}.
\newblock ISSN 0162-1459.
\newblock \doi{10.1080/01621459.2021.1956501}.

\bibitem[Chi et~al.(2019)Chi, Lu, and Chen]{chi_nonconvex_2019}
Yuejie Chi, Yue~M. Lu, and Yuxin Chen.
\newblock Nonconvex {Optimization} {Meets} {Low}-{Rank} {Matrix} {Factorization}: {An} {Overview}.
\newblock \emph{IEEE Transactions on Signal Processing}, 67\penalty0 (20):\penalty0 5239--5269, October 2019.
\newblock ISSN 1053-587X, 1941-0476.
\newblock \doi{10.1109/TSP.2019.2937282}.

\bibitem[Donhauser et~al.(2022)Donhauser, Ruggeri, Stojanovic, and Yang]{donhauser_fast_2022}
Konstantin Donhauser, Nicolò Ruggeri, Stefan Stojanovic, and Fanny Yang.
\newblock Fast rates for noisy interpolation require rethinking the effect of inductive bias.
\newblock In \emph{Proceedings of the 39th {International} {Conference} on {Machine} {Learning}}, pages 5397--5428. PMLR, June 2022.
\newblock ISSN: 2640-3498.

\bibitem[Donoho and Montanari(2016)]{donoho_high_2016}
David Donoho and Andrea Montanari.
\newblock High dimensional robust {M}-estimation: asymptotic variance via approximate message passing.
\newblock \emph{Probability Theory and Related Fields}, 166\penalty0 (3):\penalty0 935--969, December 2016.
\newblock ISSN 1432-2064.
\newblock \doi{10.1007/s00440-015-0675-z}.

\bibitem[Dudeja et~al.(2023)Dudeja, Lu, and Sen]{dudeja_universality_2023}
Rishabh Dudeja, Yue~M. Lu, and Subhabrata Sen.
\newblock Universality of approximate message passing with semirandom matrices.
\newblock \emph{The Annals of Probability}, 51\penalty0 (5):\penalty0 1616--1683, September 2023.
\newblock ISSN 0091-1798, 2168-894X.
\newblock \doi{10.1214/23-AOP1628}.

\bibitem[Dudeja et~al.(2024)Dudeja, Sen, and Lu]{dudeja_spectral_2024}
Rishabh Dudeja, Subhabrata Sen, and Yue~M. Lu.
\newblock Spectral {Universality} in {Regularized} {Linear} {Regression} {With} {Nearly} {Deterministic} {Sensing} {Matrices}.
\newblock \emph{IEEE Transactions on Information Theory}, 70\penalty0 (11):\penalty0 7923--7951, November 2024.
\newblock ISSN 1557-9654.
\newblock \doi{10.1109/TIT.2024.3458953}.

\bibitem[El~Karoui et~al.(2013)El~Karoui, Bean, Bickel, Lim, and Yu]{el_karoui_robust_2013}
Noureddine El~Karoui, Derek Bean, Peter~J. Bickel, Chinghway Lim, and Bin Yu.
\newblock On robust regression with high-dimensional predictors.
\newblock \emph{Proceedings of the National Academy of Sciences}, 110\penalty0 (36):\penalty0 14557--14562, September 2013.
\newblock \doi{10.1073/pnas.1307842110}.

\bibitem[Gavish and Donoho(2014)]{gavish_optimal_2014}
Matan Gavish and David~L. Donoho.
\newblock The {Optimal} {Hard} {Threshold} for {Singular} {Values} is 4/{\textbackslash}sqrt 3.
\newblock \emph{IEEE Transactions on Information Theory}, 60\penalty0 (8):\penalty0 5040--5053, August 2014.
\newblock ISSN 1557-9654.
\newblock \doi{10.1109/TIT.2014.2323359}.

\bibitem[Ge et~al.(2017)Ge, Jin, and Zheng]{ge_no_2017}
Rong Ge, Chi Jin, and Yi~Zheng.
\newblock No {Spurious} {Local} {Minima} in {Nonconvex} {Low} {Rank} {Problems}: {A} {Unified} {Geometric} {Analysis}.
\newblock In \emph{Proceedings of the 34th {International} {Conference} on {Machine} {Learning}}, pages 1233--1242. PMLR, July 2017.
\newblock ISSN: 2640-3498.

\bibitem[Haeffele et~al.(2014)Haeffele, Young, and Vidal]{haeffele2014structured}
Benjamin Haeffele, Eric Young, and Rene Vidal.
\newblock Structured low-rank matrix factorization: Optimality, algorithm, and applications to image processing.
\newblock In \emph{International conference on machine learning}, pages 2007--2015. PMLR, 2014.

\bibitem[Han(2023)]{han_noisy_2023}
Qiyang Han.
\newblock Noisy linear inverse problems under convex constraints: {Exact} risk asymptotics in high dimensions.
\newblock \emph{The Annals of Statistics}, 51\penalty0 (4):\penalty0 1611--1638, August 2023.
\newblock ISSN 0090-5364, 2168-8966.
\newblock \doi{10.1214/23-AOS2301}.

\bibitem[Han(2024)]{han_entrywise_2024}
Qiyang Han.
\newblock Entrywise dynamics and universality of general first order methods, June 2024.
\newblock arXiv:2406.19061 [cs, math, stat].

\bibitem[Han and Shen(2023)]{han_universality_2023}
Qiyang Han and Yandi Shen.
\newblock Universality of regularized regression estimators in high dimensions.
\newblock \emph{The Annals of Statistics}, 51\penalty0 (4):\penalty0 1799--1823, August 2023.
\newblock ISSN 0090-5364, 2168-8966.
\newblock \doi{10.1214/23-AOS2309}.

\bibitem[Han and Xu(2023)]{han_distribution_2023}
Qiyang Han and Xiaocong Xu.
\newblock The distribution of {Ridgeless} least squares interpolators, July 2023.
\newblock arXiv:2307.02044 [cs, math, stat].

\bibitem[Javanmard and Montanari(2014{\natexlab{a}})]{javanmard_confidence_2014}
Adel Javanmard and Andrea Montanari.
\newblock Confidence {Intervals} and {Hypothesis} {Testing} for {High}-{Dimensional} {Regression}.
\newblock \emph{Journal of Machine Learning Research}, 15\penalty0 (82):\penalty0 2869--2909, 2014{\natexlab{a}}.
\newblock ISSN 1533-7928.

\bibitem[Javanmard and Montanari(2014{\natexlab{b}})]{javanmard_hypothesis_2014}
Adel Javanmard and Andrea Montanari.
\newblock Hypothesis {Testing} in {High}-{Dimensional} {Regression} {Under} the {Gaussian} {Random} {Design} {Model}: {Asymptotic} {Theory}.
\newblock \emph{IEEE Transactions on Information Theory}, 60\penalty0 (10):\penalty0 6522--6554, October 2014{\natexlab{b}}.
\newblock ISSN 1557-9654.
\newblock \doi{10.1109/TIT.2014.2343629}.

\bibitem[Javanmard and Montanari(2018)]{javanmard_debiasing_2018}
Adel Javanmard and Andrea Montanari.
\newblock Debiasing the lasso: {Optimal} sample size for {Gaussian} designs.
\newblock \emph{The Annals of Statistics}, 46\penalty0 (6A):\penalty0 2593--2622, December 2018.
\newblock ISSN 0090-5364, 2168-8966.
\newblock \doi{10.1214/17-AOS1630}.

\bibitem[Javanmard and Soltanolkotabi(2022)]{javanmard_precise_2022}
Adel Javanmard and Mahdi Soltanolkotabi.
\newblock Precise statistical analysis of classification accuracies for adversarial training.
\newblock \emph{The Annals of Statistics}, 50\penalty0 (4):\penalty0 2127--2156, August 2022.
\newblock ISSN 0090-5364, 2168-8966.
\newblock \doi{10.1214/22-AOS2180}.

\bibitem[Javanmard et~al.(2020)Javanmard, Soltanolkotabi, and Hassani]{javanmard_precise_2020}
Adel Javanmard, Mahdi Soltanolkotabi, and Hamed Hassani.
\newblock Precise {Tradeoffs} in {Adversarial} {Training} for {Linear} {Regression}.
\newblock In \emph{Proceedings of {Thirty} {Third} {Conference} on {Learning} {Theory}}, pages 2034--2078. PMLR, July 2020.
\newblock ISSN: 2640-3498.

\bibitem[Koriyama and Bellec(2025)]{koriyama_phase_2025}
Takuya Koriyama and Pierre~C. Bellec.
\newblock Phase transitions for the existence of unregularized {M}-estimators in single index models, May 2025.
\newblock arXiv:2501.03163 [math].

\bibitem[Laurent and Massart(2000)]{laurent_adaptive_2000}
B.~Laurent and P.~Massart.
\newblock Adaptive estimation of a quadratic functional by model selection.
\newblock \emph{The Annals of Statistics}, 28\penalty0 (5):\penalty0 1302--1338, October 2000.
\newblock ISSN 0090-5364, 2168-8966.
\newblock \doi{10.1214/aos/1015957395}.

\bibitem[Li et~al.(2019)Li, Lu, Arora, Haupt, Liu, Wang, and Zhao]{li_symmetry_2019}
Xingguo Li, Junwei Lu, Raman Arora, Jarvis Haupt, Han Liu, Zhaoran Wang, and Tuo Zhao.
\newblock Symmetry, {Saddle} {Points}, and {Global} {Optimization} {Landscape} of {Nonconvex} {Matrix} {Factorization}.
\newblock \emph{IEEE Transactions on Information Theory}, 65\penalty0 (6):\penalty0 3489--3514, June 2019.
\newblock ISSN 1557-9654.
\newblock \doi{10.1109/TIT.2019.2898663}.

\bibitem[Liang and Sur(2022)]{liang_precise_2022}
Tengyuan Liang and Pragya Sur.
\newblock A precise high-dimensional asymptotic theory for boosting and minimum-ell\_1-norm interpolated classifiers.
\newblock \emph{The Annals of Statistics}, 50\penalty0 (3):\penalty0 1669--1695, June 2022.
\newblock ISSN 0090-5364, 2168-8966.
\newblock \doi{10.1214/22-AOS2170}.

\bibitem[Loureiro et~al.(2021)Loureiro, Gerbelot, Cui, Goldt, Krzakala, Mezard, and Zdeborová]{loureiro_learning_2021}
Bruno Loureiro, Cedric Gerbelot, Hugo Cui, Sebastian Goldt, Florent Krzakala, Marc Mezard, and Lenka Zdeborová.
\newblock Learning curves of generic features maps for realistic datasets with a teacher-student model.
\newblock In \emph{Advances in {Neural} {Information} {Processing} {Systems}}, volume~34, pages 18137--18151. Curran Associates, Inc., 2021.

\bibitem[Luo and Trillos(2022)]{luo_nonconvex_2022}
Yuetian Luo and Nicolas~Garcia Trillos.
\newblock Nonconvex {Matrix} {Factorization} is {Geodesically} {Convex}: {Global} {Landscape} {Analysis} for {Fixed}-rank {Matrix} {Optimization} {From} a {Riemannian} {Perspective}, November 2022.
\newblock arXiv:2209.15130 [math].

\bibitem[Ma et~al.(2020)Ma, Wang, Chi, and Chen]{ma_implicit_2020}
Cong Ma, Kaizheng Wang, Yuejie Chi, and Yuxin Chen.
\newblock Implicit {Regularization} in {Nonconvex} {Statistical} {Estimation}: {Gradient} {Descent} {Converges} {Linearly} for {Phase} {Retrieval}, {Matrix} {Completion}, and {Blind} {Deconvolution}.
\newblock \emph{Foundations of Computational Mathematics}, 20\penalty0 (3):\penalty0 451--632, June 2020.
\newblock ISSN 1615-3375, 1615-3383.
\newblock \doi{10.1007/s10208-019-09429-9}.

\bibitem[Ma et~al.(2021)Ma, Li, and Chi]{ma_beyond_2021}
Cong Ma, Yuanxin Li, and Yuejie Chi.
\newblock Beyond {Procrustes}: {Balancing}-{Free} {Gradient} {Descent} for {Asymmetric} {Low}-{Rank} {Matrix} {Sensing}.
\newblock \emph{IEEE Transactions on Signal Processing}, 69:\penalty0 867--877, 2021.
\newblock ISSN 1941-0476.
\newblock \doi{10.1109/TSP.2021.3051425}.

\bibitem[Ma and Fattahi(2023)]{ma_optimization_2023}
Jianhao Ma and Salar Fattahi.
\newblock On the {Optimization} {Landscape} of {Burer}-{Monteiro} {Factorization}: {When} do {Global} {Solutions} {Correspond} to {Ground} {Truth}?, February 2023.
\newblock arXiv:2302.10963 [cs, math].

\bibitem[Ma et~al.(2023)Ma, Bi, Lavaei, and Sojoudi]{ma_geometric_2023}
Ziye Ma, Yingjie Bi, Javad Lavaei, and Somayeh Sojoudi.
\newblock Geometric {Analysis} of {Noisy} {Low}-{Rank} {Matrix} {Recovery} in the {Exact} {Parametrized} and the {Overparametrized} {Regimes}.
\newblock \emph{INFORMS Journal on Optimization}, April 2023.
\newblock ISSN 2575-1484.
\newblock \doi{10.1287/ijoo.2023.0090}.

\bibitem[Miolane and Montanari(2021)]{miolane_distribution_2021}
Léo Miolane and Andrea Montanari.
\newblock The distribution of the {Lasso}: {Uniform} control over sparse balls and adaptive parameter tuning.
\newblock \emph{The Annals of Statistics}, 49\penalty0 (4):\penalty0 2313--2335, August 2021.
\newblock ISSN 0090-5364, 2168-8966.
\newblock \doi{10.1214/20-AOS2038}.

\bibitem[Montanari et~al.(2023)Montanari, Ruan, Saeed, and Sohn]{montanari_universality_2023}
Andrea Montanari, Feng Ruan, Basil Saeed, and Youngtak Sohn.
\newblock Universality of max-margin classifiers, September 2023.
\newblock arXiv:2310.00176 [math, stat].

\bibitem[Montanari et~al.(2025)Montanari, Ruan, Sohn, and Yan]{montanari_generalization_2025}
Andrea Montanari, Feng Ruan, Youngtak Sohn, and Jun Yan.
\newblock The generalization error of max-margin linear classifiers: {Benign} overfitting and high dimensional asymptotics in the overparametrized regime.
\newblock \emph{The Annals of Statistics}, 53\penalty0 (2):\penalty0 822--853, April 2025.
\newblock ISSN 0090-5364, 2168-8966.
\newblock \doi{10.1214/25-AOS2489}.

\bibitem[Negahban and Wainwright(2011)]{negahban_estimation_2011}
Sahand Negahban and Martin~J. Wainwright.
\newblock Estimation of (near) low-rank matrices with noise and high-dimensional scaling.
\newblock \emph{Annals of Statistics}, 39\penalty0 (2):\penalty0 1069--1097, April 2011.
\newblock ISSN 0090-5364, 2168-8966.
\newblock \doi{10.1214/10-AOS850}.

\bibitem[Salehi et~al.(2019)Salehi, Abbasi, and Hassibi]{salehi_impact_2019}
Fariborz Salehi, Ehsan Abbasi, and Babak Hassibi.
\newblock The {Impact} of {Regularization} on {High}-dimensional {Logistic} {Regression}.
\newblock In \emph{Advances in {Neural} {Information} {Processing} {Systems}}, volume~32. Curran Associates, Inc., 2019.

\bibitem[Soltanolkotabi et~al.(2025)Soltanolkotabi, Stöger, and Xie]{soltanolkotabi_implicit_2025}
Mahdi Soltanolkotabi, Dominik Stöger, and Changzhi Xie.
\newblock Implicit {Balancing} and {Regularization}: {Generalization} and {Convergence} {Guarantees} for {Overparameterized} {Asymmetric} {Matrix} {Sensing}.
\newblock \emph{IEEE Transactions on Information Theory}, 71\penalty0 (4):\penalty0 2991--3037, April 2025.
\newblock ISSN 1557-9654.
\newblock \doi{10.1109/TIT.2025.3530335}.

\bibitem[Stojanovic et~al.(2024)Stojanovic, Donhauser, and Yang]{stojanovic_tight_2024}
Stefan Stojanovic, Konstantin Donhauser, and Fanny Yang.
\newblock Tight bounds for maximum \${\textbackslash}ell\_1\$-margin classifiers.
\newblock In \emph{Proceedings of {The} 35th {International} {Conference} on {Algorithmic} {Learning} {Theory}}, pages 1055--1112. PMLR, March 2024.
\newblock ISSN: 2640-3498.

\bibitem[Stöger and Soltanolkotabi(2021)]{stoger_small_2021}
Dominik Stöger and Mahdi Soltanolkotabi.
\newblock Small random initialization is akin to spectral learning: {Optimization} and generalization guarantees for overparameterized low-rank matrix reconstruction.
\newblock In \emph{Advances in {Neural} {Information} {Processing} {Systems}}, volume~34, pages 23831--23843. Curran Associates, Inc., 2021.

\bibitem[Taheri et~al.(2020)Taheri, Pedarsani, and Thrampoulidis]{taheri_sharp_2020}
Hossein Taheri, Ramtin Pedarsani, and Christos Thrampoulidis.
\newblock Sharp {Asymptotics} and {Optimal} {Performance} for {Inference} in {Binary} {Models}.
\newblock In \emph{Proceedings of the {Twenty} {Third} {International} {Conference} on {Artificial} {Intelligence} and {Statistics}}, pages 3739--3749. PMLR, June 2020.
\newblock ISSN: 2640-3498.

\bibitem[Taheri et~al.(2021)Taheri, Pedarsani, and Thrampoulidis]{taheri_fundamental_2021}
Hossein Taheri, Ramtin Pedarsani, and Christos Thrampoulidis.
\newblock Fundamental {Limits} of {Ridge}-{Regularized} {Empirical} {Risk} {Minimization} in {High} {Dimensions}.
\newblock In \emph{Proceedings of {The} 24th {International} {Conference} on {Artificial} {Intelligence} and {Statistics}}, pages 2773--2781. PMLR, March 2021.
\newblock ISSN: 2640-3498.

\bibitem[Thrampoulidis et~al.(2015{\natexlab{a}})Thrampoulidis, Oymak, and Hassibi]{thrampoulidis_gaussian_2015}
Christos Thrampoulidis, Samet Oymak, and Babak Hassibi.
\newblock The {Gaussian} min-max theorem in the {Presence} of {Convexity}, March 2015{\natexlab{a}}.
\newblock arXiv:1408.4837 [cs, math].

\bibitem[Thrampoulidis et~al.(2015{\natexlab{b}})Thrampoulidis, Oymak, and Hassibi]{thrampoulidis_regularized_2015}
Christos Thrampoulidis, Samet Oymak, and Babak Hassibi.
\newblock Regularized {Linear} {Regression}: {A} {Precise} {Analysis} of the {Estimation} {Error}.
\newblock In \emph{Proceedings of {The} 28th {Conference} on {Learning} {Theory}}, pages 1683--1709. PMLR, June 2015{\natexlab{b}}.
\newblock ISSN: 1938-7228.

\bibitem[Thrampoulidis et~al.(2018)Thrampoulidis, Abbasi, and Hassibi]{thrampoulidis_precise_2018}
Christos Thrampoulidis, Ehsan Abbasi, and Babak Hassibi.
\newblock Precise {Error} {Analysis} of {Regularized} {M} -{Estimators} in {High} {Dimensions}.
\newblock \emph{IEEE Transactions on Information Theory}, 64\penalty0 (8):\penalty0 5592--5628, August 2018.
\newblock ISSN 1557-9654.
\newblock \doi{10.1109/TIT.2018.2840720}.

\bibitem[Tong et~al.(2021)Tong, Ma, and Chi]{tong_low-rank_2021}
Tian Tong, Cong Ma, and Yuejie Chi.
\newblock Low-{Rank} {Matrix} {Recovery} {With} {Scaled} {Subgradient} {Methods}: {Fast} and {Robust} {Convergence} {Without} the {Condition} {Number}.
\newblock \emph{IEEE Transactions on Signal Processing}, 69:\penalty0 2396--2409, 2021.
\newblock ISSN 1941-0476.
\newblock \doi{10.1109/TSP.2021.3071560}.

\bibitem[Tu et~al.(2016)Tu, Boczar, Simchowitz, Soltanolkotabi, and Recht]{tu_low-rank_2016}
Stephen Tu, Ross Boczar, Max Simchowitz, Mahdi Soltanolkotabi, and Ben Recht.
\newblock Low-rank {Solutions} of {Linear} {Matrix} {Equations} via {Procrustes} {Flow}.
\newblock In \emph{Proceedings of {The} 33rd {International} {Conference} on {Machine} {Learning}}, pages 964--973. PMLR, June 2016.
\newblock ISSN: 1938-7228.

\bibitem[Wang et~al.(2025)Wang, , and Fan]{wang_robust_2025}
Bingyan Wang, , and Jianqing Fan.
\newblock Robust {Matrix} {Completion} with {Heavy}-{Tailed} {Noise}.
\newblock \emph{Journal of the American Statistical Association}, 120\penalty0 (550):\penalty0 922--934, April 2025.
\newblock ISSN 0162-1459.
\newblock \doi{10.1080/01621459.2024.2375037}.

\bibitem[Wang et~al.(2022)Wang, Donhauser, and Yang]{wang_tight_2022}
Guillaume Wang, Konstantin Donhauser, and Fanny Yang.
\newblock Tight bounds for minimum \${\textbackslash}ell\_1\$-norm interpolation of noisy data.
\newblock In \emph{Proceedings of {The} 25th {International} {Conference} on {Artificial} {Intelligence} and {Statistics}}, pages 10572--10602. PMLR, May 2022.
\newblock ISSN: 2640-3498.

\bibitem[Xia(2019)]{xia_confidence_2019}
Dong Xia.
\newblock Confidence {Region} of {Singular} {Subspaces} for {Low}-{Rank} {Matrix} {Regression}.
\newblock \emph{IEEE Transactions on Information Theory}, 65\penalty0 (11):\penalty0 7437--7459, November 2019.
\newblock ISSN 1557-9654.
\newblock \doi{10.1109/TIT.2019.2924900}.

\bibitem[Xu et~al.(2023)Xu, Shen, Chi, and Ma]{xu_power_2023}
Xingyu Xu, Yandi Shen, Yuejie Chi, and Cong Ma.
\newblock The {Power} of {Preconditioning} in {Overparameterized} {Low}-{Rank} {Matrix} {Sensing}.
\newblock In \emph{Proceedings of the 40th {International} {Conference} on {Machine} {Learning}}, pages 38611--38654. PMLR, July 2023.
\newblock ISSN: 2640-3498.

\bibitem[Yang and Ma(2023)]{yang_optimal_2023}
Yuepeng Yang and Cong Ma.
\newblock Optimal {Tuning}-{Free} {Convex} {Relaxation} for {Noisy} {Matrix} {Completion}.
\newblock \emph{IEEE Transactions on Information Theory}, 69\penalty0 (10):\penalty0 6571--6585, October 2023.
\newblock ISSN 1557-9654.
\newblock \doi{10.1109/TIT.2023.3284341}.

\bibitem[Zhang and Zhang(2014)]{zhang_confidence_2014}
Cun-Hui Zhang and Stephanie~S. Zhang.
\newblock Confidence {Intervals} for {Low} {Dimensional} {Parameters} in {High} {Dimensional} {Linear} {Models}.
\newblock \emph{Journal of the Royal Statistical Society Series B: Statistical Methodology}, 76\penalty0 (1):\penalty0 217--242, January 2014.
\newblock ISSN 1369-7412.
\newblock \doi{10.1111/rssb.12026}.

\bibitem[Zhang et~al.(2023)Zhang, Chiu, and Zhang]{zhang_fast_2023}
Gavin Zhang, Hong-Ming Chiu, and Richard~Y. Zhang.
\newblock Fast and {Minimax} {Optimal} {Estimation} of {Low}-{Rank} {Matrices} via {Non}-{Convex} {Gradient} {Descent}, May 2023.
\newblock arXiv:2305.17224 [cs, math, stat].

\bibitem[Zhang(2021)]{zhang_sharp_2021}
Richard~Y. Zhang.
\newblock Sharp {Global} {Guarantees} for {Nonconvex} {Low}-{Rank} {Matrix} {Recovery} in the {Overparameterized} {Regime}, April 2021.
\newblock arXiv:2104.10790 [math].

\bibitem[Zheng and Lafferty(2015)]{zheng_convergent_2015}
Qinqing Zheng and John Lafferty.
\newblock A {Convergent} {Gradient} {Descent} {Algorithm} for {Rank} {Minimization} and {Semidefinite} {Programming} from {Random} {Linear} {Measurements}.
\newblock In \emph{Advances in {Neural} {Information} {Processing} {Systems}}, volume~28. Curran Associates, Inc., 2015.

\end{thebibliography}

\end{document}